\documentclass{amsart}
\usepackage{amsmath,amssymb, url,color}
\usepackage{amsmath,amssymb,amsthm,mathtools}
\usepackage[margin=3cm]{geometry}
\usepackage{tikz}

\newtheorem{lemma}{Lemma}[section]
\newtheorem{prop}[lemma]{Proposition}
\newtheorem{cor}[lemma]{Corollary}
\newtheorem{thm}[lemma]{Theorem}
\newtheorem{example}[lemma]{Example}
\newtheorem{thm?}[lemma]{Theorem?}

\newtheorem{remark}[lemma]{Remark}

\usepackage{enumitem}
\setenumerate[0]{label=\alph*)}

\begin{document}
\title{Torsion points and isogenies on CM elliptic curves}

\author{Abbey Bourdon}
\author{Pete L. Clark}

\newcommand{\etalchar}[1]{$^{#1}$}
\newcommand{\F}{\mathbb{F}}
\newcommand{\et}{\textrm{\'et}}
\newcommand{\ra}{\ensuremath{\rightarrow}}
\newcommand{\FF}{\F}
\newcommand{\ff}{\mathfrak{f}}
\newcommand{\Z}{\mathbb{Z}}
\newcommand{\N}{\Z^{\geq 0}}
\newcommand{\ch}{}
\newcommand{\R}{\mathbb{R}}
\newcommand{\PP}{\mathbb{P}}
\newcommand{\pp}{\mathfrak{p}}
\newcommand{\qq}{\mathfrak{q}}
\newcommand{\C}{\mathbb{C}}
\newcommand{\Q}{\mathbb{Q}}
\newcommand{\ab}{\operatorname{ab}}
\newcommand{\Aut}{\operatorname{Aut}}
\newcommand{\gk}{\mathfrak{g}_K}
\newcommand{\gq}{\mathfrak{g}_{\Q}}
\newcommand{\OQ}{\overline{\Q}}
\newcommand{\Out}{\operatorname{Out}}
\newcommand{\End}{\operatorname{End}}
\newcommand{\Gal}{\operatorname{Gal}}
\newcommand{\CT}{(\mathcal{C},\mathcal{T})}
\newcommand{\lcm}{\operatorname{lcm}}
\newcommand{\Div}{\operatorname{Div}}
\newcommand{\OO}{\mathcal{O}}
\newcommand{\rank}{\operatorname{rank}}
\newcommand{\tors}{\operatorname{tors}}
\newcommand{\IM}{\operatorname{IM}}
\newcommand{\CM}{\mathbf{CM}}
\newcommand{\HS}{\mathbf{HS}}
\newcommand{\Frac}{\operatorname{Frac}}
\newcommand{\Pic}{\operatorname{Pic}}
\newcommand{\coker}{\operatorname{coker}}
\newcommand{\Cl}{\operatorname{Cl}}
\newcommand{\loc}{\operatorname{loc}}
\newcommand{\GL}{\operatorname{GL}}
\newcommand{\PGL}{\operatorname{PGL}}
\newcommand{\PSL}{\operatorname{PSL}}
\newcommand{\Frob}{\operatorname{Frob}}
\newcommand{\Hom}{\operatorname{Hom}}
\newcommand{\Coker}{\operatorname{\coker}}
\newcommand{\Ker}{\ker}
\newcommand{\g}{\mathfrak{g}}
\newcommand{\sep}{\operatorname{sep}}
\newcommand{\new}{\operatorname{new}}
\newcommand{\Ok}{\mathcal{O}_K}
\newcommand{\ord}{\operatorname{ord}}
\newcommand{\mm}{\mathfrak{m}}
\newcommand{\Ohell}{\OO_{\ell^{\infty}}}
\newcommand{\cc}{\mathfrak{c}}
\newcommand{\ann}{\operatorname{ann}}
\renewcommand{\tt}{\mathfrak{t}}
\renewcommand{\cc}{\mathfrak{a}}
\renewcommand{\aa}{\mathfrak{a}}
\newcommand{\bb}{\mathfrak{b}}
\newcommand\leg{\genfrac(){.4pt}{}}
\renewcommand{\gg}{\mathfrak{g}}
\renewcommand{\O}{\mathcal{O}}
\newcommand{\Spec}{\operatorname{Spec}}
\newcommand{\rr}{\mathfrak{r}}
\newcommand{\rad}{\operatorname{rad}}
\newcommand{\SL}{\operatorname{SL}}
\def\hh{\mathfrak{h}}
\newcommand{\KK}{\mathcal{K}}

\begin{abstract}
Let $\OO$ be an order in the imaginary quadratic field $K$.  For positive integers $M \mid N$, we determine the least degree of an $\OO$-CM point on the modular curve $X(M,N)_{/K(\zeta_M)}$ and also on the modular curve $X(M,N)_{/\Q(\zeta_M)}$: that is, we treat both the case in which the complex multiplication is rationally defined and the case in which we do not assume that the complex 
multiplication is rationally defined.  To prove these results we establish several new theorems on rational cyclic isogenies of
CM elliptic curves.  In particular, we extend a result of Kwon \cite{Kwon99} that determines the set of positive
integers $N$ for which there is an $\OO$-CM elliptic curve $E$ admitting a cyclic, $\Q(j(E))$-rational $N$-isogeny.
\end{abstract}

\vspace*{-.5cm}

\maketitle

\vspace*{-.5cm}

\tableofcontents

\section{Introduction}
\noindent
An elliptic curve $E$ defined over a field $F$ of characteristic $0$ has \textbf{complex multiplication (CM)} if the geometric endomorphism algebra $\End( E_{/\overline{F}}) \otimes_{\Z} \Q$ is an imaginary quadratic field $K$; in this case the endomorphism ring 
$\OO \coloneqq \End E_{/\overline{F}}$ is an order in $K$. 
\\ \indent
In this work we continue our study of two closely related topics: torsion points 
on CM elliptic curves over number fields and CM points on modular curves.  Prior contributions to this program have been made by {Kronecker, Weber, Fricke, Hasse, Deuring, Shimura,}
Olson \cite{Olson74}, Silverberg \cite{Silverberg88}, \cite{Silverberg92}, Parish \cite{Parish89}, Aoki \cite{Aoki95}, \cite{Aoki06}, Ross 
\cite{Ross94}, Kwon \cite{Kwon99}, Prasad-Yogananda \cite{PY01}, Stevenhagen \cite{Stevenhagen01}, Breuer \cite{Breuer10}, 
Lombardo \cite{Lombardo17}, Lozano-Robledo \cite{LR18}, \cite{LR19}, Gaudron-R\'emond \cite{GR18} and by the present authors and our collaborators
\cite{CCS13}, \cite{CCRS14}, \cite{BCS17}, \cite{BCP17}, \cite{BP17}, \cite{CCM19}. 
\\ \indent
The present work is a sequel to \cite{BC18} by the same authors.  In \cite{BC18} we studied the modulo $N$ Galois 
representation attached to a CM elliptic curve defined over a number field $F$ containing the CM field $K$, with applications to torsion 
subgroups of CM elliptic curves defined over a number field $F \supset K$.  We also called for work on the following general problem: given a modular curve $X = X(\Gamma)$ attached to a congruence subgroup $\Gamma$ of $\SL_2(\Z)$\footnote{To be precise, following \cite[\S 4]{Deligne-Rapoport} and \cite[\S 2]{Mazur77} we begin with $N \in \Z^+$ and a subgroup $H$ of $\GL_2(\Z/N\Z)$.  Then we take $\Gamma$ to be the complete preimage of $H \cap \SL_2(\Z/N\Z)$ under the map $\SL_2(\Z) \ra
\SL_2(\Z/N\Z)$.  Let $\zeta_N$ be a primitive $N$th root of unity, so $(\Z/N\Z)^{\times} = \Aut(\Q(\zeta_N)/\Q)$.  Then the field
of definition of $X(\Gamma)$ is $\Q(\Gamma) \coloneqq \Q(\zeta_N)^{\det H}$.  We also put $K(\Gamma) \coloneqq K \Q(\Gamma)$.}, for each imaginary quadratic order $\OO$, determine the degrees of $\OO$-CM points on $X$.
The case $X = X(N) = X(\Gamma(N))$ as a curve over $K(\Gamma(N)) = K(\zeta_N)$ was treated in \cite{Stevenhagen01} and \cite[Thm. 1.1]{BC18}, yielding a generalization of the First Main Theorem of Complex Multiplication to arbitrary quadratic orders.  We also treated the case $X = X_1(N) = X(\Gamma_1(N))$ as a curve over $K(\Gamma_1(N)) = K$.
\\ \indent
This fits into the larger program of understanding points of low degree on modular curves.  For a nice curve $X$ defined over a 
number field $F$, a closed point $p$ on $X$ has \textbf{low degree} if the degree $d_p \coloneqq [F(p):F]$ of the residue field of $p$ is less than the $K$-gonality $\gamma_K(X)$, i.e., the least degree of a nonconstant $K$-morphism $\pi: X \ra \PP^1$.\footnote{It follows from Hilbert's Irreducibility Theorem that there are infinitely many closed points of degree $\gamma_K(X)$.}  The \textbf{sporadic points} $p$, for which there are only finitely many closed points $q$ with $d_q \leq d_p$, comprise a subclass of the points of low degree.  The second author's interest in the subject was kindled almost 15 years ago by the belief that for sufficiently large $N$, all noncuspidal points on $X_1(N)_{/\Q}$ 
of least degree should be CM points.  This would imply that for all sufficiently large $N$, the curves $X_1(N)_{/\Q}$ have 
sporadic CM points, and in \cite[Thm. 5, Thm. 7]{CCS13} it was shown that $X_1(N)_{/\Q}$ has sporadic CM points for all sufficiently large \emph{prime} $N$.  \\ \indent
It is of course also of interest to consider composite $N$.  More generally, the classification of torsion subgroups of elliptic curves over 
degree $d$ number fields is equivalent to classifying all points of degree dividing $\frac{d}{\varphi(M)}$ on the modular curves $X(M,N)_{/\Q(\zeta_M)}$.  In \cite{CCRS14} the authors used extensive computer calculations to classify all possible torsion subgroups of CM elliptic curves defined over a degree $d$ number field 
for all $1 \leq d \leq 13$, and in \cite{BP17} the authors extend the classification to include all odd $d$.
In the present paper we compute for each imaginary quadratic order $\OO$ and all positive integers $M \mid N$, the least degree of an $\OO$-CM point on the modular curve $X(M,N)_{/K(\zeta_M)}$ and on the modular curve $X(M,N)_{/\Q(\zeta_M)}$.  This brings us close 
to a classification of torsion subgroups of CM elliptic curves over number fields of \emph{all degrees}.  The remaining issue is that 
whereas in the case of $X(M,N)_{/K(\zeta_M)}$ every degree of a closed $\OO$-CM point {is divisible by the least degree}, in the case of 
$X(M,N)_{/\Q(\zeta_M)}$ this need not be the case: see Example \ref{LASTEXAMPLE}.  
\\ \\
We now discuss the results in more detail.  Our first main result generalizes \cite[Thm. 1.1]{BC18} from $X_1(N)$ to 
 $X(M,N) = X(\Gamma(M) \cap \Gamma_1(N))$ defined over $K(\Gamma) = K(\zeta_M)$.   To prove it, we analyze the action of the Cartan subgroup $(\OO/N\OO)^{\times}$ on pairs of points on an $\OO$-CM elliptic curve, generalizing the orbit analysis on points of order $N$ in \cite{BC18}. Here, $K(\ff)$ denotes the ring class field of $K$ of conductor $\ff$, which is equal to $K(j(E))$ for an elliptic curve $E$ with CM by the order in $K$ of conductor $\ff$.

\begin{thm} \label{introTHM}
Let $\OO$ be an imaginary quadratic order of conductor $\ff$, and let $M  \mid N $ be positive integers.  
There is a poitive integer $T(\OO,M,N)$, explicitly given in $\S4.1$, such that: for all positive integers $d$, there is a field extension $F / K(\ff)$ of degree $d$ and an $\OO$-CM elliptic curve $E_{/F}$ such that $\Z/M\Z \times \Z/N \Z \hookrightarrow E(F)$ iff $T(\OO,M,N) \mid d$.  

\end{thm}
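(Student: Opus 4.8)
\emph{Overview and reduction to one curve over $K(\ff)$.}
The plan is to convert the statement into a question about the action of the Cartan subgroup $(\OO/N\OO)^{\times}$ on the subgroups of the $N$-torsion of a single $\OO$-CM elliptic curve, and then to read off $T(\OO,M,N)$ from the resulting orbit data, twists included. Fix an $\OO$-CM elliptic curve $E_0$ over $K(\ff)$. If $F\supseteq K(\ff)$ and $E_{/F}$ is $\OO$-CM, then $j(E)$ is a root of the ring class polynomial of $\OO$, so $j(E)\in K(\ff)\subseteq F$; since $K(\ff)/K$ is abelian, after replacing $E_0$ by a $\Gal(K(\ff)/K)$-conjugate we may assume $j(E)=j(E_0)$, and then $E$ is the twist of $(E_0)_{/F}$ by a character $\chi\colon \g_F\to\OO^{\times}$ (the Galois action on $\Aut_{\overline F}E_0=\OO^{\times}$ is trivial because $K\subseteq F$). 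Let $\bar\rho\colon \g_{K(\ff)}\to(\OO/N\OO)^{\times}$ be the mod $N$ Cartan representation of $E_0$ — valued in the Cartan since $K\subseteq K(\ff)$, with explicitly known image $\mathcal{G}_N$ by \cite{Stevenhagen01}, \cite{BC18}. Tracking the Galois action through the twist, $\Z/M\Z\times\Z/N\Z\hookrightarrow E(F)$ becomes: there is a subgroup $\Lambda\cong\Z/M\Z\times\Z/N\Z$ of $E_0[N]$ that is stable under $\bar\rho(\g_F)$ and on which $\bar\rho(\sigma)$ acts as multiplication by $\chi(\sigma)^{-1}\in\OO^{\times}$ for all $\sigma\in\g_F$.

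\emph{Orbit analysis and the definition of $T$.}
Following $\S 3$, I would analyze the action of $(\OO/N\OO)^{\times}$, and of its subgroup $\mathcal{G}_N$, on pairs $(P,Q)$ generating a subgroup isomorphic to $\Z/M\Z\times\Z/N\Z$, computing stabilizers and orbit lengths in terms of the factorization of $N$ and the splitting behaviour of each prime in $\OO$ — this is precisely the promised generalization of the analysis of points of order $N$ in \cite{BC18}. The key observation is that any $g\in\mathcal{G}_N$ that stabilizes $\Lambda$ and acts on it by a unit scalar may be absorbed into the twist $\chi$, so the invariant governing the degree is the index $[\mathcal{G}_N:\mathcal{G}_N^{\Lambda}]$, where $\mathcal{G}_N^{\Lambda}=\{g\in\mathcal{G}_N:\ g\Lambda=\Lambda,\ g|_{\Lambda}\in\OO^{\times}\cdot\mathrm{id}\}$, corrected when $\OO^{\times}\neq\{\pm 1\}$ by the (at most quadratic) discrepancy between this index and the least degree over which the required $\OO^{\times}$-valued twist is actually realizable. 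One then \emph{defines} $T(\OO,M,N)$ to be the minimum over all valid $\Lambda$, and over the $\Gal(K(\ff)/K)$-conjugates of $E_0$, of these corrected indices; this is the content of $\S 4.1$.

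\emph{Sufficiency and necessity.}
If $T(\OO,M,N)\mid d$, write $d=T(\OO,M,N)\,m$. The orbit analysis produces a field $F_0/K(\ff)$ of degree $T(\OO,M,N)$ inside $K(\ff)(E_0[N])$, together with a character $\chi$, realizing the optimal $\Lambda$; this yields an $\OO$-CM curve $E'_{/F_0}$ with $\Z/M\Z\times\Z/N\Z\hookrightarrow E'(F_0)$. Choosing any degree $m$ field extension $F/F_0$ (possible in characteristic zero) gives $[F:K(\ff)]=d$ and $\Z/M\Z\times\Z/N\Z\hookrightarrow E'(F_0)\subseteq E'(F)$. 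Conversely, given $(F,E,\Lambda)$ as in the reduction, $d=[\g_{K(\ff)}:\g_F]$ is a multiple of $[\mathcal{G}_N:\bar\rho(\g_F)]$, and $\bar\rho(\g_F)\subseteq\mathcal{G}_N^{\Lambda}$, so the corrected index attached to $\Lambda$ divides $d$; hence $d$ is a multiple of a corrected index, which is $\geq T(\OO,M,N)$.

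\emph{The main obstacle.}
The step I expect to be hardest is \emph{not} exhibiting the minimum but proving that $T(\OO,M,N)$ actually \emph{divides} each of the corrected indices $[\mathcal{G}_N:\mathcal{G}_N^{\Lambda}]$, so that ``$d$ is a multiple of some corrected index'' upgrades to ``$T(\OO,M,N)\mid d$''; equivalently, that the minimal corrected index equals the gcd of all of them. This requires the explicit description from $\S 3$ of how $\mathcal{G}_N^{\Lambda}$ and its twist-correction vary with $\Lambda$, together with a careful treatment of the small-level exceptions, the primes dividing $\gcd(\ff,N)$, and the cases $\OO^{\times}=\mu_4,\mu_6$ (including the cohomological obstruction to realizing a $\mu_4$-twist over a given field). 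Checking that passing to a $\Gal(K(\ff)/K)$-conjugate of $E_0$ leaves the minimum unchanged is a final, comparatively routine, verification.
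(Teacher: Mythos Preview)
Your framework is the same as the paper's: Proposition~\ref{FIRSTCOMPILEPROP} is exactly your reduction, phrased in terms of $\overline{C_N(\OO)}$-orbits on reduced $(M,N)$-pairs rather than stabilizers of subgroups $\Lambda$ (the two viewpoints coincide, since the stabilizer of a reduced pair $(P,Q)$ is your $\mathcal{G}_N^\Lambda$ for $\Lambda=\langle P,Q\rangle$). One small point: your worry about a ``cohomological obstruction to realizing a $\mu_4$-twist'' is unnecessary. Since $F\supset K$, the Galois action on $\Aut E_0=\OO^\times$ is trivial, so $H^1(\g_F,\OO^\times)=\Hom(\g_F,\OO^\times)$ and every character $\chi$ is the cocycle of a twist; no correction is needed, and the reduced orbit size is literally the minimal degree.

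You correctly isolate the genuine gap: showing that the \emph{minimum} reduced orbit size divides \emph{every} reduced orbit size. Your proposal to extract this from a direct orbit classification is plausible but not how the paper does it, and carrying it out that way would be laborious. The paper instead reduces to prime powers via CRT (Proposition~\ref{PROP4.11}) and then, for each $(\ell^a,\ell^b)$, argues field-theoretically: Proposition~\ref{1.1} shows that whenever $\Z/\ell^a\Z\times\Z/\ell^b\Z\hookrightarrow E(F)$ one has an explicit quantity dividing $w[F:K(\ff)]$, obtained by bounding $[F(E[\ell^b]):F]$ and using that $\#\overline{C_{\ell^b}(\OO)}\mid [F(E[\ell^b]):K(\ff)]$. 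Matching constructions (Theorems~\ref{1.3}--\ref{THM1.7}), built from ray class fields, ring class fields, and the canonical isogeny $\iota:E\to E'$ to the maximal order together with its dual (Theorem~\ref{KKTHM}), show these bounds are sharp. This simultaneously computes $\widetilde{T}(\OO,\ell^a,\ell^b)$ and proves the divisibility, without ever enumerating all Cartan orbits. Your outline would become a proof once you supply an argument of this type for the divisibility step; the direct orbit route you sketch is not wrong, but the paper's detour through torsion-field degrees is what makes the divisibility fall out cleanly in each splitting case.
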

Next we return to the case of $X = X_1(N)$ but \emph{without} the assumption that the ground field contains the CM field $K$. Write
$\Q(\ff)$ for $\Q(j(E)),$ where $E$ is an elliptic curve with CM by the order of conductor $\ff$ in the imaginary quadratic field $K$. Since $T(\OO,1,N)$ gives the least degree over $K(\ff)$ in which an $\OO$-CM elliptic curve has a rational point of order $N$, the least degree over $\Q(\ff)$ is either $T(\OO,1,N)$ or $2 \cdot T(\OO,1,N)$; it is a matter of whether we can save the factor of $2$. We determine this first in the case where $N=\ell^a$ is a prime power by establishing a relationship between rational cyclic $\ell^a$-isogenies and rational points of order $\ell^a$ in minimal degree.  If $A_{/F}$ is an abelian
variety defined over a number field and $C \subset A$ is an order $N$ cyclic \'etale $F$-subgroup scheme, then there is an abelian extension
$L/F$ of degree dividing $\frac{\varphi(N)}{2}$ and a quadratic twist $A^D$ of $A_{/L}$ such that $A^D(L)$ has a point of order $N$
\cite[Thm. 5.5]{BCS17}.  Moreover, by \cite[Thm. 6.2]{BC18}, if $\OO$ is an imaginary quadratic order of discriminant $\Delta < -4$ and
there is an $\OO$-CM elliptic curve $E$ defined over a number field $F \supset K(\ff)$ with an $F$-rational point of order $N$, then $\frac{\varphi(N)}{2} \mid [F:K(\ff)]$.  From this it follows immediately that if there is an $\OO$-CM elliptic curve defined over any number field $F$ {with an $F$-rational point of order $N$}, then $\frac{\varphi(N)}{2} \mid [F:\Q(\ff)]$.  So we see that working over either $\Q(\ff)$ or $K(\ff)$, the existence
of a rational cyclic $N$-isogeny yields an $\OO$-CM point of order $N$ in the lowest possible degree extension of $\Q(\ff)$ or $K(\ff)$.
Strikingly, when $N = \ell^a$ is a prime power the converse turns out to be true: for $F_0 = \Q(\ff)$ or $K(\ff)$, if we have an $\OO$-CM elliptic curve defined over an extension $F/F_0$ of degree $\frac{\varphi(\ell^a)}{2}$ with an $F$-rational point of order $N$, then {there is an $\OO$-CM elliptic curve defined over $F_0$ with} an $F_0$-rational cyclic $\ell^a$-isogeny.  Moreover, the least degree over
$\Q(\ff)$ in which an $\OO$-CM elliptic curve can have a point of order $\ell^a$ can be computed in terms of isogenies over $\Q(\ff)$ and $K(\ff)$.  

\begin{thm} \label{GenIsogThm}
Let $\OO$ be an imaginary quadratic order, let $\ell$ be a prime number, and let $a \in \Z^+$.  Let $m$ denote the maximum over all $i \in \Z^{\geq 0}$ such that there is an $\OO$-CM elliptic curve $E_{/\Q(\ff)}$ with a $\Q(\ff)$-rational cyclic $\ell^i$-isogeny, and let $M$ denote the supremum over all $i \in \Z^{\geq 0}$ such that there is an $\OO$-CM elliptic curve $E_{/K(\ff)}$ with a $K(\ff)$-rational cyclic $\ell^i$-isogeny.\footnote{{In particular, we allow $M=\infty$.}} The least degree over $\Q(\ff)$ in which there is an $\OO$-CM elliptic curve with a rational point of order $\ell^a$ is as follows:
\begin{enumerate}
\item If $a \leq m$, then the least degree is $T(\OO,\ell^a)$.
\item If $m<a \leq M$, then $\ell^a>2$ and the least degree is $2 \cdot T(\OO,\ell^a)$.
\item If $a>M=m$, then the least degree is $T(\OO,\ell^a)$.
\item If $a>M>m$, then $\ell=2$ and the least degree is  $2 \cdot T(\OO,2^a)$.
\end{enumerate}
The quantities $m$ and $M$ are explicitly computed in Propositions \ref{IsogenyProp} and \ref{lastIsog}, while $T(\OO,\ell^a)=T(\OO,1,\ell^a)$ is given in Theorem \ref{BIGMNTHM}.

\end{thm}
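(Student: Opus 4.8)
Write $T=T(\OO,\ell^a)$, and let $d_0$ denote the quantity to be computed, i.e., the least degree over $\Q(\ff)$ in which an $\OO$-CM elliptic curve has a rational point of order $\ell^a$. The plan is to first record two soft bounds. If $E_{/F}$ is $\OO$-CM with $F\supseteq\Q(\ff)$, $[F:\Q(\ff)]=d_0$, and an $F$-rational point of order $\ell^a$, then $E_{/FK}$ is $\OO$-CM over $FK\supseteq K(\ff)$, still carries such a point, and has $[FK:K(\ff)]\le d_0$, so $T\le d_0$; and, taking a curve $E_{/F'}$ of minimal degree $T$ over $K(\ff)$ --- for which $[F':\Q(\ff)]=2T$ since $K\subseteq K(\ff)\subseteq F'$ --- we get $d_0\le 2T$. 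The second and decisive ingredient is the equivalence, for $F_0\in\{\Q(\ff),K(\ff)\}$, between (i) the existence of an $\OO$-CM curve over an extension of $F_0$ of degree $\varphi(\ell^a)/2$ carrying a rational point of order $\ell^a$, and (ii) the existence of an $\OO$-CM curve over $F_0$ with an $F_0$-rational cyclic $\ell^a$-isogeny. That (ii) implies (i) is \cite[Thm.~5.5]{BCS17} together with the divisibility $\varphi(\ell^a)/2\mid[F:F_0]$ provided by \cite[Thm.~6.2]{BC18} when $F_0=K(\ff)$ and by its consequence recorded in the introduction when $F_0=\Q(\ff)$; that (i) implies (ii) is the new input, which I would establish alongside the explicit computation of $m$ and $M$ in Propositions \ref{IsogenyProp} and \ref{lastIsog}. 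Feeding in the definitions of $m$ and $M$, this equivalence says precisely: $T=\varphi(\ell^a)/2$ iff $a\le M$, and $d_0=\varphi(\ell^a)/2$ iff $a\le m$. The two orders with $\Delta\in\{-3,-4\}$, and the degenerate case $\ell^a=2$, are dealt with by the correspondingly adjusted bounds throughout; I suppress this.

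Cases (1) and (2) are then formal. In case (1) we have $a\le m\le M$, so $d_0=\varphi(\ell^a)/2=T$. In case (2) we have $T=\varphi(\ell^a)/2$ but $d_0\ne\varphi(\ell^a)/2$; since \cite[Thm.~6.2]{BC18} also forces $\varphi(\ell^a)/2\mid d_0$, this gives $d_0\ge\varphi(\ell^a)=2T$, and with $d_0\le 2T$ we conclude $d_0=2T$. The side assertion that $\ell^a>2$ here amounts to the implication $\ell^a=2\Rightarrow m=M$: this is visible from the computation of $m$ and $M$, but one can also argue directly, since the image of the mod-$2$ representation of a $\Q(\ff)$-model of an $\OO$-CM curve in $\GL_2(\F_2)=S_3$ agrees with its restriction to $\Gal(\overline{\Q}/K(\ff))$ up to an index dividing $2$ and is unchanged by quadratic twisting (as $-I=I$ modulo $2$), so an $\OO$-CM curve with a rational nonzero $2$-torsion point exists over $K(\ff)$ exactly when one exists over $\Q(\ff)$.

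In the remaining cases $a>M$, so $T>\varphi(\ell^a)/2$ and $d_0>\varphi(\ell^a)/2$, and here I would reduce the computation of $d_0$ to a stabilizer count. Fix a maximal $\OO$-CM elliptic curve $E_0$ over $\Q(\ff)$; by \cite{BC18} and standard properties of CM Galois representations, the image of its mod-$\ell^a$ representation is (conjugate to) the normalizer $G$ of the Cartan subgroup $\mathcal{C}\subseteq\GL_2(\Z/\ell^a\Z)$ attached to $\OO$, with $G\cap\mathcal{C}$ equal to the image of $\Gal(\overline{\Q}/K(\ff))$ and the nontrivial coset $G\setminus\mathcal{C}$ acting on $E_0[\ell^a]\cong\OO/\ell^a\OO$ by maps of the form $x\mapsto u\bar{x}$ with $u\in(\OO/\ell^a\OO)^\times$. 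Then, optimizing over quadratic twists of $E_0$, $T$ is the least index of the stabilizer of a point $v$ of order $\ell^a$ in the relevant subgroup of $\mathcal{C}$, and $d_0$ is the least such index in the relevant subgroup of $G$, now subject to the extra demand that this stabilizer meet $G\setminus\mathcal{C}$ (equivalently, that the associated torsion field not contain $K$); if the stabilizer is forced to lie in $\mathcal{C}$, one recovers only the bound $d_0=2T$, attained at $E_0/F'$.

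The crux is thus to decide when a point $v$ optimal for $K(\ff)$ can be chosen so that its stabilizer also meets $G\setminus\mathcal{C}$. When $m=M$ (case (3)), I expect this to be always arrangeable: whenever $v$ is a unit in $\OO/\ell^a\OO$ one has $u:=v\,\bar{v}^{\,-1}\in\mathcal{C}$, and then $x\mapsto u\bar{x}$ fixes $v$, giving $d_0\le T$ and hence $d_0=T$. This covers $\ell$ inert --- the prototype, where ``order $\ell^a$'' and ``unit'' coincide --- applied to the appropriate twist, and the split and ramified subcases should reduce to it similarly. When $m<M$ (case (4)), which I would first show forces $\ell=2$ by comparing Propositions \ref{IsogenyProp} and \ref{lastIsog}, no optimal $v$ can have a stabilizer meeting $G\setminus\mathcal{C}$: the exact obstruction is the existence, for some $i$ with $m<i\le M$, of a $K(\ff)$-rational cyclic $2^i$-isogeny that does not descend to $\Q(\ff)$, and carrying this through the orbit count forces $d_0=2T$. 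I expect this last point --- identifying the coset obstruction with the isogeny gap $M>m$ and carrying out the $2$-adic bookkeeping inside $G$ --- to be the main obstacle.
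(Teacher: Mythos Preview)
Your framework for (1) and (2) is in the right spirit, but you have not identified the mechanism that actually proves the implication (i)$\Rightarrow$(ii) over $\Q(\ff)$ in the ramified case, which is the heart of the matter. The paper proves case (b) of Theorem \ref{THM6.5} via Theorem \ref{GENKWONTHM}: if $m<a$ and there were a point $P$ of order $\ell^a$ over some $L/\Q(\ff)$ of degree $\varphi(\ell^a)/2$, then $K\not\subset L$, and since $\langle P\rangle$ gives an $L$-rational cyclic $\ell^a$-isogeny with $a>m$, Theorem \ref{GENKWONTHM} forces $\Q(\ell\ff)\subset L$. Then $K(\ell\ff)\subset KL$; because $K(\ell\ff)$ is the projective $\ell$-torsion field, the mod-$\ell$ representation over $KL$ is scalar, so the rational $\ell$-torsion point forces $\Z/\ell\Z\times\Z/\ell^a\Z\hookrightarrow E(KL)$, contradicting $[KL:K(\ff)]=\varphi(\ell^a)/2$ by Theorem \ref{BIGMNTHM}. (In the split case the argument is different again, going through \cite[Thm.~4.8]{BCS17}.) Your phrase ``which I would establish alongside the explicit computation of $m$ and $M$'' does not point to either route.

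For cases (3) and (4) your normalizer/stabilizer approach diverges from the paper and, as written, has a genuine gap. Your observation that a unit $v\in(\OO/\ell^a\OO)^\times$ is fixed by $x\mapsto v\bar v^{-1}\bar x\in G\setminus\mathcal{C}$ is correct and does settle the inert case, where every order-$\ell^a$ point is a unit and $T=\#\overline{C_{\ell^a}(\OO)}$. But case (3) also contains ramified subcases with $m=M\ge 1$ --- for instance $\ell$ odd, $\ell\mid\ff$, $\left(\frac{\Delta_K}{\ell}\right)=-1$ --- where $T(\OO,\ell^a)$ is strictly smaller than $\#\overline{C_{\ell^a}(\OO)}$ and the minimal Cartan orbit is \emph{not} the unit orbit, so your argument yields no useful bound. (There is no split subcase of (3), since then $M=\infty$.) The paper's route is different and uniform: use the already-proved case (1) to obtain a point of order $\ell^m$ over an extension of $\Q(\ff)$ of degree $\max(1,\varphi(\ell^m)/2)$, then lift to order $\ell^a$ at cost $\le\ell^{2(a-m)}$ via Lemma \ref{LITTLEDIVLEMMA} (or Lemma \ref{MODULARPULLBACKLEMMA} when $m=0$), and check case by case that the resulting degree equals $T(\OO,\ell^a)$ whenever $a>m=M$. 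Case (4) is then handled by reduction back to case (2): assuming a realization in degree $T(\OO,2^a)$, a Weber-function field diagram shows that $2^{a-M}P$ lies in a minimal Cartan orbit and $[\Q(\ff)(\mathfrak{h}(2^{a-M}P)):\Q(\ff)]=\varphi(2^M)/2$, contradicting case (2) since $M>m$. So the paper never carries out the ``coset obstruction'' bookkeeping you anticipate; it bootstraps from cases (1) and (2) instead.
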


Thus our analysis of least degrees over $\Q(\ff)$ is heavily informed by the classification of rational
cyclic isogenies over both $\Q(\ff)$ and $K(\ff)$.  The classification over $\Q(\ff)$ is due to S. Kwon \cite{Kwon99} unless $K = \Q(\sqrt{-1})$ and $\Q(\sqrt{-3})$.  In fact his work holds verbatim as long as the discriminant of the CM
order is not $-4$ or $-3$ (in other words, it holds for all orders in these two fields except the maximal ones).  We will complete Kwon's
classification for these last two discriminants: Corollary \ref{LASTKWONCOR}.  Moreover we generalize half of Kwon's theorem, as follows (Theorem \ref{GENKWONTHM}): we show that when $F$ is any number field containing neither $K$ nor $\Q(\ell \ff)$ for any $\ell \mid N$, if no $\OO$-CM elliptic curve admits a $\Q(\ff)$-rational cyclic $N$-isogeny, then no $\OO$-CM elliptic curve admits an $F$-rational cyclic $N$-isogeny. Theorem \ref{GENKWONTHM} is a crucial ingredient in the proof of Theorem \ref{GenIsogThm}. 

In Section 7, we show that the classification of least degrees of CM points on $X_1(\ell^a)$ yields a natural criterion for when the least degree of an $\OO$-CM point on $X_1(N)$ is $T(\OO,N)=T(\OO,1,N)$ instead of $2 \cdot T(\OO,N)$. We let $T^{\circ}(\OO,N)$ denote the least degree over $\Q(\ff)$ in which there is an $\OO$-CM elliptic curve with a rational point of order $N$. If $N=\ell^a$ is a prime power, then $T^{\circ}(\OO,\ell^a)$ can be deduced from Theorem \ref{GenIsogThm} and is given explicitly in Theorem \ref{THM5.5}.  

\begin{thm} \label{introX1N}
Let $\OO$ be an imaginary quadratic order.  Let $N \in \Z^+$ have prime power decomposition 
$\ell_1^{a_1} \cdots \ell_r^{a_r}$ with $\ell_1 < \ldots < \ell_r$. The least degree over $\Q(\ff)$ in which there is an $\OO$-CM elliptic curve with a rational point of order $N$ is $T(\OO,N)$ if and only if $T^{\circ}(\OO,\ell_i^{a_i})=T(\OO,\ell_i^{a_i})$ for all $1 \leq i \leq r$. Otherwise the least degree is $2\cdot T(\OO,N)$.
\end{thm}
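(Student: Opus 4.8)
The plan is to pin down the coarse structure first and then reduce everything to the prime-power case. We always have $T(\OO,N)\leq T^{\circ}(\OO,N)\leq 2\,T(\OO,N)$: if $E_{/F}$ is $\OO$-CM with $F\supseteq\Q(\ff)$ and $\Z/N\Z\hookrightarrow E(F)$, then base change to $FK\supseteq K(\ff)$ gives $T(\OO,N)\leq[FK:K(\ff)]\leq[F:\Q(\ff)]$, while the degree-$T(\OO,N)$ extension of $K(\ff)$ supplied by Theorem~\ref{introTHM} has degree $2\,T(\OO,N)$ over $\Q(\ff)$. So the content of the theorem is that $T^{\circ}(\OO,N)$ is forced down to the lower bound exactly when each $T^{\circ}(\OO,\ell_i^{a_i})=T(\OO,\ell_i^{a_i})$, and otherwise sits at the upper bound (in particular never strictly between). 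I would also record at the outset that $T(\OO,N)=\prod_i T(\OO,\ell_i^{a_i})$, which one sees by decomposing the relevant Cartan orbit (and which is implicit in the formula of Theorem~\ref{BIGMNTHM}); this makes the two sides of the asserted equivalence directly comparable.

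Next I would set up the Galois dictionary already used for Theorems~\ref{introTHM} and \ref{GenIsogThm}. For an $\OO$-CM curve $E$ over a number field $F\supseteq\Q(\ff)$ with $K\not\subseteq F$, the mod-$N$ image $\tilde G\subseteq\Aut(E[N])$ lies in the normalizer $\mathcal{N}$ of the Cartan subgroup $\mathcal{C}=(\OO/N\OO)^{\times}$ but is not contained in $\mathcal{C}$, and $[F(P):F]=[\tilde G:\mathrm{Stab}_{\tilde G}(P)]$ for a point $P$ of order $N$. Minimizing over the finitely many quadratic twists of $E$ over $\Q(\ff)$ and over $P$ computes $T^{\circ}(\OO,N)$, the corresponding minimization inside $\mathcal{C}$ computes $T(\OO,N)$, and $T^{\circ}(\OO,N)=T(\OO,N)$ holds precisely when one can simultaneously arrange that $\mathcal{C}\cdot P$ has the minimal size $T(\OO,N)$ and that some element of $\tilde G\setminus\mathcal{C}$ fixes $P$ (equivalently $\mathrm{Stab}_{\tilde G}(P)\not\subseteq\mathcal{C}$), that element being, up to $\mathcal{C}$, a complex conjugation.

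Now I would apply the Chinese Remainder Theorem: under $\OO/N\OO\cong\prod_i\OO/\ell_i^{a_i}\OO$ we get $\mathcal{C}\cong\prod_i\mathcal{C}_i$ with $\mathcal{C}_i=(\OO/\ell_i^{a_i}\OO)^{\times}$, and $P$ corresponds to a tuple $(P_i)_i$ with $P_i$ of order $\ell_i^{a_i}$; the Cartan orbit of $P$ is the product of the orbits $\mathcal{C}_i\cdot P_i$, so it is minimal iff each is, and—crucially—the semilinear part of any element of $\mathcal{N}\setminus\mathcal{C}$ is a single global complex conjugation, acting by the same semilinear recipe on every factor, so such an element fixes $(P_i)_i$ iff each component fixes $P_i$. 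The ``only if'' direction falls out of this: from an $\OO$-CM curve realizing a point of order $N$ in degree $T(\OO,N)$ over $\Q(\ff)$, restriction to the $\ell_i$-adic factor—together with the linear disjointness of the $\ell_i$-power division data, which guarantees the degree genuinely factors as $\prod_i T(\OO,\ell_i^{a_i})$—exhibits, for each $i$, a minimal-orbit point of order $\ell_i^{a_i}$ fixed by a complex conjugation, i.e.\ $T^{\circ}(\OO,\ell_i^{a_i})=T(\OO,\ell_i^{a_i})$.

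The ``if'' direction—assembling one global witness from prime-power witnesses—is the step I expect to be the main obstacle, since the $\OO$-CM curves involved form only a torsor under quadratic twists over $\Q(\ff)$ and a single twist acts on every $\ell_i$-component at once, so the twist realizing the savings at $\ell_i$ might clash with the one needed at $\ell_j$. I would resolve this by routing through rational cyclic isogenies, where no such clash occurs: by Theorem~\ref{GenIsogThm}, each favourable prime is either in the regime $a_i\leq m_i$—where, since subgroup schemes are insensitive to quadratic twisting, every $\OO$-CM curve over $\Q(\ff)$ carries a $\Q(\ff)$-rational cyclic $\ell_i^{a_i}$-subgroup scheme—or in the regime $a_i>M_i=m_i$, where the equality $T^{\circ}(\OO,\ell_i^{a_i})=T(\OO,\ell_i^{a_i})$ is robust under twisting. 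Fixing one $\OO$-CM curve $E_{/\Q(\ff)}$, the cyclic $\ell_i^{a_i}$-subgroups from the first kind of prime assemble into a single $\Q(\ff)$-rational cyclic subgroup scheme $C\subseteq E$, to which \cite[Thm.\ 5.5]{BCS17} applies in one shot, producing a twist of $E$ with a point of the corresponding order over an extension whose degree is the product of the per-prime minimal degrees; the data at the remaining primes is then adjoined, the total degree being $\prod_i T(\OO,\ell_i^{a_i})=T(\OO,N)$ by the same linear disjointness of $\ell_i$-adic division fields. The real work is verifying that the twist parameter produced by \cite[Thm.\ 5.5]{BCS17} for the joint subgroup $C$ and the extensions forced at the remaining primes do not interfere, so the joint degree is exactly $T(\OO,N)$ and not larger; I would carry this out by tracking that twist (it is cut out by a character built from the Galois action on $C$, which splits as a product over the $\ell_i$-factors) and by invoking the per-prime analyses behind Theorems~\ref{GenIsogThm} and \ref{THM5.5}. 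Small $N$ (e.g.\ $N\leq 2$) I would check directly.
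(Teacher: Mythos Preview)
Your proposal has a genuine gap at its foundation: the factorization $T(\OO,N)=\prod_i T(\OO,\ell_i^{a_i})$ is false.  Theorem~\ref{BIGMNTHM}(e) gives
\[
T(\OO,N)=\frac{\prod_i \widetilde{T}(\OO,\ell_i^{a_i})}{w},
\]
and since $\widetilde{T}(\OO,\ell_i^{a_i})=w\cdot T(\OO,\ell_i^{a_i})$ for $\ell_i^{a_i}\geq 4$ (Lemma~\ref{LEMMA4.10}(b)), the two quantities differ by $w^{r-1}\geq 2$ whenever $r\geq 2$.  Concretely, for $\Delta<-4$ with two odd ramified primes $\ell_1,\ell_2$ one has $T(\OO,\ell_i)=\tfrac{\ell_i-1}{2}$ but $T(\OO,\ell_1\ell_2)=\tfrac{(\ell_1-1)(\ell_2-1)}{2}\neq \tfrac{(\ell_1-1)(\ell_2-1)}{4}$.

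This is not a bookkeeping slip; it is exactly where the argument lives.  The Weber-function fields $\Q(\ff)(\mathfrak{h}(P_i))$ do each have degree $T(\OO,\ell_i^{a_i})$, and their compositum has degree at most $\prod_i T(\OO,\ell_i^{a_i})$, which is \emph{strictly smaller} than $T(\OO,N)$ for $r\geq 2$.  The missing factor $w^{r-1}$ is precisely the cost of making all the $P_i$ rational on a \emph{single} model: once $P_1$ is rational on some model over $\Q(\ff)(\mathfrak{h}(P_1))$, each further $P_i$ costs up to a degree-$w$ extension.  Your ``linear disjointness'' intuition and the isogeny-assembly plan built on it miss this twist-compatibility cost, so both your ``if'' and ``only if'' degree computations are off.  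The paper's proof of the ``if'' direction (Theorem~\ref{Thm7.1}(a)) proceeds via Weber functions directly---not through isogenies---and this $w^{r-1}$ factor is explicit there; the special-case analysis of Theorem~\ref{GenIsogThm} is not invoked at all.  A secondary issue: for primes with $a_i>M_i=m_i$ there is by definition no $\Q(\ff)$-rational cyclic $\ell_i^{a_i}$-subgroup, so there is nothing to feed into \cite[Thm.~5.5]{BCS17} and your ``robust under twisting'' clause needs an actual argument.  (Also minor: twists over $\Q(\ff)$ are not finitely many, and for $\Delta\in\{-3,-4\}$ they are quartic or sextic rather than quadratic.)
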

\noindent
In the last of our main results we return to the case of $X(M,N)$ but without the assumption that the ground field contains the CM field $K$.    For positive integers $M \mid N$, let 
$T^{\circ}(\OO,M,N)$ be the least degree of a field extension $F/\Q(\ff)$ for which there is an $\OO$-CM elliptic curve $E_{/F}$ and an injective 
group homomorphism $\Z/M\Z \times \Z/N\Z \hookrightarrow E(F)$.  The following result generalizes Theorem \ref{introX1N}:

\begin{thm}
Let $\OO$ be an imaginary quadratic order of discriminant $\Delta$. Let \[2 \leq M=\ell_1^{a_1} \cdots \ell_r^{a_r} \mid N=\ell_1^{b_1} \cdots \ell_r^{b_r} \text{ with } \ell_1 < \ldots < \ell_r. \]  The least degree $[F:\Q(\ff)]$ of a number field $F \supset \Q(\ff)$ for which 
there is an $\OO$-CM elliptic curve $E_{/F}$ and an injective group homomorphism $\Z/M\Z \times \Z/N\Z \hookrightarrow E(F)$ is 
$T(\OO,M,N)$ if and only if all of the following conditions hold: $M = 2$, $\Delta$ is even and $T^{\circ}(\OO,\ell_i^{a_i},\ell_i^{b_i}) = T(\OO,\ell_i^{a_i},\ell_i^{b_i})$ for all $1 \leq i \leq r$.  Otherwise the least degree is $2 \cdot T(\OO,M,N)$.
\end{thm}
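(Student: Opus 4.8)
The plan is to reduce, using the results already established, to a prime-by-prime analysis and then to recombine the prime-power pieces. Since $[K(\ff):\Q(\ff)] = 2$ for $\Delta < -4$ (the exceptional discriminants $\Delta \in \{-3,-4\}$ being handled separately, using the extra automorphisms), Theorem \ref{introTHM} gives $T^{\circ}(\OO,M,N) \in \{T(\OO,M,N),\ 2T(\OO,M,N)\}$: base-changing the curve of Theorem \ref{introTHM} from $K(\ff)$ to $\Q(\ff)$ gives the upper bound, and if $E_{/F}$ realizes $\Z/M\Z \times \Z/N\Z$ with $\Q(\ff)\subseteq F$ and $K\subseteq F$, then $F \supseteq K(\ff)$ and Theorem \ref{introTHM} forces $2T(\OO,M,N)\mid[F:\Q(\ff)]$. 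Hence $T^{\circ}(\OO,M,N) = T(\OO,M,N)$ if and only if there is an $\OO$-CM elliptic curve $E$ over a field $F$ with $\Q(\ff)\subseteq F$, $K\not\subseteq F$, $[F:\Q(\ff)] = T(\OO,M,N)$, and $\Z/M\Z\times\Z/N\Z\hookrightarrow E(F)$; the theorem is thus a criterion for the existence of such a pair $(E,F)$.

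First I would establish the necessity of the conditions $M = 2$ and ``$\Delta$ even''. If $M\geq 2$ and $\Z/M\Z\times\Z/N\Z\hookrightarrow E(F)$, then $E[M]\subseteq E(F)$. Using the description of the image of the mod-$M$ Galois representation of an $\OO$-CM curve over a field not containing $K$ (from \cite{BC18}): the image lies in the normalizer of, but not in, a Cartan subgroup of $\GL_2(\Z/M\Z)$, and by the Weil pairing any element of the nontrivial coset is an involution of determinant $-1$ on $E[M]$. Triviality of the image would force such an element to be the identity, whence $M\mid 2$, hence $M=2$. When $M = 2$, this element can act trivially on $E[2]$ (for a suitable $\OO$-CM curve over $\Q(\ff)$) precisely when $2$ is ramified in $\OO$, i.e. $\Delta \equiv 0\pmod 4$: when $2$ is inert or split it acts on $E[2]$ as a nontrivial involution (a transvection, respectively an interchange of the two rational lines $E[\mathfrak{p}_1]$, $E[\mathfrak{p}_2]$), hence never trivially. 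This explains the parity hypothesis, and also why the $M = 1$ version, Theorem \ref{introX1N}, has no such condition.

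Next I would reduce to prime powers. Over $K(\ff)$ the factorization $(\OO/N\OO)^{\times}\cong\prod_{i=1}^r(\OO/\ell_i^{b_i}\OO)^{\times}$ gives $T(\OO,M,N) = \prod_{i=1}^r T(\OO,\ell_i^{a_i},\ell_i^{b_i})$ (Theorem \ref{BIGMNTHM}) and shows the $\ell_i$-power torsion fields of $E_{/K(\ff)}$ are pairwise linearly disjoint. For necessity of the prime-power equalities: given $(E,F)$ as above with $[F:\Q(\ff)] = T(\OO,M,N)$ and $K\not\subseteq F$, let $F_i\subseteq F$ be generated over $\Q(\ff)$ by the coordinates of the $\ell_i$-part of the prescribed subgroup; then $K\not\subseteq F_i$, so $T^{\circ}(\OO,\ell_i^{a_i},\ell_i^{b_i})\leq[F_i:\Q(\ff)]$, and since the $F_i$ become pairwise linearly disjoint over $K(\ff)$ their compositum inside $F$ has degree at least $\tfrac12\prod_i[F_i:\Q(\ff)]$ over $\Q(\ff)$, forcing $\prod_i[F_i:\Q(\ff)] \leq 2[F:\Q(\ff)] = 2\prod_i T(\OO,\ell_i^{a_i},\ell_i^{b_i})$. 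Combined with $T(\OO,\ell_i^{a_i},\ell_i^{b_i}) \leq T^{\circ}(\OO,\ell_i^{a_i},\ell_i^{b_i}) \leq [F_i:\Q(\ff)]$ and $T^{\circ}(\OO,\ell_i^{a_i},\ell_i^{b_i}) \in \{T(\OO,\ell_i^{a_i},\ell_i^{b_i}),\,2T(\OO,\ell_i^{a_i},\ell_i^{b_i})\}$, this forces $T^{\circ}(\OO,\ell_i^{a_i},\ell_i^{b_i}) = T(\OO,\ell_i^{a_i},\ell_i^{b_i})$ for every $i$. For sufficiency I would, assuming $M = 2$, $\Delta$ even, and all prime-power equalities, choose for each $i$ a quadratic twist of a fixed $\OO$-CM curve $E_0/\Q(\ff)$ over a field $F_i\supseteq\Q(\ff)$ with $K\not\subseteq F_i$, $[F_i:\Q(\ff)] = T(\OO,\ell_i^{a_i},\ell_i^{b_i})$, realizing the $\ell_i$-part, and then produce a single quadratic twist of $E_0$ over $F = F_1\cdots F_r$; the content is that, because $\Delta$ is even, the several twisting characters can be matched to one quadratic character so that $K\not\subseteq F$ while $[F:\Q(\ff)] = \prod_i T(\OO,\ell_i^{a_i},\ell_i^{b_i}) = T(\OO,M,N)$ --- a Grunwald--Wang / weak-approximation argument. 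The needed prime-power equalities themselves are already known: for $i$ with $a_i = 0$ this is the prime-power case of Theorem \ref{introX1N} (equivalently Theorem \ref{GenIsogThm}, with Theorem \ref{GENKWONTHM} as input and $T^{\circ}(\OO,\ell^a)$ as in Theorem \ref{THM5.5}), and for the unique prime with $a_i = 1$, namely $\ell = 2$ when $M = 2$, it is the analogous statement for $X(2,2^{b})$, proved by the same isogeny-theoretic method.

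I expect the main obstacle to be the recombination step: pinning down exactly the (at most quadratic) failure of the $\ell$-power torsion fields of a CM elliptic curve over $\Q(\ff)$ to be linearly disjoint --- they are disjoint over $K(\ff)$ but may share a quadratic subextension over $\Q(\ff)$ --- and showing that ``$M = 2$ and $\Delta$ even'' is exactly the condition under which the single available quadratic twist can be made to serve simultaneously at every prime without dragging $K$ into the compositum. The exceptional discriminants $\Delta \in \{-3,-4\}$, with their extra automorphisms and twists, require separate bookkeeping.
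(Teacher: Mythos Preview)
Your reduction to $M=2$ and $\Delta$ even is correct and matches the paper's use of \cite[Lemma 3.15]{BCS17} and \S2.5 (Remark \ref{REMARK8.1}).  However, the recombination step contains a concrete error and a strategic divergence from the paper that leaves a gap.

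The formula $T(\OO,M,N)=\prod_{i=1}^r T(\OO,\ell_i^{a_i},\ell_i^{b_i})$ is false.  By Theorem \ref{BIGMNTHM}(e) one has $T(\OO,M,N)=\frac{1}{w}\prod_i \widetilde{T}(\OO,\ell_i^{a_i},\ell_i^{b_i})$, and by Lemma \ref{LEMMA4.10} $\widetilde{T}=wT$ for prime powers $\geq 4$, so in fact $T(\OO,M,N)=w^{r-1}\prod_i T(\OO,\ell_i^{a_i},\ell_i^{b_i})$, off by $2^{r-1}$ when $\Delta<-4$.  This factor is not cosmetic: it is exactly what the paper's argument uses.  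Relatedly, your claim that the $\ell_i$-torsion fields over $K(\ff)$ are pairwise linearly disjoint is not quite right, since the reduced Galois image is $\bigl(\prod_i C_{\ell_i^{b_i}}(\OO)\bigr)/\{\pm 1\}_{\mathrm{diag}}$, not a direct product of the reduced Cartans.

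More seriously, your sufficiency argument --- choose a separate quadratic twist over each $F_i$ and then ``match'' them by Grunwald--Wang --- is neither the paper's method nor clearly workable: you would need a single $d$ with $d/d_i\in F_i^{\times 2}$ for all $i$ while also controlling $[F_1\cdots F_r:\Q(\ff)]$ and keeping $K\not\subset F$.  The paper sidesteps this entirely by using Weber functions.  For sufficiency (Theorem \ref{Thm8.7}) it fixes the model $E_{/F}$ that realizes $\Z/2\Z\times\Z/2^b\Z$ in degree $T(\OO,2,2^b)$, then chooses $P'\in E(\overline{F})$ of odd order $N'$ with $[\Q(\ff)(\hh(P')):\Q(\ff)]=T(\OO,N')$; since $\hh$ is model-independent, over the compositum $L$ the point $P'$ becomes rational on the \emph{same} $E$ after a further extension of degree at most $w$, and $w\cdot T(\OO,N')\cdot T(\OO,2,2^b)=T(\OO,2,N)$ exactly.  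For necessity the paper works with a single $P$ of order $N$ on a single model and decomposes its Cartan orbit via Proposition \ref{PROP4.11}: the $C_{2^b}(\OO)$-orbit of $N'P$ and the $C_{N'}(\OO)$-orbit of $2^bP$ must each be minimal, and since $\Q(\ff)(\hh(N'P),\hh(Q))$ and $\Q(\ff)(\hh(2^bP))$ lie in $F\not\supset K$, the prime-power equalities follow.  Your compositum-degree inequality is a reasonable heuristic, but the Weber-function/orbit bookkeeping is what actually makes the constants line up.
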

\noindent
By \cite[Lemma 3.15]{BCS17} and \S 2.5, if $M \geq 3$ then for any $K$-CM elliptic curve defined over a number field $F$ we have 
$F(E[M]) \supset K$ and thus $T^{\circ}(\OO,M,N) = 2 T(\OO,M,N)$ unless $M = 2$.  Thus it suffices to compute $T^{\circ}(\OO,2,N)$, which is 
done in \S 8.  Again Theorem \ref{GENKWONTHM} plays an important role, as does the theory of CM elliptic curves over real number fields as developed in \cite{BCS17}. 
\\ \\
The quantities $T^{\circ}(\OO,2,2^b)$ and $T^{\circ}(\OO,1,\ell^{b_i})$ are computed explicitly in Theorem \ref{THM5.5}, Theorem \ref{Thm8.5}, and Proposition \ref{Prop8.6}.
\\ \\
Working with non-maximal imaginary quadratic orders brings various complications: for instance, if $\OO$ is maximal then every
fractional $\OO$-ideal is proper, and every finite $\OO$-submodule of $E(\C)[\tors]$ is cyclic.  Both of these can fail for non-maximal orders.  For these and other reasons, the study of Galois representations and torsion subgroups
is distinctly easier for $\OO_K$-CM elliptic curves. When $\OO$ has conductor $\ff > 1$, if $E_{/F}$ is any $\OO$-CM elliptic curve defined over a number field, there is a canonical $F$-rational
cyclic $\ff$-isogeny $\iota: E \ra E'$ with $\End E' = \OO_K$, and one can try to use $E'$ to study $E$.  For instance,
if $F \supset K$, then $\# E(F)[\tors] \mid \# E'(F)[\tors]$ \cite[Thm. 1.7]{BC18}.   In Theorem \ref{KKTHM}, we analyze the dual isogeny $\iota^{\vee}: E' \ra E$, computing
the intersection of its kernel $\mathcal{K}$ with any finite $\OO_K$-submodule of $E'$.  This result is used in the proof of Theorem \ref{BIGMNTHM}.  It also has the following consequence: \\ \indent
Let $\iota: E \ra E'$ be as above, defined over a number field $F$ containing $K$, and write
\[ E(F) \cong \Z/s\Z \times Z/e\Z, \ E'(F) \cong \Z/s' \Z \times \Z/e' \Z, \ s \mid e, \ s' \mid e'. \]
In \cite[Lemma 6.7]{BC18} we showed that $s \mid s'$: in other words, if $E$ has full $s$-torsion over $F$, then so does $E'$.  Here we apply the analysis of $\iota^{\vee}$ to show that $e' \mid e$: that is, the exponent of $E'(F)$ divides the exponent of $E(F)$.  This gives a contribution to the problem of how the torsion subgroup of an elliptic curve over a number field varies within a rational isogeny
class, as studied e.g. in \cite{Fujita-Nakamura07}.

\subsection{Acknowledgments} We thank Filip Najman for helpful comments on an earlier draft.

\section{Background}

\subsection{The morphism $X_1(N) \ra X_1(M)$}
For positive integers $M \mid N$, we have a map of modular curves $X_1(N) \ra X_1(M)$ defined over $\Q$.
\\ \\
The following result is well known, but for completeness we give the proof.

\begin{lemma}
\label{DEGREELEMMA}
Let $N \geq 2$.  We have
\begin{equation}
\label{DEGGAMMA1N}
 \deg(X_1(N) \ra X(1)) =  \begin{cases} \frac{N^2 \prod_{p \mid N} \left(1-\frac{1}{p^2} \right)}{2} & N \geq 3 \\ 3 & N = 2 \end{cases}.
\end{equation}
\end{lemma}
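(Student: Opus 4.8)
The plan is to compute the degree of $X_1(N) \to X(1)$ as the index $[\SL_2(\Z) : \pm\Gamma_1(N)]$ (the degree of the corresponding cover of $j$-lines equals the index of the image of $\pm\Gamma_1(N)$ in $\mathrm{PSL}_2(\Z)$). I would first recall the standard formula $[\SL_2(\Z):\Gamma_1(N)] = N^2 \prod_{p\mid N}\left(1-\frac{1}{p^2}\right)$, which can be derived from $[\SL_2(\Z):\Gamma(N)] = N^3\prod_{p\mid N}(1-1/p^2)$ together with $[\Gamma_1(N):\Gamma(N)] = N$ (the latter because $\Gamma_1(N)/\Gamma(N)$ is identified with the additive group $\Z/N\Z$ via the upper-right entry), or alternatively from the orbit–stabilizer computation counting pairs generating $\Z/N\Z \times \Z/N\Z$ modulo the first coordinate.

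The only subtlety, which accounts for the case split in the statement, is the effect of $-I$. For $N \geq 3$, $-I \notin \Gamma_1(N)$, so passing from $\SL_2$ to $\mathrm{PSL}_2$ divides the index by $2$: $\deg(X_1(N)\to X(1)) = \tfrac{1}{2}[\SL_2(\Z):\Gamma_1(N)] = \tfrac{1}{2}N^2\prod_{p\mid N}(1-1/p^2)$, matching the first branch. For $N = 2$, we have $-I \in \Gamma_1(2)$ (indeed $\Gamma_1(2) = \Gamma_0(2)$), so no factor of $2$ is lost, and $\deg(X_1(2)\to X(1)) = [\SL_2(\Z):\Gamma_1(2)] = [\SL_2(\Z):\Gamma_0(2)] = 2+1 = 3$, which is the second branch. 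I would verify $[\SL_2(\Z):\Gamma_1(2)] = 3$ directly, e.g. by noting $\Gamma_1(2)=\Gamma_0(2)$ has index $p+1 = 3$ in $\SL_2(\Z)$, or by listing coset representatives.

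There is essentially no hard part here; the computation is entirely standard group theory for congruence subgroups. The one thing to be careful about is the handling of $-I$ and the resulting factor of $2$: the formula $\tfrac{1}{2}N^2\prod_{p\mid N}(1-1/p^2)$ would also give $\tfrac{1}{2}\cdot 4 \cdot \tfrac{3}{4} = \tfrac{3}{2}$ for $N=2$, which is not an integer, precisely because $-I \in \Gamma_1(2)$ invalidates the naive halving — so the case $N=2$ genuinely must be separated out and computed on its own, as the statement does.
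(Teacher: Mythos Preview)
Your proposal is correct and follows essentially the same approach as the paper: both compute the degree as the index of the image of $\Gamma_1(N)$ in $\PSL_2(\Z)$, invoke the standard formula $[\SL_2(\Z):\Gamma_1(N)] = N^2\prod_{p\mid N}(1-1/p^2)$, and handle the factor of $2$ by noting $-I \in \Gamma_1(N)$ iff $N \leq 2$. Your additional remarks (the derivation via $\Gamma(N)$, the identification $\Gamma_1(2)=\Gamma_0(2)$, and the sanity check that the first formula gives $3/2$ at $N=2$) are helpful elaborations but do not change the underlying argument.
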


\begin{proof}
For $N \in \Z^+$, put
\[ \Gamma_1(N) \coloneqq \left\{ \left[ \begin{array}{cc} a & b \\ c & d \end{array} \right] \mid a,b,c,d \in \Z, \ a -1, b \equiv 0 \pmod{N} \} \subset \SL_2(\Z) \right\} \]
and
\[ \overline{\Gamma_1(N)} \coloneqq \Gamma_1(N)/ \{\pm 1\} \subset \PSL_2(\Z) \coloneqq \SL_2(\Z)/\{\pm 1\}. \]
For integers $M \mid N$, we have
\begin{equation}
\label{DGAMMAEQ1}
 \deg(X_1(N) \ra X_1(M)) = \deg(\overline{\Gamma_1(N)} \backslash \mathcal{H} \ra \overline{\Gamma_1(N)} \backslash
\mathcal{H}) = [\overline{\Gamma_1(M)}:\overline{\Gamma_1(N)}].
\end{equation}
Moreover we have
\begin{equation}
\label{DGAMMAEQ2}
 [\Gamma_1(1):\Gamma_1(N)] = N^2 \prod_{p \mid N} \left(1 - \frac{1}{p^2}\right)
\end{equation}
and
\begin{equation}
\label{DGAMMAEQ3}
[\overline{\Gamma_1(N)}:\Gamma_1(N)] = \begin{cases} 1 & N \leq 2 \\ 2 & N \geq 3 \end{cases}.
\end{equation}
From (\ref{DGAMMAEQ1}), (\ref{DGAMMAEQ2}) and (\ref{DGAMMAEQ3}), the desired result (\ref{DEGGAMMA1N}) follows.
\end{proof}
\noindent
We apply Lemma \ref{DEGREELEMMA} to give ``worst case scenarios'' on lifting torsion points on elliptic curves.

\begin{lemma}
\label{LITTLEDIVLEMMA}
Let $\ell$ be a prime number, and let $1 \leq a < b$ be integers.  Let $F$ be a field of characteristic $0$, let $E_{/F}$ be an elliptic
curve, and let $P \in E(F)$ be a point of order $\ell^a$.  
\begin{enumerate}
\item There is a field extension $L/F$ with $[L:F] \leq \ell^{2(b-a)}$ and a point $Q \in E(L)$ such that $\ell^{b-a} Q = P$ (and thus
$Q$ has order $\ell^b$). 
\item If $\ell^a = 2$, then there is a field extension $L/F$ with $[L:F] \leq 2^{2b-3}$ and an elliptic curve $E'_{/L}$ such that 
$j(E') = j(E)$ and $E'$ has an $L$-rational point $Q$ of order $2^b$.  
\end{enumerate}
\end{lemma}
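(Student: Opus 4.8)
The plan is to prove both parts by a direct "divide and field-extend" argument, using the degree bound of Lemma \ref{DEGREELEMMA} (and its $\Gamma_1$ variant) applied fibre by fibre over the point we want to lift.

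For part (a), I would proceed one prime-power step at a time, lifting $P$ through the chain $\ell^a \mid \ell^{a+1} \mid \dots \mid \ell^b$. So it suffices to handle the case $b = a+1$: given $P \in E(F)$ of order $\ell^a$, I want a field $L/F$ with $[L:F] \le \ell^2$ and $Q \in E(L)$ with $\ell Q = P$. The set of such $Q$ is a coset of $E[\ell]$, hence has exactly $\ell^2$ elements; adjoining the coordinates of any one of them to $F$ gives an extension of degree at most $\ell^2$. (Concretely: the fibre over $P$ of the multiplication-by-$\ell$ map is a closed subscheme of $E$ of degree $\ell^2$, so its residue fields have degree $\le \ell^2$ over $F$; pick a closed point and let $L$ be its residue field, which contains $F$ since $P \in E(F)$.) Iterating $b-a$ times and taking the compositum gives $[L:F] \le (\ell^2)^{b-a} = \ell^{2(b-a)}$, and the resulting $Q$ satisfies $\ell^{b-a}Q = P$, so $Q$ has order exactly $\ell^b$. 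This much is essentially the elliptic-curve analogue of \eqref{DGAMMAEQ2}, saying the fibres of $X_1(\ell^b) \to X_1(\ell^a)$ have size $\ell^{2(b-a)}$ generically, but worked out pointwise so it holds for the specific point $P$ rather than only at the generic point.

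For part (b) the point is to shave the degree bound from $2^{2(b-1)} = 2^{2b-2}$ down to $2^{2b-3}$ when $\ell^a = 2$, by allowing a quadratic twist. Here I can no longer demand the same curve $E$; instead I work on the modular curve $X_1(2^b)$ and use that $\overline{\Gamma_1(2^b)}$ has index $\tfrac12[\Gamma_1(2):\Gamma_1(2^b)]$ times something — precisely, by \eqref{DGAMMAEQ3} we have $[\overline{\Gamma_1(2)}:\overline{\Gamma_1(2^b)}] = \tfrac12[\Gamma_1(2):\Gamma_1(2^b)] = \tfrac12 \cdot 2^{2(b-1)}\prod_{p\mid 2}(1-p^{-2})^{?}$ — more simply, $\deg(X_1(2^b)\to X_1(2)) = 2^{2b-3}$ for $b \ge 2$ by Lemma \ref{DEGREELEMMA} (numerator $2^{2b}\cdot(3/4)/2 = 3\cdot 2^{2b-3}$ for $X_1(2^b)\to X(1)$, and $3$ for $X_1(2)\to X(1)$, giving the ratio $2^{2b-3}$). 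Since $E$ with its rational $2$-torsion point $P$ defines an $F$-rational point $x \in X_1(2)(F)$, the fibre of $X_1(2^b) \to X_1(2)$ over $x$ is a degree-$2^{2b-3}$ closed subscheme, so it has a closed point with residue field $L$ of degree $\le 2^{2b-3}$ over $F$. That point of $X_1(2^b)(L)$ corresponds to an elliptic curve $E'_{/L}$ with a rational point of order $2^b$ and with $j(E')$ mapping to $x$, i.e.\ $j(E') = j(E)$.

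The main obstacle — and the only genuinely delicate point — is the passage from "closed point of the coarse modular curve $X_1(2^b)$ with residue field $L$" to "elliptic curve $E'_{/L}$ with an $L$-rational point of order $2^b$." For $N \ge 4$ the curve $Y_1(N)$ is a fine moduli space, so a point of $Y_1(2^b)(L)$ literally is (an isomorphism class of) a pair $(E', Q)$ over $L$; the subtlety is only that $j(E')$ agrees with $j(E)$ as an element of $L$ but $E'$ need not be isomorphic to $E_{/L}$ — it is a twist, which is exactly what we want and why the bound improves. (For $b = 2$, i.e.\ $N = 4$, one should double-check $Y_1(4)$ is fine, which it is, or else argue directly with the Tate curve / explicit Weierstrass families; I would note this but not belabor it.) I would also remark that cusps of $X_1(2^b)$ lying over the cusps of $X_1(2)$ must be avoided, but since $x$ corresponds to an actual elliptic curve it is not a cusp, and the fibre over a non-cuspidal point lies entirely in $Y_1(2^b)$. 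Finally, in part (b) the statement only claims existence of $E'$ with a rational point of order $2^b$, not that it lifts the given $P$ compatibly, so no further compatibility check is needed.
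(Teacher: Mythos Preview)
Your proposal is correct and follows essentially the same approach as the paper. For part (a) the paper is slightly more direct---it observes in one line that $\{Q : \ell^{b-a}Q = P\}$ is a principal homogeneous space for $E[\ell^{b-a}]$ of cardinality $\ell^{2(b-a)}$, rather than iterating step by step---but the content is the same; for part (b) your argument matches the paper's almost exactly, including the use of $\deg(X_1(2^b)\to X_1(2)) = 2^{2b-3}$, the fine-moduli-space remark for $Y_1(2^b)$ with $b\geq 2$, and the observation that $E'$ need only be a twist of $E$.
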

\begin{proof}
a) The set $\{Q \in E(\overline{F}) \mid \ell^{b-a}Q = P\}$ is a principal homogeneous space for $E[\ell^{b-a}]$ and thus has cardinality $\ell^{2b-2a}$.  \\
b) The pair $(E,P)$ induces a point $p \in Y_1(2)(F)$.  Let $\pi: Y_1(2^b) \ra Y_1(2)$ be the natural modular map.   By (\ref{DEGGAMMA1N}) we have $\deg \pi = 2^{2b-3}$, so
there is a field extension $L/F$ with $[L:F] \leq 2^{2b-3}$ and a point $\widetilde{p} \in Y_1(2^b)(L)$ such that $\pi(\widetilde{p}) = p$.  
It follows that there is a pair $(E',Q)_{/L}$ with $E'$ an elliptic curve and $Q \in E'(L)$ of order $2^b$ that induces the point $\widetilde{p} \in Y_1(2^b)$.  
 (Since $b \geq 2$, the curve $Y_1(2^b)_{/\Q}$ is a fine moduli space and the pair $(E',Q)$ is unique up to isomorphism.  But in fact the existence of a structure defined over the field of moduli holds for all modular curves attached to congruence subgroups of $\SL_2(\Z)$ \cite[p. 274, Prop. VI.3.2]{Deligne-Rapoport}.)  Since $\pi(\widetilde{p}) = p$, we have $j(E') = j(E)$.  
\end{proof}

\begin{lemma}
\label{MODULARPULLBACKLEMMA}
Let $N \geq 3$, let $F$ be a field of characteristic $0$, and let $E_{/F}$ be an elliptic curve.  Then there is a field extension $L/F$ of
degree at most $\frac{N^2 \prod_{p \mid N} \left(1- \frac{1}{p^2} \right)}{\# \Aut E}$ and a twist $E'$ of $E_{/L}$ such that
$E'(L)$ has a point of order $N$.
\end{lemma}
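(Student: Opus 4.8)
The plan is to produce the pair $(E',P)$ as the modular interpretation of a well-chosen noncuspidal closed point of $Y_1(N)$ lying above $j(E)$, and to control its residue degree via the ramification of the natural map $\pi\colon X_1(N)\to X(1)$.

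First I would reduce to a statement about one fiber. Since $E$ is an elliptic curve over $F$ we have $j(E)\in Y(1)(F)$, i.e.\ $j(E)$ is not the cusp of $X(1)$. By Lemma~\ref{DEGREELEMMA}, $\deg\pi=\tfrac{N^2\prod_{p\mid N}(1-1/p^2)}{2}$, so $\pi^{-1}(j(E))$ is a nonempty finite $F$-scheme of this degree; writing its reduction as $\Spec\prod_i L_i$ we get $\sum_i e_i\,[L_i:F]=\deg\pi$, where $e_i$ is the ramification index of $\pi$ at the $i$th point. Each such point lies on $Y_1(N)$ (none is a cusp), and — invoking the existence of a model over the field of moduli for modular curves attached to congruence subgroups of $\SL_2(\Z)$, exactly as in the proof of Lemma~\ref{LITTLEDIVLEMMA} — it is represented by a pair $(E_i,P_i)_{/L_i}$ with $j(E_i)=j(E)$, so $E_i$ is a twist of $E_{/L_i}$ and $P_i\in E_i(L_i)$ has order $N$. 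It therefore suffices to find an $i$ with $[L_i:F]\le\tfrac{N^2\prod_{p\mid N}(1-1/p^2)}{\#\Aut E}$.

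Next I would compute the ramification, which in characteristic $0$ depends only on $j(E)$. If $j(E)\neq 0,1728$, then $\#\Aut E=2$ and $\pi$ is unramified over $j(E)$, so $\sum_i[L_i:F]=\deg\pi=\tfrac{N^2\prod(1-1/p^2)}{\#\Aut E}$. If $j(E)=1728$, then $\#\Aut E=4$; an element of $\Gamma_1(N)$ of order $4$ would square to $-I\notin\Gamma_1(N)$ (as $N\ge 3$), so $\Gamma_1(N)$ has no element of order $4$, $X_1(N)$ has no elliptic point of order $2$, every point above $j(E)$ has $e_i=2$, and $\sum_i[L_i:F]=\deg\pi/2=\tfrac{N^2\prod(1-1/p^2)}{\#\Aut E}$. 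If $j(E)=0$ and $N\ge 4$, then $\#\Aut E=6$; an order-$3$ element of $\SL_2(\Z)$ has trace $-1$ while every element of $\Gamma_1(N)$ has trace $\equiv 2\pmod N$ and $-1\not\equiv 2\pmod N$, and an order-$6$ element cubes to $-I\notin\Gamma_1(N)$, so $\Gamma_1(N)$ has no element of order $3$ or $6$, $X_1(N)$ has no elliptic point of order $3$, every point above $j(E)$ has $e_i=3$, and $\sum_i[L_i:F]=\deg\pi/3=\tfrac{N^2\prod(1-1/p^2)}{\#\Aut E}$. In each case $\min_i[L_i:F]$ is at most the displayed sum, which is exactly the required bound. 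The one remaining case is $j(E)=0$, $N=3$, where the target is $\tfrac{8}{6}=\tfrac{4}{3}$; here I would simply exhibit the $F$-rational point of $Y_1(3)$ above $j=0$ given by $(E'',Q)$ with $E''\colon y^2=x^3+1$ and $Q=(0,1)$ a flex (hence of order $3$), and take $L=F$, $E'=E''_{/F}$ (a twist of $E_{/F}$ since $j(E'')=0$), $P=Q$.

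I expect the main obstacle to be the bookkeeping at the elliptic points $j=0$ and $j=1728$: one needs the (standard) torsion classification for $\Gamma_1(N)$ to force the ramification index of $\pi$ above such a point to equal $\#\Aut E/2$ whenever $N\ge 3$, since this is precisely what makes the reduced fiber small enough, together with the ad hoc handling of the genuinely exceptional pair $(N,j)=(3,0)$ (where, alternatively, one may observe that the two geometric points of the fiber over $j=0$, having distinct ramification indices $1$ and $3$ summing appropriately to $4=\deg\pi$, are each forced to be $F$-rational). A subsidiary point requiring a word of care is the passage from a closed point of $Y_1(N)$ to an honest pair $(E',P)$ over its residue field: this is automatic for $N\ge 4$, where $Y_1(N)$ is a fine moduli scheme, and in general follows from the existence of a model over the field of moduli cited above.
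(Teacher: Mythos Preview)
Your proof is correct and follows essentially the same approach as the paper: pull back the point $j(E)\in X(1)(F)$ along $\pi\colon X_1(N)\to X(1)$, use the ramification of $\pi$ over the elliptic points $j=0,1728$ to bound the degree of some closed point in the fiber, and invoke \cite[Prop.~VI.3.2]{Deligne-Rapoport} to realize that point by a pair $(E',P)$ over its residue field. The only noteworthy difference is at $(N,j)=(3,0)$: the paper's blanket assertion that $\overline{\Gamma_1(N)}$ has no nontrivial torsion is literally false for $N=3$ (e.g.\ $\left(\begin{smallmatrix}1&-1\\3&-2\end{smallmatrix}\right)\in\Gamma_1(3)$ has order $3$), though the needed conclusion $D=3[y]+D'$ still holds since the fiber pattern over $j=0$ is $3+1$; your separate treatment of this case via the explicit pair $(y^2=x^3+1,(0,1))$ is cleaner.
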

\begin{proof}
Consider the natural modular map $\pi: X_1(N) \ra X(1)$, viewed as a morphsim of curves defined over $F$.  The elliptic curve $E_{/F}$ induces a degree $1$ divisor $[x]$ on $X(1)_{/F}$, so its pullback $D \coloneqq \pi^* [x]$ is an effective divisor on $X_1(N)_{/F}$ of degree
\[ \deg(X_1(N) \ra X(1)) = \frac{N^2 \prod_{p \mid N} \left(1-\frac{1}{p^2} \right)}{2}. \]
Case 1: Suppose $j(E) \neq 0,1728$, so $\# \Aut E = 2$.  Then there is some closed point $y$ in the support of $D$ of degree
at most $\frac{N^2 \prod_{p \mid N} \left(1- \frac{1}{p^2} \right)}{\# \Aut E}$.  Let $L = F(y)$.   By \cite[p. 274, Prop. VI.3.2]{Deligne-Rapoport}
there is an elliptic curve $E'_{/L}$ and an $L$-rational point $P$ of order $N$ on $E'$ such that the pair $(E',P)_{/L}$ induces the
point $y$ on $X_1(N)$.  Since $\pi(y) = \pi([E',P]) = x = [E]$, we have $j(E') = j(E)$ and thus $E'_{/L}$ is a twist of $E_{/L}$.  \\
Case 2: Suppose $j(E) = 1728$, so $\#\Aut E = 4$.  The map $X_1(N) \ra X(1)$ is ramified over $j = 1728$: more precisely
because $\overline{\Gamma_1(N)}$ has no nontrivial elements of finite order and $\overline{\Gamma(1)}$ has an element of order
$2$, we have $D = 2[y] + D'$ for a closed point $y$ and an effective divisor $D'$.  Again we take $L = F(y)$.  Then everything is as in Case 1 except we have the improved upper bound
\[[L:F] = [F(y):F] \leq \frac{\deg D}{2} = \frac{N^2 \prod_{p \mid N} \left(1- \frac{1}{p^2} \right)}{4}, \]
establishing the result in this case. \\
Case 3: Suppose $j(E) = 0$, so $\#\Aut E = 6$.  The map $X_1(N) \ra X(1)$ is ramified over $j = 0$: more precisely
because $\overline{\Gamma_1(N)}$ has no nontrivial elements of finite order and $\overline{\Gamma(1)}$ has an element of order
$3$, we have $D = 3[y] + D'$ for a closed point $y$ and an effective divisor $D'$.  Again we take $L = F(y)$, getting the improved upper bound
\[[L:F] = [F(y):F] \leq \frac{\deg D}{3} = \frac{N^2 \prod_{p \mid N} \left(1- \frac{1}{p^2} \right)}{6}. \qedhere \]
\end{proof}

\subsection{Orders in imaginary quadratic fields and complex multiplication.} Let $K$ be an imaginary quadratic field, with ring of integers $\OO_K$.  Throughout
this paper, $\OO$ denotes an arbitrary $\Z$-order in $K$, i.e., a subring of $K$ that is free of rank $2$ as a $\Z$-module such that
$\OO \otimes_{\Z} \Q = K$.  For such an order $\OO$, we define the \textbf{conductor}
\[ \ff = [\OO_K:\OO]. \]
For each positive integer $\ff$, there is a unique order $\OO(\ff)$ in $K$ of conductor $\ff$, namely
\[ \Z + \ff \OO_K. \]
We let $\Delta = \Delta(\OO)$ be the discriminant of $\OO$ (defined e.g. as the discriminant of the trace form, as for any
$\Z$-algebra that is finitely generated and free as a $\Z$-module).  Put $\Delta_K = \Delta(\OO_K)$; then
\[ \Delta(\OO(\ff)) = \ff^2 \Delta_K. \]
We put
\[\tau_K \coloneqq \frac{\Delta_K+ \sqrt{\Delta_K}}{2},
\]
so $\OO(\ff)=\Z[\ff\tau_K]$.  Also put
\[ w \coloneqq w(\OO) = \# \OO^{\times}, \ w_K \coloneqq \# \OO_K^{\times}. \]
Then we have
\[ w = \begin{cases} 6 & \text{if }\Delta = -3 \\
4 & \text{if }\Delta = -4 \\ 2 & \text{if } \Delta < -4
\end{cases}. \]
For an imaginary quadratic discriminant $\Delta$ 
(i.e., $\Delta \in \Z$ is negative and is $0$ or $1$ modulo $4$), let $H_{\Delta}(j) \in \Q[j]$ be the Hilbert class polynomial, 
the monic separable polynomial whose roots in $\C$ are the $j$-invariants of $\OO$-CM elliptic curves.  It has 
degree $\# \Pic \OO$ and is irreducible over $K$.   We take 
\[ \Q(\ff) \coloneqq \Q[j]/(H_{\Delta}(j)); \]
thus we view $\Q(\ff)$ as an abstract number field and not a subfield of $\C$.  It has $\# \Pic \OO$ different embeddings into $\C$, 
in particular the embedding in which $j$ maps to the $j$-invariant of $\C/\OO$, which is an embedding into $\R$.  We put 
\[ K(\ff) \coloneqq K \Q(\ff), \]
the $\ff$-ring class field of $K$.  For all $\ff \geq 2$ we have \cite[Cor. 7.24]{Cox89}
\begin{equation}
\label{RINGCLASSDEGEQ}
[K(\ff):K^{(1)}] = \frac{2}{w_K} \ff \prod_{p \mid \ff} \left( 1- \left(\frac{\Delta_K}{p}\right) \frac{1}{p} \right),
\end{equation}
where $K^{(1)}$ denotes the Hilbert class field of $K$. In general, for an ideal $I$ of $K$, we let $K^I$ denote the $I$-ray class field of $K$ and $K^{(N)}=K^{N\Ok}$.

\subsection{Galois representations of elliptic curves.} For a field $F$ of characteristic 0, let $F^{\sep}$ be a separable closure of $F$ and let $\gg_F = \Aut(F^{\sep}/F)$ be the absolute Galois group of $F$.
For an elliptic curve $E$ defined over $F$ and $N \in \Z^+$, let
\[ \rho_N: \gg_F \ra \Aut E[N] \cong \GL_2(\Z/N\Z) \]
be the mod $N$ Galois representation, and let
\[ \overline{\rho_N}: \gg_F \ra \Aut E[N] / \Aut(E) \cong \GL_2(\Z/N\Z) / \Aut(E) \]
be the reduced mod $N$ Galois representation. The advantage of the reduced Galois representation is that it is independent of
the chosen $F$-rational model of $E$; it depends only on $j(E)$.

For $N \in \Z^+$, we put
\[ C_N(\OO) = (\OO/N\OO)^{\times}, \]
the mod $N$ Cartan subgroup.  Let $q_N: \OO \ra \OO/N\OO$ be the natural map.  The \textbf{reduced Cartan subgroup} is
\[ \overline{C_N(\OO)} = C_N(\OO)/q_N(\OO^{\times}). \]
We have \cite[Lemma 2.2b)]{BC18}
\[
\#C_N(\OO)=N^2 \prod_{p \mid N} \left( 1- \left(\frac{\Delta}{p}\right) \frac{1}{p} \right)\left(1-\frac{1}{p}\right).
\]   The map $q_N^{\times}: \OO^{\times} \ra (\OO/N\OO)^{\times}$ is injective when $N \geq 3$, and when $N=2$ its kernel is $\{\pm 1\}$.  Thus we also know $\#\overline{C_N(\OO)}$.

If $E_{/F}$ has CM by the order $\OO$ in $K$ and if $F \supset K$, then the Galois action commutes with the $\OO$-action and thus
\[ \rho_N: \gg_F \ra (\OO/N\OO)^{\times},  \  \overline{\rho_N}: \gg_F \ra \overline{C_N(\OO)}. \]
If $F=K(\ff)$, work of Stevenhagen implies $\overline{\rho_N}(\gg_F)=\overline{C_N(\OO)}$. See \cite[$\S 4$]{Stevenhagen01} and \cite[Cor. 1.2]{BC18}.

\subsection{Weber functions and fields of moduli.} If $E$ is an elliptic curve defined over a field $F$ of characteristic $0$, we define
a Weber function $\mathfrak{h}$ to be the composition of the quotient map $E \rightarrow E/\Aut(E)$ with an $F$-isomorphism $E/\Aut(E) \cong \mathbb{P}^1$: thus a Weber function is uniquely specified up to an element of
$\operatorname{PGL_2(F)} = \Aut \mathbb{P}^1_{/F}$.  If $E_{/F}$ is given by a Weierstrass equation $y^2 = x^3 +Ax+B$
with $A,B \in F$, then for $P = (x,y) \in E(\overline{F})$, we may take \cite[Ex. II.5.5.1]{SilvermanII}
\[ \mathfrak{h}(P) = \begin{cases} x & AB \neq 0 \\ x^2 & B = 0 \\ x^3 & A = 0 \end{cases}. \]
We have $B = 0$ iff $j(E) = 1728$ iff $\End E$ is the imaginary quadratic order of discriminant $-4$, and $A = 0$ iff $j(E) = 0$
iff $\End E$ is the imaginary quadratic order of discriminant $-3$.
\\ \indent
For $P \in E(\overline{F})$, the Weber function field $F(\mathfrak{h}(P))$
is model-independent in the following sense: let $E'_{/F}$ be an elliptic curve such that there is an isomorphism $\psi: E_{/\overline{F}} \ra E'_{/\overline{F}}$.  Then $F(\mathfrak{h}(P)) = F(\mathfrak{h}(\psi(P)))$.  This can be seen either because $E'_{/F}$ is obtained
by twisting $E_{/F}$ by a cocycle $\eta \in Z^1(\gg_F,\Aut E)$ or by use of Weierstrass equations as above: cf. \cite[p. 107]{Shimura}.
\\ \indent
For $E_{/F}$ and $P \in E(\overline{F})$ of order $N$, we have $\Q(j(E),\mathfrak{h}(P)) \subset F(P)$.  In fact $
\Q(j(E), \mathfrak{h}(P))$ is the residue field $\Q(x)$ of the closed point $x =[(E,P)]$ on the modular curve $X_1(N)_{/\Q}$. Moreover, there is a model of $E_{/\Q(j(E),(\mathfrak{h}(P))}$ such that $P \in E(\Q(j(E),\mathfrak{h}(P))$ \cite[Prop. VI.3.2]{Deligne-Rapoport}.

\subsection{Addenda on $\R$-structures} In this section we will recall some results on $\OO$-CM elliptic curves defined over the real numbers established in \cite[\S3]{BCS17} and make some addenda.  
\\ \\
If $E_{/\C}$ is an elliptic curve, an \textbf{$\R$-structure} on $E$ is an $\R$-isomorphism class of $\R$-models of $E$.  An elliptic curve admits an $\R$-structure iff $j(E) \in \R$, in which case it admits exactly two $\R$-structures \cite[Lemma 3.3b)]{BCS17}.  We can understand $\R$-structures in terms of the uniformizing lattices in $\C$.   For a lattice $\Lambda \subset \C$, let $g_2(\Lambda)$ and $g_3(\Lambda)$ be the associated Eisenstein series, and put 
\[ E_{\Lambda}: y^2 = 4x^3 - g_2(\Lambda)x - g_3(\Lambda). \] A lattice $\Lambda \subset \C$ is \textbf{real} if $\overline{\Lambda} = \Lambda$.   Then an elliptic curve $E_{/\C}$ has $j(E) \in \R$ iff $E \cong E_{\Lambda}$ for a real lattice $\Lambda$ \cite[Lemma 3.2a)]{BCS17}.  Moreover, two real lattices $\Lambda_1$ and $\Lambda_2$ determine the 
same $\R$-structure on $E$ iff they are $\R$-homothetic: there is $\alpha \in \R^{\times}$ such that $\Lambda_2 = \alpha \Lambda_1$ 
\cite[Lemma 3.2b)]{BCS17}.  
\\ \indent
Let $\OO$ be an imaginary quadratic order, and let $I$ be an ideal of $\OO$.  We say that $I$ is \textbf{primitive} if $I$ is not contained 
in $a \OO$ for any integer $a > 1$; equivalently, if the additive group $(\OO/I,+)$ is cyclic.  We say that $I$ is \textbf{proper} if 
\[ (I:I) \coloneqq \{x \in K \mid x I \subset I\} = \OO. \]
By \cite[Lemma 3.1]{BCS17} an ideal of $\OO$ is proper iff it is projective and thus by \cite[Thm. 7.29]{Clark-CA} an ideal $I$ is proper 
iff it is locally principal: for all maximal ideals $\pp$ of $\OO$, the pushforward $I \OO_{\pp}$ of $I$ to the local ring $\OO_{\pp}$ is principal.  \\ \indent 
Let $\OO$ be an imaginary quadratic order, and let $E_{/\C}$ be an $\OO$-CM elliptic curve with $j(E) \in \R$, so as above 
there are two $\R$-homothety classes of real lattices $\Lambda$ such that $E \cong E_{\Lambda}$.  Each of these classes 
contains a unique primitive $\OO$-ideal $I$; moreover this $I$ is proper and real \cite[Lemma 3.6a)]{BCS17}.  \\ \indent Conversely, let 
$I$ be a primitive proper real $\OO$-ideal.  Then $E_{I}$ is a real $\OO$-CM elliptic curve. As in $\S2.2$, we view $\Q(\ff)$ as an abstract 
number field with no privileged complex embedding.  Then there is a unique field embedding $\iota: \Q(\ff) \ra \C$ 
such that $\iota(j) = j(E_{I})$, and we view $\Q(\ff)$ as a subfield of $\R$ via this embedding.  Let $(E_0)_{/\Q(\ff)}$ be any 
elliptic curve with $j$-invariant $j$.  Then the $\R$-structure on $(E_0)_{/\R}$ need not be the same as that of $E_{I}$, but we claim that there is some twist $E_1$ of $E_0$ over $\Q(\ff)$ such that $(E_1)_{/\R} \cong E_{I}$.   To see this, 
first suppose that $\Delta < -4$.  Then $(E_0)_{/\R}$ and $(E_{I})_{/\R}$ are quadratic twists of each other, and 
since the square classes in $\R$ are represented by $1$ and $-1$, we may take $E_1$ to be either $E_0$ or the quadratic twist of $(E_0)_{/\Q(\ff)}$ by $-1$.  Similar arguments work in the remaining two cases: when $\Delta = -4$ (resp. when $\Delta = -3$) then $(E_0)_{/\R}$ and $(E_{I})_{/\R}$ are quartic (resp. sextic) twists of each other, and $-1$ represents the unique nontrivial element of $\R^{\times}/\R^{\times 4}$ (resp. of $\R^{\times}/\R^{\times 6}$).  More concretely, when $\Delta = -4$ the elliptic curves 
\[ (E_{\pm})_{/\Q}:  y^2 = x^3 \pm x \]
give the two different $\R$-structures, and when $\Delta = -3$ the elliptic curves 
\[ (E_{\pm})_{/\Q}: y^2 = x^3 \pm 1 \]
give the two different $\R$-structures.  
\\ \\
Now let $E_{/\Q(\ff)}$ be an $\OO$-CM elliptic curve.  We wish to determine when the $2$-torsion field $\Q(\ff)(E[2])$ contains the 
CM field $K$.  When $\Delta = -3$, we have $\Q(\ff) = \Q$ and $E$ is given by $y^2 = x^3 + B$ for some $B \in \Q^{\times}$, and since the splitting field of 
$x^3+B$ contains $\Q(\zeta_3) = K$, necessarily we do have $\Q(\ff)(E[2]) \supset K$.  (This is asserted in the proof of 
\cite[Cor. 4.3]{BCS17}.  The explanation given there is not correct, but as we have seen the assertion is true.)  When $\Delta = -4$, we have $\Q(\ff) = \Q$ and $E$ is given 
by $y^2 = x^3 + Ax$ for some $A \in \Q^{\times}$, so $\Q(\ff)(E[2]) \supset K = \Q(\sqrt{-1})$ iff $A \in \Q^{\times 2}$.  Henceforth we assume $\Delta < -4$, in which case $\Q(\ff)(E[2]) = \Q(\ff)(\mathfrak{h}(E[2]))$, so the answer is independent of the chosen 
$\Q(\ff)$-model.  In this case, \cite[Thm. 4.2]{BCS17}a) asserts that $\Q(\ff)(E[2]) \supset K$ iff $\Delta$ is odd.  Once again the 
assertion is correct but the explanation requires some modification.  As noted in \emph{loc. cit.} it follows from \cite[Lemma 3.15]{BCS17} that if $\Delta$ is odd, then $\Q(\ff)(E[2]) \supset K$.  (In fact this applies even when $\Delta = -3$ and gives another explanation 
of that case -- this is probably what the authors of \cite{BCS17} had in mind.)  Conversely, suppose that $\Delta$ is even.  Since $\OO$ 
is a primitive, proper real $\OO$-ideal, as explained above there is an embedding $\iota: \Q(\ff) \hookrightarrow \C$ and an 
$\OO$-CM elliptic curve $E_{/\Q(\ff)}$ such that $E_{/\R} \cong (E_{\OO})_{/\R}$.  Since $\Delta$ is even we have $\OO = \Z 1 \oplus 
\Z \frac{ \sqrt{\Delta}}{2}$.  From this description it is clear that complex conjugation acts trivially on the $2$-torsion and thus 
that $\iota(\Q(\ff)(E[2])) \subset \R$.  Since $\Q(\ff)(E[2])$ has a real embedding, it cannot contain the imaginary quadratic field $K$.

\subsection{Isogenies of CM elliptic curves.} 

Let $\varphi: E \ra E'$ be an isogeny of $K$-CM elliptic curves over $\C$.  Let $\OO = \OO(\ff) = \End E$, $\OO' = \OO(\ff') = \End E'$, and suppose that $\ff' \mid \ff$.  We may represent $E$ as $\C/\aa$ for a proper fractional $\OO$-ideal $\aa$ and then $E' = \C/\Lambda$ for a proper fractional $\OO'$-ideal $\Lambda$ that contains $\aa$.  Thus also $\aa \OO' \subset \Lambda$.  Putting $E'' = \C/\aa \OO'$,
the isogeny $\varphi$ factors as
\[ E \stackrel{\iota_{\ff,\ff'}}{\ra} E'' \stackrel{\varphi'}{\ra} E'. \]
Recall that for an imaginary quadratic order $\OO$, a fractional $\OO$-ideal $\aa$ is proper iff it is projective \cite[Lemma 3.1]{BCS17}.
Thus $\aa$ is a projective $\OO$-module, so $\aa \OO' = \aa \otimes_{\OO} \OO'$
is a projective $\OO'$-module, hence
$\aa \OO'$ is a proper fractional $\OO'$-ideal, i.e., $\End E'' = \OO'$.  This shows that $\iota_{\ff,\ff'}$ is universal for isogenies
from an $\OO(\ff)$-CM elliptic curve to an $\OO(\ff')$-CM elliptic curve.  Moreover we have
\[ \Ker \iota_{\ff,\ff'} = \aa \OO' / \aa \cong \Z/ \frac{\ff}{\ff'} \Z. \]
(This is easy when $\aa = \OO$, but by \cite[Lemma 3.1]{BCS17}, for all primes $p$ we have $\aa \otimes \Z_p \cong \OO
\otimes \Z_p$, so the general case reduces to this.)  This shows that there is only one possible kernel for a $\frac{\ff}{\ff'}$-isogeny from an $\OO$-CM elliptic curve to an $\OO'$-CM elliptic curve, and thus $\iota_{\ff,\ff'}$ must be $\Q(\ff)$-rational.\footnote{This is a well known
result.  For other treatments, see \cite[\S 2]{Kwon99} and \cite[Prop. 2.2]{BP17}.}
\\ \\
Now suppose that $\varphi: E \ra E'$ is an isogeny of $K$-CM elliptic curves over $\C$ with $\End E = \End E' = \OO = \OO(\ff)$.  We may represent $\varphi$ as $\C/\Lambda \ra \C/\Lambda'$ where $\Lambda' \supset \Lambda$ are proper (thus invertible) fractional $\OO$-ideals.
Taking $\bb = (\Lambda')^{-1} \Lambda$, we have $\Lambda' = \Lambda \bb^{-1}$, so
\[ \Ker \varphi = \Lambda \bb^{-1} / \Lambda = E[\bb] \eqqcolon \{P \in E(\C) \mid \forall b \in \bb, bP = 0\}. \]
It follows by \cite[\S 3.3]{BCS17} that $\varphi$ is defined over a field $F \supset \Q(\ff)$ iff $F \supset K$ or $\bb$ is real.

\subsection{Classification of $K(\ff)$-rational cyclic isogenies}

\begin{thm}
\label{KFISOGTHM}
Let $\OO$ be an imaginary quadratic order of discriminant $\Delta$, and suppose that $\Delta < -4$.    Let $N \in \Z^+$.  The
following are equivalent:  
\begin{enumerate}
\item There is an $\OO$-CM elliptic curve $E_{/K(\ff)}$ with a $K(\ff)$-rational cyclic $N$-isogeny $\varphi: E \ra E'$.  
\item There is a point $P \in \OO/N\OO$ of order $N$ with $C_{N}(\OO)$-orbit of size $\varphi(N)$.  
\item We have that $\Delta$ is a square in $\Z/4N\Z$.
\end{enumerate}
\end{thm}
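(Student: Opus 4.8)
The plan is to prove the cyclic chain of implications (a) $\Rightarrow$ (b) $\Rightarrow$ (c) $\Rightarrow$ (a). For (a) $\Rightarrow$ (b): given an $\OO$-CM elliptic curve $E_{/K(\ff)}$ with a $K(\ff)$-rational cyclic $N$-isogeny $\varphi \colon E \to E'$, the kernel $C = \Ker \varphi$ is a cyclic $\gg_{K(\ff)}$-stable subgroup of order $N$. Since $\Delta < -4$, we have $\overline{C_N(\OO)} = C_N(\OO)$ (as $\OO^\times = \{\pm 1\}$ and $-1$ acts trivially on $C$), and by Stevenhagen's result recalled in \S2.3 the image of $\overline{\rho_N}$ on $\gg_{K(\ff)}$ is all of $\overline{C_N(\OO)} = C_N(\OO)$. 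Picking a generator $P$ of $C$, Galois-stability of $C$ means that for every $\sigma \in \gg_{K(\ff)}$ we have $\rho_N(\sigma) P \in \langle P \rangle$, i.e. $\rho_N(\sigma)P = \lambda(\sigma) P$ for some $\lambda(\sigma) \in (\Z/N\Z)^\times$. Thus the $C_N(\OO)$-orbit of $P$ is contained in $(\Z/N\Z)^\times \cdot P$, which has size at most $\varphi(N)$; combined with the fact that $P$ has order $N$ (so its orbit has size at least $\varphi(N)$, since any element of $C_N(\OO)$ fixing $P$ fixes each element of $\langle P \rangle$ setwise) we get an orbit of size exactly $\varphi(N)$. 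Conversely (b) $\Rightarrow$ (a): if $P \in \OO/N\OO$ has order $N$ and $C_N(\OO)$-orbit of size $\varphi(N)$, then the orbit equals $(\Z/N\Z)^\times P$ (it is contained in $\langle P \rangle \setminus (\text{proper subgroups})$, which has exactly $\varphi(N)$ elements), so the cyclic subgroup $\langle P \rangle$ is $C_N(\OO)$-stable. Taking $E_{/K(\ff)}$ any $\OO$-CM elliptic curve and identifying $E[N]$ with $\OO/N\OO$ via a $\gg_{K(\ff)}$-equivariant isomorphism (available since $\rho_N(\gg_{K(\ff)})$ lands in $C_N(\OO)$), the subgroup corresponding to $\langle P \rangle$ is $\gg_{K(\ff)}$-stable and cyclic of order $N$, hence is the kernel of a $K(\ff)$-rational cyclic $N$-isogeny.

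For the equivalence (b) $\Leftrightarrow$ (c), the key is to make the $C_N(\OO)$-action on $\OO/N\OO$ completely explicit. Write $\OO = \Z[\tau]$ where $\tau = \ff \tau_K$ satisfies $\tau^2 = t\tau - n$ with $t = \operatorname{Tr}(\tau)$, $n = \operatorname{Nm}(\tau)$, and $\Delta = t^2 - 4n$. An element $x = u + v\tau \in C_N(\OO)$ acts on $\OO/N\OO$ by multiplication; relative to the $\Z/N\Z$-basis $\{1, \tau\}$ this is the matrix $\begin{pmatrix} u & -vn \\ v & u + vt \end{pmatrix}$. I would then analyze, for a point $P = a + b\tau$ of order $N$, when the orbit $C_N(\OO) \cdot P$ has the maximal size $\varphi(N)$ — equivalently, when the stabilizer of the line $\langle P \rangle$ in $C_N(\OO)$ acts through all of $(\Z/N\Z)^\times$, equivalently (by a counting argument comparing $\# C_N(\OO) = N^2 \prod_{p\mid N}(1 - (\tfrac{\Delta}{p})\tfrac 1p)(1 - \tfrac 1p)$ with $\varphi(N)$) when the stabilizer of $P$ itself has the "expected" small size. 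This reduces, by the Chinese Remainder Theorem, to the case $N = \ell^k$ a prime power, where one computes directly: the existence of such a $P$ is governed by whether the characteristic polynomial $X^2 - tX + n$ of $\tau$ has a root modulo suitable powers of $\ell$ — which is exactly the condition that $\Delta = t^2 - 4n$ be a square modulo $4\ell^k$. The clean statement "$\Delta$ is a square in $\Z/4N\Z$" should emerge after reassembling the prime-power cases, handling $\ell = 2$ separately (where the factor of $4$ matters).

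The main obstacle I expect is the prime-power orbit computation in (b) $\Leftrightarrow$ (c), specifically getting the $\ell = 2$ case and the precise modulus $4N$ (rather than $N$ or $\Delta_0$) exactly right, since the quadratic $X^2 - tX + n$ splitting behavior modulo powers of $2$ is delicate and depends on $\Delta \bmod 8$. A secondary subtlety is making sure, in (a) $\Leftrightarrow$ (b), that one is free to choose the $\gg_{K(\ff)}$-equivariant identification $E[N] \cong \OO/N\OO$ as an $\OO$-module, which rests on the structure of $E[N]$ as a faithful cyclic $\OO/N\OO$-module — true when $\aa \otimes \Z_\ell \cong \OO \otimes \Z_\ell$ for all $\ell$, as recalled in \S2.6, but worth spelling out since $\OO/N\OO$ need not be a product of fields. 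Once those are handled, the rest is bookkeeping with the Cartan action, and the equivalences follow.
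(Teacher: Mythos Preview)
Your approach is essentially correct and is exactly what lies behind the paper's one-line proof, which simply cites \cite[Thm.~6.18a)]{BC18} and \cite[Thm.~6.15]{BC18}: the equivalence (a) $\Leftrightarrow$ (b) is \cite[Thm.~6.18a)]{BC18} (proved via the surjectivity of the reduced mod~$N$ Galois representation, just as you outline), and (b) $\Leftrightarrow$ (c) is \cite[Thm.~6.15]{BC18} (proved by CRT reduction to prime powers and an explicit analysis of when the minimal polynomial of $\tau$ has a root, as you sketch). So you are reconstructing the cited arguments rather than taking a different route.

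One wording issue to fix: the assertion ``$\overline{C_N(\OO)} = C_N(\OO)$'' is false as stated---for $N \geq 3$ and $\Delta < -4$ one has $\overline{C_N(\OO)} = C_N(\OO)/\{\pm 1\}$, a quotient of index~$2$. What you actually need (and what your parenthetical hints at) is that $-1$ preserves the cyclic subgroup $\langle P \rangle$, so $C_N(\OO)$-stability and $\overline{C_N(\OO)}$-stability of $\langle P \rangle$ coincide; equivalently, $\rho_N(\gg_{K(\ff)}) \cdot \{\pm 1\} = C_N(\OO)$ and both $g$ and $-g$ send $P$ into $\langle P \rangle$ if either does. Also, your justification for the lower bound $\varphi(N)$ on the orbit size is garbled: the clean reason is simply that $(\Z/N\Z)^\times \subset C_N(\OO)$ already produces $\varphi(N)$ distinct images of $P$. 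With these cosmetic fixes the argument goes through.
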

\begin{proof} Combine \cite[Thm. 6.18a)]{BC18} with \cite[Thm. 6.15]{BC18}.
\end{proof}

\section{The dual isogeny}
\noindent
Let $\iota: E \ra E'$ be the canonical cyclic $\ff$-isogeny to an $\OO_K$-CM elliptic curve, and let $\iota^{\vee}$ be the dual isogeny.
There is an embedding $K(\ff) \hookrightarrow \C$ such that $E_{/\C} \cong \C/\OO$, $E'_{/\C} \cong \C/\OO_K$,
$\iota$ is the quotient map and $\iota^{\vee}$ is $P + \OO_K \mapsto \ff P + \OO$.
\\ \\
In this section, we will compute $\iota^{\vee}(T')$ for any finite $\OO_K$-submodule $T' \subset E'[\tors]$.
\\ \\
The following result will be proved in the next section.

\begin{thm}
\label{KKTHM}
Put $\KK \coloneqq \Ker \iota^{\vee}: E' \ra E$ and $c \coloneqq \ord_{\ell}(\ff)$.  
\begin{enumerate}
\item Suppose $\left( \frac{\Delta_K}{\ell} \right) = 1$, and let $\pp_1,\pp_2$ be the two primes\footnote{The statement is not
symmetric in $\pp_1$ and $\pp_2$, but it still holds after interchanging $\pp_1$ and $\pp_2$.} of $\OO_K$ lying over $\ell$.
For $0 \leq a \leq b$ we have
\[ \KK \cap E'[\pp_1^a \pp_2^b] \cong_{\Z} \Z/\ell^{\min(a,c)} \Z \]
and
\[ \iota^{\vee} E'[\pp_1^a \pp_2^b] \cong_{\Z} \Z/\ell^{\max(a-c,0)} \Z \times \Z/\ell^b \Z. \]
\item Suppose $\left( \frac{\Delta_K}{\ell} \right) = 0$, and let $\pp$ be the prime of $\OO_K$ lying over $\ell$.  For all $d \in \Z^+$ we have
\[ \KK \cap E'[\pp^{d}] \cong_{\Z} \Z/\ell^{\min(c,\lfloor \frac{d}{2} \rfloor)} \Z\]
and
\[ \iota^{\vee} E'[\pp^d] \cong_{\Z} \Z/\ell^{\max(\lfloor \frac{d}{2} \rfloor-c,0)} \Z \times \Z/\ell^{\lceil \frac{d}{2} \rceil}\Z. \]
\item Suppose $\left( \frac{\Delta_K}{\ell} \right) = -1$.  For all $b \in \Z^+$ we have
\[ \KK \cap E[\ell^b] \cong_{\Z} \Z/\ell^{\min(b,c)} \Z \]
and
\[ \iota^{\vee} E'[\ell^b] \cong_{\Z} \Z/\ell^{\max(b-c,0)} \Z \times \Z/\ell^b \Z. \]
\end{enumerate}
\end{thm}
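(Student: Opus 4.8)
The plan is to work entirely with the complex-analytic model fixed in the paragraph preceding the statement: $E_{/\C} \cong \C/\OO$, $E'_{/\C} \cong \C/\OO_K$, $\iota^\vee\colon P + \OO_K \mapsto \ff P + \OO$. Everything in the theorem concerns only the $\ell$-primary part of the situation, so the first step is to localize: replace $\ff$ by $\ell^c$ throughout, noting that $\OO \otimes \Z_\ell = \Z_\ell + \ell^c (\OO_K \otimes \Z_\ell)$ and that $\KK$ is killed by $\ell^c$ (indeed $\KK = \Ker \iota^\vee \cong \ff \OO_K/\OO \cdot(\text{stuff})$; more precisely the kernel of $P + \OO_K \mapsto \ff P + \OO$ is $\{P \in \frac{1}{\ff}\OO + \OO_K : \ff P \in \OO\}/\OO_K = \frac{1}{\ff}\OO/\OO_K$ after adjusting, which is cyclic of order $\ff$). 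So $\KK$ is cyclic of order $\ell^c$ and we must intersect this fixed cyclic group with the various torsion submodules $E'[\pp_1^a\pp_2^b]$, $E'[\pp^d]$, $E'[\ell^b]$, and then read off the image of each such submodule under $\iota^\vee$ using the short exact sequence $0 \to \KK \cap T' \to T' \to \iota^\vee T' \to 0$, so that $\iota^\vee T' \cong T'/(\KK\cap T')$.

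The concrete identification of $\KK$ inside $E'$ is the crux. Writing $E'[\tors] \otimes \Z_\ell$ as $(K\otimes\Q_\ell)/(\OO_K\otimes\Z_\ell)$, the subgroup $\KK\otimes\Z_\ell$ is $\tfrac{1}{\ell^c}(\OO\otimes\Z_\ell)/(\OO_K\otimes\Z_\ell) = \tfrac{1}{\ell^c}(\Z_\ell + \ell^c\OO_{K,\ell})/\OO_{K,\ell}$, which equals the image of $\tfrac{1}{\ell^c}\Z_\ell$, a cyclic $\Z_\ell$-module of length $c$. Now I split into the three cases by the behavior of $\ell$:

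\emph{Case $\left(\frac{\Delta_K}{\ell}\right)=1$.} Here $\OO_{K,\ell} \cong \Z_\ell\times\Z_\ell$ via the two primes $\pp_1,\pp_2$, and under this isomorphism the element $1 \in \Z_\ell$ maps to the diagonal $(1,1)$. So $\KK$ is the image of $\tfrac{1}{\ell^c}(\Z_\ell)^{\mathrm{diag}}$ in $(\Q_\ell/\Z_\ell)^2$, i.e. $\KK = \{(x,x) : \ell^c x = 0\}$, the "diagonal" cyclic subgroup of order $\ell^c$ of $E'[\pp_1^\infty\pp_2^\infty]$. Meanwhile $E'[\pp_1^a\pp_2^b]$ corresponds to $\tfrac{1}{\ell^a}\Z_\ell/\Z_\ell \times \tfrac{1}{\ell^b}\Z_\ell/\Z_\ell = \Z/\ell^a \times \Z/\ell^b$. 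Intersecting: $(x,x)$ lies in this iff $\ell^a x = 0$ and $\ell^b x = 0$, i.e. (since $a\le b$) iff $\ell^{\min(a,c)}$-torsion is reached — precisely $\KK\cap E'[\pp_1^a\pp_2^b]\cong \Z/\ell^{\min(a,c)}\Z$. For the image, I compute $(\Z/\ell^a\times\Z/\ell^b)/\langle(1,1)\ \mathrm{of\ order}\ \ell^{\min(a,c)}\rangle$; a quick Smith-normal-form / elementary-divisor computation gives $\Z/\ell^{\max(a-c,0)}\Z\times\Z/\ell^b\Z$, matching the claim.

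\emph{Case $\left(\frac{\Delta_K}{\ell}\right)=0$.} Now $\ell$ ramifies, $\pp^2 = (\ell)$, and $E'[\pp^d]$ corresponds to the fractional-ideal quotient $\pp^{-d}\OO_{K,\ell}/\OO_{K,\ell}$, which as an abelian group is $\Z/\ell^{\lceil d/2\rceil}\Z \times \Z/\ell^{\lfloor d/2\rfloor}\Z$ — this is the standard computation of $\#(\OO_K/\pp^d)$ together with the module structure, and it is the one genuinely ramified-arithmetic input. The subgroup $\KK$ is the image of $\tfrac{1}{\ell^c}\Z_\ell$; I need to see which multiples of $\tfrac{1}{\ell^c}$ lie in $\pp^{-d}$. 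Since $\ell^j \in \pp^{2j}$, one has $\tfrac{1}{\ell^c} \in \pp^{-d}$ iff $\pp^{-2c}\subseteq\pp^{-d}$... rather I track when $\tfrac{a}{\ell^c} \in \pp^{-d}\OO_{K,\ell}$ for $a\in\Z_\ell$: the valuation of $\tfrac{a}{\ell^c}$ at $\pp$ is $2(v_\ell(a)-c)$, and membership in $\pp^{-d}$ requires this to be $\ge -d$, i.e. $v_\ell(a)\ge c - \lfloor d/2\rfloor$. Hence $\KK\cap E'[\pp^d]$ is cyclic of order $\ell^{\min(c,\lfloor d/2\rfloor)}$, and then $\iota^\vee E'[\pp^d] \cong E'[\pp^d]/(\KK\cap E'[\pp^d])$; quotienting $\Z/\ell^{\lceil d/2\rceil}\times\Z/\ell^{\lfloor d/2\rfloor}$ by a cyclic subgroup of order $\ell^{\min(c,\lfloor d/2\rfloor)}$ sitting appropriately (generated by an element of full order in the $\lfloor d/2\rfloor$-component, since $\KK$ is generated by $\tfrac{1}{\ell^c}$ whose $\pp$-order is the larger of the two exponents when $d$ is odd — one must check this carefully) yields $\Z/\ell^{\max(\lfloor d/2\rfloor - c,0)}\Z\times\Z/\ell^{\lceil d/2\rceil}\Z$.

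\emph{Case $\left(\frac{\Delta_K}{\ell}\right)=-1$.} Here $\OO_{K,\ell}$ is the ring of integers in the unramified quadratic extension of $\Q_\ell$, so it is a free $\Z_\ell$-module of rank $2$ and $E'[\ell^b] \cong (\Z/\ell^b\Z)^2$ with $\KK$ the image of $\tfrac{1}{\ell^c}\Z_\ell \cdot 1$, a cyclic direct summand (as $1$ is part of a $\Z_\ell$-basis of $\OO_{K,\ell}$). Thus $\KK\cap E'[\ell^b]$ is cyclic of order $\ell^{\min(b,c)}$ and $E'[\ell^b]/(\KK\cap E'[\ell^b]) \cong \Z/\ell^{\max(b-c,0)}\Z\times\Z/\ell^b\Z$ immediately, since we are quotienting $(\Z/\ell^b)^2$ by a cyclic direct-summand subgroup of order $\ell^{\min(b,c)}$.

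The main obstacle I anticipate is the ramified case (b): correctly pinning down the $\OO_{K,\ell}$-module (not just abelian-group) structure of $\pp^{-d}/\OO_{K,\ell}$ and then identifying \emph{which} of its two cyclic factors the generator of $\KK$ maps into — when $d$ is odd the two elementary divisors differ and one must verify that $\tfrac{1}{\ell^c}$ has $\pp$-adic order equal to $2c$ (even), hence lands in the \emph{larger}, length-$\lceil d/2\rceil$ summand, which is why the surviving quotient keeps a $\Z/\ell^{\lceil d/2\rceil}$ factor. In the split and inert cases the module structures are transparent, so those reduce to elementary-divisor bookkeeping and carry little risk. Throughout, the passage from the analytic torsion computation to the arithmetic statement over $K(\ff)$ is immediate because the isogeny $\iota$ and its dual are both $K(\ff)$-rational (indeed $\iota$ is $\Q(\ff)$-rational by \S2.6), so Galois-equivariance gives the isomorphisms as stated; the $\cong_\Z$ notation in the theorem signals that only the abelian-group structure is being claimed, which is exactly what the quotient computations produce.
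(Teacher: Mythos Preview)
Your overall strategy matches the paper's: identify $\KK$ explicitly as the image of $\frac{1}{\ell^c}\Z$ in $E'[\ell^\infty]$, compute $\KK \cap T'$ for each relevant $T'$, and read off $\iota^\vee T' \cong T'/(\KK\cap T')$. The inert and split cases are fine; your coordinate description via $\OO_{K,\ell}\cong\Z_\ell\times\Z_\ell$ in the split case is a clean repackaging of what the paper does via $\OO_K$-submodules and exponent/cardinality counting.

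The ramified case has a genuine gap, and your own final paragraph misidentifies the fix. Knowing only that $\KK\cap E'[\pp^d]$ is cyclic of order $\ell^{\min(c,\lfloor d/2\rfloor)}$ does \emph{not} determine the quotient of $\Z/\ell^{\lceil d/2\rceil}\times\Z/\ell^{\lfloor d/2\rfloor}$ by it: for $d=2b-1$, quotienting by a cyclic subgroup of order $\ell^{b-1}$ can yield either $\Z/\ell^b$ or $\Z/\ell^{b-1}\times\Z/\ell$, depending on whether that subgroup is a direct summand. Your claim that $\frac{1}{\ell^c}$ ``lands in the \emph{larger}, length-$\lceil d/2\rceil$ summand'' is backwards, and the accompanying logic (``which is why the surviving quotient keeps a $\Z/\ell^{\lceil d/2\rceil}$ factor'') is inverted: if $\KK$ sat inside the larger summand, it is the \emph{smaller} factor that would survive untouched, and the resulting quotient would have exponent $\ell^{b-1}$, not $\ell^b$.

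What actually needs to be shown is that $\frac{1}{\ell^{\lfloor d/2\rfloor}}$ generates a \emph{pure} subgroup of $E'[\pp^d]$, hence a direct summand---necessarily the smaller $\Z/\ell^{\lfloor d/2\rfloor}$ factor---so that $\KK\cap E'[\pp^d]$ lies entirely inside that summand. The paper does this directly: if $m\cdot\frac{1}{\ell^{b-1}} = \ell^i y$ for some $y\in\pp^{-(2b-1)}$, then $m\in\pp^{2i-1}$, and since $\ord_\pp(m)$ is even one gets $\ell^i\mid m$. With purity in hand, the complementary summand $\Z/\ell^{\lceil d/2\rceil}$ injects under $\iota^\vee$, giving the exponent, and the cardinality then forces the claimed structure. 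Your valuation heuristic (``$v_\pp(1/\ell^c)=-2c$ is even'') is pointing at the right computation but drawing the wrong conclusion from it; the even valuation is precisely what makes $\frac{1}{\ell^{b-1}}$ \emph{not} $\ell$-divisible in $E'[\pp^{2b-1}]$, which combined with its having order $\ell^{b-1}$ forces it to generate the smaller summand.
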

\noindent
From Theorem \ref{KKTHM} we deduce the following result.

\begin{thm}
\label{DUALEXPONENTTHM}
Let $F \supset K(\ff)$ be a number field, and let $\iota: E \ra E'$ be the canonical $K(\ff)$-rational cyclic $\ff$-isogeny to
an $\OO_K$-CM elliptic curve $E'$.  Write
\[ E(F)[\tors] \cong \Z/s \Z \times \Z/e\Z, \ E'(F)[\tors] \cong \Z/s'\Z \times \Z/e' \Z \]
with $s \mid e$ and $s' \mid e'$.  Then $e' \mid e$.
\end{thm}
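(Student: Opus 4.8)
The plan is to reduce the statement about the exponent $e'$ of $E'(F)[\tors]$ to a prime-by-prime comparison and then exploit Theorem \ref{KKTHM} through the dual isogeny $\iota^\vee : E' \ra E$. First I would observe that it suffices to prove $\ord_\ell(e') \le \ord_\ell(e)$ for every prime $\ell$, since the exponent of a finite abelian group is the product of the exponents of its $\ell$-primary parts and divisibility can be checked prime by prime. Fix a prime $\ell$, set $c \coloneqq \ord_\ell(\ff)$, and let $\ell^{e'_\ell}$ be the exponent of the $\ell$-primary part $E'(F)[\ell^\infty]$. I would pick a point $P' \in E'(F)$ of exact order $\ell^{e'_\ell}$. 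Because $F \supset K(\ff) \supset K$, the Galois action commutes with the $\OO_K$-action on $E'$, so the $\OO_K$-submodule $T'$ generated by $P'$ is again $F$-rational, and it is a finite $\OO_K$-submodule of $E'[\tors]$ of exponent exactly $\ell^{e'_\ell}$. Since $\iota^\vee$ is $F$-rational (it is the dual of the $K(\ff)$-rational isogeny $\iota$, hence defined over $K(\ff) \subset F$), the image $\iota^\vee(T') \subset E(F)$ is an $F$-rational subgroup whose exponent divides $e$; so it is enough to show that the exponent of $\iota^\vee(T')$ equals the exponent of $T'$, i.e. that $\iota^\vee$ does not decrease the exponent of any finite $\OO_K$-submodule.

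Next I would decompose the $\OO_K$-submodule $T'$ according to the splitting behavior of $\ell$ in $\OO_K$, which is exactly the trichotomy of Theorem \ref{KKTHM}. In each case $T'$ is contained in one of the groups $E'[\pp_1^a\pp_2^b]$, $E'[\pp^d]$, or $E'[\ell^b]$ appearing in that theorem, and in fact the exponent of $T'$ is detected by the largest such group it meets. So I would verify, case by case, that the cyclic-group decompositions of $\iota^\vee E'[\pp_1^a\pp_2^b]$, $\iota^\vee E'[\pp^d]$, and $\iota^\vee E'[\ell^b]$ given in Theorem \ref{KKTHM} all have the \emph{same} exponent as the original torsion module: in the inert case $\iota^\vee E'[\ell^b] \cong \Z/\ell^{\max(b-c,0)}\Z \times \Z/\ell^b\Z$ has exponent $\ell^b$, matching $E'[\ell^b]$; in the split case $\iota^\vee E'[\pp_1^a\pp_2^b] \cong \Z/\ell^{\max(a-c,0)}\Z \times \Z/\ell^b\Z$ has exponent $\ell^b$, again matching the exponent $\ell^b$ of $E'[\pp_1^a\pp_2^b]$ since $a \le b$; and in the ramified case $\iota^\vee E'[\pp^d] \cong \Z/\ell^{\max(\lfloor d/2\rfloor - c,0)}\Z \times \Z/\ell^{\lceil d/2\rceil}\Z$ has exponent $\ell^{\lceil d/2\rceil}$, matching the exponent $\ell^{\lceil d/2\rceil}$ of $E'[\pp^d]$. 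Thus in every case $\iota^\vee$ preserves the exponent, and applied to $T'$ this gives a point of order $\ell^{e'_\ell}$ inside $E(F)$, forcing $\ell^{e'_\ell} \mid e$.

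The one subtlety I want to handle carefully is the passage from an arbitrary finite $\OO_K$-submodule $T' \subset E'(F)[\tors]$ of $\ell$-power exponent to one of the model submodules $E'[\mathfrak{I}]$ controlled by Theorem \ref{KKTHM}: I need that $\iota^\vee(T')$ has exponent at least that of $T'$, which follows because $T'$ contains a cyclic $\OO_K$-submodule, hence one of the form $E'[\mathfrak{I}]$ with $\mathfrak{I}$ a product of powers of the primes above $\ell$ and $E'/E'[\mathfrak{I}]$-exponent matching, and $\iota^\vee$ restricted to that submodule already achieves the full exponent by the computation above; alternatively I can note that $\iota^\vee \circ \iota = [\ff]$ and $\iota^\vee$ is injective on the prime-to-$\ell^c$ part, so it is harmless to restrict to $\ell$-primary torsion and argue via the explicit structure. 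I expect the main obstacle to be bookkeeping: matching up the generic $\OO_K$-module $T'$ generated by a single point of maximal order with the specific ideals $\pp_1^a\pp_2^b$, $\pp^d$, $\ell^b$ for which Theorem \ref{KKTHM} is stated, and making sure the exponent is read off correctly (in particular that $a \le b$ in the split case, which is the hypothesis under which Theorem \ref{KKTHM}(a) is phrased, does not cost us anything — and it does not, since one can always relabel $\pp_1,\pp_2$ as the footnote to Theorem \ref{KKTHM} permits). Once that matching is in place, the conclusion $e' \mid e$ is immediate.
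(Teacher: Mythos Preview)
Your proposal is correct and follows essentially the same route as the paper: reduce to one prime $\ell$ at a time, use that $F \supset K$ so the $F$-rational $\ell$-primary torsion of $E'$ is an $\OO_K$-submodule, identify it with some $E'[I]$, and then read off from Theorem \ref{KKTHM} that $\iota^{\vee}$ preserves the exponent in each of the three splitting cases. The only difference is cosmetic: the paper takes the entire group $E'(F)[\ell^{\infty}]$ and notes directly that, as a finite $\ell$-primary $\OO_K$-module, it equals $E'[I]$ for a suitable ideal $I$ (since $\OO_K$ is Dedekind, every such module is cyclic), whereas you generate a cyclic $\OO_K$-submodule $T'$ from a single point of maximal order and then worry about matching $T'$ to one of the model modules. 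That ``subtlety'' you flag dissolves once you observe that $T' = E'[\ann(P')]$ automatically, so no extra bookkeeping is needed.
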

\begin{proof}
It is enough to show that for all primes $\ell$ we have $\exp E'(F)[\ell^{\infty}] \mid \exp E(F)[\ell^{\infty}]$.  Since
$E'(F)[\ell^{\infty}]$ is a finite $\ell$-primary $\OO_K$-submodule of $E'[\tors]$, it is isomorphic to $E[I]$ for an ideal $I$ of
$\OO_K$ that is divisible only by prime ideals lying over $\ell$.  Recalling that when $\ell$ ramifies in $\OO_K$ we have
\[ E'[\pp^d] \cong_{\Z} \Z/\ell^{\lceil \frac{d}{2} \rceil} \Z \times \Z/\ell^{\lfloor \frac{d}{2} \rfloor} \Z, \]
we see that Theorem \ref{KKTHM} implies that in all cases
\[ \exp E'(F)[\ell^{\infty}] = \exp E'[I] \mid \exp \iota^{\vee} E'[I] \mid \exp E(F)[\ell^{\infty}]. \qedhere\]
\end{proof}

\begin{remark}
In \cite[Lemma 6.7]{BC18} it was shown that in the setting of Theorem \ref{DUALEXPONENTTHM} we have $s \mid s'$.  Theorem \ref{DUALEXPONENTTHM} is in some sense the ``dual divisibility.''
\end{remark}

\subsection{Inert case} We suppose that $\left(\frac{\Delta_K}{\ell} \right) = -1$. Put $c \coloneqq \ord_{\ell}(\ff)$.  In this case the only finite $\OO_K$-modules of $E'[\ell^{\infty}]$ are $E'[\ell^b]$ for $b \in \Z^+$.  This is an easy case: if $\mathcal{K} = \Ker \iota^{\vee}$
then $\mathcal{K} \cap E[\ell^b]$ is cyclic of order $\ell^{\min(b,c)}$  and $\iota^{\vee}(E[\ell^b])$ is the subgroup of
$\C/\OO$ generated by $\ell^{c} \cdot \frac{1}{\ell^b}$ and $\ell^{\cc} \cdot \frac{\tau_K}{\ell^b}$, and thus
\[ \iota^{\vee} E'[\ell^b] \cong \Z/\ell^{\max(b-c,0)} \Z \times \Z/\ell^b \Z. \]

\subsection{Split case} We suppose that $\left( \frac{\Delta_K}{\ell} \right) = 1$ and $\ell \mid \ff$. Let $\pp_1,\pp_2$
be the two primes of $\OO_K$ lying over $\ell$.  We have
\[ E'[\pp_1^a \pp_2^b] \cong E'[\pp_1^a] \oplus E'[\pp_2^b] \cong_{\Z} \Z/\ell^a \Z \times \Z/\ell^b \Z. \]
We have $\KK = \ker \iota^{\vee}$ is cyclic of order $\ff$, generated by $\frac{1}{\ff} + \OO_K$.  If $P \in \KK \cap E'[\pp_1^a \pp_2^b]$ has order $d$, then $d \mid \ell^b$ and $d \mid \ff$, so $d \mid \ell^c$.  Also the $\OO_K$-submodule $\langle \langle P \rangle \rangle$ generated by $P$ is $E'[d]$; since the largest $d$ such that $E'[d] \subset [\pp_1^a \pp_2^b]$ is $\ell^a$,
we get $d \mid \ell^a$ and thus $d \mid \ell^{\min(a,c)}$.  On the other hand, the unique cyclic subgroup of $\KK$ of
order $\ell^{\min(a,c)}$ is also contained in $E'[\pp_1^a \pp_2^b]$, so
\[ \KK \cap E'[\pp_1^a \pp_2^b] \cong_{\Z} \Z/\ell^{\min(a,c)} \Z. \]
It follows that $\KK \cap E'[\pp_2^b]$ is the trivial group, so \[\iota^{\vee}(E'[\pp_2^b]) \cong_{\Z} E'[\pp_2^b]
\cong_{\Z} \Z/\ell^b \Z, \]
and it follows that $\exp \iota^{\vee} E[\pp_1^a \pp_2^b] = \ell^b$.  Since
\[\# \iota^{\vee} E[\pp_1^a \pp_2^b] = \frac{ \# E[\pp_1^a \pp_2^b]}{\ell^{\min(a,c)}} = \ell^{a+b-\min(a,c)}, \]
it follows that
\[ \iota^{\vee} E[\pp_1^a \pp_2^b] \cong \Z/\ell^{\max(a-c,0)} \Z \times \Z/\ell^b \Z. \]

\subsection{Ramified case} We suppose that $\left(\frac{\Delta_K}{\ell} \right) = 0$ and $\ell \mid \ff$.
\\ \\
Step 1: Let $\underline{a} = \min(a,c)$.  Let $\pp$ be the unique prime of $\OO_K$ lying over $\ell$. Let $\iota: E \ra E'$ be the canonical $K(\ff)$-rational cyclic $\ff$-isogeny to an $\OO_K$-CM elliptic curve $E'_{/K(\ff)}$.  Let $\mathcal{K} = \Ker \iota^{\vee}$.  For $d \in \Z^+$,
we compute $E[\pp^d] \cap K$.  This intersection is generated by $\frac{x}{\ell^c}$ for some $x \in \Z$, while
$E[\pp^d] \cong \pp^{-d}/\OO_K$.  We have
\[\frac{x}{\ell^c} \in \pp^{-d} \iff x \in \ell^c \pp^{-d} = \pp^{2c-d}. \]
$\bullet$ If $d$ is even, then
\[ x \in (\ell)^{c-\frac{d}{2}}, \]
so $\ord_{\ell}(x) \geq c - \frac{d}{2}$.  If $c - \frac{d}{2} \leq 0$, this condition is vacuous and $E'[\pp^d] \cap \mathcal{K}$ is
generated by $\frac{1}{\ell^c}$ and thus is cyclic of order $\ell^{c}$.  If $c - \frac{d}{2} \geq 0$ then $E[\pp^d] \cap \mathcal{K}$
is cyclic of order $\ell^{d/2}$.  Compiling the two cases, we get that $E[\pp^d] \cap \mathcal{K}$ is cyclic of order $\ell^{\min(c,d/2)}$.  \\
$\bullet$ If $d$ is odd, then since $\ord_{\pp}(x)$ is even, the above gives $\ord_{\pp}(x) \geq 2c - (d-1)$, and running through the
argument as above, we get that $E[\pp^d] \cap \KK$ is cyclic of order $\ell^{\min(c,\frac{d-1}{2})}$. \\
$\bullet$ Thus in general we have
\[ \# E'[\pp^d] \cap \KK = \ell^{\min(c,\lfloor \frac{d}{2} \rfloor)}. \]
By \cite[\S7.3]{BC18}, we
have
\[ E'[\pp^{2b-1}] \cong \Z/\ell^b \Z \times \Z/\ell^{b-1} \Z, \ E'[\pp^{2b}] = E'[\ell^b] \cong \Z/\ell^b\Z \times \Z/\ell^b \Z. \]
We claim that in the first case we can take $\frac{1}{\ell^{b-1}}$ as a generator for the second invariant factor, and the
second case we can take $\frac{1}{\ell^b}$ as a generator for the second invariant factor.  In the latter case this is
clear: any element of order $\ell^b$ can be taken as a generator for the second invariant factor.  In the former case, the elements that can be taken as a generator of the second invariant factor are precisely those elements $x$ of order $\ell^{b-1}$ that generate a
\textbf{pure} subgroup of $E'[\pp^{2b-1}]$: for all $m,i \in \Z^+$, if there is $y \in E'[\pp^{2b-1}]$ such that
$mx = \ell^i y$, then $mx = \ell^i nx$ for some $n \in \Z$.  (For in a commutative group $G$ with $G = G[\ell^b]$ for some $b \in \Z^+$, every pure
subgroup is a direct summand \cite[\S4.3]{Robinson}.)  We apply this with $x = \frac{1}{\ell^{b-1}}$: if $mx = \ell^i y$ for $y \in \pp^{-(2b-1)}$,
then $m \ell^{-b-i+1} \in \pp^{-(2b+1)}$, so
\[m \in \ell^{b+i-1} \pp^{-(2b-1)} = \pp^{2b+2i-2} \pp^{-2b+1} = \pp^{2i-1}, \]
so $\ord_{\pp}(m) \geq 2i-1$.  Since $m \in \Z$, $\ord_{\pp}(m)$ is even, so $\ord_{\pp}(m) \geq 2i$ and thus $\ell^i \mid m$ and
we may take $n = \frac{m}{\ell^i}$.  It follows that
\[ \exp \iota^{\vee}(E[\pp^{2b-1}]) = \exp \iota^{\vee}(E[\pp^{2b}]) = \ell^b \]
and thus
\[ \exp \iota^{\vee}(E[\pp^d]) = \ell^{\lceil \frac{d}{2} \rceil}. \]
Since
\[ \# \iota^{\vee} E[\pp^d] = \frac{\# E[\pp^d]}{\ell^{\min(c,\lfloor \frac{d}{2} \rfloor)}} = \ell^{d-\min(c,\lfloor \frac{d}{2} \rfloor)}, \]
it follows that
\[ \iota^{\vee} E'[\pp^d] \cong_{\Z} \Z/\ell^{\max(\lfloor \frac{d}{2} \rfloor-c,0)} \Z \times \Z/\ell^{\lceil \frac{d}{2} \rceil}\Z. \]

\section{Degrees of CM points on $X(M,N)_{/K(\zeta_M)}$}

\subsection{Statement of the theorem}
Let $\OO$ be an order of conductor $\ff$ in the imaginary quadratic field $K$, and let $w = \# \OO^{\times}$ and $w_K = \# \OO_K^{\times}$.
For positive integers $M \mid N$, we define an
\textbf{(M,N)-pair} to be a pair $(P,Q)$ with $P \in \OO/N\OO$ of order $M$ and $Q \in \OO/N\OO$ of order $N$ such that the
$\Z$-module generated by $P$ and $Q$ is isomorphic to $\Z/M\Z \times \Z/N\Z$.  We denote by $\widetilde{T}(\OO,M,N)$ the least
size of a $C_N(\OO)$-orbit on the set of $(M,N)$-pairs. 

\begin{thm}
\label{BIGMNTHM}
Let $\OO$ be an imaginary quadratic order of conductor $\ff$, and let $M = \ell_1^{a_1} \cdots \ell_r^{a_r} \mid N = \ell_1^{b_1} \cdots \ell_r^{b_r}$ be positive integers.  
\begin{enumerate}
\item There is $T(\OO,M,N) \in \Z^+$ such that: for all $d \in \Z^+$, there is a number field $F \supset K(\ff)$ such that
$[F:K(\ff)] = d$ and an $\OO$-CM elliptic curve $E_{/F}$ such that $\Z/M\Z \times \Z/N \Z \hookrightarrow E(F)$ iff $T(\OO,M,N) \mid d$.  
\item 
If $N=2$ or 3, then $T(\OO,M,N)$ is as follows:
\begin{align*}
T(\OO, 1,2)&=\begin{cases} 3& \left( \frac{\Delta}{2} \right)=-1 \text{ and } \Delta \neq -3 \\ 1 & \text{otherwise} \end{cases},\\
T(\OO,1,3) &= \begin{cases} 8/w& \left( \frac{\Delta}{3} \right)=-1  \\ 1 & \text{otherwise} \end{cases},\\
 T(\OO,2,2) &= \frac{2(2-\left( \frac{\Delta}{2} \right))}{w},\\
 T(\OO,3,3) &= \frac{2(3-\left( \frac{\Delta}{3} \right))}{w}.
  \end{align*}

\item If $N=2$, then $\widetilde{T}(\OO,M,N)$ is as follows:
\begin{align*}
\widetilde{T}(\OO, 1,2)&=\begin{cases} 1& \left( \frac{\Delta}{2} \right)\neq-1  \\ 3 & \left( \frac{\Delta}{2} \right)=-1 \end{cases},\\
\widetilde{T}(\OO,2,2) &= 2-\left( \frac{\Delta}{2} \right).
  \end{align*}
\item Suppose $N \geq 3$ and $r = 1$, and write $M = \ell^a$, $N=\ell^b$ for $0 \leq a \leq b$. Put $c \coloneqq \ord_{\ell}(\ff)$. Then: 
\begin{enumerate}
\item[i)] If $\left( \frac{\Delta}{\ell} \right) = -1$, then
\[ \widetilde{T}(\OO,\ell^a,\ell^b) =\ell^{2b-2}(\ell^2-1). \]
\item[ii)] If $\left( \frac{\Delta}{\ell} \right) = 1$, then
\[ \widetilde{T}(\OO,\ell^a,\ell^b) = \begin{cases} \ell^{b-1}(\ell-1) & a = 0 \\ \ell^{a+b-2} (\ell-1)^2 & a \geq 1 \end{cases}. \]
\item[iii)] If $\ell \mid \ff $ and $\left( \frac{\Delta_K}{\ell} \right) = 1$, then
\[ \widetilde{T}(\OO,\ell^a,\ell^b) = \ell^{a+b-1}(\ell-1). \]
\item[iv)] If $\left( \frac{\Delta_K}{\ell} \right) = 0$, then
\[ \widetilde{T}(\OO,\ell^a,\ell^b) = \begin{cases} \ell^{a+b-1}(\ell-1) & b \leq 2c+1 \\
\ell^{\max(a+b-1,2b-2c-2)}(\ell-1) & b > 2c+1 \end{cases}. \]
\item[v)] If $\ell \mid \ff$ and $\left( \frac{\Delta_K}{\ell} \right) = -1$, then
\[ \widetilde{T}(\OO,\ell^a,\ell^b) = \begin{cases} \ell^{a+b-1}(\ell-1) & b \leq 2c \\
\ell^{\max(a+b-1,2b-2c-1)}(\ell-1) & b > 2c \end{cases}. \]
\end{enumerate}
\item Suppose $N \geq 4$.  Then we have
\[ T(\OO,M,N) = \frac{\prod_{i=1}^r \widetilde{T}(\OO,\ell_i^{a_i},\ell_i^{b_i})}{w}. \]
\end{enumerate}
\end{thm}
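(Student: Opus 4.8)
The plan is to convert the statement into a combinatorial one about the action of the mod $N$ Cartan subgroup $C_N(\OO)=(\OO/N\OO)^{\times}$ on the set of $(M,N)$-pairs, to carry out that orbit count one rational prime at a time, and to keep track of the saving obtained by twisting the elliptic curve.

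\emph{Step 1: the dictionary, and part (a).} For an $\OO$-CM elliptic curve $E$ over a number field $F\supseteq K(\ff)$, a choice of $\OO$-module generator of $E[N]$ identifies $\rho_N\colon\gg_F\to C_N(\OO)$, and an injection $\Z/M\Z\times\Z/N\Z\hookrightarrow E(F)$ is the same as an $(M,N)$-pair in $E[N]$ fixed pointwise by $\gg_F$; the Weil pairing forces $\zeta_M\in F$, which is the content hidden in passing between $K(\zeta_M)$ and $K(\ff)$. By Stevenhagen's theorem $\overline{\rho_N}(\gg_{K(\ff)})=\overline{C_N(\OO)}$, so as $E$ varies over twists the image $\rho_N^E(\gg_{K(\ff)})$ runs through subgroups $G\le C_N(\OO)$ with $G\cdot q_N(\OO^{\times})=C_N(\OO)$, and conversely every such $G$ is realized. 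Hence the least degree is $T(\OO,M,N)=\min_{(P,Q)}\min_G |G\cdot(P,Q)|$, and the realizable degrees form exactly its set of multiples once one establishes two group-theoretic facts, proved in parallel with the point-orbit analysis of \cite{BC18}: \textbf{(L1)} every $C_N(\OO)$-orbit on $(M,N)$-pairs has size divisible by the minimal orbit size $\widetilde T(\OO,M,N)$; and \textbf{(L2)} some minimal-orbit pair can be chosen so that, with a suitable admissible $G$, the minimum $\widetilde T(\OO,M,N)/|q_N(\OO^{\times})|$ is attained. Granting (L1), the orbit--stabilizer identity $|G\cdot(P,Q)|=|C_N(\OO)\cdot(P,Q)|\cdot\frac{[\mathrm{Stab}_{C_N}(P,Q):\mathrm{Stab}_G(P,Q)]}{[C_N(\OO):G]}$, with $[C_N(\OO):G]$ dividing $|q_N(\OO^{\times})|$ (since $G\cdot q_N(\OO^{\times})=C_N(\OO)$), shows that $\widetilde T(\OO,M,N)/|q_N(\OO^{\times})|$ divides every such orbit size; upward closure under base change is clear; and $|q_N(\OO^{\times})|$ is $w$ for $N\ge 3$ and $w/2$ for $N=2$. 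For $N\ge 4$ and $\Delta<-4$ this gives $T(\OO,M,N)=\widetilde T(\OO,M,N)/w$ immediately, since then $q_N(\OO^{\times})=\{\pm 1\}$ and the relevant $Q$ has order $>2$. The cases $\Delta\in\{-3,-4\}$ and $N\in\{2,3\}$ are genuinely special — there $q_N(\OO^{\times})$ can be large relative to $C_N(\OO)$ (e.g. $q_3(\OO^{\times})=C_3(\OO)$ for $\Delta=-3$, producing a twist with full rational $3$-torsion), and $-1$ acts trivially on $E[2]$ — and they are handled by direct computation with the Weierstrass families $y^2=x^3+B$ and $y^2=x^3+Ax$, which produces the separate formulas of parts (b) and (c), including the extra factor $2$ in $T(\OO,1,2)$, $T(\OO,2,2)$ and the value $T(\OO,1,2)=1$ for $\Delta=-4$.

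\emph{Step 2: reduction to prime powers, and part (e).} The Chinese Remainder Theorem gives $\OO/N\OO\cong\prod_i\OO/\ell_i^{b_i}\OO$, hence $C_N(\OO)\cong\prod_i C_{\ell_i^{b_i}}(\OO)$, and an $(M,N)$-pair decomposes into a tuple of $(\ell_i^{a_i},\ell_i^{b_i})$-pairs with the Cartan acting coordinatewise; therefore orbit sizes multiply, so $\widetilde T(\OO,M,N)=\prod_i\widetilde T(\OO,\ell_i^{a_i},\ell_i^{b_i})$ and (L1) for composite $N$ follows from the prime-power cases. Combined with Step 1 and $|q_N(\OO^{\times})|=w$ (valid for $N\ge 4$), this yields part (e). Everything is now reduced to computing $\widetilde T(\OO,\ell^a,\ell^b)$, i.e. parts (c) (for $\ell^b=2$) and (d) (for $\ell^b\ge 3$).

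\emph{Step 3: the prime-power orbit count.} Put $R=\OO/\ell^b\OO$ and $c=\ord_\ell(\ff)$, so $R^{\times}=C_{\ell^b}(\OO)$. Applying orbit--stabilizer twice, $|R^{\times}\cdot(P,Q)|=|R^{\times}\cdot Q|\cdot|\mathrm{Stab}_{R^{\times}}(Q)\cdot P|$ (the last factor being the size of the orbit of $P$ under the stabilizer of $Q$), so one minimizes first over order-$\ell^b$ elements $Q$ and then over the admissible $P$ (those of image order $\ell^a$ in $R/\langle Q\rangle\cong\Z/\ell^b\Z$). The structure of $R$ depends on $\ell$: when $\left( \frac{\Delta}{\ell} \right)=-1$, $R=\OO_K/\ell^b\OO_K$ is an unramified quotient, every order-$\ell^b$ element is a unit, and $|R^{\times}\cdot(P,Q)|=|R^{\times}|=\ell^{2b-2}(\ell^2-1)$ independently of $a$; when $\left( \frac{\Delta}{\ell} \right)=1$, $R\cong\Z/\ell^b\Z\times\Z/\ell^b\Z$ and an elementary count with diagonal units gives (d)(ii); and in the remaining three cases $R$ is a local ring with nilpotents, of the form $(\Z/\ell^b\Z)[t]/(t^2-\ell^{c}t)$ when $\ell\mid\ff$ splits in $\OO_K$, and (up to scaling and lower-order terms) $(\Z/\ell^b\Z)[t]/(t^2-(\mathrm{unit})\,\ell^{2c+1})$ in the ramified case and $(\Z/\ell^b\Z)[t]/(t^2-(\mathrm{unit})\,\ell^{2c})$ in the inert case $\ell\mid\ff$. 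In these last three cases the dichotomies in the stated formulas — at $b\le 2c+1$ (ramified) and $b\le 2c$ (inert) — mark exactly the range in which the relevant generator of the maximal ideal squares to $0$ in $R$, so that $R$ becomes a ring of dual numbers up to scaling and the orbit structure changes. To control which elements lie in which $R^{\times}$-orbit and to compute $\mathrm{Stab}_{R^{\times}}(Q)$ in these cases, I would pass through the canonical cyclic $\ff$-isogeny $\iota\colon E\to E'$ to an $\OO_K$-CM curve and its dual, using the explicit descriptions of $\KK\cap E'[\cdot]$ and $\iota^{\vee}(E'[\cdot])$ in Theorem \ref{KKTHM} to transfer the bookkeeping to the maximal order $\OO_K$.

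\emph{The main obstacle} is Step 3 in the three non-maximal/ramified cases (d)(iii)--(v): there $\langle Q\rangle$ need not be $R$-stable, the orbit of $Q$ and the stabilizer $\mathrm{Stab}_{R^{\times}}(Q)$ depend delicately on how $\ord Q$ and $\ord P$ compare to the thresholds $\ell^{2c}$, $\ell^{2c+1}$, and one must verify not only the value of $\widetilde T$ but also the divisibility fact (L1), so that part (a)'s ``iff $T\mid d$'' is an exact statement rather than a mere lower bound. The dictionary of Step 1, the Chinese Remainder reduction of Step 2, and the split/inert cases with $\ell\nmid\ff$ are comparatively routine.
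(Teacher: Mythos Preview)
Your Steps 1 and 2 track the paper closely: Proposition \ref{FIRSTCOMPILEPROP} is the dictionary (stated via reduced $(M,N)$-pairs and $\overline{C_N(\OO)}$, which is cleaner than your formulation with a varying subgroup $G$), Lemma \ref{LEMMA4.10} is the passage between $T$ and $\widetilde{T}$, and Proposition \ref{PROP4.11} is the CRT reduction. Where you diverge is Step 3. You propose a direct ring-theoretic orbit count in $R=\OO/\ell^b\OO$; the paper never does this. Instead, for each of cases (i)--(v) the paper proves a \emph{divisibility lower bound} (Proposition \ref{1.1}: if $\Z/\ell^a\Z\times\Z/\ell^b\Z\hookrightarrow E(F)$ then the claimed $\widetilde{T}$ divides $w[F:K(\ff)]$, obtained from $\#\overline{C_{\ell^b}(\OO)}\mid[F(E[\ell^b]):K(\ff)]$ together with a bound on $[F(E[\ell^b]):F]$ from \cite{BCP17}) and then an \emph{explicit construction} of a field $F$ and curve $E_{/F}$ achieving equality (Theorems \ref{1.3}--\ref{THM1.7}, using ray class fields $K^{\pp_1^a\pp_2^b}$ or $K^{\pp^d}$, ring class fields $K(\ell^a\ff)$, the canonical isogeny $\iota$, and Theorem \ref{KKTHM}). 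This buys two things your approach has to earn separately: the divisibility fact (L1) comes for free from Proposition \ref{1.1}, since that is a statement about \emph{every} such $F$, not just the optimal one; and the argument identifies the minimal fields concretely and even pins down $E(F)[\ell^\infty]$ exactly.

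Your direct computation would recover $\widetilde{T}$ in cases (i)--(ii), but in (iii)--(v) you correctly flag that verifying (L1) --- that \emph{every} orbit has size divisible by $\widetilde{T}$ --- is the crux, and your plan to ``transfer the bookkeeping'' via $\iota^\vee$ is not yet a proof. Theorem \ref{KKTHM} computes images of $\OO_K$-submodules of $E'$ under $\iota^\vee$, but an arbitrary $(\ell^a,\ell^b)$-pair in $E$ need not arise this way, and the Cartan action on $E$ is by $(\OO/\ell^b\OO)^\times$, not by $(\OO_K/\ell^b\OO_K)^\times$; so you would still need the structural analysis of the local ring $R$ that you defer. (Your presentations of $R$ are also not quite right: e.g.\ when $\ell\mid\ff$ and $\ell$ splits in $\OO_K$, one has $\OO\otimes\Z_\ell=\Z_\ell+\ell^c(\Z_\ell\times\Z_\ell)$, which is not $\Z_\ell[t]/(t^2-\ell^c t)$.) The paper sidesteps all of this by never computing a single orbit directly in the ramified cases.
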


\subsection{Reducing to the case of prime powers.}
Let $\OO$ be an imaginary quadratic order, and let $M \mid N$ be positive integers.  Recall that an $(M,N)$-pair 
is a pair $(P,Q)$ with $P \in \OO/N\OO$ of order $M$ and $Q \in \OO/N\OO$ of order $N$ such that the
$\Z$-module generated by $P$ and $Q$ is isomorphic to $\Z/M\Z \times \Z/N\Z$. The Cartan subgroup $C_N(\OO) = (\OO/N\OO)^{\times}$ has a natural action on $(M,N)$ pairs:
\[ g \cdot (P,Q) = (gP,gQ). \]
We denote by $\widetilde{T}(\OO,M,N)$ the least
size of a $C_N(\OO)$-orbit on the set of $(M,N)$-pairs.

A \textbf{reduced (M,N)-pair} is an orbit of an $(M,N)$-pair $(P,Q)$ under the image of $\OO^{\times}$ in $\OO/N\OO$.  Unless
$\Delta = -4,-3$, this simply identifies $(P,Q)$ with $(-P,-Q)$.  The reduced Cartan subgroup $\overline{C_N(\OO)} = C_N(\OO)/q_N(\OO^{\times})$ has a natural action on the set of reduced $(M,N)$-pairs.  We denote by $T(\OO,M,N)$ the least size of a $\overline{C_N(\OO)}$-orbit on the set of reduced $(M,N)$-pairs.

\begin{lemma}
\label{COMPILE1}
Let $N \in \Z^+$.  The group $C_N(\OO)$ acts freely on the set of $(N,N)$-pairs, and thus every $C_N(\OO)$-orbit has size 
$\# C_N(\OO)$.
\end{lemma}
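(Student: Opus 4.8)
The claim is that $C_N(\OO) = (\OO/N\OO)^\times$ acts freely on the set of $(N,N)$-pairs. Since $\#C_N(\OO)$ is the order of the group, the statement about orbit sizes is immediate from freeness, so the real content is showing the stabilizer of every $(N,N)$-pair is trivial. First I would unwind the definitions: an $(N,N)$-pair is a pair $(P,Q)$ of elements of the $\OO$-module $\OO/N\OO$, each of additive order $N$, such that the $\Z$-submodule $\langle P, Q\rangle$ is isomorphic to $\Z/N\Z \times \Z/N\Z$. Since $\OO/N\OO$ itself has size $N^2$ and $(\OO/N\OO, +) \cong \Z/N\Z \times \Z/N\Z$ (as $\OO$ is free of rank $2$ over $\Z$), an $(N,N)$-pair is precisely a $\Z$-basis $(P,Q)$ of the abelian group $\OO/N\OO$.

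\textbf{Key steps.} Suppose $g \in C_N(\OO)$ fixes the $(N,N)$-pair $(P,Q)$, i.e. $gP = P$ and $gQ = Q$ in $\OO/N\OO$. Since $P, Q$ generate $\OO/N\OO$ as a $\Z$-module, and multiplication by $g$ is $\Z$-linear, the endomorphism ``multiplication by $g$'' of $\OO/N\OO$ agrees with the identity on a generating set, hence is the identity map on all of $\OO/N\OO$. In particular $g \cdot 1 = 1$ in $\OO/N\OO$, i.e. $g = 1$ in $\OO/N\OO$. (Equivalently: the map $\OO/N\OO \to \OO/N\OO$ given by $x \mapsto gx$ is $\OO$-linear and $\Z$-linear; since it fixes a $\Z$-basis it is the identity, and evaluating at $1 \in \OO/N\OO$ forces $g = 1$.) Thus the stabilizer is trivial. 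Combined with the orbit–stabilizer theorem, every $C_N(\OO)$-orbit on the set of $(N,N)$-pairs has size exactly $\#C_N(\OO)$, which is the displayed formula from Section 2.3.

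\textbf{Main obstacle.} There is essentially no obstacle here — the only thing to be careful about is the identification of ``$(N,N)$-pair'' with ``$\Z$-basis of $\OO/N\OO$.'' One must check that the order conditions (that $P$ has order $N$ and $Q$ has order $N$) together with $\langle P,Q \rangle \cong \Z/N\Z \times \Z/N\Z$ really do force $\langle P, Q \rangle = \OO/N\OO$, which follows by a cardinality count: $\langle P,Q\rangle$ has $N^2$ elements and sits inside $\OO/N\OO$, which also has $N^2$ elements. After that, the freeness argument is the two-line observation that a ring element acting as the identity on a module generating set acts as the identity everywhere, hence equals $1$ when the module contains a copy of the ring (namely $\OO/N\OO$ itself). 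I would write this up in a short paragraph.
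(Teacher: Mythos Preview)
Your proof is correct and takes essentially the same approach as the paper: both argue that if $g$ fixes $P$ and $Q$, then since $P,Q$ generate $\OO/N\OO$ as a $\Z$-module (which you verify by the cardinality count), the $\Z$-linear map $x \mapsto gx$ is the identity, so $g = 1$. The paper's proof is a one-line version of exactly this argument.
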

\begin{proof}
If $gP = P$ and $gQ = Q$ then $g \in (\OO/N\OO)^{\times}$ fixes all of $\OO/N\OO$ and thus $g = 1$.
\end{proof}
\noindent
The following result explains the relevance reduced $(M,N)$-pairs to the problem at hand.

\begin{prop}
\label{FIRSTCOMPILEPROP}
For an imaginary quadratic order $\OO$, positive integers $M \mid N$, and a positive integer $d$, the following are equivalent: 
\begin{enumerate}
\item There is a field $F \supset K(\ff)$ with $[F:K(\ff)] = d$, an $\OO$-CM elliptic curve $E_{/F}$ and an injection
\[ \Z/M\Z \times \Z/N\Z \hookrightarrow E(F)[\tors]. \]
\item The integer $d$ is divisible by the size of some $\overline{C_N(\OO)}$-orbit on the set of reduced $(M,N)$-pairs.
\end{enumerate}
\end{prop}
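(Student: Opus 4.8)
The plan is to pass from the arithmetic statement about CM elliptic curves over number fields to the group-theoretic statement about orbits of the reduced Cartan subgroup on reduced $(M,N)$-pairs, using the description of the mod $N$ Galois representation of an $\OO$-CM elliptic curve recalled in \S2.3 together with the fact (also recalled there) that over $K(\ff)$ the reduced Galois representation is surjective onto $\overline{C_N(\OO)}$. First I would fix a base model: choose an embedding $\Q(\ff) \hookrightarrow \C$, let $(E_0)_{/K(\ff)}$ be an $\OO$-CM elliptic curve, and pick a $\Z$-module isomorphism $E_0[N] \cong \OO/N\OO$ carrying the $\OO$-action to multiplication, so that $\rho_N(\gg_{K(\ff)}) = C_N(\OO)$ and $\overline{\rho_N}(\gg_{K(\ff)}) = \overline{C_N(\OO)}$. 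An injection $\Z/M\Z \times \Z/N\Z \hookrightarrow E(F)[\tors]$ is the same as a point of a component of the modular curve $X(M,N)$ whose residue field has degree $d$ over $K(\ff)$; I would phrase the argument directly in terms of Galois orbits on $(M,N)$-pairs rather than invoking the modular curve, since the excerpt gives us the representation-theoretic tools.

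For the implication (b) $\Rightarrow$ (a): given a reduced $(M,N)$-pair $\bar{x}$ whose $\overline{C_N(\OO)}$-orbit has size $d_0 \mid d$, the stabilizer $H \subset \overline{C_N(\OO)}$ of $\bar{x}$ has index $d_0$. Let $F_0/K(\ff)$ be the fixed field of $\overline{\rho_N}^{-1}(H)$, so $[F_0 : K(\ff)] = d_0$; over $F_0$ the point $\bar{x}$ — i.e., the pair of points of $E_0$ of orders $M$ and $N$ generating a $\Z/M\Z \times \Z/N\Z$ — is Galois-stable up to $\Aut E_0$. By the field-of-moduli statement for modular curves (\cite[Prop. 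VI.3.2]{Deligne-Rapoport}, quoted in \S2.4) there is an $\OO$-CM elliptic curve $E_{/F_0}$, a twist of $(E_0)_{/F_0}$, together with an honest injection $\Z/M\Z \times \Z/N\Z \hookrightarrow E(F_0)$. Finally enlarge $F_0$ to any field $F/F_0$ with $[F:F_0] = d/d_0$ (possible since any degree is attainable, e.g. by a suitable subfield of a cyclotomic extension linearly disjoint from everything in sight); then $\Z/M\Z \times \Z/N\Z \hookrightarrow E(F)$ and $[F:K(\ff)] = d$.

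For the converse (a) $\Rightarrow$ (b): suppose $E_{/F}$ is $\OO$-CM with $[F:K(\ff)] = d$ and $\iota : \Z/M\Z \times \Z/N\Z \hookrightarrow E(F)$. After extending scalars we may identify $E[N] \cong \OO/N\OO$ as above (the identification is canonical up to $\Aut E = \OO^\times$), so $\iota$ determines a reduced $(M,N)$-pair $\bar{x}$. Since the image of $\iota$ is pointwise $F$-rational, $\gg_F$ fixes the corresponding pair of points, hence $\overline{\rho_N}(\gg_F)$ is contained in the stabilizer of $\bar{x}$ in $\overline{C_N(\OO)}$; as $\overline{\rho_N}(\gg_{K(\ff)}) = \overline{C_N(\OO)}$ and $[\gg_{K(\ff)} : \gg_F]$ divides $d$ (restriction of $\overline{\rho_N}$ to $\gg_F$ has image of index dividing $[F:K(\ff)]=d$), the index of $\mathrm{Stab}(\bar{x})$, which is the size of the orbit of $\bar{x}$, divides $d$. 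The main obstacle, and the one point requiring genuine care, is the bookkeeping around $\Aut E$: passing between an honest point configuration on a model $E_{/F}$ and a reduced pair (an $\OO^\times$-orbit) must be done consistently in both directions so that the use of the reduced representation $\overline{\rho_N}$ and of the reduced Cartan $\overline{C_N(\OO)}$ matches up — this is exactly why the statement is about reduced $(M,N)$-pairs and the reduced Cartan, and why we must invoke the field-of-moduli result to produce a model over $F_0$ rather than merely over some quadratic (or quartic/sextic) extension.
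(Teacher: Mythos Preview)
Your proof is correct and follows essentially the same approach as the paper: both arguments hinge on the surjectivity of the reduced Galois representation $\overline{\rho_N}: \gg_{K(\ff)} \to \overline{C_N(\OO)}$, identifying the orbit size of a reduced $(M,N)$-pair with the minimal degree over which a suitable twist carries an honest rational $(M,N)$-structure. The paper phrases the $(b)\Rightarrow(a)$ direction via the character $\chi:\gg_F \to \OO^\times$ and an explicit twist, whereas you invoke the Deligne--Rapoport field-of-moduli result; these are equivalent, and your treatment is somewhat more detailed (one small slip: the identification $E[N]\cong \OO/N\OO$ is canonical only up to $(\OO/N\OO)^\times$, not $\OO^\times$, but this is harmless since you only use the $C_N(\OO)$-orbit).
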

\begin{proof}
This is a direct consequence of the surjectivity of the reduced Galois representation
\[ \overline{\rho_N}: \gg_{K(\ff)} \ra \overline{C_N(\OO)}. \]
Indeed, it follows that for an $(M,N)$-pair $(P,Q) \in E(\overline{K(\ff)})$, the size of the $\overline{C_N(\OO)}$-orbit on the reduced
pair is the unique minimal degree of a field extension $F/K(\ff)$ over which there is a single character $\chi: \gg_F \ra \OO^{\times}$ such that for all $\sigma \in \gg_F$ we have $\sigma(P) = \chi(\sigma) P$ and $\sigma(Q) =
\chi(\sigma) Q$, and thus $(P,Q)$ both become rational on some twist of $E_{/F}$.
\end{proof}
\noindent
Thus we wish to find, for all $\OO$ and $M \mid N$, the set of all multiples of sizes of $\overline{C_N(\OO)}$-orbits on
reduced $(M,N)$-pairs.  We will show that every $\overline{C_N(\OO)}$-orbit has size a multiple of $T(\OO,M,N)$, with the consequence that condition (b) in Proposition \ref{FIRSTCOMPILEPROP} will simplify to: $d$ is a multiple of $T(\OO,M,N)$.
\\ \\
Although Proposition \ref{FIRSTCOMPILEPROP} gives us the answer in terms of orbits on reduced $(M,N)$-pairs, it is more natural
to work with orbits on $(M,N)$-pairs, especially with regard to the process of compiling across prime powers.  The following
result allows us to pass back and forth between them.

\begin{lemma}
\label{LEMMA4.10}
Let $N \geq 4$ and let $M \mid N$.  Let $(P,Q)$ be an $(M,N)$-pair in $\OO/N\OO$, and let $\overline{(P,Q)}$ be the corresponding
reduced $(M,N)$-pair. 
\begin{enumerate}
\item The size of the $C_N(\OO)$-orbit on $(P,Q)$ is $w$ times the size of the $\overline{C_N(\OO)}$-orbit
on $\overline{(P,Q)}$. 
\item We have $\widetilde{T}(\OO,M,N) = w T(\OO,M,N)$.
\item Every $C_N(\OO)$-orbit on an $(M,N)$-pair has size a multiple of $\widetilde{T}(\OO,M,N)$ iff every
$\overline{C_N(\OO)}$-orbit on a reduced $(M,N)$-pair has size a multiple of $T(\OO,M,N)$.
\end{enumerate}
\end{lemma}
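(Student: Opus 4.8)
The plan is to reduce all three parts to a single group-theoretic fact: when $N \geq 4$, the subgroup $q_N(\OO^{\times})$ of $C_N(\OO)$ acts \emph{freely} on the set of $(M,N)$-pairs. Granting this, parts (a), (b), (c) follow by routine orbit-counting for an abelian group acting through a subgroup.

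First I would prove the freeness. Let $(P,Q)$ be an $(M,N)$-pair and suppose $u \in \OO^{\times}$ satisfies $q_N(u)\cdot(P,Q) = (P,Q)$; in particular $(u-1)Q = 0$ in $\OO/N\OO$. Multiplying by the conjugate $\overline{u-1}$ (the image of $u-1$ under the nontrivial automorphism of $K$) gives $\mathrm{N}_{K/\Q}(u-1)\cdot Q = 0$, and since $Q$ has additive order $N$ this forces $N \mid \mathrm{N}_{K/\Q}(u-1)$. But for a root of unity $u \neq 1$ in an imaginary quadratic field one computes $\mathrm{N}_{K/\Q}(u-1) = 2 - \mathrm{Tr}_{K/\Q}(u) \in \{1,2,3,4\}$, with the value $4$ occurring only for $u = -1$; this handles all three possibilities $w = 2,4,6$ uniformly. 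Hence $N \leq 4$, and if $N = 4$ then $u = -1$, so $2Q = 0$, contradicting $\ord(Q) = 4$. Since $N \geq 4$ we conclude $u = 1$; because $q_N$ is injective on $\OO^{\times}$ for $N \geq 3$, the stabilizer of $(P,Q)$ in $q_N(\OO^{\times})$ is trivial, i.e. the action is free.

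Now for part (a), write $G = C_N(\OO)$ and $Z = q_N(\OO^{\times}) \subseteq G$, a central subgroup of order $w$ (using injectivity of $q_N$ again), with $\overline{C_N(\OO)} = G/Z$. Given an $(M,N)$-pair $x = (P,Q)$, the $Z$-orbits inside the single $G$-orbit $Gx$ partition $Gx$, and since $Z$ is normal in $G$ the stabilizer in $Z$ of any point of $Gx$ is a conjugate of $\mathrm{Stab}_Z(x) = \{1\}$, so each such $Z$-orbit has cardinality $|Z| = w$. On the other hand these $Z$-orbits are precisely the reduced $(M,N)$-pairs forming the $\overline{G}$-orbit of $\overline{x}$, so their number equals $|\overline{G}\cdot\overline{x}|$. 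Counting points of $Gx$ two ways gives $|Gx| = w\cdot|\overline{G}\cdot\overline{x}|$, which is (a). Part (b) is immediate: every reduced $(M,N)$-pair is $\overline{x}$ for some $(M,N)$-pair $x$, so minimizing over $x$ and using (a), $\widetilde{T}(\OO,M,N) = w\,T(\OO,M,N)$. For (c) I combine (a) and (b): for any $(M,N)$-pair $x$ we have $\widetilde{T}(\OO,M,N) \mid |Gx|$ iff $w\,T(\OO,M,N) \mid w\,|\overline{G}\cdot\overline{x}|$ iff $T(\OO,M,N) \mid |\overline{G}\cdot\overline{x}|$, and since $x \mapsto \overline{x}$ is onto the set of reduced $(M,N)$-pairs, the divisibility condition holds for all $C_N(\OO)$-orbits iff it holds for all $\overline{C_N(\OO)}$-orbits.

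The only step with real content — and the one I expect to require the most care — is the freeness claim, where the hypothesis $N \geq 4$ is genuinely essential: it is exactly what excludes the honest failures (e.g. $u = \zeta_3$ fixing a point of order $3$ in the $\Delta = -3$ case, or $u = -1$ fixing a point of order $2$), and the uniform norm bound $\mathrm{N}_{K/\Q}(u-1) \le 4$ is what lets one dispatch the cases $w = 2,4,6$ at once. Once freeness is in hand, the remainder is elementary; contrast with Lemma \ref{COMPILE1}, where an even stronger freeness (of all of $C_N(\OO)$) holds but only for $(N,N)$-pairs.
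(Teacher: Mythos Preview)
Your proof is correct and follows the same overall strategy as the paper: reduce (a) to the assertion that $\OO^{\times}$ acts freely on $(M,N)$-pairs when $N \geq 4$, then deduce (b) and (c) formally. The paper's proof of freeness simply cites \cite[Lemma 7.6]{BC18} for the case $M = 1$ (a point of order $N$) and observes that the general case follows since every $(M,N)$-pair contains such a point; you instead supply a self-contained argument via the norm bound $\mathrm{N}_{K/\Q}(u-1) = 2 - \mathrm{Tr}_{K/\Q}(u) \in \{1,2,3,4\}$ for $u \in \OO^{\times} \setminus \{1\}$, which handles all three values of $w$ uniformly and makes the role of the hypothesis $N \geq 4$ transparent. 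This is a pleasant and slightly more informative variant, though not a different route.
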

\begin{proof}
(a) The assertion is equivalent to: $\OO^{\times}$ acts freely on $(M,N)$-pairs. When $M  = 1$, this follows from \cite[Lemma 7.6]{BC18}.  The general case follows. (b), (c) These follow immediately.
\end{proof}

\begin{prop}
\label{PROP4.11}
Let $M \mid N$ be positive integers.  Write
\[ M = \ell_1^{a_1} \cdots \ell_r^{a_r}, \ N = \ell_1^{b_1} \cdots \ell_r^{b_r}. \]
Let $(P,Q)$ be an $(M,N)$-pair in $\OO/N\OO$.  For $1 \leq i \leq r$, let $P_i$ (resp. $Q_i$) be the image of $P$ (resp. $Q$) in $\OO/\ell_i^{b_i} \OO$ .  Then: 
\begin{enumerate}
\item For all $1 \leq i \leq r$, we have that $(P_i,Q_i)$ is an $(\ell_i^{a_i},\ell_i^{b_i})$-pair in $\OO/\ell_i^{b_i} \OO$.  
\item The $C_N(\OO)$-orbit on the pair $(P,Q)$ is isomorphic as a $C_N(\OO)$-set to the direct product of
the $C_{\ell_i^{b_i}}(\OO)$-orbits on the pairs $(P_i,Q_i)$.
\item Suppose $N \geq 4$.  Then we have
\[ T(\OO,M,N) = \frac{\prod_{i=1}^r \widetilde{T}(\OO,\ell_i^{a_i},\ell_i^{b_i})}{w}. \]
\end{enumerate}
\end{prop}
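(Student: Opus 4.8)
The plan is to push everything through the Chinese Remainder Theorem. The ring isomorphism $\OO/N\OO \cong \prod_{i=1}^r \OO/\ell_i^{b_i}\OO$ restricts to a group isomorphism $C_N(\OO) \cong \prod_{i=1}^r C_{\ell_i^{b_i}}(\OO)$, and under the resulting identification a point $x \in \OO/N\OO$ corresponds to the tuple of its images $(x_i)_i$, with $\ord(x) = \lcm_i \ord(x_i) = \prod_i \ord(x_i)$ since the $\ell_i$ are pairwise coprime. For part (a): from $\ord(P) = M = \prod_i \ell_i^{a_i}$ and $\ord(Q) = N = \prod_i \ell_i^{b_i}$ one reads off $\ord(P_i) = \ell_i^{a_i}$ and $\ord(Q_i) = \ell_i^{b_i}$. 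The $\Z$-module generated by $P$ and $Q$ corresponds under CRT to the product of the $\Z$-modules generated by the $P_i$ and $Q_i$, whose $\ell_i$-primary component is exactly the $\Z$-module generated by $P_i$ and $Q_i$. Since the $\ell_i$-primary component of $\Z/M\Z \times \Z/N\Z$ is $\Z/\ell_i^{a_i}\Z \times \Z/\ell_i^{b_i}\Z$, uniqueness of the primary decomposition of a finite abelian group forces the $\Z$-module generated by $P_i$ and $Q_i$ to be isomorphic to $\Z/\ell_i^{a_i}\Z \times \Z/\ell_i^{b_i}\Z$; hence $(P_i,Q_i)$ is an $(\ell_i^{a_i},\ell_i^{b_i})$-pair in $\OO/\ell_i^{b_i}\OO$.

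For part (b): under $C_N(\OO) \cong \prod_i C_{\ell_i^{b_i}}(\OO)$, an element $g = (g_i)_i$ satisfies $g\cdot(P,Q) \leftrightarrow (g_i\cdot(P_i,Q_i))_i$, so the orbit map $g \mapsto g\cdot(P,Q)$ is the product of the orbit maps $g_i \mapsto g_i\cdot(P_i,Q_i)$. This exhibits the $C_N(\OO)$-orbit of $(P,Q)$ as the direct product $\prod_i \bigl(C_{\ell_i^{b_i}}(\OO)\cdot(P_i,Q_i)\bigr)$ of $C_{\ell_i^{b_i}}(\OO)$-sets, with $C_N(\OO)$ acting through its projections; in particular its size is $\prod_i \#\bigl(C_{\ell_i^{b_i}}(\OO)\cdot(P_i,Q_i)\bigr)$.

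For part (c): by (b), every $C_N(\OO)$-orbit on an $(M,N)$-pair has size $\prod_i \#\bigl(C_{\ell_i^{b_i}}(\OO)\cdot(P_i,Q_i)\bigr) \geq \prod_i \widetilde{T}(\OO,\ell_i^{a_i},\ell_i^{b_i})$, so $\widetilde{T}(\OO,M,N) \geq \prod_i \widetilde{T}(\OO,\ell_i^{a_i},\ell_i^{b_i})$. Conversely, choosing for each $i$ an $(\ell_i^{a_i},\ell_i^{b_i})$-pair with $C_{\ell_i^{b_i}}(\OO)$-orbit of minimal size and assembling these via CRT into a pair $(P,Q) \in \OO/N\OO$ — which is an $(M,N)$-pair by the converse of the argument in (a), again immediate from CRT and primary decomposition — produces, by (b), a $C_N(\OO)$-orbit of size exactly $\prod_i \widetilde{T}(\OO,\ell_i^{a_i},\ell_i^{b_i})$. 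Hence $\widetilde{T}(\OO,M,N) = \prod_i \widetilde{T}(\OO,\ell_i^{a_i},\ell_i^{b_i})$. Since $N \geq 4$, Lemma \ref{LEMMA4.10}(b) gives $\widetilde{T}(\OO,M,N) = w\,T(\OO,M,N)$, and combining the two identities yields $T(\OO,M,N) = \frac{\prod_{i=1}^r \widetilde{T}(\OO,\ell_i^{a_i},\ell_i^{b_i})}{w}$.

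The argument is essentially CRT bookkeeping, and I anticipate no genuine obstacle. The one step that demands a little care is part (a): one must verify that the $\Z$-module generated by a pair splits compatibly with the CRT decomposition, which amounts to identifying the $\ell_i$-primary part of a finite abelian group with the $i$-th factor of its primary decomposition and invoking uniqueness of that decomposition. The factor of $w$ in (c) is supplied entirely by Lemma \ref{LEMMA4.10}(b), which is the only place the hypothesis $N \geq 4$ enters.
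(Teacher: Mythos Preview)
Your proof is correct and follows essentially the same approach as the paper: parts (a) and (b) are CRT bookkeeping from the decompositions $\OO/N\OO \cong \prod_i \OO/\ell_i^{b_i}\OO$ and $C_N(\OO) \cong \prod_i C_{\ell_i^{b_i}}(\OO)$, and (c) then follows from the resulting identity $\widetilde{T}(\OO,M,N) = \prod_i \widetilde{T}(\OO,\ell_i^{a_i},\ell_i^{b_i})$ together with Lemma \ref{LEMMA4.10}(b). You have simply spelled out more of the details than the paper does.
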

\begin{proof}
Parts (a) and (b) are an immediate consequence of the canonical decompositions arising from the Chinese Remainder Theorem:
\[ C_N(\OO) = \prod_{i=1}^r C_{\ell_i^{b_i}}(\OO), \ \OO/N\OO = \prod_{i=1}^r \OO/\ell_i^{b_i}\OO.\]
Thus $\widetilde{T}(\OO,M,N)=\prod_{i=1}^r \widetilde{T}(\OO,\ell_i^{a_i},\ell_i^{b_i})$, and so (c) follows Lemma \ref{LEMMA4.10}(b).
\end{proof}

\begin{cor}
Suppose the $C_{\ell^b}(\OO)$-orbit on an $(\ell^a,\ell^b)$-pair has size a multiple of $\widetilde{T}(\OO,\ell^a,\ell^b)$ for all primes $\ell$ and all integers $0 \leq a \leq b$.  Then for any positive integers $M \mid N$ the following are equivalent:
\begin{enumerate}
\item There is a field $F \supset K(\ff)$ with $[F:K(\ff)] = d$, an $\OO$-CM elliptic curve $E_{/F}$ and an injection
\[ \Z/M\Z \times \Z/N\Z \hookrightarrow E(F)[\tors]. \]
\item The integer $d$ is a multiple of $T(\OO,M,N)$.
\end{enumerate}
\end{cor}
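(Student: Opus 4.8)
The plan is to reduce everything to Proposition \ref{FIRSTCOMPILEPROP} together with one structural fact: that, under the hypothesis, \emph{every} $\overline{C_N(\OO)}$-orbit on the set of reduced $(M,N)$-pairs has size divisible by $T(\OO,M,N)$. Granting this, the equivalence is immediate. By Proposition \ref{FIRSTCOMPILEPROP}, condition (a) holds for a given $d$ precisely when $d$ is divisible by the size of some $\overline{C_N(\OO)}$-orbit on reduced $(M,N)$-pairs. If each such size is a multiple of $T(\OO,M,N)$, then divisibility by one of them forces $T(\OO,M,N)\mid d$; conversely $T(\OO,M,N)$ is by definition the least --- hence an attained --- orbit size, so $T(\OO,M,N)\mid d$ already exhibits $d$ as a multiple of an orbit size. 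Thus (a) is equivalent to $T(\OO,M,N)\mid d$, i.e.\ to (b), and the task reduces to establishing the divisibility claim.

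For $N\geq 4$ I would argue as follows. Write $M=\ell_1^{a_1}\cdots\ell_r^{a_r}$ and $N=\ell_1^{b_1}\cdots\ell_r^{b_r}$. By Lemma \ref{LEMMA4.10}(c) it suffices to show that every $C_N(\OO)$-orbit on an $(M,N)$-pair $(P,Q)$ has size divisible by $\widetilde{T}(\OO,M,N)$. By Proposition \ref{PROP4.11}(b) such an orbit is, as a $C_N(\OO)$-set, the direct product of the $C_{\ell_i^{b_i}}(\OO)$-orbits on the reductions $(P_i,Q_i)$, so its cardinality is the product of those $r$ orbit sizes; by hypothesis each factor is a multiple of $\widetilde{T}(\OO,\ell_i^{a_i},\ell_i^{b_i})$, so the orbit size is a multiple of $\prod_{i=1}^r\widetilde{T}(\OO,\ell_i^{a_i},\ell_i^{b_i})=\widetilde{T}(\OO,M,N)$, the last equality being recorded inside the proof of Proposition \ref{PROP4.11}. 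This disposes of $N\geq 4$ purely formally, from results already in hand.

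What remains is $N\in\{1,2,3\}$, where Lemma \ref{LEMMA4.10} is unavailable and $\OO^{\times}$ may fail to act freely on $(M,N)$-pairs --- already the case when $\Delta\in\{-3,-4\}$. For $N=1$ the assertion is trivial, since $T(\OO,1,1)=1$ and the required injection is vacuous. For $N\in\{2,3\}$ only the four pairs $(M,N)\in\{(1,2),(2,2),(1,3),(3,3)\}$ occur, and for each I would check the divisibility claim by hand --- enumerating the small set of reduced $(M,N)$-pairs and the $\overline{C_N(\OO)}$-action on it, and cross-checking against the explicit values of $T(\OO,M,N)$ and $\widetilde{T}(\OO,M,N)$ in Theorem \ref{BIGMNTHM}(2)--(3) --- and then apply Proposition \ref{FIRSTCOMPILEPROP} as above. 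I expect this low-modulus bookkeeping, and the exceptional discriminants $-3$ and $-4$ in particular, to be the only real friction: for $N\geq 4$ the Corollary is a mechanical consequence of the compilation results, so its genuine content is carried entirely by the hypothesis, i.e.\ by the prime-power orbit analysis performed elsewhere in Section 4.
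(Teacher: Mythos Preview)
Your argument is correct and, for $N\ge 4$, identical to the paper's: reduce to the unreduced orbits via Lemma \ref{LEMMA4.10}(c), factor via Proposition \ref{PROP4.11}(b), apply the hypothesis prime by prime, and then invoke Proposition \ref{FIRSTCOMPILEPROP}. The only difference is in the handling of $N\in\{2,3\}$: rather than enumerating reduced pairs and $\overline{C_N(\OO)}$-orbits by hand, the paper bypasses Proposition \ref{FIRSTCOMPILEPROP} entirely for these cases and cites results already established in \cite{BC18} --- namely \cite[Thm.~7.2(a)]{BC18} for $(M,N)\in\{(1,2),(1,3)\}$ and \cite[Thm.~1.1, Cor.~1.4]{BC18} for $(M,N)\in\{(2,2),(3,3)\}$ --- which give the equivalence directly. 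Your case-check would of course work as well, but the citation is cleaner and sidesteps the $\Delta\in\{-3,-4\}$ bookkeeping you anticipated.
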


\begin{proof}
If $N \geq 4$, then this follows from Proposition \ref{FIRSTCOMPILEPROP}, Lemma \ref{LEMMA4.10}, and Proposition \ref{PROP4.11}. If $M=1$ and $N=2$ or 3, the statement follows from \cite[Thm. 7.2(a)]{BC18} with $T(\OO,1,N)=T(\OO,N)$. If $M=N=2$ or if $M=N=3$, then the statement follows from \cite[Thm. 1.1, Cor. 1.4]{BC18} with $ T(\OO,N,N)=\#\overline{C_N(\OO)}$.
\end{proof}

\subsection{Prime power case}
 For a prime $\ell$ and natural numbers $0 \leq a \leq b$, we will compute $\widetilde{T}(\OO, \ell^a,\ell^b)$ and show every $C_{\ell^b}(\OO)$-orbit on an $(\ell^a,\ell^b)$-pair has size a multiple of $\widetilde{T}(\OO,\ell^a,\ell^b)$.  When $a = b$, by Lemma \ref{COMPILE1} every $C_{\ell^b}(\OO)$-orbit has size $\# C_{\ell^b}(\OO)$, accomplishing both goals in this case, so we may assume 
$a < b$.  
\\ \\
If $\ell^b <4$, then the values of $T(\OO, \ell^a,\ell^b),$ $\widetilde{T}(\OO, \ell^a,\ell^b)$ follow from Lemma \ref{COMPILE1} and \cite[Thm. 1.1, Cor. 1.4, Thm. 7.2, Rmk. 7.3]{BC18}. The following result handles the divisibility claim.
 
 \begin{lemma} 
 \begin{enumerate}
 \item[]
\item Every $C_2(\OO)$-orbit on a $(1,2)$-pair has size a multiple of $\widetilde{T}(\OO,1,2)$.  
\item Every $C_3(\OO)$-orbit on a $(1,3)$-pair has size a multiple of $\widetilde{T}(\OO,1,3)$.
\end{enumerate}
\end{lemma}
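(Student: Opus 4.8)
The plan is to treat each case by direct inspection of the small Cartan groups $C_2(\OO)$ and $C_3(\OO)$, using the structure results recalled in \S2.3 together with the explicit values of $\widetilde{T}(\OO,1,N)$ from part (3) of Theorem \ref{BIGMNTHM}. First I would recall that a $(1,N)$-pair is just a point $P \in \OO/N\OO$ of order $N$, and that the relevant orbit is the $C_N(\OO)$-orbit of $P$; by Proposition \ref{FIRSTCOMPILEPROP} (in its $M=1$ incarnation, i.e.\ the analysis already in \cite{BC18}) or by a direct orbit count, the possible orbit sizes are what we must pin down. Note that when $N=2$ every nonzero element of $\OO/2\OO$ has order $2$, so a $(1,2)$-pair is simply a nonzero vector in the $\FF_2$-vector space $\OO/2\OO \cong \FF_2^2$, and $C_2(\OO)$ acts on these three nonzero vectors; when $N=3$, an order-$3$ point is a nonzero vector in $\OO/3\OO \cong \FF_3^2$ up to nothing (all nonzero vectors have order $3$), so $C_3(\OO)$ acts on the $8$ nonzero vectors.

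The key computation is to determine the $C_\ell(\OO)$-orbit structure on $(\OO/\ell\OO)\setminus\{0\}$ for $\ell=2,3$, which depends only on the splitting behavior of $\ell$, i.e.\ on $\left(\frac{\Delta}{\ell}\right)$. If $\left(\frac{\Delta}{\ell}\right)=1$ then $\OO/\ell\OO \cong \FF_\ell \times \FF_\ell$ and $C_\ell(\OO)\cong \FF_\ell^\times \times \FF_\ell^\times$ acts with orbits of sizes $\ell-1$ (the two coordinate axes, minus origin) and $(\ell-1)^2$; if $\left(\frac{\Delta}{\ell}\right)=0$ then $\OO/\ell\OO$ is local with maximal ideal of size $\ell$, and $C_\ell(\OO)$ has order $\ell(\ell-1)$, acting with orbits of sizes $\ell-1$ (the punctured maximal ideal) and $\ell(\ell-1)$ (the units); if $\left(\frac{\Delta}{\ell}\right)=-1$ then $\OO/\ell\OO \cong \FF_{\ell^2}$ and $C_\ell(\OO)\cong \FF_{\ell^2}^\times$ acts simply transitively on the $\ell^2-1$ nonzero elements. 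In each case I would check that the minimal orbit size equals $\widetilde{T}(\OO,1,\ell)$ as listed (for $\ell=2$: $1$ if $\left(\frac{\Delta}{2}\right)\neq -1$, i.e.\ there is a fixed point or an orbit of size $1$, and $3$ if $\left(\frac{\Delta}{2}\right)=-1$; for $\ell=3$: $8/w$ if $\left(\frac{\Delta}{3}\right)=-1$ and $1$ otherwise — here one must remember that for $\ell=3$ we are computing $\widetilde T$, which records the $C_3(\OO)$-orbit, but the value $8/w$ in part (2) is $T(\OO,1,3)$, so I would be careful to state the claim for $\widetilde T(\OO,1,3)$, namely $8$ in the inert case and $1$ otherwise, matching $w\cdot T$ only when the relevant freeness holds — the safe route is to read $\widetilde T$ directly off part (3) for $N=2$ and to argue the $N=3$ case from scratch). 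Finally, and this is the heart of the divisibility claim, I would verify that \emph{every} orbit has size divisible by the minimal one: this is automatic when the minimal size is $1$ (every split or ramified case), and when the minimal size is the full group order (inert case) every orbit necessarily has that size, so divisibility holds trivially; the only mildly nontrivial check is the split case where orbit sizes are $\ell-1$ and $(\ell-1)^2$, and $(\ell-1)\mid(\ell-1)^2$.

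The main obstacle I anticipate is bookkeeping around the distinction between $T$ and $\widetilde T$ at $\ell=3$, together with the exceptional discriminants $\Delta=-3,-4$ where $w>2$ and the $\OO^\times$-action is larger; for $\ell=2$ and $\Delta=-3$ one has $2 \nmid$ the relevant quantities in a way that must be reconciled with the formula $T(\OO,1,2)=1$ when $\left(\frac{\Delta}{2}\right)=-1$ fails (note $\left(\frac{-3}{2}\right)=-1$, so actually $\widetilde T(\OO,1,2)=3$ here), and for $\ell=3$ and $\Delta=-3$ one has $3\mid \Delta$ so $\left(\frac{\Delta}{3}\right)=0$ and $\widetilde T(\OO,1,3)=\ell-1=2$. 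In each such exceptional case I would simply exhibit the orbit decomposition explicitly and confirm that the listed $\widetilde T$ is the gcd of the orbit sizes. Since all groups involved have order at most $\#C_3(\OO)\le 8$, the entire argument reduces to a finite, completely explicit case check, and no genuinely new idea beyond the orbit analysis of \cite{BC18} is required; the lemma is really a sanity check that the small cases are consistent with the uniform statements proved later for $\ell^b\geq 4$.
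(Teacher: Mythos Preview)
Your approach is correct and essentially the same as the paper's: for part (a) the paper does exactly the case split on $\left(\frac{\Delta}{2}\right)$ that you describe (trivial when $\widetilde{T}=1$, single orbit of size $3$ when inert), and for part (b) the paper simply cites \cite[Thm.~7.8 and \S7.4]{BC18}, which contains precisely the orbit analysis of $C_\ell(\OO)$ on nonzero vectors of $\OO/\ell\OO$ that you spell out explicitly. So your argument is a faithful unpacking of that citation.

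One minor slip to fix: you write that $\widetilde{T}(\OO,1,3)$ is ``$8$ in the inert case and $1$ otherwise,'' but your own orbit computation shows it is $2$ in the split and ramified cases (orbit sizes $\{2,2,4\}$ and $\{2,6\}$ respectively). You partially catch this later when treating $\Delta=-3$. This does not affect the divisibility claim, since every orbit size you list is indeed divisible by the minimal one, but the stated value of $\widetilde{T}(\OO,1,3)$ should be corrected to $2$ in the non-inert cases.
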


 \begin{proof} a) We have
  \[
\widetilde{T}(\OO,1,2)=\widetilde{T}(\OO,2)=\begin{cases} 1& \left( \frac{\Delta}{2} \right)\neq-1,  \\ 3 & \left( \frac{\Delta}{2} \right)=-1. \end{cases}
  \] 
The divisibility claim holds trivially if $\left( \frac{\Delta}{2} \right)\neq-1$, so suppose $\left( \frac{\Delta}{2} \right)=-1$. Then 
$\# C_2(\OO) = 3 = \# (\OO/2\OO \setminus \{0\})$, so the $(1,2)$-pairs form a single $C_2(\OO)$-orbit.  \\
b) This follows from \cite[Thm. 7.8 and $\S7.4$]{BC18}. 
  \end{proof}
\noindent
Henceforth we assume $\ell^b \geq 4$. The following preliminary result strengthens \cite[Lemma 2.3]{BCP17}.

\begin{prop}
\label{1.1}
Let $\ell^b \geq 4$ be a prime power, and let $E$ be an $\OO$-CM elliptic curve defined over a number field $F \supset K$. Suppose
\[ \Z/\ell^a \Z \times \Z/\ell^b \Z \hookrightarrow E(F). \]
\begin{enumerate}
\item If $\left( \frac{\Delta}{\ell} \right) = -1$, then $a = b$ and $\ell^{2b-2} (\ell^2-1) \mid w [F:K(\ff)]$.  
\item If $\left( \frac{\Delta}{\ell}\right) = 1$ and $a = 0$, then $\ell^{b-1} (\ell-1) \mid w [F:K(\ff)]$.  
\item If $\left( \frac{\Delta}{\ell}\right) = 1$ and $a \geq 1$, then $\ell^{a+b-2}(\ell-1)^2 \mid w [F:K(\ff)]$. 
\item If $\left( \frac{\Delta}{\ell}\right) = 0$, then $\ell^{a+b-1}(\ell-1) \mid w [F:K(\ff)]$.
\end{enumerate}
\end{prop}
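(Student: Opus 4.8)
The plan is to pass from $E_{/F}$ to the associated $\OO_K$-CM elliptic curve via the canonical isogeny and then exploit what is already known about torsion on $\OO_K$-CM curves together with the divisibility $\frac{\varphi(\ell^b)}{2} \mid [F:K(\ff)]$ from the reduced Galois representation being the full reduced Cartan. Concretely, I would first reduce to the $\ell$-primary part: if $\Z/\ell^a\Z \times \Z/\ell^b\Z \hookrightarrow E(F)$, then $E(F)[\ell^\infty]$ is a finite $\ell$-primary $\OO$-submodule of $E[\tors]$ containing a copy of $\Z/\ell^a\Z \times \Z/\ell^b\Z$. When $\left(\frac{\Delta}{\ell}\right) = -1$ the prime $\ell$ is inert in $\OO$ (note $\OO$ is maximal at $\ell$ in this case, since $\ell \nmid \ff$ forces $\left(\frac{\Delta}{\ell}\right) = \left(\frac{\Delta_K}{\ell}\right)$), so every finite $\OO$-submodule of $E[\ell^\infty]$ is of the form $E[\ell^k]$, forcing $a = b$; this gives the first half of (a). In the split and ramified cases (b), (c), (d), $a$ and $b$ can genuinely differ and we must track both invariant factors.

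The core of the argument is a degree lower bound. I would use the reduced mod $\ell^b$ Galois representation $\overline{\rho_{\ell^b}}\colon \gg_{K(\ff)} \to \overline{C_{\ell^b}(\OO)}$, which is surjective (Stevenhagen, \cite[Cor. 1.2]{BC18}), so $[F(E[\ell^b]):F]$ together with the constraint that $F$ already contains $\Z/\ell^a\Z \times \Z/\ell^b\Z$ forces $[F:K(\ff)]$ to be divisible by the index in $\overline{C_{\ell^b}(\OO)}$ of the stabilizer of the relevant pair — equivalently, by the size of the $\overline{C_{\ell^b}(\OO)}$-orbit of a reduced $(\ell^a,\ell^b)$-pair, which after multiplying by $w$ is $\widetilde{T}(\OO,\ell^a,\ell^b)$. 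So I would compute $\# C_{\ell^b}(\OO)$ from the formula in $\S2.3$, namely $\ell^{2b}\prod_{p\mid\ell}\left(1 - \left(\frac{\Delta}{p}\right)\frac1p\right)\left(1-\frac1p\right) = \ell^{2b-2}(\ell-\left(\frac{\Delta}{\ell}\right))(\ell-1)$, and then estimate how much smaller than $\#C_{\ell^b}(\OO)$ a stabilizer can be: a group element fixing a point $P$ of order $\ell^a$ lies in a subgroup of index $\#(\OO/\ell^a\OO)^\times / (\text{something})$ depending on the annihilator structure of $P$, and similarly for the order-$\ell^b$ point $Q$. The case analysis on $\left(\frac{\Delta}{\ell}\right) \in \{-1,0,1\}$ and on whether $a = 0$ or $a \geq 1$ matches the four claimed bounds; in the split case one also distinguishes whether $\ell \mid \ff$, but since here $\left(\frac{\Delta}{\ell}\right) = \pm 1$ means $\ell \nmid \ff$, we are in the maximal-at-$\ell$ situation and $\OO_\ell \cong \OO_{K,\ell}$, which simplifies the module theory.

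The step I expect to be the main obstacle is the precise stabilizer computation in the split case with $a \geq 1$, yielding the sharp exponent $\ell^{a+b-2}(\ell-1)^2$ rather than something weaker. Here $\OO/\ell^b\OO \cong \Z/\ell^b\Z \times \Z/\ell^b\Z$ as a ring (product of the two completions at $\pp_1,\pp_2$), and a generic $(\ell^a,\ell^b)$-pair $(P,Q)$ has $P$ with components of orders $(\ell^{a_1},\ell^{a_2})$ and $Q$ with components of orders $(\ell^{b_1},\ell^{b_2})$ where $\max(a_1,a_2) = a$, $\max(b_1,b_2) = b$; the orbit size is then $\ell^{a_1+b_1-2\min}(\ell-1)\cdot\ell^{a_2+b_2-2\min}(\ell-1)$-type quantity per factor, and one must check that the minimum over all admissible component-order configurations equals $\ell^{a+b-2}(\ell-1)^2$. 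This is a finite optimization but needs care; I would organize it by noting that to fill out an order-$\ell^a$ point and an order-$\ell^b$ point spanning $\Z/\ell^a\Z\times\Z/\ell^b\Z$ inside $\Z/\ell^b\Z\times\Z/\ell^b\Z$, the cheapest configuration puts the "large" direction of $P$ and of $Q$ in opposite $\pp_i$-components, and then the stabilizer in $C_{\ell^b}(\OO) = (\Z/\ell^b\Z)^\times \times (\Z/\ell^b\Z)^\times$ is as large as possible. The ramified case (d) is handled similarly but with $\OO/\ell^b\OO$ now local (a quotient of a ramified extension), using $E'[\pp^d] \cong \Z/\ell^{\lceil d/2\rceil}\Z \times \Z/\ell^{\lfloor d/2\rfloor}\Z$ from $\S2$; and the inert case (a), beyond forcing $a=b$, is immediate from Lemma \ref{COMPILE1} since then the orbit has full size $\#C_{\ell^b}(\OO) = \ell^{2b-2}(\ell^2-1)$.
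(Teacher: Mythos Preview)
Your core idea is right and coincides with the paper's: from the surjectivity of $\overline{\rho_{\ell^b}}\colon \gg_{K(\ff)}\to\overline{C_{\ell^b}(\OO)}$ one gets that $\#\overline{C_{\ell^b}(\OO)}$ divides $[F(E[\ell^b]):F]\cdot[F:K(\ff)]$, and then one bounds the first factor. Where you diverge is in how that first factor is controlled. The paper does not compute Cartan orbits directly; instead it invokes \cite[Lemma~2.2]{BCP17}, a $\GL_2$-level statement (valid for any elliptic curve, CM or not) giving $[F(E[\ell^b]):F]\mid \ell^{b-1}(\ell-1)$ when $a=0$ in the split case, and $[F(E[\ell^b]):F]\mid \ell^{b-a}$ in cases (c) and (d). Dividing $\#\overline{C_{\ell^b}(\OO)}$ by these bounds gives the claimed divisibilities in one line each. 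Your route---computing stabilizers of $(\ell^a,\ell^b)$-pairs inside the Cartan---yields the same numbers but requires the casework you anticipate. For instance, in (c) the key inequality you need is $\max(a_1,b_1)+\max(a_2,b_2)\ge a+b$ (notation as in your sketch), which follows because $\langle P,Q\rangle$ injects into the product of its two projections, so $\ell^{a+b}=\#\langle P,Q\rangle\le \ell^{\max(a_1,b_1)+\max(a_2,b_2)}$.

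One genuine gap in your sketch: case (d) is not just ``$\OO/\ell^b\OO$ local with the $\pp$-filtration from the ramified maximal order.'' The condition $\bigl(\tfrac{\Delta}{\ell}\bigr)=0$ also covers $\ell\mid\ff$ with $\ell$ split or inert in $\OO_K$, where the local ring $\OO\otimes\Z_\ell$ is genuinely non-maximal and its module theory is different (e.g.\ $\OO/\ell\OO\cong\F_\ell[t]/(t^2)$ regardless of how $\ell$ behaves in $\OO_K$). Your appeal to $E'[\pp^d]\cong\Z/\ell^{\lceil d/2\rceil}\Z\times\Z/\ell^{\lfloor d/2\rfloor}\Z$ only applies after passing to the $\OO_K$-CM curve $E'$ via $\iota$, and then you must transport the information back through $\iota^\vee$---exactly the analysis of Theorem~\ref{KKTHM}, which is more delicate than your sketch indicates. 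The paper's use of the $\GL_2$ bound from \cite{BCP17} sidesteps this entirely, treating all of (d) uniformly without reference to $\Delta_K$ or $\ff$.
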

\begin{proof}
By the results of \cite[Thm. 1.1]{BC18}, we have $F(E[\ell^b]) \supset K^{(\ell^b)}K(\ff \ell^b)$, so
\[ \# \overline{C_{\ell^b}(\OO)} =  [K^{(\ell^b)}K(\ff \ell^b):K(\ff)] \mid
[F(E[\ell^b]):K(\ff)] =[F(E[\ell^b]):F][F:K(\ff)]. \]
(a) If $\left(\frac{\Delta}{\ell} \right) = -1$, then $C_{\ell^b}(\OO)$ acts transitively on points of order $\ell^b$ \cite[Lemma 6.12(b)]{BC18}, so
$a = b$ and $F = F(E[\ell^b])$. Thus
\[  \frac{\ell^{2b-2}(\ell^2-1)}{w} = \# \overline{C_{\ell^b}(\OO)} \mid [F:K(\ff)]. \]
(b) In this case, by \cite[Lemma 2.2]{BCP17}, we have $[F(E[\ell^b]):F] \mid \ell^{b-1} (\ell-1)$, so
\[ \frac{\ell^{2b-2} (\ell-1)^2}{w} = \# \overline{C_{\ell^b}(\OO)} \mid [F(E[\ell^b]):F][F:K(\ff)] \mid
\ell^{b-1} (\ell-1) [F:K(\ff)], \]
and the result follows. \\
(c),(d) In both of these cases, by \cite[Lemma 2.2]{BCP17} we have $[F(E[\ell^b]):F] \mid \ell^{b-a}$, so
\[ \frac{ \# \overline{C_{\ell^b}(\OO)}}{\ell^{b-a}} \mid [F:K(\ff)]. \]
Since
\[ \# \overline{C_{\ell^b}(\OO)} = \begin{cases} \frac{\ell^{2b-2} (\ell-1)^2}{w} & \left(\frac{\Delta}{\ell} \right) = 1 \\
\frac{\ell^{2b-1} (\ell-1)}{w} & \left( \frac{\Delta}{\ell} \right) = 0 \end{cases} \] the result follows.
\end{proof}

\begin{thm}
\label{1.3}
Let $\ell^b \geq 4$ be a prime power.   Suppose $\left( \frac{\Delta}{\ell} \right) = -1$.   
\begin{enumerate}
\item We have
\[
 w[K^{(\ell^b)}K(\ff \ell^b):K(\ff)] = \ell^{2b-2}(\ell^2-1). \]
\item  There is an $\OO$-CM elliptic curve
$E_{/K^{(\ell^b)}K(\ff \ell^b)}$ such that
\[ E(K^{(\ell^b)}K(\ff \ell^b))[\ell^{\infty}] \cong \Z/\ell^b \Z \times \Z/\ell^b \Z. \]
\item We have $\widetilde{T}(\OO, \ell^a, \ell^b)=\ell^{2b-2}(\ell^2-1)$.  Moreover every $C_{\ell^b}(\OO)$-orbit on an $(\ell^a,\ell^b)$-pair has size a multiple of $\widetilde{T}(\OO,\ell^a,\ell^b).$
\end{enumerate}
\end{thm}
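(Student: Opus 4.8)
The plan is to prove the three assertions in turn. The key structural fact, used throughout, is that $\left(\tfrac{\Delta}{\ell}\right) = -1$ forces $\ell \nmid \ff$ (otherwise $\ell^{2} \mid \Delta$ and $\left(\tfrac{\Delta}{\ell}\right) = 0$), so that $\OO/\ell^{b}\OO \cong \OO_{K}/\ell^{b}\OO_{K}$ is a local ring whose maximal ideal is $\ell\OO_{K}/\ell^{b}\OO_{K}$ (as $\ell$ is inert in $\OO_{K}$), with residue field $\F_{\ell^{2}}$, and whose underlying abelian group is free of rank $2$ over $\Z/\ell^{b}\Z$ with basis $\{1,\ff\tau_{K}\}$. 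For part (a): the formula for $\# C_{N}(\OO)$ from $\S 2.3$ gives
\[ \# C_{\ell^{b}}(\OO) = \ell^{2b}\left(1 - \left(\tfrac{\Delta}{\ell}\right)\tfrac{1}{\ell}\right)\left(1 - \tfrac{1}{\ell}\right) = \ell^{2b}\left(1 + \tfrac{1}{\ell}\right)\left(1 - \tfrac{1}{\ell}\right) = \ell^{2b-2}(\ell^{2}-1). \]
Because $\ell^{b} \geq 4 \geq 3$, the map $q_{\ell^{b}}^{\times}\colon \OO^{\times} \to (\OO/\ell^{b}\OO)^{\times}$ is injective, so $\#\overline{C_{\ell^{b}}(\OO)} = \#C_{\ell^{b}}(\OO)/w$; since $[K^{(\ell^{b})}K(\ff\ell^{b}):K(\ff)] = \#\overline{C_{\ell^{b}}(\OO)}$ by \cite[Thm. 1.1]{BC18} (as recalled in the proof of Proposition \ref{1.1}), multiplying by $w$ yields part (a).

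For part (b), put $L \coloneqq K^{(\ell^{b})}K(\ff\ell^{b})$. By \cite[Thm. 1.1]{BC18}, $L$ is the reduced $\ell^{b}$-division field $K(\ff)(\mathfrak{h}(E[\ell^{b}]))$ of any $\OO$-CM elliptic curve $E_{/K(\ff)}$; in particular $\overline{\rho_{\ell^{b}}}$ is trivial on $\gg_{L}$, i.e.\ $\rho_{\ell^{b}}(\gg_{L}) \subseteq q_{\ell^{b}}(\OO^{\times})$. Since $L \supseteq K$, the absolute Galois group acts trivially on $\Aut(E_{/\overline{L}}) = \OO^{\times}$, so the inverse of $\rho_{\ell^{b}}|_{\gg_{L}}$, regarded via $q_{\ell^{b}}^{\times}$ as a class in $H^{1}(\gg_{L}, \Aut E) = \Hom(\gg_{L}, \OO^{\times})$, defines a twist $E'$ of $E_{/L}$ whose mod $\ell^{b}$ representation is trivial. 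Thus $\Z/\ell^{b}\Z \times \Z/\ell^{b}\Z \cong E'[\ell^{b}] \subseteq E'(L)$, and $E'$ is $\OO$-CM since twisting preserves $j$. To see that the $\ell$-primary torsion is no larger, write $E'(L)[\ell^{\infty}] \cong \Z/\ell^{s}\Z \times \Z/\ell^{t}\Z$ with $b \leq s \leq t$; if $t > b$ then $\Z/\ell^{b}\Z \times \Z/\ell^{b+1}\Z \hookrightarrow E'(L)$, contradicting Proposition \ref{1.1}(a), which in the inert case forces the two exponents to agree. Hence $s = t = b$ and $E'(L)[\ell^{\infty}] \cong \Z/\ell^{b}\Z \times \Z/\ell^{b}\Z$.

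For part (c), I claim that $C_{\ell^{b}}(\OO)$ acts \emph{freely} on the set of $(\ell^{a}, \ell^{b})$-pairs, which is nonempty --- for instance $(\ell^{b-a}\cdot 1,\ \ff\tau_{K})$ is such a pair, using the $\Z/\ell^{b}\Z$-basis above. Indeed, if $(P,Q)$ is an $(\ell^{a}, \ell^{b})$-pair, then $Q$ has additive order $\ell^{b} = \exp(\OO/\ell^{b}\OO, +)$; since in the local ring $\OO/\ell^{b}\OO$ the elements of additive order strictly less than $\ell^{b}$ are precisely those lying in the maximal ideal $\ell\OO/\ell^{b}\OO$, the element $Q$ lies outside the maximal ideal and is therefore a unit, so $gQ = Q$ already forces $g = 1$. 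Consequently every $C_{\ell^{b}}(\OO)$-orbit on an $(\ell^{a},\ell^{b})$-pair has size $\#C_{\ell^{b}}(\OO) = \ell^{2b-2}(\ell^{2}-1)$; in particular $\widetilde{T}(\OO, \ell^{a}, \ell^{b}) = \ell^{2b-2}(\ell^{2}-1)$, and every such orbit size is trivially a multiple of it.

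The step demanding the most care is part (b). One must check that $\rho_{\ell^{b}}|_{\gg_{L}}$ genuinely takes values in $\OO^{\times}$ and is a homomorphism, so that the twist is legitimate --- this is where $L \supseteq K$ and $\ell^{b} \geq 3$ enter --- and, more substantively, that acquiring full $\ell^{b}$-torsion over $L$ cannot create a point of order $\ell^{b+1}$; the latter is exactly what Proposition \ref{1.1}(a) precludes in the inert case. Parts (a) and (c) are then short, resting respectively on the order computation for the mod $\ell^{b}$ Cartan subgroup and on the observation that in the inert case any element of $\OO/\ell^{b}\OO$ of maximal additive order is automatically a unit.
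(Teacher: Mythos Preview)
Your argument is essentially correct, and in part (c) it is cleaner than the paper's, but there is one small slip in part (b).

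In ruling out extra $\ell$-primary torsion you write: ``if $t>b$ then $\Z/\ell^{b}\Z\times\Z/\ell^{b+1}\Z\hookrightarrow E'(L)$, contradicting Proposition~\ref{1.1}(a), which in the inert case forces the two exponents to agree.''  This misreads Proposition~\ref{1.1}(a): the clause ``$a=b$'' there does not forbid an embedding $\Z/\ell^{a}\Z\times\Z/\ell^{b}\Z\hookrightarrow E(F)$ with $a<b$; it asserts that once there is a point of order $\ell^{b}$ one in fact has full $\ell^{b}$-torsion (and the degree divisibility).  So from $t>b$ you should instead conclude, via Proposition~\ref{1.1}(a), that $\ell^{2(b+1)-2}(\ell^{2}-1)\mid w[L:K(\ff)]$, contradicting part (a) since $w[L:K(\ff)]=\ell^{2b-2}(\ell^{2}-1)$.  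The paper makes the same point via the equivalent field containment $L\supset K^{(\ell^{b+1})}K(\ff\ell^{b+1})$.  With this one-line fix your (b) is complete; your explicit twist construction is exactly what underlies the paper's citation of \cite[Cor.~1.4]{BC18}.

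For (c) your route is genuinely different from the paper's.  You observe directly that since $\ell$ is inert (and $\ell\nmid\ff$), the ring $\OO/\ell^{b}\OO$ is local with maximal ideal $\ell\OO/\ell^{b}\OO$, so any element $Q$ of additive order $\ell^{b}$ lies outside the maximal ideal and is therefore a unit; hence $gQ=Q$ forces $g=1$, and $C_{\ell^{b}}(\OO)$ acts freely on $(\ell^{a},\ell^{b})$-pairs via the $Q$-coordinate alone.  This is a strict strengthening of Lemma~\ref{COMPILE1} in the inert case and yields (c) purely algebraically.  The paper instead deduces (c) from (b) and Proposition~\ref{1.1} through the field-theoretic interpretation (Proposition~\ref{FIRSTCOMPILEPROP}) and then transfers back to unreduced orbits via Lemma~\ref{LEMMA4.10}.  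Your argument avoids that detour entirely and in fact shows more: every orbit has size \emph{equal to} $\#C_{\ell^{b}}(\OO)$, not merely a multiple of it.
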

\begin{proof}
By \cite[Thm. 1.1]{BC18} we have
\[ [K^{(\ell^b)}K(\ff \ell^b):K(\ff)] = \# \overline{C_{\ell^b}(\OO)} = \frac{\# C_{\ell^b}(\OO)}{w} = \frac{\ell^{2b-2}(\ell^2-1)}{w}, \]
establishing (a).  By \cite[Cor. 1.4]{BC18} there is an $\OO$-CM elliptic curve $E_{/K^{(\ell^b)}K(\ff \ell^b)}$
with $\Z/\ell^b \Z \times \Z/\ell^b \Z \hookrightarrow E(K^{(\ell^b)}K(\ff \ell^b))$.  If the $\ell$-primary torsion subgroup were any larger, then because $\left( \frac{\Delta}{\ell} \right) = -1$ there would be full $\ell^{b+1}$-torsion, which would imply that $K^{(\ell^b)}K(\ff \ell^b) \supset K^{(\ell^{b+1})} K(\ff \ell^{b+1})$.  But since
$[K^{(\ell^b)}K(\ff \ell^b):K(\ff)] = \frac{\ell^{2b-2} (\ell^2-1)}{w}$ is an increasing function of $b$, this is absurd. 
Combining (b) with Proposition \ref{1.1} gives that $T(\OO,\ell^a, \ell^b)=\frac{ \ell^{2b-2}(\ell^2-1)}{w}$ and this quantity divides the size of every $\overline{C_{\ell^b}(\OO)}$-orbit on a reduced $(\ell^a,\ell^b)$-pair. So (c) follows from Lemma \ref{LEMMA4.10}.
\end{proof}

\begin{thm}
\label{THM1.4}
Suppose $\left( \frac{\Delta}{\ell} \right) = 1$.  Let $\pp_1, \pp_2$ be the two primes of $\OO_K$ lying over
$\ell$.  Let $a,b \in \N$ with $a \leq b$ and $\ell^b \geq 4$.  
\begin{enumerate}
\item There is a number field $F \supset K(\ff)$ with \[w[F:K(\ff)] =  \begin{cases} \ell^{b-1}(\ell-1)& a = 0 \\
\ell^{a+b-2}(\ell-1)^2 & a \geq 1 \end{cases}\] and an $\OO$-CM elliptic curve $E_{/F}$ such that 
\[
E(F)[\ell^{\infty}] \cong  \begin{cases} \Z/\ell^a \Z \times \Z/\ell^b \Z& \ell \neq 2 \\
\Z/\ell^{\max\{1,a\}} \Z \times \Z/\ell^b \Z & \ell=2 \end{cases} .
\]  
\item If $\Delta_K \notin \{-4,-3\}$ or if $\ff = 1$, we may take $F = K^{\pp_1^a \pp_2^b}K(\ff)$. 
\item If $\ff \neq 1$ and $\Delta_K \in \{-4,-3\}$, we may take $F$ to be an extension
of $K^{\pp_1^a \pp_2^b}K(\ff)$ of degree $\frac{w_K}{2}$.
\item Every $C_{\ell^b}(\OO)$-orbit on an $(\ell^a,\ell^b)$-pair has size a multiple of 
\[
\widetilde{T}(\OO,\ell^a,\ell^b)=\begin{cases} \ell^{b-1}(\ell-1)& a = 0, \\
\ell^{a+b-2}(\ell-1)^2 & a \geq 1. \end{cases}
\]
\end{enumerate}
\end{thm}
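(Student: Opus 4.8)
The plan uses that $\left( \frac{\Delta}{\ell} \right) = 1$ forces $\ell$ to split in $\OO_K$ and $\ell \nmid \ff$, so $\OO \otimes \Z_\ell \cong \OO_K \otimes \Z_\ell \cong \Z_\ell \times \Z_\ell$; writing $\ell \OO_K = \pp_1 \pp_2$, the Cartan group $C_{\ell^b}(\OO)$ is $(\Z/\ell^b\Z)^\times \times (\Z/\ell^b\Z)^\times$ acting componentwise on $\OO/\ell^b\OO \cong \Z/\ell^b\Z \times \Z/\ell^b\Z$. For part (d) I would not count orbits by hand but bootstrap from Proposition \ref{1.1}: given an $(\ell^a,\ell^b)$-pair $(P,Q)$, let $s$ be the size of the $\overline{C_{\ell^b}(\OO)}$-orbit of the associated reduced pair. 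Proposition \ref{FIRSTCOMPILEPROP} (applied with $d = s$) produces a field $F \supseteq K(\ff)$ with $[F:K(\ff)] = s$, an $\OO$-CM elliptic curve $E_{/F}$, and an injection $\Z/\ell^a\Z \times \Z/\ell^b\Z \hookrightarrow E(F)$; then Proposition \ref{1.1}(b) (if $a=0$) or (c) (if $a \geq 1$) gives $\widetilde{T}(\OO,\ell^a,\ell^b) \mid w[F:K(\ff)] = ws$, and $ws$ is the size of the $C_{\ell^b}(\OO)$-orbit of $(P,Q)$ by Lemma \ref{LEMMA4.10}(a). This proves (d) without reference to (a).

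For (a)--(c) set $\mm \coloneqq \pp_1^a \pp_2^b$. Since $\ell \nmid \ff$, the extensions $K(\ff)/K^{(1)}$ and $K^{\mm}/K^{(1)}$ ramify over disjoint sets of primes, so they are linearly disjoint over $K^{(1)}$ and $[K^{\mm}K(\ff):K(\ff)] = [K^{\mm}:K^{(1)}] = \#(\OO_K/\mm)^\times/w_K$, the last equality because $\OO_K^\times \hookrightarrow (\OO_K/\mm)^\times$ (valid since $\ell^b \geq 4$ and $\ell$ is prime to $N(\zeta - 1)$ for $\zeta \in \OO_K^\times \setminus \{1\}$ in the split case); this degree is $\ell^{b-1}(\ell-1)/w_K$ for $a=0$ and $\ell^{a+b-2}(\ell-1)^2/w_K$ for $a \geq 1$. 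Next, take an $\OO_K$-CM elliptic curve $E'_{/K^{(1)}}$ with Weber function $\mathfrak{h}$. By the main theorem of complex multiplication, $K^{\mm} = K^{(1)}(\mathfrak{h}(E'[\mm]))$; since $E'[\mm] \cong \OO_K/\mm$ is $\OO_K$-cyclic and $\Aut E' = \OO_K^\times \subseteq K^\times \subseteq K^{\mm}$, the Galois action on $E'[\mm]$ over $K^{\mm}$ is through a character $\gg_{K^{\mm}} \to \OO_K^\times$, and the twist $E''$ of $E'$ by this character satisfies $E''[\mm] = E''[\pp_1^a] \oplus E''[\pp_2^b] \subseteq E''(K^{\mm})$. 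No larger $\ell$-primary torsion is rational over $K^{\mm}$, nor over $K^{\mm}K(\ff)$: a strict enlargement would force one of $K^{\pp_1^{a+1}\pp_2^b}$, $K^{\pp_1^a\pp_2^{b+1}}$ into $K^{\mm}K(\ff)$, but each of these is a nontrivial extension of $K^{\mm}$ ramified over $\ell$, while $K^{\mm}K(\ff)/K^{\mm}$ is unramified over $\ell$ — the lone exception being $\ell = 2$, $a = 0$, where $K^{\pp_1\pp_2^b} = K^{\pp_2^b}$ and the $\pp_1$-torsion point is acquired anyway (equivalently, $C_2(\OO)$ is trivial, so $E''[2] \subseteq E''(K)$), which is precisely why the stated torsion is $\Z/\ell^{\max\{1,a\}}\Z \times \Z/\ell^b\Z$ when $\ell = 2$.

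It remains to transfer this to an $\OO$-CM curve. Let $\iota\colon E_0 \to E'$ be the canonical $K(\ff)$-rational cyclic $\ff$-isogeny from an $\OO$-CM curve $E_0$; because $\ell \nmid \ff$ it restricts to an isomorphism on $\ell$-primary torsion. If $\OO = \OO_K$ or $\Delta_K \notin \{-4,-3\}$ then $w = w_K$, the twisting used above (quadratic when $\Delta_K \notin \{-4,-3\}$, full $w_K$-ic when $\OO = \OO_K$) is compatible with $\iota$, and twisting $E_0$ by the same character gives $E_0^D{}_{/K^{\mm}K(\ff)}$ with $\Z/\ell^a\Z \times \Z/\ell^b\Z \hookrightarrow E_0^D(K^{\mm}K(\ff))$ and no larger $\ell$-primary torsion; taking $F = K^{\mm}K(\ff)$ and using the degree above proves (b), hence (a). If $\ff > 1$ and $\Delta_K \in \{-4,-3\}$ then $w = 2 < w_K$ and $E''$ is a $w_K$-ic twist of $E'$, of Kummer class $d \in (K^{\mm})^\times/(K^{\mm})^{\times w_K}$; over $F \coloneqq K^{\mm}K(\ff)(d^{2/w_K})$ — an extension of $K^{\mm}K(\ff)$ of degree dividing $w_K/2$ — the class of $d$ becomes a $(w_K/2)$-th power, hence lies in the image of the natural map $F^\times/F^{\times 2} \to F^\times/F^{\times w_K}$, so $E''_{/F}$ is a quadratic twist of $E'_{/F}$, which by $\iota$ corresponds to a quadratic twist $E_0^D$ of $E_0$ over $F$ with $\Z/\ell^a\Z \times \Z/\ell^b\Z \hookrightarrow E_0^D(F)$. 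Here $\ell$ split in $\Q(\sqrt{-4})$ forces $\ell \geq 5$ and in $\Q(\sqrt{-3})$ forces $\ell \geq 7$, so $\ell > w_K/2$ and the auxiliary extension cannot enlarge the $\ell$-primary torsion; since $[F:K(\ff)] = \tfrac{w_K}{2} \cdot \tfrac{\ell^{a+b-2}(\ell-1)^2}{w_K} = \tfrac{\ell^{a+b-2}(\ell-1)^2}{w}$ (and likewise for $a=0$), this proves (c), hence (a).

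I expect the main obstacle to be the bookkeeping in the final paragraph: matching the $w_K$-ic twists available for the $\OO_K$-CM curve $E'$ against the merely quadratic twists available for the conductor-$\ff$ curve $E_0$, while simultaneously certifying that the unavoidable degree-$(w_K/2)$ extension introduces no new $\ell$-power torsion. Everything else reduces either to the componentwise orbit combinatorics (packaged into Proposition \ref{1.1} for part (d)) or to standard ramification properties of ray class fields.
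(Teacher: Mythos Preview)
Your approach is essentially the paper's: build the torsion on an $\OO_K$-CM curve over the ray class field $K^{\mm}$, push it through the canonical $\ff$-isogeny (an isomorphism on $\ell$-primary torsion since $\ell\nmid\ff$), and twist; part (d) via Proposition~\ref{1.1} and Lemma~\ref{LEMMA4.10} is exactly what the paper does.

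The one substantive difference is how you certify the \emph{exact} degree and the \emph{exact} $\ell$-primary torsion.  The paper does both in one stroke with Proposition~\ref{1.1}: once you have the injection $\Z/\ell^a\Z\times\Z/\ell^b\Z\hookrightarrow E(F)$ together with the upper bound on $[F:K(\ff)]$, the divisibility in Proposition~\ref{1.1} forces equality of degrees and simultaneously forbids any strictly larger $\ell$-primary subgroup (a larger $(a',b')$ would push the lower bound past the known upper bound).  You instead argue both points directly---degree by linear disjointness, torsion by ray class field containment---and here there is a soft spot.  Your claim that $K^{\pp_1^{a+1}\pp_2^b}/K^{\mm}$ is ``ramified over $\ell$'' while $K^{\mm}K(\ff)/K^{\mm}$ is not is not quite justified as stated: ramification of the relative extension at a prime above $\pp_1$ is not automatic from the moduli alone (one would need the exact conductor).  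The linear disjointness you already used does give what you want---$[K^{\pp_1^{a+1}\pp_2^b}K(\ff):K^{\mm}K(\ff)]=[K^{\pp_1^{a+1}\pp_2^b}:K^{\mm}]>1$---so the argument is salvageable, but it is simpler to invoke Proposition~\ref{1.1} as the paper does.  That also cleans up your part (c), where you write both ``degree dividing $w_K/2$'' and then assert equality; the equality comes for free once Proposition~\ref{1.1} supplies the matching lower bound.
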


\begin{proof}
First suppose $\ff = 1$, i.e., $\OO = \OO_K$.  By the results of \cite[\S 7.3]{BC18}, there is an $\OO_K$-CM elliptic curve
$E_{/K^{\pp_1^a \pp_2^b}}$ with $\Z/\ell^a\Z \times \Z/\ell^b \Z \hookrightarrow E(K^{\pp_1^a \pp_2^b})$.   By \cite[Lemma 2.10]{BC18} we have
\begin{equation}
\label{THM1.4EQ1}
w_K [K^{\pp_1^a\pp_2^b}:K^{(1)}] = \begin{cases} \ell^{b-1}(\ell-1) & a = 0 \\
\ell^{a+b-2}(\ell-1)^2 & a \geq 1 \end{cases}.
\end{equation}
By Proposition \ref{1.1} we must have $E(K^{\pp_1^a \pp_2^b})[\ell^{\infty}] \cong \Z/\ell^a \Z \times \Z/\ell^b \Z$ if $\ell>2$ or if $\ell=2$ and $a\geq 1$. If $\ell=2$, then $K(\ff)=K(2\ff)$ (see $\S \ref{coincidences}$). In this case, $K\neq\Q(i), \Q(\sqrt{-3})$, so $K(2\ff)$ is the projective 2-torsion point field of an $\OO$-CM elliptic curve \cite[Thm. 4.1]{BC18}, and $E$ has full $2$-torsion over any extension containing a rational point of order 2. This completes
the proof in this case.  \\ \indent
Next suppose $\ff > 1$ and $\Delta_K \notin \{-4,-3\}$.   Let $E_{/K(\ff)}$ be an $\OO$-CM elliptic curve, and let $\iota: E \ra E'$ be the canonical $K(\ff)$-rational cyclic $\ff$-isogeny to an $\OO_K$-CM elliptic curve $E'$. Then \[T' \coloneqq E'[\pp_1^a \pp_2^b] \cong \Z/\ell^a \Z \times \Z/\ell^b \Z, \] and
$K^{(1)}(\hh(E'[\pp_1^a \pp_2^b])) = K^{\pp_1^a \pp_2^b}$. Let $F=K^{\pp_1^a \pp_2^b}K(\ff)$.  Since $w_K= 2$ and $T'$ is cyclic as an $\Ok$-module \cite[Lemma 2.4]{BC18},
it follows that for all $\sigma \in \gg_F$ there is $\epsilon(\sigma) \in\{ \pm 1\}$ such that for all $P' \in T'$ we have $\sigma P' = \epsilon(\sigma) P'$.  Let $\iota^{\vee}_{/K(\ff)}: E' \ra E$
be the dual isogeny, also cyclic of order $\ff$.  Put $T \coloneqq \iota^{\vee}(T')$.  Since $\gcd(\ff,\ell) =1$,
the map $\iota^{\vee}: T' \ra T$ is an isomorphism of $\gg_F$-modules, and thus for all $P \in T$, we have
$\sigma P = \epsilon(\sigma) P$.  Thus $\epsilon$ is a (possibly trivial) quadratic character on $\gg_F$,
and twisting $E$ by $\epsilon$ we get an elliptic curve $E_{/F}$ such that
\[\Z/\ell^a \Z \times \Z/\ell^b \Z \cong T \subset E(F). \]
Since $w_K = 2$, (\ref{THM1.4EQ1}) implies
\[[F:K(\ff)] \leq \begin{cases} \frac{\ell^{b-1}(\ell-1)}{2} & a = 0 \\
\frac{\ell^{a+b-2}(\ell-1)^2}{2} & a \geq 1 \end{cases}. \]  By Proposition \ref{1.1} we must have equality.  Similarly, we must have
$E(F)[\ell^{\infty}] \cong \Z/\ell^a \Z \times \Z/\ell^b \Z$ if $\ell>2$ or if $\ell=2$ and $a \geq1$, because if the $\ell$-primary torsion subgroup
were any larger, it would contradict Proposition \ref{1.1}.  As above, if $\ell=2$, then $K(\ff)=K(2\ff)$, and $E$ has full $2$-torsion over any extension containing a rational point of order 2. \\ \indent
Finally suppose $\ff > 1$ and $\Delta_K \in \{-4,-3\}$: thus $w_K$ is $4$ or $6$ while $w = 2$.   Then over the field $F_0 \coloneqq K^{\pp_1^a \pp_2^b} K(\ff)$ the action of $\gg_{F_0}$
on $T'$ is now by a character with values in $\OO_K^{\times}$.  There is thus a field extension $F/F_0$
of degree $\frac{w_K}{2}$ over which the action of $\gg_F$ on $T'$ is given by a quadratic character,
and the argument proceeds as above with this choice of $F$.  Notice in particular that $[F_0:K(\ff)]$ is smaller
than $[F:K(\ff)]$ in the previous case by a factor of $\frac{w_K}{2}$; since $[F:F_0] = \frac{w_K}{2}$, these
factors cancel out and $[F:K(\ff)]$ is unchanged.

Thus by Proposition \ref{1.1} we have
\[
T(\OO,\ell^a, \ell^b)=\begin{cases} \frac{\ell^{b-1}(\ell-1)}{w}& a = 0 \\
\frac{\ell^{a+b-2}(\ell-1)^2}{w} & a \geq 1 \end{cases}
\] and this quantity divides the size of every $\overline{C_{\ell^b}(\OO)}$-orbit on a reduced $(\ell^a,\ell^b)$-pair. So (d) follows from Lemma \ref{LEMMA4.10}.
\end{proof}

\begin{thm}
\label{THM1.5}
Suppose $\ell \mid \ff$ and $\left( \frac{\Delta_K}{\ell} \right) = 1$, and let $\pp_1,\pp_2$ be the two
primes of $\OO_K$ lying over $\ell$.  Let $a,b \in \N$ with $0 \leq a \leq b$
and $\ell^b \geq 4$. 
\begin{enumerate}
\item There is a number field $F \supset K(\ff)$ with
\[  [F:K(\ff)] = \frac{\ell^{a+b-1} (\ell-1)}{2} \]
and an $\OO$-CM elliptic curve $E_{/F}$ such that $E(F)[\ell^{\infty}] \cong \Z/\ell^a \Z \times \Z/\ell^b \Z$.  
\item We may take $F$ to be an extension of $K^{\pp_2^b}K(\ell^a \ff)$ of degree $\frac{w_K}{2}$.
\item Every $C_{\ell^b}(\OO)$-orbit on an $(\ell^a,\ell^b)$-pair has size a multiple of 
\[
\widetilde{T}(\OO,\ell^a,\ell^b)=\ell^{a+b-1}(\ell-1). \]
\end{enumerate}
\end{thm}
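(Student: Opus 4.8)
The plan follows the architecture of the proof of Theorem \ref{THM1.4}: compute the minimal Cartan orbit, realize it by a suitable twist, and finally identify the field of definition. Two standing simplifications are worth recording at the outset: since $\ell \mid \ff$ we have $\ell \mid \Delta$, hence $\left(\frac{\Delta}{\ell}\right) = 0$; and since $\ff > 1$ we have $w = 2$.

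\emph{Part (c).} Proposition \ref{1.1}(d) applies to any $\OO$-CM elliptic curve $E$ over a number field $F \supset K$ with $\Z/\ell^a\Z \times \Z/\ell^b\Z \hookrightarrow E(F)$, giving $\ell^{a+b-1}(\ell-1) \mid 2[F:K(\ff)]$; a short case analysis ($\ell$ odd, where $2 \mid \ell-1$, versus $\ell=2$, where $\ell-1=1$) upgrades this to $\frac{\ell^{a+b-1}(\ell-1)}{2} \mid [F:K(\ff)]$. By Proposition \ref{FIRSTCOMPILEPROP} and Lemma \ref{LEMMA4.10} (legitimate since $\ell^b \geq 4$), every $C_{\ell^b}(\OO)$-orbit on an $(\ell^a,\ell^b)$-pair then has size divisible by $\ell^{a+b-1}(\ell-1)$.

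\emph{The minimal orbit.} Pass to the $\ell$-adic completion: with $c = \ord_\ell(\ff)$ one identifies $\OO \otimes \Z_\ell$ with $\{(x,y)\in\Z_\ell^2 : x\equiv y \pmod{\ell^c}\}$, so that $\OO/\ell^b\OO \cong (\Z/\ell^b\Z)[e]/(e^2-\ell^c e)$ with $e = (0,\ell^c)$ and $C_{\ell^b}(\OO) = \{\alpha+\beta e : \alpha\in(\Z/\ell^b\Z)^\times,\ \beta\in\Z/\ell^b\Z\}$. The pair $(P,Q) := (\ell^{b-a}\cdot 1,\ e) \in (\OO/\ell^b\OO)^2$ is an $(\ell^a,\ell^b)$-pair, and a direct computation shows its pointwise stabilizer in $C_{\ell^b}(\OO)$ has order $\ell^{b-a}$; hence its orbit has size $\#C_{\ell^b}(\OO)/\ell^{b-a} = \ell^{a+b-1}(\ell-1)$. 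Combined with part (c) this shows $\widetilde{T}(\OO,\ell^a,\ell^b) = \ell^{a+b-1}(\ell-1)$ and $T(\OO,\ell^a,\ell^b) = \frac{\ell^{a+b-1}(\ell-1)}{2}$.

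\emph{Parts (a) and (b).} Choose an embedding $K(\ff)\hookrightarrow\C$ with $E = \C/\OO$; under $\OO/\ell^b\OO \cong E[\ell^b]$ the point $P$ becomes the diagonal point $\frac1{\ell^a}+\OO$, which generates $\ker\beta^\vee$ for the canonical cyclic $\ell^a$-isogeny $\beta\colon E_0 \ra E$ from an $\OO(\ell^a\ff)$-CM curve $E_0$, while, writing $\iota\colon E\ra E'$ for the canonical cyclic $\ff$-isogeny to the $\OO_K$-CM curve $E'$ and $\iota^\vee$ for its dual, $\langle Q\rangle = \iota^\vee(E'[\pp_2^b])$ is cyclic of order $\ell^b$. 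By Propositions \ref{FIRSTCOMPILEPROP} and \ref{1.1}(d) there is $F\supset K(\ff)$ with $[F:K(\ff)] = \frac{\ell^{a+b-1}(\ell-1)}{2}$ and a twist of $E_{/F}$ carrying the pair $(P,Q)$; since $\ell^b\geq 4$, Proposition \ref{1.1}(d) rules out $E(F)[\ell^\infty]$ being strictly larger than $\Z/\ell^a\Z\times\Z/\ell^b\Z$, which gives (a) (the divisibility half being part (c)). For (b): over $F$ the subgroup $\langle P\rangle = \ker\beta^\vee$ is $\gg_F$-stable, and since $\beta^\vee$ is $K(\ell^a\ff)$-rational while the minimal field over which $\langle P\rangle$ is Galois-stable has degree $\ell^a = [K(\ell^a\ff):K(\ff)]$ over $K(\ff)$, this forces $F\supseteq K(\ell^a\ff)$; and $\iota^\vee$ being $K(\ff)$-rational, the rationality of $Q$ on the twist forces $\gg_F$ to act on $E'[\pp_2^b]$ through $\{\pm1\}$, whence $F\supseteq K^{\pp_2^b}$ by the main theorem of complex multiplication. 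Thus $F\supseteq K^{\pp_2^b}K(\ell^a\ff)$, and the remaining input — the genuinely delicate step — is the degree identity $[K^{\pp_2^b}K(\ell^a\ff):K(\ff)] = \frac{\ell^{a+b-1}(\ell-1)}{w_K}$ (equivalently $[F:K^{\pp_2^b}K(\ell^a\ff)] = \frac{w_K}{2}$), which I would obtain from $(\ref{RINGCLASSDEGEQ})$, the class-number formula for $K^{\pp_2^b}$, and a comparison of the ray class field $K^{\pp_2^b}$ with the ring class field $K(\ell^a\ff)$ inside $K^{(\ell^a\ff)}$, controlling the intersection $K^{\pp_2^b}\cap K(\ell^a\ff)$ by tracking ramification at $\pp_1$, $\pp_2$, and the primes dividing $\ff/\ell^c$. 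An alternative to that last computation is to check directly that over $K^{\pp_2^b}K(\ell^a\ff)$ the action of $\gg$ on $(P,Q)$ is by a single $\OO_K^\times$-valued character and then twist, after which the degree identity falls out of the already-established value of $[F:K(\ff)]$.
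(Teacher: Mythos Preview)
Your treatment of part (c) and the value of $\widetilde{T}(\OO,\ell^a,\ell^b)$ is correct and essentially matches the paper's: both reduce to Proposition~\ref{1.1}(d) and Lemma~\ref{LEMMA4.10}.  Your explicit stabilizer computation for the pair $(\ell^{b-a},e)$ in $(\Z/\ell^b\Z)[e]/(e^2-\ell^c e)$ is a genuinely different route to the minimal orbit: the paper instead \emph{constructs} the torsion over a specific field and reads off the orbit size a posteriori.  Your approach is more self-contained (it avoids Theorem~\ref{KKTHM} and \cite[Thm.~4.1]{BC18}), while the paper's gives the field of part~(b) for free.  Part~(a) then follows correctly in your setup from Proposition~\ref{FIRSTCOMPILEPROP} together with Proposition~\ref{1.1}(d).

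Part~(b), however, is not complete as written.  You correctly show that the abstractly produced minimal field $F$ contains $K^{\pp_2^b}K(\ell^a\ff)$ (the arguments via $\ker\beta^{\vee}$ and via $\iota^{\vee}(E'[\pp_2^b])$ are sound).  The easy bound $[K^{\pp_2^b}K(\ell^a\ff):K(\ff)]\leq \ell^a\cdot[K^{\pp_2^b}:K^{(1)}]=\frac{\ell^{a+b-1}(\ell-1)}{w_K}$ then gives $[F:K^{\pp_2^b}K(\ell^a\ff)]\geq \frac{w_K}{2}$, but not equality.  To close this you need the \emph{lower} bound on $[K^{\pp_2^b}K(\ell^a\ff):K(\ff)]$, and your two suggested routes for it are both left as sketches.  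The second (``check the $\OO_K^{\times}$-valued character over the compositum and twist'') is precisely what the paper does: Case~1 uses Theorem~\ref{KKTHM} to make $Q$ rational over $K^{\pp_2^b}K(\ff)$ after a quadratic twist, and Case~2 invokes \cite[Thm.~4.1]{BC18} to show the mod~$\ell^a$ representation becomes scalar over $K(\ell^a\ff)$, hence trivial once a point of order $\ell^a$ is already rational.  (Note also that your phrase ``$\OO_K^{\times}$-valued character on $(P,Q)$'' needs care: $\OO_K^{\times}\not\subset\Aut E$ when $\ff>1$, so one must track the induced action on the cyclic groups $\langle P\rangle$ and $\langle Q\rangle$ separately and argue they agree, which is exactly the content of the paper's two-case argument.)
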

\begin{proof}
Let $\iota: E \ra E'$ be the canonical $K(\ff)$-rational cyclic $\ff$-isogeny to an $\OO_K$-CM elliptic curve
$E'$.  We put $T' \coloneqq E'[\pp_1^a \pp_2^b]$ and $T \coloneqq \iota^{\vee}(T')$.  Let us first assume
that $K \neq \Q(\sqrt{-1}),\Q(\sqrt{-3})$, so $w_K = 2$. \\
Case 1: Suppose $a = 0$.  Let $F=K^{\pp_2^b}K(\ff).$ In this case, by Theorem \ref{KKTHM} the map $\iota^{\vee}: T' \ra T$ is an isomorphism of $\gg_{F}$-modules, so after twisting $E$ by a unique
quadratic character $\epsilon$ we get an elliptic curve $E_{/F}$ with $T \cong \Z/\ell^b \Z \hookrightarrow
E(F)$. Combining with Proposition \ref{1.1} we get $w[F:K(\ff)] = \ell^{b-1}(\ell-1)$ and $E(F)[\ell^{\infty}]
\cong \Z/\ell^b \Z$, completing the proof in this case.  \\
Case 2: Suppose $a \geq 1$, and let $F_0 \coloneqq K(\ff)K^{\pp_2^b}$.  By Case 1 there is an
$\OO$-CM elliptic curve $E_{/F_0}$ with an $F_0$-rational point of order $\ell^b$.  By \cite[Thm. 4.1]{BC18},
after base extension to $K^{\pp_2^b}K(\ell^a \ff)$ the mod $\ell^a$ Galois representation is given by
scalar matrices, but since we also have a rational point of order $\ell^b$, the mod $\ell^a$ Galois
representation is trivial.  Since $[K(\ell^a \ff):K(\ff)] = \ell^a$ we must have
\[ 2[K^{\pp_2^b}K(\ell^a \ff):K(\ff)] \leq \ell^{a+b-1}(\ell-1). \]
By Proposition \ref{1.1} we must have
\[ [K^{\pp_2^b}K(\ell^a \ff):K(\ff)] = \frac{\ell^{a+b-1}(\ell-1)}{2}. \]  Taking $F = K^{\pp_2^b}K(\ell^a \ff)$ completes the proof in this case. \\ \indent
If $K = \Q(\sqrt{-1})$ or $\Q(\sqrt{-3})$ then we modify the above argument as in the proof of Theorem
\ref{THM1.4}: namely we make an extension of $K^{\pp_2^b}$ of degree $\frac{w_K}{2}$ so as to
ensure that that Galois action on $T'$ is given by a \emph{quadratic} character.  Once again, the degree comes out the same.

Thus $T(\OO,\ell^a, \ell^b)=\frac{\ell^{a+b-1}(\ell-1)}{2},$ and this quantity divides the size of every $\overline{C_{\ell^b}(\OO)}$-orbit on a reduced $(\ell^a,\ell^b)$-pair by Proposition \ref{1.1}. So (c) follows from Lemma \ref{LEMMA4.10}.
\end{proof}

\begin{thm}
\label{THM1.6}
Suppose $\left( \frac{\Delta_K}{\ell} \right) = 0$.  Let $c \coloneqq \ord_{\ell}(\ff)$.  Let $a,b \in \N$ with $a \leq b$ and $\ell^b \geq 4$.  
\begin{enumerate}
\item Suppose $b \leq 2c+1$.  Then there is a number field $F \supset K(\ff)$ with
\[ [F:K(\ff)] = \frac{\ell^{a+b-1}(\ell-1)}{w} \]
and an $\OO$-CM elliptic curve $E_{/F}$ such that $E(F)[\ell^{\infty}] \cong \Z/\ell^a \Z \times \Z/\ell^b \Z$.   
\item Suppose $b > 2c+1$.  Then: 
\begin{enumerate}
\item[i)] If for a number field $F \supset K(\ff)$ we have $E(F)[\ell^{\infty}] \cong \Z/\ell^a\Z \times \Z/\ell^b\Z$, then
$a \geq b-2c-1$.
\item[ii)] If $a \geq b-2c-1$, there is a number field $F \supset K(\ff)$ with $[F:K(\ff)] = \frac{\ell^{a+b-1}(\ell-1)}{w}$ and an $\OO$-CM elliptic curve
$E_{/F}$ with $E(F)[\ell^{\infty}] = \Z/\ell^a \Z \times \Z/\ell^b \Z$.
\end{enumerate}
\item Every $C_{\ell^b}(\OO)$-orbit on an $(\ell^a,\ell^b)$-pair has size a multiple of 
\[
\widetilde{T}(\OO,\ell^a,\ell^b)=\begin{cases} \ell^{a+b-1}(\ell-1)& b \leq 2c+1, \\
\ell^{\max(a+b-1,2b-2c-2)}(\ell-1) & b>2c+1. \end{cases}
\]
\end{enumerate}
\end{thm}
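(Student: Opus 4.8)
The plan is to follow the template of the proofs of Theorems~\ref{THM1.4} and~\ref{THM1.5}: realize the configuration $\Z/\ell^a\Z\times\Z/\ell^b\Z$ over a field of controlled degree, show the degree is minimal and the configuration is exactly the $\ell$-primary torsion using Proposition~\ref{1.1}(d), and pass to orbit sizes via Lemma~\ref{LEMMA4.10}; the role played by the split-case input of \cite{BC18} in those proofs is now played by the dual-isogeny computation of Theorem~\ref{KKTHM}(2). Throughout let $\iota\colon E\to E'$ be the canonical $K(\ff)$-rational cyclic $\ff$-isogeny to an $\OO_K$-CM elliptic curve, $\iota^{\vee}\colon E'\to E$ its dual, and $\pp$ the unique prime of $\OO_K$ over $\ell$, so $(\ell)=\pp^2$ and $\OO_K\otimes\Z_\ell$ is a DVR with uniformizer $\pi$ satisfying $\pi^2=\ell u$ for a unit $u$. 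By \S3.3 and Theorem~\ref{KKTHM}(2), for all $d$ we have $E'[\pp^d]\cong_{\Z}\Z/\ell^{\lceil d/2\rceil}\Z\times\Z/\ell^{\lfloor d/2\rfloor}\Z$ and $\iota^{\vee}E'[\pp^d]\cong_{\Z}\Z/\ell^{\max(\lfloor d/2\rfloor-c,0)}\Z\times\Z/\ell^{\lceil d/2\rceil}\Z$. As the case $a=b$ is handled by Lemma~\ref{COMPILE1}, I would assume $a<b$.

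First I would prove the obstruction (b)(i). Suppose $E_{/F}$ is $\OO$-CM with $F\supset K(\ff)$, $E(F)[\ell^{\infty}]\cong\Z/\ell^a\Z\times\Z/\ell^b\Z$, and $b>2c+1$. Since $F\supset K$, the group $E'(F)[\ell^{\infty}]$ is a finite $\OO_K$-submodule of $E'[\ell^{\infty}]$, hence equals $E'[\pp^d]$ for some $d$; write $b'=\lceil d/2\rceil\ge a'=\lfloor d/2\rfloor$, so $a'\ge b'-1$. The kernel of $\iota$ on $E[\ell^{\infty}]$ is cyclic of order $\ell^c$, so $\iota(E(F)[\ell^{\infty}])\subseteq E'[\pp^d]$ has exponent at least $\ell^{b-c}$, whence $b'\ge b-c$ and $a'\ge b-c-1$. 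Also $\iota^{\vee}E'[\pp^d]\cong\Z/\ell^{\max(a'-c,0)}\Z\times\Z/\ell^{b'}\Z$ injects into $E(F)[\ell^{\infty}]$, so $\max(a'-c,0)\le a$; as $a'-c\ge b-2c-1>0$, this gives $a\ge b-2c-1$.

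For the realizations (a) and (b)(ii), i.e.\ for $\max(b-2c-1,0)\le a\le b$, I would work directly with the Cartan action. Identify $E[\ell^b]$ with $\OO/\ell^b\OO$ and write elements of the latter as $s+\ell^c t\pi$ with $s,t\in\Z/\ell^b\Z$, using that $\OO\otimes\Z_\ell=\Z_\ell\oplus\ell^c\Z_\ell\pi$. Then $(P,Q)=(\ell^{b-a},\,\ell^c\pi)$ is an $(\ell^a,\ell^b)$-pair, and a direct computation of its stabilizer inside $(\OO/\ell^b\OO)^{\times}$ — using $\pi^2=\ell u$ — gives $|\operatorname{Stab}(P,Q)|=\ell^{\,b-\max(a,\,b-2c-1,\,0)}$. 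Since $\#C_{\ell^b}(\OO)=\ell^{2b-1}(\ell-1)$, the $C_{\ell^b}(\OO)$-orbit of $(P,Q)$ has size $\ell^{\max(a+b-1,\,2b-2c-2)}(\ell-1)$, which for $a\ge\max(b-2c-1,0)$ equals $\ell^{a+b-1}(\ell-1)$. Using the surjectivity of $\overline{\rho_{\ell^b}}\colon\gg_{K(\ff)}\to\overline{C_{\ell^b}(\OO)}$ (\cite[Cor.~1.2]{BC18}; equivalently, Proposition~\ref{FIRSTCOMPILEPROP} together with Lemma~\ref{LEMMA4.10}) there is then $F\supset K(\ff)$ with $[F:K(\ff)]=\ell^{a+b-1}(\ell-1)/w$ and an $\OO$-CM $E_{/F}$ with $\Z/\ell^a\Z\times\Z/\ell^b\Z\hookrightarrow E(F)$. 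Writing $E(F)[\ell^{\infty}]\cong\Z/\ell^{a''}\Z\times\Z/\ell^{b''}\Z$ with $a''\ge a$, $b''\ge b$, Proposition~\ref{1.1}(d) forces $\ell^{a''+b''-1}(\ell-1)\mid w[F:K(\ff)]=\ell^{a+b-1}(\ell-1)$, so $(a'',b'')=(a,b)$; this gives (a) and (b)(ii).

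For (c): the orbit just computed shows $\widetilde T(\OO,\ell^a,\ell^b)\le\ell^{\max(a+b-1,\,2b-2c-2)}(\ell-1)$ for every $a\le b$ (and this is the value claimed in (c), split according to $b\lessgtr 2c+1$). Conversely, given a $\overline{C_{\ell^b}(\OO)}$-orbit of size $d$ on a reduced $(\ell^a,\ell^b)$-pair, Proposition~\ref{FIRSTCOMPILEPROP} produces $F\supset K(\ff)$ of degree $d$ with an $\OO$-CM $E_{/F}$ admitting $\Z/\ell^a\Z\times\Z/\ell^b\Z\hookrightarrow E(F)$, say with $E(F)[\ell^{\infty}]\cong\Z/\ell^{a''}\Z\times\Z/\ell^{b''}\Z$, $a''\ge a$, $b''\ge b$; then $\ell^{a''+b''-1}(\ell-1)\mid wd$ by Proposition~\ref{1.1}(d), and if $b>2c+1$ then $b''>2c+1$ and (b)(i) gives $a''+b''\ge 2b''-2c-1\ge 2b-2c-1$. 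Hence $\ell^{\max(a+b-1,\,2b-2c-2)}(\ell-1)\mid wd$, i.e.\ $T(\OO,\ell^a,\ell^b)\mid d$, and with Lemma~\ref{LEMMA4.10} this proves (c) and turns the previous inequality into an equality. The main obstacle will be the stabilizer computation in $(\OO/\ell^b\OO)^{\times}$: one must keep track of the relation $\pi^2=\ell u$ (with $\ell=2$ wildly ramified), the case splits $b\lessgtr 2c+1$ and $a\lessgtr b-2c-1$, and the exceptional discriminants (which enter only through $w$), and verify that $(\ell^{b-a},\ell^c\pi)$ genuinely spans $\Z/\ell^a\Z\times\Z/\ell^b\Z$ — bookkeeping that in the split and inert cases was packaged inside cited results of \cite{BC18}.
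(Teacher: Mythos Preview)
Your obstruction argument for (b)(i) is essentially the paper's, though your use of Theorem~\ref{KKTHM}(2) to read off $\iota^{\vee}E'[\pp^d]$ directly is a bit cleaner than the paper's ad hoc bound on $\#T$ and on the exponent $B$.

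For (a), (b)(ii), and (c) you take a genuinely different route. The paper does not compute any Cartan orbit explicitly: it first dispatches the case $\ff=1$ via the First Main Theorem of CM, and for $\ff>1$ it invokes \cite[Thm.~7.2]{BC18} to obtain a point of order $\ell^b$ in degree $\ell^{b-1}(\ell-1)/2$ (resp.\ $\ell^{2b-2c-2}(\ell-1)/2$) and then \cite[Thm.~7]{CP15} to adjoin full $\ell^a$-torsion over a further extension of degree dividing $\ell^a$ (resp.\ $\ell^{a-b+2c+1}$); Proposition~\ref{1.1}(d) then pins down both the degree and the $\ell$-primary torsion exactly. Your approach bypasses both citations by exhibiting an explicit $(\ell^a,\ell^b)$-pair and computing its stabilizer. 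This is more self-contained and gives the value of $\widetilde{T}$ for \emph{all} $a\le b$ in one stroke, whereas the paper's construction naturally treats only $a\ge\max(b-2c-1,0)$ and reads off $\widetilde{T}$ for smaller $a$ from (b)(i) a posteriori.

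One caution on your stabilizer computation: the relation $\pi^2=\ell u$ holds with $u$ a unit in $\OO_K\otimes\Z_\ell$, but $u$ need not lie in $\Z_\ell$ when $\ell=2$ and $\ord_2(\Delta_K)=2$ (e.g.\ for $\pi=1+\sqrt{D}$ one has $\pi^2=(D-1)+2\pi$). Writing $u=u_0+u_1\pi$ with $u_0\in\Z_\ell^\times$, the condition $gQ=Q$ becomes $\beta\ell^{2c+1}u_0\equiv 0$ and $\alpha+\beta\ell^{c+1}u_1\equiv 1\pmod{\ell^b}$; the second equation now determines $\alpha$ in terms of $\beta$ rather than forcing $\alpha=1$ outright. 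Combined with $gP=P$, the count still comes out to $\ell^{b-\max(a,b-2c-1,0)}$, so your conclusion survives, but the bookkeeping you flag as ``the main obstacle'' is real and should be written out.
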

\begin{proof}
Let $\pp$ be the unique prime of $\OO_K$ lying over $\ell$.  \\ \indent
First suppose that $\ff = 1$, so $c = 0$.  In this case, for any number
field $F \supset K^{(1)}$ and any $\OO_K$-CM elliptic curve $E_{/F}$, the subgroup $E(F)[\ell^{\infty}]$ is an $\OO_K$-submodule
of $E(F)$ and thus is isomorphic to $E[\pp^d]$ for some $d \in \N$ (see the proof of Theorem 7.8 in \cite{BC18}).  By the First Main Theorem of Complex Multiplication, we have
\[ K^{(1)}(\mathfrak{h}(E[\pp^d])) = K^{\pp^d}, \]
and since $E[\pp^d]$ is cyclic there is an $\OO_K$-CM elliptic curve $E_{/K^{\pp^d}}$ with $E[\pp^d] \subset E(K^{\pp^d})$, so
\[ \frac{\ell^{d-1}(\ell-1)}{w} = [K^{\pp^d}:K^{(1)}].\]
 Moreover, by the proof of Theorem 7.8 in \cite{BC18}, we have
\[ E[\pp^d] \cong  \Z/\ell^{\lfloor \frac{d}{2} \rfloor} \Z \times \Z/\ell^{\lceil \frac{d}{2} \rceil}  \Z, \]
which implies that if $a \leq b$ are the natural numbers such that $E(F)[\ell^{\infty}] \cong \Z/\ell^a \Z \times \Z/\ell^b \Z$,
then either $a = b-1$ or $a = b$.  If $a = b-1$ then $d = 2a+1$, while if $a = b$ then $d = 2a$.  Either way we have
\[ \frac{\ell^{a+b-1}(\ell-1)}{w} = \frac{\ell^{d-1}(\ell-1)}{w} = [K^{\pp^d}:K^{(1)}], \]
completing the result in this case.  Henceforth we suppose that $\ff > 1$, so $w = 2$.  \\
(a) Suppose $b \leq 2c+1$.  Then by \cite[Thm. 7.2]{BC18}, there is an extension $F_0/K(\ff)$ of degree $\ell^{b-1}(\ell-1)/2$ and $\OO$-CM elliptic curve $E_{/F_0}$ with an $F_0$-rational point of order $\ell^b$. By \cite[Thm. 7]{CP15} and its proof,
there is an extension $F/F_0$ with $[F:F_0] \mid \ell^a$ such that $E_{/F}$ has full $\ell^a$-torsion.  Thus
\[ \Z/\ell^a \Z \times \Z/\ell^b \Z \hookrightarrow E(F)[\ell^{\infty}] \]
and by Proposition \ref{1.1} we must have equality.  \\
(b) Suppose $b > 2c+1$, let $F \supset K$, and suppose $E(F)[\ell^{\infty}] \cong \Z/\ell^a \Z \times \Z/\ell^b \Z$.  Let $\iota: E \ra E'$ be the usual canonical isogeny to an $\OO_K$-CM elliptic curve.  If $E(F)$ has a point of order $\ell^b$
then $E'(F)$ has a point of order $\ell^{b-c}$ and thus there is a subgroup $T' \subset E'(F)$ with $T' \cong \Z/\ell^{b-c-1}\Z \times \Z/\ell^{b-c} \Z$.  Let $T = \iota^{\vee}(T')$, and write $T \cong \Z/\ell^A\Z \times \Z/\ell^B \Z$ with $0 \leq A \leq B$.  Since $\#T \geq \frac{\# T'}{\ell^c}$ and $B \leq b-c$, we must have $A \geq b-c-1-c = b-2c-1$, so $a \geq A \geq b-2c-1$.  Now Proposition
\ref{1.1} implies
\[ \ell^{2b-2c-2}(\ell-1) \mid 2 [F:K(\ff)]. \]
By \cite[Thm. 7.2]{BC18}, there is a field extension $F_0/K(\ff)$ of degree $\frac{\ell^{2b-2c-2}(\ell-1)}{2}$ and an $\OO$-CM elliptic curve $E_{/F_0}$ with an $F_0$-rational point of order $\ell^b$. As we have shown, this forces $\Z/\ell^{b-2c-1}\Z \times \Z/\ell^b \Z \hookrightarrow
E(F_0)[\ell^{\infty}]$, and then Proposition \ref{1.1} implies
\[ [F_0:K(\ff)] = \frac{\ell^{2b-2c-2}(\ell-1)}{2} \]
and
\[ E(F_0)[\ell^{\infty}] \cong \Z/\ell^{2b-2c-1} \Z \times \Z/\ell^b \Z. \]
Finally, suppose $b-2c-1 \leq a \leq b$.  By \cite[Thm. 7]{CP15} there is a field extension $F/F_0$ of degree dividing
$\ell^{a-b+2c+1}$ over which $E$ has full $\ell^a$-torsion.  Thus
\[ [F:K(\ff)] \mid \frac{\ell^{a-b+2c+1+2b-2c-2}(\ell-1)}{2} = \frac{\ell^{a+b-1}(\ell-1)}{2} \]
and
\[ \Z/\ell^a \Z \times \Z/\ell^b \Z \hookrightarrow E(F)[\ell^{\infty}]. \]
Once again, by Proposition \ref{1.1} the degree divisibility and the group inclusion are each equalities.
Statement (c) now follows from Lemma \ref{LEMMA4.10}.
\end{proof}

\begin{thm}
\label{THM1.7}
Suppose $\ell \mid \ff$ and $\left( \frac{\Delta_K}{\ell} \right) = -1$. Let $c \coloneqq \ord_{\ell}(\ff)$. Let $a,b \in \N$ with $a \leq b$ and $\ell^b \geq 4$. 
\begin{enumerate}
\item Suppose $b \leq 2c$.  Then there is a number field $F \supset K(\ff)$ with
\[ [F:K(\ff)] = \frac{\ell^{a+b-1}(\ell-1)}{2} \]
and an $\OO$-CM elliptic curve $E_{/F}$ such that $E(F)[\ell^{\infty}] \cong \Z/\ell^a \Z \times \Z/\ell^b \Z$.  
\item Suppose $b > 2c$.  Then: 
\begin{enumerate}
\item[i)] If for a number field $F \supset K(\ff)$ we have $E(F) \cong\Z/\ell^a \Z \times \Z/\ell^b \Z$, then $a \geq b-2c$. 
\item[ii)] If $a \geq b-2c$, there is a number field $F \supset K(\ff)$ with $[F:K(\ff)] = \frac{\ell^{a+b-1}(\ell-1)}{2}$ and an
$\OO$-CM elliptic curve $E_{/F}$ with $E(F)[\ell^{\infty}] = \Z/\ell^a \Z \times \Z/\ell^b \Z$.
\end{enumerate}
\item Every $C_{\ell^b}(\OO)$-orbit on an $(\ell^a,\ell^b)$-pair has size a multiple of 
\[
\widetilde{T}(\OO,\ell^a,\ell^b)=\begin{cases} \ell^{a+b-1}(\ell-1)& b \leq 2c, \\
\ell^{\max(a+b-1,2b-2c-1)}(\ell-1) & b>2c. \end{cases}
\]
\end{enumerate}
\end{thm}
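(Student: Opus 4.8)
The proof will follow the template of Theorems \ref{THM1.4}--\ref{THM1.6}, using the canonical $K(\ff)$-rational cyclic $\ff$-isogeny $\iota \colon E \ra E'$ to an $\OO_K$-CM elliptic curve $E'$ together with its dual $\iota^{\vee}$, now fed by the inert case of Theorem \ref{KKTHM}. First I record the standing simplifications. Since $\ell \mid \ff$ we have $\ff \geq 2$, hence $\Delta = \ff^2 \Delta_K < -4$ and $w = 2$; moreover $\ell \mid \Delta$, so $\left(\frac{\Delta}{\ell}\right) = 0$ and Proposition \ref{1.1}(d) applies to every $\OO$-CM elliptic curve over a number field $F \supset K$ carrying the relevant torsion. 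By the opening remarks of \S4.3 we may assume $a < b$, and we write $\pp = \ell \OO_K$ for the (inert) prime over $\ell$; since the only finite $\OO_K$-submodules of $E'[\ell^{\infty}]$ are the full torsion subgroups $E'[\ell^m]$, for any number field $L \supset K$ we have $E'(L)[\ell^{\infty}] = E'[\ell^m]$ for some $m \in \N$.

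The key technical input is the following companion-torsion lemma: \emph{if $F \supset K(\ff)$ is a number field, $E_{/F}$ is an $\OO$-CM elliptic curve, and $E(F)$ contains a point of order $\ell^b$ with $b > 2c$, then $\Z/\ell^{b-2c}\Z \times \Z/\ell^b\Z \hookrightarrow E(F)$.} The $\ell$-primary part of $\Ker \iota$ is cyclic of order $\ell^c$, so the image under $\iota$ of a point of order $\ell^b$ has order $\geq \ell^{b-c}$; hence $E'(F)[\ell^{\infty}] \supseteq E'[\ell^{b-c}]$, so $\iota^{\vee}(E'[\ell^{b-c}]) \subseteq E(F)$, and Theorem \ref{KKTHM}(3) identifies $\iota^{\vee}(E'[\ell^{b-c}]) \cong_{\Z} \Z/\ell^{b-2c}\Z \times \Z/\ell^{b-c}\Z$, which contains the full $\ell^{b-2c}$-torsion $E[\ell^{b-2c}]$. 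Combined with the given point of order $\ell^b$ this yields the claimed embedding. Taking $F$ as in (b)(i) gives $a \geq b-2c$ immediately; and the same computation shows that when $b \leq 2c$ a rational point of order $\ell^b$ forces \emph{no} companion torsion, as then $\iota^{\vee}(E'[\ell^{b-c}])$ is cyclic.

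For the constructions underlying (a) and (b)(ii): by \cite[Thm. 7.2]{BC18} there is a number field $F_0 \supset K(\ff)$ and an $\OO$-CM elliptic curve $E_{/F_0}$ with an $F_0$-rational point of order $\ell^b$, where $[F_0:K(\ff)] = \frac{\ell^{b-1}(\ell-1)}{2}$ if $b \leq 2c$ and $[F_0:K(\ff)] = \frac{\ell^{2b-2c-1}(\ell-1)}{2}$ if $b > 2c$. Applying the companion-torsion lemma (when $b>2c$) and then Proposition \ref{1.1}(d) pins down $E(F_0)[\ell^{\infty}] \cong \Z/\ell^{a_0}\Z \times \Z/\ell^b\Z$ with $a_0 = 0$ if $b \leq 2c$ and $a_0 = b-2c$ if $b > 2c$. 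Now invoke \cite[Thm. 7]{CP15} and its proof to obtain an extension $F/F_0$ with $[F:F_0] \mid \ell^{a-a_0}$ over which $E$ has full $\ell^a$-torsion (here $a \geq a_0$, which for $b>2c$ is precisely the hypothesis of (b)(ii)); then $\Z/\ell^a\Z \times \Z/\ell^b\Z \hookrightarrow E(F)[\ell^{\infty}]$ and $[F:K(\ff)] \mid \frac{\ell^{a+b-1}(\ell-1)}{2}$, while Proposition \ref{1.1}(d) forces equality in both, giving (a) and (b)(ii).

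Finally, for (c), let $(P,Q)$ be any $(\ell^a,\ell^b)$-pair and realize it inside $E[\ell^b] \cong \OO/\ell^b\OO$ for a fixed $\OO$-CM elliptic curve $E_{/K(\ff)}$, whose reduced mod $\ell^b$ representation is surjective onto $\overline{C_{\ell^b}(\OO)}$ (see \S2.3). By Proposition \ref{FIRSTCOMPILEPROP} and its proof, the size of the reduced $\overline{C_{\ell^b}(\OO)}$-orbit on $(P,Q)$ is the least degree $[F:K(\ff)]$ over which some twist $E^{\chi}_{/F}$ has $P$ and $Q$ rational, so that $\Z/\ell^a\Z \times \Z/\ell^b\Z \hookrightarrow E^{\chi}(F)$. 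Proposition \ref{1.1}(d) gives $\ell^{a+b-1}(\ell-1) \mid 2[F:K(\ff)]$, and when $b > 2c$ the companion-torsion lemma additionally yields $\Z/\ell^{b-2c}\Z \times \Z/\ell^b\Z \hookrightarrow E^{\chi}(F)$ and hence $\ell^{2b-2c-1}(\ell-1) \mid 2[F:K(\ff)]$; thus $\ell^{\max(a+b-1,2b-2c-1)}(\ell-1)$ divides $2[F:K(\ff)]$, which by Lemma \ref{LEMMA4.10}(a) is the size of the (unreduced) $C_{\ell^b}(\OO)$-orbit. This shows every orbit has size a multiple of $\ell^{\max(a+b-1,2b-2c-1)}(\ell-1)$, which for $b \leq 2c$ is $\ell^{a+b-1}(\ell-1)$; that this bound is attained — so that $\widetilde{T}(\OO,\ell^a,\ell^b)$ has the stated value — follows from the curves of the previous paragraph when $a \geq b-2c$, and when $b > 2c$ and $a < b-2c$ from the curve $E_{/F_0}$ with torsion $\Z/\ell^{b-2c}\Z \times \Z/\ell^b\Z$, which contains an $F_0$-rational $(\ell^a,\ell^b)$-pair and satisfies $[F_0:K(\ff)] = \frac{\ell^{2b-2c-1}(\ell-1)}{2}$. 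The main obstacle is the bookkeeping around the threshold $b=2c$: one must show that a point of order $\ell^b$ forces precisely $\ell^{\max(b-2c,0)}$-torsion to accompany it — no less (the content of the companion-torsion lemma and of (b)(i)) and no more (capping the degree, and hence the torsion, via Proposition \ref{1.1}(d)) — and then transport the resulting degree equalities through Lemma \ref{LEMMA4.10} to orbit sizes.
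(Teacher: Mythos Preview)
Your proof is correct and follows essentially the same approach as the paper: the canonical isogeny $\iota$ and its dual together with the inert case of Theorem~\ref{KKTHM} yield the companion-torsion bound $a \geq b-2c$, the base field $F_0$ comes from \cite[Thm.~7.2]{BC18}, the extension to full $\ell^a$-torsion from \cite[Thm.~7]{CP15}, and Proposition~\ref{1.1}(d) closes all inequalities; part (c) is then read off via Lemma~\ref{LEMMA4.10}. Your write-up is in fact more explicit than the paper's in two places---you spell out the companion-torsion lemma via Theorem~\ref{KKTHM}(3) rather than pointing back to Theorem~\ref{THM1.6}, and you unpack the orbit-divisibility argument for (c) (including the attainment when $a < b-2c$) rather than leaving it as ``follows from Lemma~\ref{LEMMA4.10}''---but the underlying logic is identical.
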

\begin{proof}
(a) Suppose that $b \leq 2c$.  Then by \cite[Thm. 7.2]{BC18}, there is an extension $F_0/K(\ff)$ of degree $\ell^{b-1}(\ell-1)/w$ and an $\OO$-CM elliptic curve $E_{/F_0}$ with an $F_0$-rational point of order $\ell^b$. The remainder of the proof of part (a) is identical to the proof of part (a) of Theorem \ref{THM1.6}. \\
(b) Suppose $b > 2c$, let $F \supset K$, and suppose $E(F)[\ell^{\infty}] \cong \Z/\ell^a \Z \times \Z/\ell^b \Z$.
Let $\iota: E \ra E'$ be the usual canonical isogeny to an $\OO_K$-CM elliptic curve.  If $E(F)$ has a point of order $\ell^b$
then $E'(F)$ has a point of order $\ell^{b-c}$ and thus -- since $\left( \frac{\Delta_K}{\ell} \right) = -1$ -- we have
$E'[\ell^{b-c}] \subset E'(F)$.  As in the proof of part (b) of Theorem \ref{THM1.6}, it follows that $E[\ell^{b-2c}] \subset \iota^{\vee}(E'(F))$, so $a \geq b-2c$.  Now Proposition \ref{1.1} implies
\[ \ell^{2b-2c-1}(\ell-1) \mid 2[F:K(\ff)]. \]
The argument now proceeds exactly as in the proof of part (b) of Theorem \ref{THM1.6}.

As above, statement (c) follows from Lemma \ref{LEMMA4.10}.
\end{proof}

\section{A generalization of Kwon's theorem}

\subsection{Kwon's theorem}
For an order $\OO = \OO(\ff)$ in an imaginary quadratic field $K$ and a positive integer $N$, we use the shorthand $I(\OO,N)$ to mean there is a $\Q(\ff)$-rational cyclic $N$-isogeny $\varphi: E \ra E'$ between elliptic curves with $\End E = \OO$. Recall $\Delta=\ff^2 \Delta_K$.

\begin{thm}(Kwon \cite[Cor. 4.2]{Kwon99})
\label{KWONTHM}
Let $\OO = \OO(\ff)$ be an order in an imaginary quadratic field $K \neq \Q(\sqrt{-1}),\Q(\sqrt{-3})$.  Let $N \in \Z^+$.
\begin{enumerate}
\item Suppose one of the following holds:
\begin{enumerate}
\item[i)] $2 \mid \ff$ and $2$ is ramified in $K$ or
\item[ii)] $4 \mid \ff$.
\end{enumerate} Then $I(\OO,N)$ holds iff $N \mid \frac{\Delta}{4}$.
\item  Suppose one of the following holds:
\begin{enumerate}
\item[i)] $\ff \equiv 2 \pmod{4}$ and $2$ is unramified in $K$ or
\item[ii)] $\ff$ is odd and $\left(\frac{\Delta_K}{2} \right) \neq -1$.
\end{enumerate}  Then $I(\OO,N)$ holds iff either $N$ or $\frac{N}{2}$ is an odd integer dividing $\Delta$.
\item Suppose $2 \nmid \ff$ and $\left( \frac{\Delta_K}{2} \right) = -1$.  Then $I(\OO,N)$ holds iff $N \mid \Delta$.
\end{enumerate}
\end{thm}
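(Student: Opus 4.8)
The plan is to reduce the statement to the case of prime powers $N=\ell^n$, to recast $I(\OO,\ell^n)$ as the existence of a cyclic order-$\ell^n$ subgroup of an $\OO$-CM elliptic curve over $\Q(\ff)$ that is stable under the absolute Galois group, and then to pin down the largest admissible $n$ from the action of the $\ell$-adic Cartan subgroup together with a complex conjugation. The generic analysis will be quick; the case $\ell=2$ is where the answer bifurcates and where the real work lies.

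\emph{Reduction to prime powers.} If $\varphi\colon E\to E'$ is a $\Q(\ff)$-rational cyclic $N$-isogeny then $C\coloneqq\Ker\varphi$ is a $\gg_{\Q(\ff)}$-stable cyclic subgroup of order $N$, and the decomposition $C=\bigoplus_{\ell\mid N}C[\ell^{\infty}]$ exhibits $\gg_{\Q(\ff)}$-stable cyclic subgroups of each prime-power order $\ell^{\ord_\ell N}$; conversely a compatible family of such subgroups on a single curve glues to one of order $N$. What makes it legitimate to work on a single fixed curve is that, since $K\neq\Q(\sqrt{-1}),\Q(\sqrt{-3})$ we have $w=2$, so any two $\OO$-CM elliptic curves over $\Q(\ff)$ are quadratic twists of one another; twisting multiplies the Galois action on every subgroup by a character valued in $\{\pm1\}$, which preserves each subgroup, so ``$\OO$-CM over $\Q(\ff)$ admitting a $\Q(\ff)$-rational cyclic $\ell^n$-isogeny'' is twist-invariant and depends only on the pair $(\OO,\ell^n)$. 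Hence $I(\OO,N)$ holds iff $I(\OO,\ell^{\ord_\ell N})$ holds for every prime $\ell$, and it suffices to compute, for each $\ell$, the largest $n$ with $I(\OO,\ell^n)$. (This is exactly the step that fails when $w>2$, i.e. for $K=\Q(\sqrt{-1}),\Q(\sqrt{-3})$, which is why those fields are excluded here and treated separately in Corollary \ref{LASTKWONCOR}.)

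\emph{The prime-power case for $\ell$ odd.} Fix $\ell$ and set $c\coloneqq\ord_\ell(\ff)$. Base-changing to $K(\ff)$ and invoking Theorem \ref{KFISOGTHM} already gives the necessary condition that $\Delta$ be a square modulo $4\ell^n$; the additional constraint imposed by working over $\Q(\ff)$ is stability under a complex conjugation $\sigma$, which acts $\OO$-conjugate-linearly, equivalently (by \S2.6) that the kernel ideal of the isogeny be \textbf{real}. I would organize the analysis by $\left(\frac{\Delta_K}{\ell}\right)$. The canonical $\Q(\ff)$-rational cyclic $\ff$-isogeny $\iota$ and its dual $\iota^{\vee}$ — whose kernels are computed in Theorem \ref{KKTHM} — are automatically $\Q(\ff)$-rational, and by ascending $c$ steps to the $\ell$-surface and then descending $c$ steps along a transverse branch (corresponding to an explicit primitive real invertible $\OO$-ideal of norm $\ell^{2c}$) they already produce a $\Q(\ff)$-rational cyclic $\ell^{2c}$-isogeny. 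When $\ell$ ramifies the prime $\pp\mid\ell$ satisfies $\overline{\pp}=\pp$, so $\pp$ is real and the horizontal isogeny $E_0\to E_0/E_0[\pp]$ at the surface is rational over $\Q(j(E_0))\subseteq\Q(\ff)$, contributing one further factor of $\ell$; when $\ell$ is inert there is no horizontal $\ell$-isogeny; and when $\ell$ splits the two surface $\ell$-isogenies $E_0[\pp_1],E_0[\pp_2]$ are interchanged by $\sigma$, hence are not individually $\Q(\ff)$-rational. For the matching upper bounds one uses that over $K(\ff)$ the reduced representation $\overline{\rho_{\ell^n}}$ surjects onto $\overline{C_{\ell^n}(\OO)}$ (\S2.3), so it is enough to check that this reduced Cartan fixes no cyclic order-$\ell^n$ subgroup once $n$ exceeds the bound — a direct computation inside $(\OO\otimes\Z_\ell/\ell^n\OO\otimes\Z_\ell)^{\times}$ — together with the $\sigma$-obstruction in the split case. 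This yields, for every odd $\ell$, the bound $\ord_\ell N\le\ord_\ell(\Delta)$, equal to $2c$ when $\ell$ is unramified in $K$ and to $2c+1$ when $\ell$ is ramified, which is the odd-primary content of all of (a), (b), (c).

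\emph{The prime $\ell=2$, the main obstacle.} Here the answer splits into cases (a), (b), (c) precisely because the $2$-surface is exceptional in two ways. First, the reduced Cartan $\overline{C_{2^n}(\OO)}$ can degenerate — for instance when $2\nmid\ff$ and $2$ splits, $(\OO/2\OO)^{\times}$ is trivial — so that the unique line of $E[2]$ fixed by $\sigma$ (the one not of the form $E[\pp_i]$) becomes $\gg_{\Q(\ff)}$-stable, furnishing an ``extra'' $\Q(\ff)$-rational descending $2$-isogeny and hence the ``$N$ or $N/2$'' clause of case (b). Second, the $2$-adic ideal theory of $\OO\otimes\Z_2$, hence the criterion of \S2.6 for a primitive invertible $2$-primary ideal to be real, as well as the subgroup lattice of $\overline{C_{2^n}(\OO)}$, all depend on whether $2\|\ff$, $4\mid\ff$, or $2\nmid\ff$ and on the splitting type of $2$ in $K$; these are exactly the data distinguishing cases (a)/(b)/(c), and the resulting $2$-primary exponents are $\ord_2(\Delta)-2$ in case (a), $\ord_2 N\le 1$ (with $1$ attained) in case (b), and $0$ in case (c). I expect the bulk of the proof to be carrying out this $2$-adic Cartan-orbit-plus-conjugation bookkeeping uniformly across the subcases; the tools I would lean on are the orbit analysis of \cite{BC18}, the dual-isogeny computation of Theorem \ref{KKTHM}, and the $\R$-structure results of \S2.5 to control the action of $\sigma$ on $2$-power torsion.
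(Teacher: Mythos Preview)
The paper does not prove Theorem~\ref{KWONTHM}: it is stated with attribution to Kwon and cited as \cite[Cor.~4.2]{Kwon99}, with no proof given. What the paper does prove is the generalization Theorem~\ref{GENKWONTHM} (the necessary direction over a larger class of base fields), together with the supplements in \S5.6, and for the sufficiency direction Remark~\ref{KWONRMK} simply points to \cite[pp.~954--955]{Kwon99}. So there is no ``paper's own proof'' to compare against in the strict sense.

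That said, your outline is a genuine proof strategy and it differs in spirit from the route the paper takes for the related results. The paper's argument for Theorem~\ref{GENKWONTHM} is ideal-theoretic: factor any $F$-rational cyclic $N$-isogeny through the canonical $\iota_{\ff,\ff'}$ (Lemma~\ref{ISOGENYLEMMA1}), observe that the residual horizontal piece has kernel $E''[\bb]$ for a primitive proper $\OO(\ff')$-ideal $\bb$ which must be real since $K\not\subset F$, and then invoke the explicit classification of primitive proper real ideals (Lemma~\ref{KWONLEMMA3.1}). Your approach is representation-theoretic: work directly with the image of $\gg_{\Q(\ff)}$ in $\Aut E[\ell^n]$, which is the mod-$\ell^n$ Cartan together with a complex conjugation, and count which cyclic subgroups this group stabilizes. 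Both reduce to the same combinatorics in the end (your ``primitive real invertible $\OO$-ideal of norm $\ell^{2c}$'' is exactly the object Lemma~\ref{KWONLEMMA3.1}(c) supplies), but the paper's factorization-through-$\iota_{\ff,\ff'}$ trick is what lets Theorem~\ref{GENKWONTHM} go through over any $F$ not containing $K$ or $\Q(\ell\ff)$, whereas your orbit argument as written is tied to $F=\Q(\ff)$ via the surjectivity of $\overline{\rho_N}$ onto $\overline{C_N(\OO)}$. Your reduction to prime powers via twist-invariance when $w=2$ is correct and matches the paper's Remark~5.12(a). For $\ell=2$ your sketch is accurate about where the difficulty lies, but the paper's Lemma~\ref{KWONLEMMA3.1}(d) packages the $2$-adic case analysis more cleanly than a direct Cartan-orbit computation would.
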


\begin{remark}\label{KWONRMK}
Let $K = \Q(\sqrt{-1})$ or $\Q(\sqrt{-3})$.  Suppose that $\ff > 1$.  It follows from Theorem \ref{GENKWONTHM} and Proposition \ref{C1PROP} that if
$I(\OO,N)$ holds then $N$ must satisfy the same numerical conditions as in Theorem \ref{KWONTHM}.  The arguments of
\cite[pp. 954-955]{Kwon99} hold verbatim to show that if $N$ satisfies these numerical conditions then $I(\OO,N)$ holds.  Thus
in fact Theorem \ref{KWONTHM} holds verbatim for every imaginary quadratic order of discriminant $\Delta < -4$.  \\ \indent
In Corollary \ref{LASTKWONCOR} we will complete Kwon's theorem by determining all $N \in \Z^+$ such that $I(\OO,N)$ holds when $\Delta = -4$ or $\Delta = -3$.
\end{remark}

\subsection{Statement of the generalization of Kwon's theorem} Half of Theorem \ref{KWONTHM} gives necessary conditions on $N$ for the existence
of a $\Q(\ff)$-rational cyclic $N$-isogeny on an $\OO(\ff)$-CM elliptic curve.  The following result extends this half of the result
by showing that the conclusions continue to hold for $F$-rational cyclic $N$-isogenies for a certain class of number fields $F$ that 
contain $\Q(\ff)$.

\begin{thm}
\label{GENKWONTHM}
Let $N, \ff \in \Z^+$.  Let $F/\Q(\ff)$ be a number field.  We suppose that there is an $\OO(\ff)$-$\operatorname{CM}$ elliptic curve $E_{/F}$ admitting an $F$-rational cyclic $N$-isogeny.  We also suppose $F$ does not contain $K$, and for all primes $\ell \mid N$, $F$ does not contain $\Q(\ell \ff)$.
\begin{enumerate}
\item There is a positive integer $d \mid \gcd(\ff,N)$ and a
primitive, proper, real $\OO(\frac{\ff}{d})$-ideal of index $\frac{N}{d}$.  
\item It follows that $N \mid \Delta = \ff^2 \Delta_K$.  Moreover: 
\begin{enumerate}
\item[i)] Suppose that $16 \mid \Delta$.  Then $N \mid \frac{\Delta}{4}$.   
\item[ii)] Suppose $\ff \equiv 2 \pmod{4}$ and $2$ is unramified in $K$.
Then either $N$ or $\frac{N}{2}$ is an odd divisor
of $\Delta$. 
\item[iii)] Suppose $2 \nmid \ff$ and $2$ is ramified in $K$.  Then either $N$ or $\frac{N}{2}$ is an odd divisor of $\Delta$.
\end{enumerate}
\end{enumerate}
\end{thm}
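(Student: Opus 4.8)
The plan is to reduce to the case where $N$ is a prime power, analyze that case on lattices over $\C$, and reassemble. For the reduction, write $N=\prod_{\ell}\ell^{a_\ell}$ and $C=\Ker\varphi$. For each prime $\ell\mid N$ the $\ell$-primary part $C_\ell=C\cap E[\ell^\infty]$ is a $\gg_F$-stable cyclic subgroup of order $\ell^{a_\ell}$, so $E\ra E/C_\ell$ is an $F$-rational cyclic $\ell^{a_\ell}$-isogeny \emph{on the same curve $E$}, which is still an $\OO(\ff)$-CM elliptic curve over the same $F$, with $K\not\subseteq F$ and $\Q(\ell\ff)\not\subseteq F$. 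Thus it suffices to prove (a) when $N=\ell^a$: one then sets $d$ equal to the product of the per-$\ell$ exponents produced below and takes $\bb$ to be the ideal of $\OO(\ff/d)$ whose $\ell$-adic part is the per-$\ell$ ideal (and is trivial away from $N$); properness, primitivity and realness are local conditions and pass to this product.

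So suppose $N=\ell^a$. Fix an embedding $F\hookrightarrow\C$. Then $E_{/\C}\cong\C/\aa$ for a proper fractional $\OO(\ff)$-ideal $\aa$, the subgroup $\Ker\varphi$ corresponds to a lattice $\Lambda\supseteq\aa$ with $\Lambda/\aa$ cyclic of order $\ell^a$, and $E'_{/\C}\cong\C/\Lambda$; let $\ff'$ be the conductor of $\End E'=(\Lambda:\Lambda)$. Since $\deg\varphi$ is a power of $\ell$, comparing $\aa$ with $\Lambda$ at primes $p\neq\ell$ shows $\ff$ and $\ff'$ have the same prime-to-$\ell$ part. I claim $\ff'\mid\ff$: if instead $\ord_\ell(\ff')>\ord_\ell(\ff)$, then $\ell\ff\mid\ff'$, hence $K(\ell\ff)\subseteq K(\ff')$; restricting a complex conjugation of the ring class field $K(\ff')$ to $K(\ell\ff)$ shows $\Q(\ell\ff)$ embeds into $\Q(\ff')$, which embeds into $F$ since $j(E')\in F$ and the Hilbert class polynomial of $\OO(\ff')$ is irreducible over $\Q$ — contradicting the hypothesis. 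Hence $\ff'\mid\ff$.

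Now put $d:=\ff/\ff'$, a power of $\ell$. As $\Lambda$ is a proper $\OO(\ff')$-module containing $\aa$, we have $\aa\OO(\ff')\subseteq\Lambda$, so $\varphi$ factors as the canonical cyclic $d$-isogeny $\iota_{\ff,\ff'}\colon E\ra E''$, which is $\Q(\ff)$-rational and hence $F$-rational, followed by an isogeny $\psi\colon E''\ra E'$ of $\OO(\ff')$-CM elliptic curves, i.e.\ a horizontal one. By $\S2.6$, $\Ker\psi=E''[\bb]$ for a proper $\OO(\ff')$-ideal $\bb$ of index $\deg\psi=\ell^a/d$. Since $\Ker\varphi$ is cyclic of order $\ell^a$ and $\Ker\iota_{\ff,\ff'}$ is its unique subgroup of order $d$, the quotient $\Ker\psi\cong\Ker\varphi/\Ker\iota_{\ff,\ff'}$ is cyclic, so $\bb$ is primitive; $d\mid\ff$ and $d=\deg\iota_{\ff,\ff'}\mid\deg\varphi=\ell^a$ give $d\mid\gcd(\ff,\ell^a)$; and $\psi$ is $F$-rational because its kernel $\iota_{\ff,\ff'}(\Ker\varphi)$ is $\gg_F$-stable, so by $\S2.6$ and $K\not\subseteq F$ the ideal $\bb$ is real. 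This proves (a) in the prime-power case. I expect establishing $\ff'\mid\ff$ together with this factorization to be the conceptual heart of the theorem; note the assumption $\Q(\ell\ff)\not\subseteq F$ is used exactly once, to force $\ff'\mid\ff$, and $K\not\subseteq F$ exactly once, to force $\bb$ real.

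Part (b) is then the arithmetic of quadratic orders, following Kwon. A primitive proper real ideal $\bb$ of $\OO(\ff/d)$ of index $m:=N/d$ corresponds to a primitive binary quadratic form of discriminant $(\ff/d)^2\Delta_K$ with leading coefficient $m$, and $\overline{\bb}=\bb$ says this form is ambiguous; writing $\bb$ in a standard $\Z$-basis, the integrality condition together with ambiguity forces $m\mid(\ff/d)^2\Delta_K$, with the familiar $2$-adic refinements dictated by $\ord_2(\ff/d)$ and the behaviour of $2$ in $K$, as in \cite[\S 4]{Kwon99}. Multiplying by $d$ and using $d\mid\ff$ yields $N=dm\mid d(\ff/d)^2\Delta_K\mid\ff^2\Delta_K=\Delta$; tracking the $2$-adic valuation through this multiplication gives the three refinements — $N\mid\Delta/4$ when $16\mid\Delta$, and "$N$ or $N/2$ an odd divisor of $\Delta$" when $\ff\equiv2\pmod4$ with $2$ unramified and when $2\nmid\ff$ with $2$ ramified. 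The $2$-adic case analysis is the most laborious part of the argument, but it reproduces \cite{Kwon99} essentially verbatim.
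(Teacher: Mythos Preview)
Your argument is essentially the paper's: show the conductor of the target divides $\ff$ using the hypothesis $\Q(\ell\ff)\not\subseteq F$, factor through the canonical $\iota_{\ff,\ff'}$, and use $K\not\subseteq F$ to force the residual ideal to be real; then invoke the classification of primitive proper real ideals. The paper, however, does \emph{not} reduce to prime powers. It proves $\ff'\mid\ff$ for the full $N$-isogeny in one stroke (Lemma~\ref{ISOGENYLEMMA1}(b)), then factors once, obtaining a single primitive proper real $\OO(\ff')$-ideal of index $N/(\ff/\ff')$. This sidesteps your reassembly entirely.

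Your reassembly step has a gap worth naming. For each prime $\ell\mid N$ you produce a divisor $d_\ell$ and a primitive proper real ideal $\bb_\ell$ of $\OO(\ff/d_\ell)$ --- but these ideals live in \emph{different} orders, so ``the ideal of $\OO(\ff/d)$ whose $\ell$-adic part is the per-$\ell$ ideal'' is not well-defined as stated. The patch is not hard: since $d_\ell$ is the full $\ell$-part of $d=\prod d_\ell$, one has $\ord_\ell(\ff/d_\ell)=\ord_\ell(\ff/d)$, so the criterion of Lemma~\ref{KWONLEMMA3.1}(c),(d) for the existence of a primitive proper real ideal of index $\ell^{a_\ell}/d_\ell$ is the same in $\OO(\ff/d_\ell)$ as in $\OO(\ff/d)$; then Lemma~\ref{KWONLEMMA3.1}(b) assembles the prime-power pieces inside $\OO(\ff/d)$. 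But once you appeal to that lemma you are essentially reproving what the paper gets directly, so the reduction buys nothing.

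One smaller point: your route from $\ell\ff\mid\ff'$ to $\Q(\ell\ff)\subseteq F$ via complex conjugation on ring class fields is more circuitous than needed. Since $E'_{/F}$ has $\operatorname{End}E'=\OO(\ff')$ and $\ell\ff\mid\ff'$, the canonical $F$-rational isogeny $\iota_{\ff',\ell\ff}$ lands on an $\OO(\ell\ff)$-CM curve defined over $F$, so $\Q(\ell\ff)\subseteq F$ immediately. This is how the paper's Lemma~\ref{ISOGENYLEMMA1}(b) argues.
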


\begin{remark}
The hypotheses on $F$ are natural.   On the one hand, \cite[Thm. 6.18]{BC18} gives the classification of $N \in \Z^+$ such that
some $\OO(\ff)$-$\operatorname{CM}$ elliptic curve admits a $K(\ff)$-rational cyclic $N$-isogeny.  On the other: suppose that $F$ is a number field
containing $\Q(N \ff)$.  Then there is an $\OO(N\ff)$-$\operatorname{CM}$ elliptic curve
$\tilde{E}_{/F}$ and a canonical $F$-rational cyclic $N$-isogeny $\iota_{N\ff,\ff}: \tilde{E} \ra E$ with $\End E = \OO(\ff)$, and thus $\iota_{N\ff,\ff}^{\vee}: E \ra \tilde{E}$ is a cyclic $N$-isogeny.
\end{remark}

\subsection{A preliminary lemma}

\begin{lemma}
\label{ISOGENYLEMMA1}
Let $N \in \Z^+$.  
\begin{enumerate}
\item Let $\varphi: E \ra E'$ be a degree $N$ isogeny of $K$-CM elliptic curves, with $\End E=\OO(\ff)$ and $\End E'=\OO(\ff')$. Then
\[ \ff \mid N \ff', \ \ff' \mid N \ff. \]
\item Let $F$ be a number field that does not contain $\Q(\ell \ff)$ for any $\ell \mid N$, let $E_{/F}$ be an $\OO(\ff)$-CM elliptic curve, and let $\varphi: E \ra E'$ be an $F$-rational $N$-isogeny.
Then the conductor $\ff'$ of $\End E'$ divides $\ff$.
\end{enumerate}
\end{lemma}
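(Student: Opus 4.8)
\emph{Part (a).} The plan is to pass to $\C$ and argue with lattices. Write $E \cong \C/\aa$ with $\aa$ a proper fractional $\OO(\ff)$-ideal; then $\Ker\varphi$ has the form $\Lambda/\aa$ for a lattice $\Lambda \supseteq \aa$, and $E' \cong \C/\Lambda$ with $\varphi$ the quotient map and $\Lambda$ a proper fractional $\OO(\ff')$-ideal (since $(\Lambda:\Lambda) = \End E' = \OO(\ff')$). As $[\Lambda:\aa] = \deg\varphi = N$, the finite group $\Lambda/\aa$ is killed by $N$, i.e. $N\Lambda \subseteq \aa$. Since $\aa$ is proper, $\ff\OO_K \subseteq \OO(\ff) = (\aa:\aa)$, so $\ff\OO_K\cdot\aa \subseteq \aa$ and hence
\[ N\ff\OO_K\cdot\Lambda = \ff\OO_K\cdot(N\Lambda) \subseteq \ff\OO_K\cdot\aa \subseteq \aa \subseteq \Lambda, \]
giving $N\ff\OO_K \subseteq (\Lambda:\Lambda) = \OO(\ff')$. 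Because $\OO(\ff') = \Z + \ff'\OO_K$, a positive integer $c$ satisfies $c\OO_K \subseteq \OO(\ff')$ iff $\ff' \mid c$; with $c = N\ff$ this yields $\ff' \mid N\ff$. Applying the same argument to the dual isogeny $\varphi^{\vee}\colon E' \to E$ (again of degree $N$, but now from conductor $\ff'$ to conductor $\ff$) gives $\ff \mid N\ff'$.

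\emph{Part (b).} I would reduce to a prime-power isogeny, taking care to keep $E$ as the \emph{source} so that the hypothesis $F \not\supseteq \Q(\ell\ff)$ is the one in force. Fix a prime $\ell \mid N$; by part (a) it suffices to show $\ord_\ell(\ff') \le \ord_\ell(\ff)=:c$ for each such $\ell$. Let $C$ be the $\ell$-primary part of $\Ker\varphi$: it is $\gg_F$-stable of order $\ell^k$ (where $\ell^k \| N$), so $\varphi$ factors $F$-rationally as $E \xrightarrow{\psi} E_2 \xrightarrow{\chi} E'$ with $E_2 = E/C$, $\deg\psi = \ell^k$, $\deg\chi = N/\ell^k$ prime to $\ell$. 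Part (a) applied to $\chi$ gives $\ord_\ell(\ff') = \ord_\ell(\ff_2)$, where $\ff_2 = \operatorname{cond}\End E_2$, and applied to $\psi$ it gives that $\ff_2$ and $\ff$ have the same prime-to-$\ell$ part $\ff_0$, so $\ff = \ell^c\ff_0$ and $\ff_2 = \ell^{c_2}\ff_0$ with $c_2 := \ord_\ell(\ff_2)$. It therefore suffices to prove $c_2 \le c$, and note that $E_2$, being the target of an $F$-rational isogeny, is defined over $F$.

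Suppose $c_2 \ge c+1$. Here I would invoke the canonical tower of $\S 2.6$: an elliptic curve with CM by $\OO(\ell^{j}\ff_0)$ for $j \ge 1$ admits a \emph{unique} order-$\ell$ subgroup whose quotient has CM by $\OO(\ell^{j-1}\ff_0)$, and being the unique subgroup of its kind it is stable under the absolute Galois group of any field of definition, so the quotient curve descends to that field too. Iterating this $c_2 - c - 1$ times starting from $E_2$ produces an elliptic curve $E^{*}$, defined over $F$, with CM by $\OO(\ell^{c+1}\ff_0) = \OO(\ell\ff)$; but then $j(E^{*}) \in F$ is a root of the monic, $\Q$-irreducible Hilbert class polynomial $H_{\Delta(\OO(\ell\ff))}$, so $\Q(\ell\ff) = \Q[j]/(H_{\Delta(\OO(\ell\ff))})$ embeds into $F$, contradicting $F \not\supseteq \Q(\ell\ff)$. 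Hence $c_2 \le c$, and running over all primes gives $\ff' \mid \ff$. I expect the main obstacle to be precisely this middle portion: arranging the factorization so that the available hypothesis pertains to the original conductor $\ff$ rather than to the conductor of some intermediate curve (hence splitting off the $\ell$-primary part first and absorbing the prime-to-$\ell$ tail into part (a)), and then justifying that every curve in the descending canonical $\ell$-tower remains defined over $F$ — which rests entirely on the uniqueness of the conductor-lowering cyclic $\ell$-isogeny recorded in $\S 2.6$.
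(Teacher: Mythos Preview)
Your proof is correct and follows essentially the same approach as the paper's. The only cosmetic differences are that in part (a) you argue directly for degree $N$ while the paper first reduces to prime-degree isogenies, and in part (b) you spell out the descent to an $\OO(\ell\ff)$-CM curve via the canonical tower of \S2.6 whereas the paper more tersely uses that $\ell\ff \mid \ff_1$ forces $\Q(\ell\ff) \subset \Q(\ff_1) \subset F$.
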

\begin{proof}
Every $N$-isogeny factors as a product of $\ell$-isogenies, so it is enough to treat the case $N = \ell$.   Over $\C$
we may view $\varphi$ as $\C/\Lambda \ra \C/\Lambda'$ where $\Lambda' \supset \Lambda$ and $\Lambda'/\Lambda \cong \Z/\ell \Z$.
Let $\alpha \in \OO(\ff)$.  For $\lambda' \in \Lambda'$ we have
\[ (\ell \alpha) \lambda' = \alpha (\ell \lambda'), \]
but $\ell \lambda' \in \Lambda$, so
\[ \alpha (\ell \lambda') \in \alpha \Lambda \subset \Lambda \subset \Lambda'. \]
Thus $\ell \alpha \in \{ x \in K \, | \, x \Lambda' \subset \Lambda'\} = \OO(\ff')$.   Applying this with $\alpha = \ff \tau_K$, we get that $\ff \ell \tau_K \in  \OO(\ff')$,
so $\ff' \mid \ell \ff$.  The same argument applied to the dual isogeny $\varphi^{\vee}: E' \ra E$ shows $\ff \mid \ell \ff'$, so
$\frac{\ff}{\ff'} \in \{\ell^{-1},1,\ell\}$.  \\
b) By part a), if $\ff'$ does not divide $\ff$, then there is some prime $\ell \mid N$ such that $\ord_{\ell}(\ff') > \ord_{\ell}(\ff)$; seeking a contradiction, we fix such a prime. We may factor $\varphi$ over $F$ as $\varphi_2 \circ \varphi_1$, where $\deg \varphi_1 = \ell^{\ord_{\ell}(N)}$ and $\deg \varphi_2 = \frac{N}{\ell^{\ord_{\ell}(N)}}$.  By part a), the $\ell$-primary part of the conductor is unchanged under $\varphi_2$, so if $\ff_1$ is the conductor of $E/\Ker \varphi_1$ then we must have $\ff \ell \mid \ff_1$ and thus
$F \supset \Q(\ff \ell)$, contradicting our hypothesis.
\end{proof}

\subsection{Classification of primitive, proper real ideals}

\begin{lemma}
\label{KWONLEMMA3.1}
Let $\OO$ be an imaginary quadratic order of discriminant $\Delta = \ff^2 \Delta_K$.  
\begin{enumerate}
\item If there is a primitive, proper real $\OO$-ideal of index $N$, then $N \mid \Delta$.  
\item Let $N = \ell_1^{a_1} \cdots \ell_r^{a_r}$.  There is a primitive, proper real $\OO$-ideal $I$ such that $[\OO:I] = N$ iff for all $1 \leq i \leq r$, there is a primitive,
proper real $\OO$-ideal $I_i$ such that $[\OO:I_i] = \ell_i^{a_i}$.  
\item Let $\ell > 2$, and let $a \in \Z^+$.  There is a primitive, proper real $\OO$-ideal $I$ such that $[\OO:I] = \ell^a$ iff
$a = \ord_{\ell} (\Delta)$.  
\item Let $\ell = 2$, and let $a \in \Z^+$. 
\begin{enumerate}
\item[i)] Suppose $16 \mid \Delta$.  Then there is a primitive, proper real $\OO$-ideal $I$ such that $[\OO:I] = 2^a$ iff
$a = 2$ or $a = \ord_2(\Delta)-2$.  
\item[ii)] Suppose $2 \mid \Delta$ and $16 \nmid \Delta$.  Then there is a primitive, proper real $\OO$-ideal $I$ such that $[\OO:I] =2^a$ iff
$a = 1$.
\end{enumerate}
\end{enumerate}
\end{lemma}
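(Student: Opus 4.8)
The plan is to reduce the whole lemma to the classical parametrization of primitive ideals in a quadratic order. Fix the $\Z$-basis $\OO = \Z \oplus \Z\omega$ with $\omega = \tfrac{\Delta + \sqrt{\Delta}}{2}$, and for $B \in \Z$ with $B \equiv \Delta \pmod 2$ put
\[ I_B \coloneqq N\Z + \tfrac{B + \sqrt{\Delta}}{2}\,\Z . \]
Expanding $\omega\cdot\tfrac{B+\sqrt{\Delta}}{2}$ shows that $I_B$ is an $\OO$-ideal iff $B^2 \equiv \Delta \pmod{4N}$, and in that case $[\OO:I_B]=N$ and $\OO/I_B$ is cyclic, so $I_B$ is primitive; conversely every primitive ideal $I$ of index $N$ has this shape, since if the class of $\omega$ in $\OO/I\cong\Z/N\Z$ is the class of $m\in\Z$ then $I = N\Z + (\omega-m)\Z = I_B$ with $B = \Delta - 2m$. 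By the standard dictionary between invertible ideals and primitive binary quadratic forms of discriminant $\Delta$ \cite[\S 7]{Cox89}, $I_B$ is proper iff $\gcd\!\bigl(N,B,\tfrac{B^2-\Delta}{4N}\bigr)=1$. Finally, the conjugate ideal is $\overline{I_B} = I_{-B}$, and $I_B = I_{B'}$ exactly when $B\equiv B'\pmod{2N}$; hence $I_B$ is real iff $N\mid B$. So the lemma becomes an elementary question: for which $N$ (resp.\ prime powers) is there $B\in\Z$ with $N\mid B$, $B^2\equiv\Delta\pmod{4N}$, and $\gcd\!\bigl(N,B,\tfrac{B^2-\Delta}{4N}\bigr)=1$?

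Granting this, part (a) is immediate: $N\mid B$ and $4N\mid B^2-\Delta$ give $N\mid B^2$, hence $N\mid\Delta$. For part (b) I would invoke the Chinese Remainder Theorem for $\OO/N\OO\cong\prod_i \OO/\ell_i^{a_i}\OO$: an ideal of index $N$ (which necessarily contains $N\OO$) corresponds to a tuple of ideals of index $\ell_i^{a_i}$ in the factors, and primitivity, properness, and realness each hold for the ideal iff they hold factor by factor — for primitivity since a product of finite cyclic groups of pairwise coprime orders is cyclic, for properness since invertibility is local, and for realness since complex conjugation respects the decomposition. This reduces everything to index a prime power $\ell^a$, $a\ge 1$.

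For part (c) let $\ell$ be odd. If $I_B$ is primitive, proper, and real of index $\ell^a$ then $\ell^a\mid B$, so $\ord_\ell(B^2)\ge 2a$; properness together with $\ell\mid\gcd(\ell^a,B)$ forces $\ell\nmid\tfrac{B^2-\Delta}{4\ell^a}$, i.e.\ $\ord_\ell(B^2-\Delta) = a$ (using $\ord_\ell(4\ell^a)=a$, as $\ell$ is odd). Since $a<2a\le\ord_\ell(B^2)$, this forces $\ord_\ell(\Delta)=a$. Conversely, if $\ord_\ell\Delta=a$, take $B=\ell^a$ when $\Delta\equiv1\pmod4$ and $B=2\ell^a$ when $\Delta\equiv0\pmod4$; then $B\equiv\Delta\pmod2$, $B^2\equiv\Delta\pmod4$, $\ord_\ell(B^2-\Delta)=a$, so $I_B$ is a primitive, proper, real ideal of index $\ell^a$.

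Part (d), the case $\ell=2$, is the crux and is where I expect essentially all the work to lie. The difference from the odd case is that $\ord_2(4\cdot2^a)=a+2$, so writing $B=2^a t$ the properness condition becomes $\ord_2(2^{2a}t^2-\Delta)=a+2$ (for $a\ge1$), and the valuation on the left depends on how $2a+2\ord_2 t$ compares with $v\coloneqq\ord_2\Delta$. The decisive phenomenon is the ``tie'' case $2a+2\ord_2 t=v$: there $2^{2a}t^2-\Delta$ is $2^v$ times a difference of two \emph{odd} integers, hence has $2$-adic valuation at least $v+1$, and this extra power of $2$ is exactly what produces the shift by $2$ in (d)(i). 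Unwinding the finitely many cases on $a$ and $\ord_2 t$ — and using that $\Delta$ is a discriminant, so $\Delta\equiv0$ or $1\pmod4$ and, when $v\ge2$, $\Delta/2^v$ is odd — should then yield the stated dichotomies: when $16\mid\Delta$ one may take $B=8t$ with $t$ chosen so that $\ord_2(B^2-\Delta)=4$ (giving $a=2$) or $B=2^{v-2}t$ with $\ord_2 t$ chosen so that $2a+2\ord_2 t>v$ (giving $a=v-2$), and no other $a$ survives; while when $2\mid\Delta$ and $16\nmid\Delta$ only $a=1$ survives. The hard part will be the careful $2$-adic bookkeeping in the boundary configurations — in particular $16\mid\Delta$ with $v=4$, where the two candidate exponents $2$ and $v-2$ coincide, and $v\in\{2,3\}$, where one must track $\Delta$ modulo $16$ — but once the parametrization of the first paragraph is in place, parts (a)--(c) and the assembly in (b) are short.
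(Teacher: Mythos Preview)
Your approach is correct and genuinely different from the paper's. The paper does not prove parts (a), (c), (d) at all: it simply cites \cite[\S 3]{Kwon99}, \cite[Cor.~3.8]{BCS17}, and \cite[Prop.~3.1]{Kwon99}. Only part (b) is proved directly in the paper, via an abstract argument using primary decomposition in a one-dimensional Noetherian domain together with the characterization proper $\Leftrightarrow$ locally principal. Your version of (b) reaches the same conclusion by the same CRT mechanism, but your framing through the explicit $I_B$ parametrization is more concrete.

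What your approach buys is self-containment: the parametrization $I_B = N\Z + \tfrac{B+\sqrt{\Delta}}{2}\Z$ with the three dictionary entries (ideal $\Leftrightarrow B^2\equiv\Delta\pmod{4N}$; proper $\Leftrightarrow \gcd(N,B,\tfrac{B^2-\Delta}{4N})=1$; real $\Leftrightarrow N\mid B$) turns (a) and (c) into short valuation arguments, with no external reference needed. Your proof of (c) is complete and clean. What the paper's approach buys is brevity --- it outsources the prime-power classification to Kwon --- and its proof of (b) is slightly more robust in that it does not rely on the $I_B$ form at all, only on primary decomposition and the local characterization of properness.

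The one place your proposal is genuinely incomplete is (d), as you yourself flag: you outline the $2$-adic case analysis and identify the key ``tie'' phenomenon at $2a+2\ord_2 t = v$, but do not carry it through. This is honest, and your sketch is on the right track; the paper likewise does not do this computation, deferring to \cite[Prop.~3.1]{Kwon99}. If you want your write-up to be fully self-contained you will need to execute those finitely many cases, paying particular attention (as you note) to $v=4$ and to $v\in\{2,3\}$.
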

\begin{proof}
a) See \cite[\S 3]{Kwon99} or \cite[Cor. 3.8]{BCS17}. \\
b) Let $\ell_1  < \ldots < \ell_r$ be prime numbers and $a_1,\dots,a_r$ be positive integers.  For $1 \leq i \leq r$, let $I_i$ be a primitive 
proper real ideal of index $\ell_i^{a_i}$.  Then the $I_i$'s are pairwise comaximal, so by the Chinese Remainder Theorem we have
\[ I \coloneqq \bigcap_{i=1}^r I_i = \prod_{i=1}^r I_i\]
and $\OO/I \cong \prod_{i=1}^r \OO/I_i$, so $I$ has index $\ell_1^{a_1} \cdots \ell_r^{a_r}$.  Since each $I_i$ is primitive we have 
$\OO/I_i \cong_{\Z} \Z/\ell_i^{a_i} \Z$.  Thus $\OO/I \cong \prod_{i=1}^r \Z/\ell_i^{a_i} \Z \cong \Z/\prod_{i=1}^r \ell_i^{a_i} \Z$ and $I$ is primitive.  An ideal of $\OO$ is proper iff it is locally principal, and $I = \prod_{i=1}^r I_i$ is a product of locally principal 
ideals and thus locally principal, hence proper.  Finally, as an intersection of real ideals, $I$ is real.  \\ \indent
Let $I$ be a primitive, proper real ideal of $\OO$ of index $\ell_1^{a_1} \cdots \ell_r^{a_r}$.  Since $I$ is a nonzero ideal in a 
one-dimensional Noetherian domain, there are pairwise comaximal primary ideals $\qq_1,\ldots,\qq_s$ such that 
\[ I = \bigcap_{j=1}^s \qq_j = \prod_{j=1}^s \qq_j \]
and the $\qq_j$'s are unique up to ordering \cite[\S 10.6]{Clark-CA}.  For each $1 \leq j \leq s$ the ring $\OO/\qq_j$ finite local, hence of prime power 
order.  Since $\OO/I \cong \prod_{j=1}^s \OO/\qq_j$, the order of each $\OO/\qq_j$ is a power of some $\ell_i$, and for all $i$ 
there is at least one $\qq_j$ such that $[\OO:\qq_j]$ is a power of $\ell_i$.  For $1 \leq i \leq r$, let $S_i$ be the subset of 
$\{1,\ldots,s\}$ consisting of all $j$ such that $[\OO:\qq_j]$ is a power of $i$, and put 
\[ I_i \coloneqq \bigcap_{j \in S_i} \qq_j = \prod_{j \in S_i} \qq_j. \]
Then $I_i$ is the ``$\ell_i$-primary part of $I$'': that is, we have 
\[ \OO/I \cong \prod_{i=1}^r \OO/I_i, \ [\OO:I_i] = \ell_i^{a_i},\]
and it follows from the uniqueness of the primary decomposition that that $I_i$'s are the unique ideals containing $I$ with these 
properties.  \\ \indent
Since $I$ is primitive, $\OO/I \cong \prod_{i=1}^r \OO/I_i$ is a cyclic $\Z$-module, hence so is each $\OO/I_i$, hence each $I_i$ 
is primitive.  Since $I$ is real, we have 
\[ I = \overline{I} = \prod_{i=1}^r \overline{I_i}, \]
and by the uniqueness of the $I_i$'s we must have $\overline{I_i} = I_i$ for all $i$.  As above, to show that each $I_i$ is projective 
it is equivalent to show that each $\qq_j$ is locally principal.  For $1 \leq j \leq s$, let $\pp_j$ be the radical of $\qq_j$.  If $\pp \in \operatorname{MaxSpec} \OO \setminus \{ \pp_j \}$ we have $\pp_j \OO_{\pp} = \OO_{\pp}$, while 
\[ \pp_j \OO_{\pp_j} = \pp_1 \cdots \pp_s \OO_{\pp_j} = I \OO_{\pp_j}, \]
so $\pp_j$ is locally principal hence projective.
\\
c), d) This is \cite[Prop. 3.1]{Kwon99}.
\end{proof}

\subsection{Proof of Theorem \ref{GENKWONTHM}}
a) Let $F$ be a number field, let $E_{/F}$ be an $\OO = \OO(\ff)$-CM elliptic curve, and let $\varphi: E \ra E'$ be an $F$-rational cyclic $N$-isogeny.  Suppose $F$ contains neither $K$ nor $\Q(\ell \ff)$ for any $\ell \mid N$.   By Lemma \ref{ISOGENYLEMMA1},
the conductor $\ff'$ of $\OO' \coloneqq \End E'$ divides $\ff$.  As in $\S2.6$, we may factor $\varphi$ as
\[ E \stackrel{\iota_{\ff,\ff'}}{\ra} E'' \stackrel{\varphi'}{\ra} E'. \]
Thus $\varphi'$ is a cyclic $\frac{N}{\ff/\ff'}$-isogeny of $\OO'$-CM elliptic curves, so by $\S2.6$ we have $\Ker \varphi' = E''[\bb]$ for a primitive
(since $\varphi'$ is cyclic) proper $\OO'$-ideal $\bb$.   Let
\[ \mathcal{K} \coloneqq \Ker \varphi, \ \mathcal{K}'' \coloneqq \Ker \iota_{\ff,\ff'}, \ \mathcal{K}' \coloneqq \ker \varphi'. \]
Since $\mathcal{K}$ and $\mathcal{K}''$ are both $F$-rational group schemes and
\[ \mathcal{K'} = \mathcal{K}/\mathcal{K}'', \]
we have that $\mathcal{K}'$ is $F$-rational; equivalently, $\varphi'$ is defined over $F$.  Since $F$ does not contain $K$, the $F$-rationality of $E[\bb]$ implies that $\bb$
is a real ideal and $\# \OO'/\bb = \# E[\bb] = \frac{N}{\ff/\ff'}$: see \cite[\S 3.3]{BCS17} and \cite[Lemma 2.4]{BC18}.
\\
b) Suppose that there is an $\OO = \OO(\ff)$-CM elliptic curve $E_{/F}$ admitting an $F$-rational cyclic $N$-isogeny.  By part a) there is there is some $\ff' \mid \ff$ such that
$N = \frac{\ff}{\ff'} \cdot i$, where $i$ is the index of a primitive, proper real $\OO(\ff')$-ideal.   By Lemma \ref{KWONLEMMA3.1}(a)
we have $i \mid \Delta(\OO(\ff')) = \ff'^2 \Delta_K$, so
\begin{equation}
\label{GENKWONEQ1}
N = \frac{\ff}{\ff'} i \mid \frac{\ff}{\ff'} \ff'^2 \Delta_K  = \ff \ff' \Delta_K \mid \ff^2 \Delta_K.
\end{equation}
$\bullet$ Suppose that $2 \mid \ff$ and $16 \mid \Delta$.  We wish to show that $N \mid \frac{\ff^2 \Delta_K}{4}$.
In view of what we have already shown it is enough to show that if $N = 2^a$ then $a \leq \ord_2(\ff^2 \Delta_K) - 2$, and this
follows easily from Lemma \ref{KWONLEMMA3.1}(d). \\
$\bullet$ Suppose that $\ff \equiv 2 \pmod{4}$ and $2$ is unramified in $K$.  We wish to show that if $2^a \mid N$ then $a \leq 1$.  This follows easily from Lemma \ref{KWONLEMMA3.1}. \\
$\bullet$ Suppose that $2 \nmid \ff$ and $2$ ramifies in $K$.  We wish to show that if $2^a \mid N$ then $a \leq 1$.  This
follows easily from Lemma \ref{KWONLEMMA3.1}.

\subsection{Supplements to Kwon's theorem}\label{coincidences}
The hypotheses of Theorem \ref{GENKWONTHM} are never satisfied in the presence of ``ring class field coincidences'': namely, when
$\Q(\ff) = \Q(\ell \ff)$ for some prime $\ell \mid N$.  Using (\ref{RINGCLASSDEGEQ}), one
sees that the instances of $\Q(\ff) = \Q(\ell \ff)$ are precisely as follows: \\ \\
(C1) If $\left( \frac{\Delta_K}{2} \right) = 1$ and $\ff$ is odd, then $\Q(\ff) = \Q(2 \ff)$.  \\
(C2) If $K = \Q(\sqrt{-1})$, then $\Q(1) = \Q(2)$.  \\
(C3) If $K = \Q(\sqrt{-3})$, then $\Q(1) = \Q(2) = \Q(3)$.
\\
In particular we must have $\ell \in \{2,3\}$.
\\ \\
Concerning (C1):

\begin{lemma}
\label{LITTLEPROJPROP}
Let $\OO(\ff)$ be an imaginary quadratic order of discriminant $\Delta < -4$, and let $E_{/F}$ be an $\OO(\ff)$-CM elliptic curve.
For a positive integer $N$, if the image $\rho_N(\gg_F)$ of the mod $N$ Galois representation on $E$ consists only of
scalar matrices, then $\Q(N\ff) \subset F$.
\end{lemma}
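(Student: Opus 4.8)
The plan is to convert the scalar hypothesis into the existence of an $F$-rational cyclic $N$-isogeny from $E$ onto an $\OO(N\ff)$-CM elliptic curve; the $j$-invariant of the target then lies in $F$ and is a root of the Hilbert class polynomial $H_{(N\ff)^2\Delta_K}$, which is exactly the assertion $\Q(N\ff) \subset F$.

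First I would isolate a purely geometric input: after fixing an embedding $\OQ \hookrightarrow \C$, there is a cyclic subgroup $C \subset E[N]$ of order $N$ such that $E/C$ has CM by $\OO(N\ff)$. One way to see this is via $\S 2.6$: for a proper fractional $\OO(N\ff)$-ideal $\widetilde{\aa}$, the canonical cyclic $N$-isogeny $\iota_{N\ff,\ff}\colon \C/\widetilde{\aa} \ra \C/\widetilde{\aa}\OO(\ff)$ has source of CM type $\OO(N\ff)$ and target of CM type $\OO(\ff)$; writing $E_{/\C} \cong \C/\aa$ with $\aa$ a proper $\OO(\ff)$-ideal and using that the natural map $\Pic\OO(N\ff) \ra \Pic\OO(\ff)$ (extension of ideals) is surjective, we may pick $\widetilde{\aa}$ with $\widetilde{\aa}\OO(\ff)$ homothetic to $\aa$, obtaining a cyclic $N$-isogeny $\psi\colon E' \ra E$ with $E'$ of CM type $\OO(N\ff)$. (Alternatively one builds $C$ directly by prescribing, at each prime $p \mid N$, a sublattice of $\aa\otimes\Z_p$ of cyclic index $p^{\ord_p N}$ whose stabilizer is the local order of conductor $p^{\ord_p(N\ff)}$.) We then set $C \coloneqq \Ker(\psi^{\vee}\colon E \ra E')$, so that $C$ is cyclic of order $N$ and $E/C \cong E'$ has CM by $\OO(N\ff)$; by transport of structure between $\C$ and $\OQ$, all of this is defined over $\OQ$.

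Next I would feed in the hypothesis. Because $\rho_N(\gg_F)$ consists only of scalar matrices, for each $\sigma \in \gg_F$ there is $\lambda_\sigma \in (\Z/N\Z)^{\times}$ with $\sigma(P) = \lambda_\sigma P$ for all $P \in E[N]$. Since $C$ is a subgroup of $E[N]$ and $\lambda_\sigma$ is a unit modulo $N$, we get $\sigma(C) = \lambda_\sigma C = C$; hence $C$ is a finite \'etale $F$-subgroup scheme of $E$, and $\widetilde{E} \coloneqq E/C$ is an elliptic curve over $F$ receiving an $F$-rational cyclic $N$-isogeny from $E$. Base-changing to $\OQ$ identifies $\widetilde{E}$ with $E/C \cong E'$, which has CM by $\OO(N\ff)$; therefore $j(\widetilde{E}) \in F$ and $H_{(N\ff)^2\Delta_K}(j(\widetilde{E})) = 0$.

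Since $\Q(N\ff) = \Q[j]/(H_{(N\ff)^2\Delta_K}(j))$ is a number field by $\S 2.2$ (so $H_{(N\ff)^2\Delta_K}$ is $\Q$-irreducible), the subfield $\Q(j(\widetilde{E})) \subseteq F$ is isomorphic to $\Q(N\ff)$, which gives the claim. The only nonroutine point is the geometric input of the second paragraph — producing a cyclic $N$-isogeny from an arbitrary $\OO(\ff)$-CM elliptic curve onto an $\OO(N\ff)$-CM elliptic curve — but this is a direct consequence of the constructions of $\S 2.6$ (together with the surjectivity of $\Pic\OO(N\ff) \to \Pic\OO(\ff)$); once $C$ is in hand, the scalar hypothesis does the rest formally.
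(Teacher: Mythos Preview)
Your proof is correct and follows essentially the same approach as the paper's: construct a cyclic order-$N$ subgroup $C \subset E[N]$ with $E/C$ of CM type $\OO(N\ff)$, use the scalar hypothesis to make $C$ (hence $E/C$) $F$-rational, and read off $\Q(N\ff) \subset F$ from $j(E/C) \in F$. The only difference is cosmetic: the paper avoids invoking surjectivity of $\Pic\OO(N\ff)\to\Pic\OO(\ff)$ by first choosing an embedding $F\hookrightarrow\C$ under which $E_{/\C}\cong\C/\OO(\ff)$ (such an embedding exists because the embeddings of $\Q(\ff)$ into $\C$ realize every root of $H_\Delta$), after which the map $z\mapsto Nz$ gives directly a cyclic $N$-isogeny $\C/\OO(\ff)\to\C/\OO(N\ff)$.
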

\begin{proof}
This is really the observation that part of the proof of \cite[Thm. 4.1]{BC18} goes through over $\Q(\ff)$ rather than $K(\ff)$, but for
the convenience of the reader we will recap the argument.  \\ \indent
There is a field embedding $F \hookrightarrow \C$ such that $E_{/\C} \cong \C/\OO(\ff)$.  The map $z \mapsto Nz$ induces a
cyclic $N$-isogeny $\varphi: \C/\OO(\ff) \ra \C/\OO(N\ff)$.  Let $C = \Ker \varphi = \langle P \rangle$ for some point $P \in E(\overline{F})$ of order $N$.  Our assumption on $\gg_F$ implies that Galois acts by scaling $P$ and thus $C$ is $F$-rational.
So $E \ra E/C$ is an $F$-rational isogeny and $\End E/C = \OO(N\ff)$, so $\Q(N \ff) \subset F$.
\end{proof}

\begin{prop}
\label{C1PROP}
Let $K$ be an imaginary quadratic field in which $2$ splits.  Let $\ff$ be an odd integer, and put $\OO = \OO(\ff)$.  Let $F$ be a number field that does not contain $K$.  The following are equivalent: 
\begin{enumerate}
\setcounter{enumii}{4}
\item Every $\OO$-CM elliptic curve defined over $F$ admits an $F$-rational cyclic $4$-isogeny. 
\item There is an $\OO$-CM elliptic curve defined over $F$ admitting an $F$-rational cyclic $4$-isogeny.  
\item The field $F$ contains $\Q(4\ff)$.
\end{enumerate}
\end{prop}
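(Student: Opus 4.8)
The plan is to prove the cycle (e)$\Rightarrow$(f)$\Rightarrow$(g)$\Rightarrow$(e). The implication (e)$\Rightarrow$(f) is immediate, since there is an $\OO$-CM elliptic curve over $F$ (note $\Q(\ff) \subseteq F$), so the substance is in (f)$\Rightarrow$(g) and (g)$\Rightarrow$(e). First I would record some free consequences of the hypotheses: since $2$ splits in $K$ we have $K \neq \Q(\sqrt{-1}),\Q(\sqrt{-3})$ and $\Delta_K \equiv 1 \pmod 8$, so $\Delta = \ff^2\Delta_K$ is odd and $\Delta < -4$; and by \eqref{RINGCLASSDEGEQ} one has $\Q(\ff) = \Q(2\ff) \subsetneq \Q(4\ff)$ with $[\Q(4\ff):\Q(\ff)]=2$. (In particular Theorem \ref{KWONTHM}(2) already applies and shows $I(\OO,4)$ fails over $\Q(\ff)$ itself, so the proposition is really about which extensions $F$ of $\Q(\ff)$ acquire the isogeny.)

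For (f)$\Rightarrow$(g), suppose $E_{/F}$ is $\OO$-CM and admits an $F$-rational cyclic $4$-isogeny $\varphi\colon E \to E'$, with $F \not\supseteq K$. Since $\Ker\varphi$ is cyclic of order $4$, the subgroup scheme $\Ker\varphi \cap E[2]$ is $F$-rational of order $2$, and writing $\varphi_1\colon E \to E_1 \coloneqq E/(\Ker\varphi \cap E[2])$ and $\varphi_2\colon E_1 \to E'$ for the induced maps gives a factorization $\varphi = \varphi_2\circ\varphi_1$ into $F$-rational cyclic $2$-isogenies. I would first show $\End E_1 = \OO(2\ff)$: by Lemma \ref{ISOGENYLEMMA1}(a) its conductor lies in $\{\ff,2\ff\}$ (as $\ff$ is odd), and if it were $\ff$ then, by \S2.6 and $F \not\supseteq K$, $\Ker\varphi_1$ would be $E[\bb_1]$ for a primitive, proper, real $\OO(\ff)$-ideal of index $2$, impossible by Lemma \ref{KWONLEMMA3.1}(a) since $\Delta(\OO(\ff))$ is odd. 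Next, Lemma \ref{ISOGENYLEMMA1}(a) places the conductor of $\End E'$ in $\{\ff,2\ff,4\ff\}$; it cannot be $\ff$, else $\varphi$ is a cyclic $4$-isogeny of $\OO(\ff)$-CM curves and so $\Ker\varphi = E[\bb]$ for a primitive, proper, real $\OO(\ff)$-ideal of index $4$ (\S2.6), again impossible by Lemma \ref{KWONLEMMA3.1}(a); and it cannot be $2\ff$, else $\varphi_2$ is a $2$-isogeny of $\OO(2\ff)$-CM curves with $\Ker\varphi_2 = E_1[\bb_2]$ for a primitive, proper, real $\OO(2\ff)$-ideal of index $2$, which cannot exist because $2 \mid \operatorname{cond}(\OO(2\ff))$ forces the unique prime of $\OO(2\ff)$ above $2$ to be non‑invertible (its localization is not a DVR). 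Hence $\End E' = \OO(4\ff)$, so $j(E') \in F$ is a root of $H_{\Delta(\OO(4\ff))}$ and therefore $\Q(4\ff) \subseteq F$.

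For (g)$\Rightarrow$(e), suppose $F \supseteq \Q(4\ff)$ and let $E_{/F}$ be $\OO$-CM. By the Remark following Theorem \ref{GENKWONTHM} together with \S2.6, for any $\OO(4\ff)$-CM elliptic curve $\tilde E_{/F}$ the canonical isogeny $\iota_{4\ff,\ff}\colon \tilde E \to \tilde E/\Ker\iota_{4\ff,\ff}$ — whose kernel is the unique cyclic order-$4$ subgroup with $\OO(\ff)$-CM quotient — is $F$-rational, so its dual is an $F$-rational cyclic $4$-isogeny on the $\OO(\ff)$-CM curve $\tilde E/\Ker\iota_{4\ff,\ff}$. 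Since $F \supseteq \Q(4\ff)$ ensures such a $\tilde E$ exists, since whether an $\OO$-CM elliptic curve admits an $F$-rational cyclic $4$-isogeny depends only on its $j$-invariant (by \S2.4, the reduced mod $4$ representation being model-independent), and since $\Q(j(E),j(\tilde E)) \cong \Q(4\ff)$ pins down the field generated over $\Q(\ff)$ by the relevant $\OO(4\ff)$-CM $j$-invariant, one concludes that every $\OO$-CM elliptic curve over $F$ admits an $F$-rational cyclic $4$-isogeny.

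The main obstacle is the conductor bookkeeping in (f)$\Rightarrow$(g): one must genuinely rule out the two intermediate possibilities $\End E' \in \{\OO(\ff),\OO(2\ff)\}$, and this rests on two non-existence facts that hold precisely because we are in the $2$-split, odd-conductor situation — that $\OO(\ff)$ has no primitive proper real ideal of index $2$ or $4$ (as $\Delta$ is odd) and that $\OO(2\ff)$ has no proper ideal of index $2$ (as $2 \mid \operatorname{cond}(\OO(2\ff))$). A secondary point needing care is the $j$-invariant dependence in (g)$\Rightarrow$(e), i.e.\ passing from one $\OO$-CM curve over $F$ with the isogeny to all of them.
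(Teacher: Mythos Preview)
Your proof is correct, and your argument for (b)$\Rightarrow$(c) takes a genuinely different route from the paper's.  The paper argues by contradiction: assuming $F \not\supset \Q(4\ff)$, it first identifies the intermediate curve $E''$ as $\OO(2\ff)$-CM by explicitly listing the three order-$2$ subgroups of $E$ (namely $E[\pp_1]$, $E[\pp_2]$, and $\Ker\iota_{2\ff,\ff}^{\vee}$) and noting that only the last is $F$-rational; it then invokes Lemma~\ref{LITTLEPROJPROP} to show that $E''$ has a \emph{unique} $F$-rational point of order $2$ (since $F(E''[2]) \supset \Q(4\ff) \not\subset F$), forcing $\varphi' = \iota^{\vee}$ and hence $\Ker\varphi = E[2]$, contradicting cyclicity.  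You instead track conductors directly through the factorization, ruling out each wrong value of $\End E'$ via \S2.6 and the non-existence of the requisite primitive proper real ideals.  Your method is more systematic and purely ideal-theoretic---it never enumerates the $2$-torsion---while the paper's argument is more hands-on and makes the obstruction visible at the level of torsion points.  The key step in your version, that $\OO(2\ff)$ has no proper ideal of index $2$ because $2$ divides its conductor (so the unique prime above $2$ has non-regular localization), is correct and is exactly what is needed.

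Your handling of (g)$\Rightarrow$(e) is a bit over-elaborated at the end.  The paper disposes of (a)$\Leftrightarrow$(b) in one line by observing that $\Delta < -4$ gives $\Aut E = \{\pm 1\}$, so the existence of an $F$-rational cyclic isogeny is independent of the chosen $F$-rational model; it then proves only (c)$\Rightarrow$(b) by producing $\iota_{4\ff,\ff}^{\vee}$.  Your appeal to the reduced mod~$4$ representation amounts to the same model-independence, and the extra remark about $\Q(j(E),j(\tilde E))$ is not really needed.
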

\begin{proof}
The hypotheses prevent $K$ from being $\Q(\sqrt{-1})$ or $\Q(\sqrt{-3})$, so for any $\OO$-CM elliptic curve $E$ we have
$\Aut E = \{ \pm 1 \}$. Thus the existence of rational isogenies is independent of the chosen $F$-rational model: (a) $\iff$ (b).  
\\
(b) $\implies$ (c): Let $E_{/F}$ be an $\OO$-CM elliptic curve, and let $\varphi: E \ra E'$ be an $F$-rational cyclic $4$-isogeny.
Seeking a contradiction, we suppose that $F$ does not contain $\Q(4\ff)$.  

The isogeny $\varphi$ factors as $E \stackrel{\iota}{\ra} E'' \stackrel{\varphi'}{\ra} E'$, where $\iota$ and $\varphi'$ are both $F$-rational $2$-isogenies.  Thus there is $P \in E(F)$ of order $2$ such that -- up to an isomorphism on $E''$ -- if $C = \langle P \rangle$, then $\iota: E \ra E/C$.  We claim that $\End E/C \cong \OO(2 \ff)$, so that -- again, up to an isomorphism on $E''$ -- we have 
$\iota = \iota_{2\ff,\ff}^{\vee}$.  Indeed, there are three order $2$ subgroups of $E$.  Since $2$ splits in $K$ and $\ff$ is odd, 
we have $2 \OO = \pp_1 \pp_2$ with $\pp_2 = \overline{\pp_1} \neq \pp_1$.  Thus $E[\pp_1]$ and $E[\pp_2]$ are distinct 
subgroups of order $2$.  However, since $F$ does not contain $K$ and $\pp_1$ and $\pp_2$ are not real ideals, the subgroups 
$E[\pp_1]$ and $E[\pp_2]$ are not $F$-rational, whereas $C$ is, so this gives all three order $2$ subgroups of $E$.  Because $\pp_1$ 
and $\pp_2$ are proper $\OO$-ideals we have $\End E/E[\pp_1] \cong \End E/E[\pp_2] \cong \End E \cong \OO$.  On the other hand, 
there is $2$-isogeny $\psi$ from an $\OO(2\ff)$-CM elliptic curve $\tilde{E}$ to $E$: e.g. we may embed $\Q(j(E)) \hookrightarrow \C$ 
such that $E_{/\C} \cong C/\OO$ and then the isogeny is $\C/\OO(2\ff) \ra \C/\OO$ and thus $\Ker \psi^{\vee}$ is an order $2$ 
subgroup of $E$ such that $\End E/\Ker \psi^{\vee} \cong \OO(2\ff)$.  Therefore we must have $\Ker \psi^{\vee} = C$.

Now $\iota^{\vee}: E'' \ra E$ is also an $F$-rational $2$-isogeny, hence is of the form $\langle Q \rangle$ for a point $Q \in E''(F)$ 
of order $2$.  Lemma \ref{LITTLEPROJPROP} gives $F(E''[2]) \supset \Q(4\ff)$; since $F$ does not contain $\Q(4\ff)$ it follows 
that $Q$ is the only element of $E''(F)$ of order $2$.  It follows that -- up to an isomorphism on $E'$ -- we have $\varphi' = \iota^{\vee}$ -- but then $\Ker \varphi = \Ker \iota^{\vee} \circ \iota = E[2]$, so $\varphi$ is not cyclic, a contradiction.
\\
(c) $\implies$ (b): If $F$ contains $\Q(4\ff)$ then there is an $\OO(4\ff)$-CM elliptic curve $\tilde{E}_{/F}$ and a
cyclic $4$-isogeny $\iota_{4\ff,\ff}$ with target an $\OO(\ff)$-CM elliptic curve.
\end{proof}
\noindent
Concerning (C2):

\begin{prop}
\label{C2PROP}
Let $K = \Q(\sqrt{-1})$, and let $\OO = \OO_K$ be the ring of integers of $K$, so $\Delta = \Delta_K = -4$, $\ff= 1$ and $\Q(\ff) = \Q$.
For $a \in \Z^+$,
we have that $I(\OO,2^a)$ holds iff $a \leq 2$.
\end{prop}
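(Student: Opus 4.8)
The plan is to prove both implications. For sufficiency I would exhibit a $\Q$-rational cyclic $4$-isogeny, together with a $\Q$-rational $2$-isogeny, whose sources are $\OO_K$-CM; that covers the relevant cases $a=1,2$. Take $E_1\colon y^2=x^3-x$, which has $j=1728$, hence $\End_{\overline{\Q}}E_1=\Z[i]=\OO_K$, and which has full rational $2$-torsion $\{O,(0,0),(1,0),(-1,0)\}$. The automorphism $[i]$ acts by $(x,y)\mapsto(-x,iy)$ and fixes $(0,0)$, so $\langle(0,0)\rangle=E_1[\pp]$ with $\pp=(1+i)$; as $\OO_K$ is maximal, $\pp$ is invertible, and the quotient $\lambda\colon E_1\to E_0\coloneqq E_1/E_1[\pp]$ has $\End E_0=\OO_K$, so its dual $\hat\lambda\colon E_0\to E_1$ witnesses $I(\OO,2)$. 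For $I(\OO,4)$, put $D\coloneqq\langle(1,0)\rangle\subset E_1$, a $\Q$-rational order-$2$ subgroup with $D\neq E_1[\pp]$, and let $\varphi$ be the composite $E_0\stackrel{\hat\lambda}{\ra}E_1\ra E_1/D$, a $\Q$-rational degree-$4$ isogeny out of the $\OO_K$-CM curve $E_0$. Its kernel $\hat\lambda^{-1}(D)$ is cyclic: otherwise it has order $4$ and exponent $2$, hence equals $E_0[2]$, forcing $D=\hat\lambda(E_0[2])$; but $\lambda\circ\hat\lambda=[2]_{E_0}$ gives $\hat\lambda^{-1}(\ker\lambda)=E_0[2]$, so $\hat\lambda(E_0[2])=\ker\lambda=E_1[\pp]=\langle(0,0)\rangle\neq D$, a contradiction. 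Hence $I(\OO,4)$, and so $I(\OO,2^a)$ for all $a\le 2$.

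For necessity, suppose $\varphi\colon E\to E'$ is a $\Q$-rational cyclic $2^a$-isogeny with $\End E=\OO_K$; I must show $a\le 2$. By Lemma \ref{ISOGENYLEMMA1}(a) (with $\ff=1$, $N=2^a$) the conductor of $\End E'$ divides $2^a$, so $\End E'=\OO(2^t)$ for some $0\le t\le a$. Since $E'/\Q$, its $j$-invariant lies in $\Q$ and is a root of the Hilbert class polynomial $H_{\Delta(\OO(2^t))}$, which is monic, irreducible over $\Q$, of degree $\#\Pic\OO(2^t)$; hence $\#\Pic\OO(2^t)=1$. As $K^{(1)}=K$ for $K=\Q(\sqrt{-1})$, formula (\ref{RINGCLASSDEGEQ}) gives $\#\Pic\OO(2^t)=2^{t-1}$ for $t\ge 1$, so $t\le 1$.

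It remains to strip off the conductor using the dual isogeny $\hat\varphi\colon E'\to E$, which is again cyclic of degree $2^a$. Its source has conductor $2^t$ and its target conductor $1$, so by \S 2.6 it factors as $E'\stackrel{\iota_{2^t,1}}{\ra}E''\stackrel{\psi}{\ra}E$ with $\End E''=\OO_K$, where $\iota_{2^t,1}$ is the canonical cyclic $2^t$-isogeny. Its kernel is the unique order-$2^t$ subgroup of $E'$ with $\OO_K$-CM quotient, hence $\gg_{\Q}$-stable because $E'/\Q$; so $\iota_{2^t,1}$, and therefore $\psi$, are $\Q$-rational, and $\ker\psi$ — the quotient of the cyclic group $\ker\hat\varphi$ by its order-$2^t$ subgroup — is cyclic of order $2^{a-t}$. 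Thus $\psi$ is a cyclic $2^{a-t}$-isogeny between the $\OO_K$-CM curves $E''$ and $E$, and the crux is the bound that any cyclic $2^b$-isogeny between $\OO_K$-CM curves has $b\le 1$. Realizing the source as $\C/\OO_K$, the condition that the target have endomorphism ring $\OO_K$ forces the overlattice $\Lambda\supseteq\OO_K$ with $\Lambda/\OO_K$ equal to the kernel to be an invertible fractional $\OO_K$-ideal of index $2^b$, necessarily $\pp^{-b}$ (as $\pp$ is the unique prime of $\OO_K$ over $2$); but $\pp^{-b}/\OO_K\cong\OO_K/\pp^b$ is cyclic as an abelian group only for $b\le 1$, since $\OO_K/\pp^{2c}\cong(\Z/2^c\Z)^2$ and $\OO_K/\pp^{2c+1}\cong\Z/2^{c+1}\Z\times\Z/2^c\Z$. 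Applying this to $\psi$ gives $a-t\le 1$, whence $a\le 2$.

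The main obstacle is the necessity direction, and within it the organization of the two-step descent: using the class-number-one constraint on $j(E')$ to pin the conductor of $E'$ down to at most $2$, using the explicit lattice computation to cap cyclic $2$-power isogenies between $\OO_K$-CM curves at degree $2$, and checking that the factorization of $\hat\varphi$ through the canonical isogeny is genuinely $\Q$-rational. The sufficiency direction is a direct construction; the only point of care is choosing the $\OO_K$-CM curve in the chain so that it carries full rational $2$-torsion.
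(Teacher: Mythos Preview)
Your proof is correct. The sufficiency construction is essentially the same as the paper's: both build the cyclic $4$-isogeny as a composite of two $2$-isogenies, one of them being (the dual of) the quotient by $E[\pp]$, landing on an $\OO(2)$-CM curve; you phrase it with the explicit model $y^2=x^3-x$, while the paper uses the ring class field coincidence $\Q(2)=\Q(1)$ and the canonical isogeny $\iota_{2,1}$ abstractly, but the underlying geometry is identical.

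The necessity argument, however, is genuinely different. The paper dispatches it in one line by citing \cite[Thm.~6.18]{BC18}, which already rules out any $K$-rational cyclic $8$-isogeny. Your argument is self-contained: you bound the conductor of the target via the class-number-one constraint $\#\Pic\OO(2^t)=1\Rightarrow t\le 1$, then factor the dual isogeny through the canonical $\iota_{2^t,1}$ and show that the residual ``horizontal'' piece between $\OO_K$-CM curves has cyclic kernel $\OO_K/\pp^{a-t}$, forcing $a-t\le 1$. This buys independence from \cite{BC18} at the cost of a longer argument, and in fact reproves the $\ell=2$, $\Delta=-4$ case of that theorem by hand. One small note: in the final step you establish that $\psi$ is $\Q$-rational, but you do not actually use this---the lattice bound $a-t\le 1$ is a purely complex-analytic statement. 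The $\Q$-rationality is harmless but unnecessary.
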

\begin{proof}
Step 1: We show that there is an $\OO$-CM elliptic curve $E_{/\Q}$ admitting a $\Q$-rational cyclic $4$-isogeny.  Since $\varphi(4)/2 = 1$, by \cite[Thm. 5.5]{BCS17}, this holds iff there is an $\OO$-CM elliptic curve such that $E(\Q)$ has an element of order $4$.  That the latter holds is well known \cite[p. 196]{Olson74}.  However, we will give a ``principled'' proof.  Namely, let $\OO'$ be the order of
conductor $2$ in $K$.  Since $\Q(2) = \Q(1) = \Q$, there is an $\OO'$-CM elliptic curve $E'_{/\Q}$ and thus a $\Q$-rational $2$-isogeny
$\iota: E' \ra E$ where $\End E = \OO$.  Let $\pp$ be the unique prime ideal of norm $2$ in $\OO$; by uniqueness, $\pp$ is real, so we have a $\Q$-rational $2$-isogeny $\psi: E \ra E/E[\pp]$.  Since $\pp$ is a proper $\OO$-ideal, we have $\End(E/E[\pp]) = \OO$.
Let $\varphi = (\psi \circ \iota)^{\vee}: E/E[\pp] \ra E'$.  Then $\varphi$ is a $\Q$-rational $4$-isogeny; if it were not cyclic, then
it would be (up to an isomorphism on the target) $[2]$, but this is impossible as $\OO = \End E/E[\pp] \neq \End E' = \OO'$.  \\
Step 2: By \cite[Thm. 6.18]{BC18}, no $\OO$-CM elliptic curve has a $K$-rational cyclic $8$-isogeny, so certainly no $\OO$-CM
elliptic curve has a $\Q$-rational cyclic $8$-isogeny.
\end{proof}
\noindent
Concerning (C3):

\begin{prop}
\label{C3PROP}
Let $K = \Q(\sqrt{-3})$, and let $\OO = \OO_K$ be the ring of integers of $K$, so $\Delta = \Delta_K = -3$, $\ff = 1$ and $\Q(\ff) = \Q$.  
\begin{enumerate}
\item For $a \in \Z^+$, we have that $I(\OO,2^a)$ holds iff $a = 1$.  
\item For $a \in \Z^+$, we have that $I(\OO,3^a)$ holds iff $a \leq 2$.
\end{enumerate}
\end{prop}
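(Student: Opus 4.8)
The plan is to prove the ``$I(\OO,N)$ holds'' assertions by explicit constructions exploiting the ring class field coincidences $\Q(1)=\Q(2)=\Q(3)$ of case (C3), and to prove the ``$I(\OO,N)$ fails'' assertions by reducing to the classification of $K$-rational cyclic isogenies on $\OO_K$-CM elliptic curves in \cite[Thm.\ 6.18]{BC18}, exactly as in the proof of Proposition \ref{C2PROP}. The key reduction for the non-existence statements is: if $\varphi\colon E\to E'$ is a $\Q$-rational cyclic $\ell^a$-isogeny with $\End E=\OO_K$ and $a\ge b$, then the unique order-$\ell^b$ subgroup $C$ of the cyclic group $\ker\varphi$ is characteristic, hence $\gg_{\Q}$-stable, so $E\to E/C$ is a $\Q$-rational (in particular $K$-rational) cyclic $\ell^b$-isogeny on the $\OO_K$-CM curve $E$. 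Thus for part (a) it suffices to show that no $\OO_K$-CM elliptic curve has a $K$-rational cyclic $4$-isogeny, and for part (b) that no $\OO_K$-CM elliptic curve has a $K$-rational cyclic $27$-isogeny; both follow from \cite[Thm.\ 6.18]{BC18}. (The first can also be seen directly: since $2$ is inert in $\OO_K$ the ring $\OO_K/4\OO_K$ is local, every cyclic order-$4$ subgroup of it is $\langle v\rangle$ for a unit $v$ with stabilizer $\{\pm 1\}$ in $C_4(\OO_K)$, which lies in $q_4(\OO_K^{\times})$; since $\overline{\rho_4}(\gg_K)=\overline{C_4(\OO_K)}$ is nontrivial, no such subgroup is $\gg_K$-stable.)

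For part (a), $I(\OO,2)$ holds: since $\Q(2)=\Q(1)=\Q$ there is an $\OO(2)$-CM elliptic curve $E'_{/\Q}$, so the canonical $\Q$-rational cyclic $2$-isogeny $\iota_{2,1}\colon E'\to E$ with $\End E=\OO_K$ has $\Q$-rational dual $\iota_{2,1}^{\vee}\colon E\to E'$, a $\Q$-rational cyclic $2$-isogeny on the $\OO_K$-CM curve $E$. Combined with the non-existence above, $I(\OO,2^a)$ holds iff $a=1$. In part (b), $I(\OO,3)$ holds for the same reason (use $\Q(3)=\Q$, or simply that $y^2=x^3+1$ has the rational $3$-torsion point $(0,1)$), and with the non-existence of $K$-rational cyclic $27$-isogenies it remains only to establish $I(\OO,9)$.

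For $I(\OO,9)$ I would argue explicitly. Let $E\colon y^2=x^3+16$, which is $\OO_K$-CM, and let $\pp=(\sqrt{-3})$ be the unique prime of $\OO_K$ of norm $3$. Since $\pp$ is real, the cyclic $3$-isogeny $\psi_1\colon E\to E''\coloneqq E/E[\pp]$ is $\Q$-rational; concretely $E[\pp]=\langle(0,4)\rangle$ and $E''$ is the standard $3$-isogenous curve $y^2=x^3-432$. The point $(12,36)$ lies on $E''$, has order $3$, and $\langle(12,36)\rangle\ne E''[\pp]$ (the latter being the $x=0$ locus of $E''$), so $\psi_2\colon E''\to E''/\langle(12,36)\rangle$ is a $\Q$-rational cyclic $3$-isogeny. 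Then $\psi_2\circ\psi_1$ is a $\Q$-rational degree-$9$ isogeny on the $\OO_K$-CM curve $E$, and it is cyclic: $\psi_1$ is $\OO_K$-linear and $E[3]=E[\pp^2]$, so $\psi_1(E[3])$ is the unique order-$3$ $\OO_K$-submodule $E''[\pp]$, whereas $\ker\psi_2=\langle(12,36)\rangle\ne E''[\pp]$; hence $\ker(\psi_2\circ\psi_1)=\psi_1^{-1}(\ker\psi_2)\ne E[3]$, so it is cyclic of order $9$. Thus $I(\OO,9)$ holds, and $I(\OO,3^a)$ holds iff $a\le 2$.

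The main obstacle is $I(\OO,9)$. Unlike $I(\OO,2)$ and $I(\OO,3)$ it does not come from a single ring class field coincidence, and a naive composite of two $3$-isogenies within the $\OO_K$-CM locus is never cyclic because $E[\pp^2]=E[3]$. The real content is choosing the sextic twist correctly: one needs the twist parameter $d$ to be a square (so $E[\pp]$ is $\Q$-rational and $\psi_1$ is defined over $\Q$) and $4d$ to be a cube (so that $E''$ acquires the extra rational order-$3$ subgroup needed for $\psi_2$), and $d=16$ meets both requirements; once the twist is right, cyclicity is automatic since $\ker\psi_1$ is an $\OO_K$-submodule while $\ker\psi_2$ is not. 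A secondary point to confirm is that \cite[Thm.\ 6.18]{BC18} covers the discriminant $\Delta=-3$ (it does, being applied in the same way for $\Delta=-4$ in Proposition \ref{C2PROP}), together with the routine division-polynomial verifications for the points $(0,4)\in E$ and $(12,36)\in E''$ and the standard $3$-isogeny formula $y^2=x^3+a\mapsto y^2=x^3-27a$.
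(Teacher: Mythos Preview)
Your proof is correct. The non-existence arguments (reducing to $K$-rational isogenies and invoking \cite[Thm.~6.18]{BC18}) and the construction for $I(\OO,2)$ coincide with the paper's. Your treatment of $I(\OO,9)$, however, takes a genuinely different route.

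The paper establishes $I(\OO,9)$ by mirroring the proof of Proposition~\ref{C2PROP}. Since $\Q(3)=\Q(1)=\Q$, there is an $\OO(3)$-CM elliptic curve $E'_{/\Q}$ together with the canonical $\Q$-rational $3$-isogeny $\iota\colon E'\to E$ to an $\OO_K$-CM curve; composing with the $\pp$-isogeny $\psi\colon E\to E/E[\pp]$ and dualizing yields a $\Q$-rational $9$-isogeny $(\psi\circ\iota)^{\vee}\colon E/E[\pp]\to E'$. Cyclicity is immediate: a non-cyclic $9$-isogeny would be $[3]$, but $\End(E/E[\pp])=\OO_K\neq\OO(3)=\End E'$. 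So contrary to your remark that $I(\OO,9)$ ``does not come from a single ring class field coincidence,'' the paper's argument uses exactly the coincidence $\Q(3)=\Q$, combined with the real ideal $\pp$.

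Your approach instead works with explicit models: from $y^2=x^3+16$ you pass through the $\pp$-quotient to $y^2=x^3-432$ and then quotient by $\langle(12,36)\rangle$, a $\Q$-rational order-$3$ subgroup that is not an $\OO_K$-submodule; cyclicity follows because $\psi_1(E[3])=E''[\pp]\neq\ker\psi_2$. Both constructions ultimately produce a $\Q$-rational cyclic $9$-isogeny from an $\OO_K$-CM curve to an $\OO(3)$-CM curve, but the paper's is model-free and its cyclicity check is a one-line endomorphism-ring comparison, whereas yours requires choosing the right sextic twist and some division-polynomial verifications. One small correction to your commentary: the condition that $d$ be a square is not needed for $\psi_1$ to be $\Q$-rational, since $E[\pp]$ is always $\Q$-rational as a subgroup scheme (the ideal $\pp$ being real); only the condition that $4d$ be a cube is relevant, and that governs the $\Q$-rationality of a second order-$3$ subgroup on $E''$.
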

\begin{proof}
a) Let $\OO'$ be the order in $K$ of conductor $2$.  Because $\Q(2) = \Q(1)$, there is an $\OO'$-CM elliptic curve
$E'_{/\Q}$ and thus a canonical $\Q$-rational $2$-isogeny $\iota: E' \ra E$ with $\End E = \OO$.  Then $\iota^{\vee}: E \ra E'$
is a $\Q$-rational cyclic $2$-isogeny.  By \cite[Thm. 6.18]{BC18}, no $\OO$-CM elliptic curve has even a $K$-rational cyclic $4$-isogeny.  (Or: if there were an $\OO$-CM elliptic curve with a $\Q$-rational cyclic $4$-isogeny, then
since $\frac{\varphi(4)}{2} = 1$, by \cite[Thm. 5.5]{BCS17} there would be an $\OO$-CM elliptic curve $E_{/\Q}$ with a $\Q$-rational
point of order $4$, which is not the case: \cite[p. 196]{Olson74}, \cite[Cor. 9.4]{Aoki95} or \cite[Thm. 5.1c)]{BCS17}.)  \\
b) Step 1: We construct a $\Q$-rational cyclic $9$-isogeny $\varphi: E \ra E'$ with $\End E = \OO$ in exactly the same way as
in the proof of Proposition \ref{C2PROP}: $\Q(3) = \Q(1) = \Q$, and there is a unique ideal $\pp$ of $\OO$ of norm $3$.
\\
Step 2: By \cite[Thm. 6.18]{BC18}, no $\OO$-CM elliptic curve has a $K$-rational cyclic $27$-isogeny, so certainly no $\OO$-CM
elliptic curve has a $\Q$-rational cyclic $27$-isogeny.
\end{proof}
\noindent
The following result completes Kwon's theorem by determining all $N \in \Z^+$ such that $I(\OO,N)$ holds for the orders $\OO$
of discriminants $-3$ and $-4$.

\begin{cor}
\label{LASTKWONCOR} 
\textbf{}
 \begin{enumerate}
\item Let $\OO$ be the imaginary quadratic order with $\Delta(\OO)=-4$.  Then $I(\OO,N)$ holds iff $N \in \{1,2,4\}$.  
\item Let $\OO$ be the imginary quadratic order with $\Delta(\OO)=-3$.  Then $I(\OO,N)$ holds iff $N \in \{1,2,3,6,9\}$.
\end{enumerate}
\end{cor}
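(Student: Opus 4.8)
The plan is to run, for both discriminants, a compressed version of Kwon's own argument: factor the given $\Q$-rational cyclic isogeny through a canonical isogeny between orders of different conductor, and then control the two pieces using the structure theory of \S2.6, the list of ring class field coincidences (C1)--(C3), and the classification of primitive, proper real ideals (Lemma \ref{KWONLEMMA3.1}). Throughout $\OO = \OO_K$, so $\ff = 1$, $\Q(\ff) = \Q$, and $\Delta_K = \Delta(\OO) \in \{-4,-3\}$. First I would record the list of admissible \emph{target} conductors: if $E'_{/\Q}$ is an $\OO(\ff')$-CM elliptic curve, then $\Q(\ff') = \Q$, so for every divisor $g > 1$ of $\ff'$ and every prime $p \mid g$ one gets a ring class field coincidence $\Q(g/p) = \Q(g)$; since $2$ is non-split in both $\Q(\sqrt{-1})$ (ramified) and $\Q(\sqrt{-3})$ (inert), case (C1) never occurs, and (C2)--(C3) then leave $\ff' \in \{1,2\}$ when $\Delta_K = -4$ and $\ff' \in \{1,2,3\}$ when $\Delta_K = -3$.

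For the ``only if'' direction, suppose $I(\OO,N)$ holds via a $\Q$-rational cyclic $N$-isogeny $\varphi \colon E \ra E'$ with $\End E = \OO_K$, and let $\ff'$ be the conductor of $\End E'$, so $\ff'$ lies in the list above. Since $1 \mid \ff'$, \S2.6 factors the dual isogeny as $\varphi^{\vee} = \psi \circ \iota_{\ff',1}$, where $\iota_{\ff',1} \colon E' \ra E''$ is the canonical (hence $\Q$-rational) cyclic $\ff'$-isogeny to an $\OO_K$-CM curve $E''_{/\Q}$ and $\psi \colon E'' \ra E$ is a cyclic $\tfrac{N}{\ff'}$-isogeny of $\OO_K$-CM elliptic curves; note $\ff' \mid N$. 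As $\iota_{\ff',1}$ and $\varphi^{\vee}$ are both $\Q$-rational, so is $\psi$ (its kernel being $\Ker\varphi^{\vee}/\Ker\iota_{\ff',1}$), and \S2.6 then gives $\Ker\psi = E''[\bb]$ for a primitive proper $\OO_K$-ideal $\bb$ with $\#\OO_K/\bb = \tfrac{N}{\ff'}$ which, since $\Q \not\supset K$, is real. Lemma \ref{KWONLEMMA3.1}(a),(c),(d) now forces $\tfrac{N}{\ff'} \in \{1,2\}$ when $\Delta_K = -4$ (there being no primitive proper real $\OO_K$-ideal of norm $4$) and $\tfrac{N}{\ff'} \in \{1,3\}$ when $\Delta_K = -3$ (none of norm $9$). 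Ranging over the admissible pairs $\bigl(\ff',\tfrac{N}{\ff'}\bigr)$ and taking the product $N = \ff' \cdot \tfrac{N}{\ff'}$ yields $N \in \{1,2,4\}$ when $\Delta_K = -4$ and $N \in \{1,2,3,6,9\}$ when $\Delta_K = -3$.

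For the converse I would realize each value explicitly. The case $N=1$ is trivial; $N=2$ is the quotient of an $\OO_K$-CM curve $E_{/\Q}$ by $E[\pp]$ with $\pp$ the unique (hence real) prime over $2$ when $\Delta_K = -4$, and Proposition \ref{C3PROP}(a) when $\Delta_K = -3$; $N=3$ (for $\Delta_K = -3$) is the quotient by $E[\pp]$ with $\pp$ the prime over $3$; $N=4$ (for $\Delta_K = -4$) is Proposition \ref{C2PROP}; $N=9$ (for $\Delta_K = -3$) is Proposition \ref{C3PROP}(b); and for $N=6$ (for $\Delta_K = -3$) one takes the $\OO_K$-CM curve $E_{/\Q}$ of Proposition \ref{C3PROP}(a), which carries a $\Q$-rational order-$2$ subgroup $C$ as well as the $\Q$-rational order-$3$ subgroup $E[\pp_3]$ ($\pp_3$ the ramified prime over $3$), and quotients by $C + E[\pp_3]$ — a $\Q$-rational isogeny of the squarefree degree $6$, hence cyclic.

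I expect the only genuinely delicate point to be the bookkeeping around $\Q$-rationality in the factorization: one must know $\iota_{\ff',1}$ is defined over $\Q(\ff')=\Q$ (it is, being the universal isogeny of \S2.6), deduce from this that $\psi$ is $\Q$-rational, and only then invoke the reality criterion of \S2.6 for $\bb$ — all of it resting on the (easy but necessary) fact that the admissible conductors $\ff'$ are exactly those in (C2)/(C3). Everything else is routine arithmetic with the two classification lemmas.
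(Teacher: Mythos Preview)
Your argument is correct and in fact streamlines the paper's proof. The paper proceeds modularly: it invokes Theorem \ref{GENKWONTHM} to exclude all primes $\ell$ that are neither $2$ nor $3$, then appeals to Propositions \ref{C2PROP} and \ref{C3PROP} for the prime-power cases at $2$ and $3$, and finally (for $\Delta=-3$) rules out $N=18$ separately via \cite[Thm.~6.18]{BC18}. You instead run the factorization argument of \S2.6 directly on $\varphi^{\vee}$: the key observation is that the hypothesis ``$F$ does not contain $\Q(\ell\ff)$'' in Theorem \ref{GENKWONTHM}---which fails here precisely because of the coincidences (C2)/(C3)---can be replaced by explicitly listing the finitely many admissible target conductors $\ff'$, after which Lemma \ref{KWONLEMMA3.1} finishes the job uniformly. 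This is essentially the proof of Theorem \ref{GENKWONTHM}(a) specialized to $\ff=1$ but with the coincidence cases handled rather than excluded, and it has the pleasant side effect of dispatching $N=8$, $N=27$, and $N=18$ all at once without the separate citations. The existence constructions you give for the ``if'' direction coincide with the paper's.

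One small point of presentation: your deduction that the admissible $\ff'$ are exactly $\{1,2\}$ (resp.\ $\{1,2,3\}$) is correct but the phrasing ``for every divisor $g>1$ of $\ff'$ and every prime $p\mid g$ one gets a ring class field coincidence'' is slightly roundabout. It is cleaner to say: since $\Q(g)\subset\Q(\ff')=\Q$ for every $g\mid\ff'$, each such $g$ must appear among the conductors with $\Q(g)=\Q$, and \S\ref{coincidences} shows these are exactly $\{1,2\}$ for $\Delta_K=-4$ and $\{1,2,3\}$ for $\Delta_K=-3$.
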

\begin{proof}
a) By Theorem \ref{GENKWONTHM}, $I(\OO,\ell)$ does not hold for any odd prime $\ell$.  The result now follows from Proposition
\ref{C2PROP}.  \\
b) By Theorem \ref{GENKWONTHM}, $I(\OO,\ell)$ does not hold for any prime $\ell \geq 5$, so by Proposition \ref{C3PROP},
if $I(\OO,N)$ holds then $N \in \{1,2,3,6,9,18\}$.  Proposition \ref{C3PROP} also shows that $I(\OO,N)$ holds for $N \in \{1,2,3,9\}$.
In particular there is an $\OO$-CM elliptic curve $E_{/\Q}$ admitting a $\Q$-rational $2$-isogeny; let $C_2$ be
its kernel.  Let $\pp$ be the unique $\OO$-ideal of norm $3$.  Then $E \ra E/(C_2 \times E[\pp])$ is a $\Q$-rational $6$-isogeny,
so $I(\OO,6)$ holds.  By \cite[Thm. 6.18]{BC18}, no $\OO$-CM
elliptic curve has even a $K$-rational cyclic $18$-isogeny.
\end{proof}

\begin{remark}
a) As seen in the proof of Proposition \ref{C1PROP}, for an order $\OO$ of discriminant $\Delta < -4$,
whether an $\OO$-CM elliptic curve $E_{/F}$ admits an $F$-rational cyclic $N$-isogeny is independent of the $F$-model, so if
$\gcd(M,N) = 1$, then $I(\OO,MN)$ holds iff $I(\OO,M)$ holds and $I(\OO,N)$ holds. However
Corollary \ref{LASTKWONCOR} shows that when $\Delta = -3$, $I(\OO,2)$ holds and $I(\OO,9)$ holds but $I(\OO,18)$ does not hold. \\
b) It would be interesting to know whether Proposition \ref{C2PROP} continues to hold over any number field $F$ containing neither $K$ nor $\Q(4)$, and similarly for Proposition \ref{C3PROP}. \\
c) Combining Proposition \ref{C3PROP} with \cite[Thm. 5.5]{BCS17}, we find that if $\OO$ is the imaginary quadratic order of
discriminant $-3$, there is an $\OO$-CM elliptic curve defined over a cubic number field $F$ with an $F$-rational point of order $9$.
In fact \cite[Table II]{BCS17} gives an explicit such curve: we may take $F = \Q[b]/(b^3-15b^2-9b-1)$,
\[ E_{/F}: y^2 + \left(1-\left(\frac{b^2}{4}+\frac{5b}{2}+\frac{1}{4}\right)\right)xy -\left(\frac{b^2}{4}+\frac{5b}{2}+\frac{1}{4}\right)y = x^3 - bx^2, P = (0,0). \]
\end{remark}

\section{Least degrees of CM points on $X_1(\ell^b)_{/\Q}$}
\noindent
For an imaginary quadratic order $\OO$ and positive integers $M \mid N$, we let $T(\OO, M,N)$ be as defined in Theorem \ref{BIGMNTHM}. We write $T(\OO,N)$ in place of $T(\OO,1,N)$. In all cases the least degree of an $\OO$-CM point on $X_1(N)_{/\Q}$ is $T(\OO,N)$ or $2 \cdot T(\OO,N)$.

\subsection{Inert case}

\begin{thm} \label{InertThm}
Let $\OO$ be an imaginary quadratic order of discriminant $\Delta$, let $\ell$ be a prime that is inert in $\OO$, and let $b \in \Z^+$.
Then:
\begin{enumerate}
\item If $E_{/L}$ is an $\OO$-CM elliptic curve defined over a number field such that $E(L)$ has a point
of order $\ell^b$, then $T(\OO, \ell^b) \mid [L:\Q(\ff)]$.
\item Conversely, there is an extension $L \supset \Q(\ff)$ with $[L:\Q(\ff)] =T(\OO, \ell^b)$ and
an $\OO$-CM elliptic curve $E_{/L}$ such that $E(L)$ has a point of order $\ell^b$.
\end{enumerate}
\end{thm}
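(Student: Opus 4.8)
The plan is to deduce both parts of Theorem~\ref{InertThm} from a single structural fact about the inert case, and then to read off the degree statements by comparing the reduced Galois image over $K(\ff)$ with the one over $\Q(\ff)$. Since $\ell$ is inert in $\OO$ the ring $\OO/\ell^b\OO$ is local (with residue field $\OO/\ell\OO\cong\F_{\ell^2}$), so the unit group $C_{\ell^b}(\OO)=(\OO/\ell^b\OO)^{\times}$ is exactly the set of elements of additive order $\ell^b$, and it acts on that set by the regular --- hence simply transitive --- multiplication action. Passing to the quotient by $q_{\ell^b}(\OO^{\times})$, the reduced Cartan $\overline{C_{\ell^b}(\OO)}$ acts simply transitively on the set $R$ of $\Aut E$-orbits of order-$\ell^b$ points of an $\OO$-CM curve $E$ (identify $E[\ell^b]\cong_{\OO}\OO/\ell^b\OO$); this $R$ is the set of reduced $(1,\ell^b)$-pairs of \S4.2. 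In particular $R$ is a single $\overline{C_{\ell^b}(\OO)}$-orbit, so $\#R=\#\overline{C_{\ell^b}(\OO)}$, and since $T(\OO,\ell^b)=T(\OO,1,\ell^b)$ is by definition the least size of such an orbit we obtain $T(\OO,\ell^b)=\#R=\#\overline{C_{\ell^b}(\OO)}$. Recall finally (\S2.3) that $\overline{\rho_{\ell^b}}(\gg_{K(\ff)})=\overline{C_{\ell^b}(\OO)}$, and that $\overline{\rho_{\ell^b}}$ is model-independent, hence defined on all of $\gg_{\Q(\ff)}\supseteq\gg_{K(\ff)}$.

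For part (a), given an $\OO$-CM curve $E_{/L}$ with $P\in E(L)$ of order $\ell^b$, I would first reduce to the case $L\supseteq K$: if $K\not\subseteq L$ replace $L$ by $LK$, which contains $K(\ff)$ with $[LK:K(\ff)]=[L:\Q(\ff)]$; and if $K\subseteq L$ then $[L:\Q(\ff)]=2[L:K(\ff)]$. So it suffices to show $T(\OO,\ell^b)\mid[L:K(\ff)]$ when $L\supseteq K(\ff)$. There $\overline{\rho_{\ell^b}}(\gg_L)$ fixes the reduced point $\overline P\in R$, so by simple transitivity $\overline{\rho_{\ell^b}}(\gg_L)=\{1\}$, whence
\[ T(\OO,\ell^b)=\#\overline{C_{\ell^b}(\OO)}=[\overline{\rho_{\ell^b}}(\gg_{K(\ff)}):\overline{\rho_{\ell^b}}(\gg_L)] \]
divides $[\gg_{K(\ff)}:\gg_L]=[L:K(\ff)]$, as required.

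For part (b), fix an $\OO$-CM curve $E_{/\Q(\ff)}$ and a point $P$ of order $\ell^b$ in $E(\overline{\Q(\ff)})$, and set $L\coloneqq\Q(\ff)(\hh(P))$, the Weber function field of $P$. By \S2.4 there is a model of $E$ over $L$ --- still $\OO$-CM, having the same $j$-invariant --- with an $L$-rational point of order $\ell^b$, so it remains to show $[L:\Q(\ff)]=T(\OO,\ell^b)$. Since $\Q(\ff)$ has a real embedding, $K\not\subseteq\Q(\ff)$, so $\overline{\rho_{\ell^b}}(\gg_{\Q(\ff)})\supseteq\overline{\rho_{\ell^b}}(\gg_{K(\ff)})=\overline{C_{\ell^b}(\OO)}$. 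As $\sigma\mapsto\sigma(\hh(P))=\hh(\sigma(P))$ factors through the reduced point $\overline{\sigma(P)}=\overline{\rho_{\ell^b}}(\sigma)\cdot\overline P$ and $\hh$ separates distinct reduced points, $[L:\Q(\ff)]$ is the size of the $\overline{\rho_{\ell^b}}(\gg_{\Q(\ff)})$-orbit of $\overline P$; this orbit contains the (transitive) $\overline{C_{\ell^b}(\OO)}$-orbit, so it is all of $R$ and $[L:\Q(\ff)]=\#R=T(\OO,\ell^b)$.

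The genuinely new input is the simply transitive reduced Cartan action in the inert case; everything else is degree arithmetic in $\Q(\ff)\subseteq K(\ff)\subseteq L$. The point to watch is in part (b): a priori $L$ could have degree $2T(\OO,\ell^b)$ over $\Q(\ff)$ (the dichotomy studied later in the paper for composite $N$), and what excludes this is precisely that $\overline{C_{\ell^b}(\OO)}$ is already transitive on $R$, so enlarging $\gg_{K(\ff)}$ to $\gg_{\Q(\ff)}$ cannot grow the orbit. One should also note that $E$ need not descend to $L$ (hence the appeal to \S2.4 / \cite[Prop.~VI.3.2]{Deligne-Rapoport} rather than just adjoining an $x$-coordinate), and that the small cases $\ell^b\in\{2,3\}$ --- including $\Delta=-3$, where $\Aut E$ is nontrivial and $T(\OO,2)=1$ --- are handled uniformly by the argument above, since $\OO/\ell^b\OO$ is local there as well.
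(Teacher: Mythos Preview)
Your proof is correct. Part (a) is essentially the paper's argument unpacked: the paper just says ``follows from Theorem~\ref{BIGMNTHM},'' and the reduction to $L\supseteq K(\ff)$ via $LK$ that you spell out is exactly what is needed to pass from that theorem (stated over $K(\ff)$) to the conclusion over $\Q(\ff)$.

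Part (b) is where your route differs. The paper invokes Lemma~\ref{MODULARPULLBACKLEMMA}, bounding the degree of a closed point in the fiber of $X_1(\ell^b)\to X(1)$ by $\deg(X_1(\ell^b)\to X(1))/\#\Aut E$, with separate ramification arguments at $j=0,1728$; the cases $\ell^b=2$ are then handled ad hoc (and $\Delta=-3$ by citing Olson). You instead compute $[\Q(\ff)(\hh(P)):\Q(\ff)]$ directly as the size of the $\gg_{\Q(\ff)}$-orbit of the reduced point $\overline P$, and observe that since $\overline{C_{\ell^b}(\OO)}$ already acts transitively on the set $R$ of reduced order-$\ell^b$ points, the larger group $\gg_{\Q(\ff)}$ cannot produce a bigger orbit. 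This is cleaner: it treats all $\ell^b$ uniformly, including $\ell^b=2$ with $\Delta=-3$ (where $\#\overline{C_2(\OO)}=1$), and it makes transparent \emph{why} no factor of $2$ is lost over $\Q(\ff)$ in the inert case --- the orbit over $K(\ff)$ is already the whole target. The paper's lemma, on the other hand, is a general-purpose tool that happens to give the sharp bound here because in the inert case $\#\overline{C_{\ell^b}(\OO)}$ coincides with $\deg(X_1(\ell^b)\to X(1))/\#\Aut E$; your argument explains that coincidence.
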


\begin{proof}
Part (a) follows from Theorem \ref{BIGMNTHM}. If $\ell^b \geq 3$, then part (b) follows from Lemma \ref{MODULARPULLBACKLEMMA}. If $\ell^b=2$ and $\Delta \neq -3$, then part (b) is clear: for any elliptic curve $E$ over a field $F$ of characteristic $0$ and any point $P$ of order $2$ on $E$, we have $[F(P):F] \leq 3=T(\OO, 2)$
because there are precisely $3$ points of order $2$. Finally, if $\ell^b = 2$ and $\Delta = -3$, then \cite[p. 196]{Olson74} shows there is an $\OO$-CM elliptic curve $E_{/\Q}$ with a rational point of order 2.  (One can take $E_{/\Q}: y^2 = x^3-1$.)
\end{proof}
\noindent
In Theorem \ref{InertThm}, when $\Delta < -4$ and $\ell^b \geq 3$ then we have  \[T(\OO,\ell^b) = \frac{\ell^{2b-2}(\ell^2-1)}{2} = \deg(X_1(\ell^b) \ra X(1)), \]
so it is as difficult for an $\OO$-CM elliptic to acquire a point of order $\ell^b$ as 
any elliptic curve defined over a number field.  

\subsection{Split case}

\begin{thm} \label{SplitThm} Let $\OO$ be an imaginary quadratic order of discriminant $\Delta$, and let $\ell$ be a prime that splits in $\OO$. For any $b \in \Z^+$ we have the following:
\begin{enumerate}
\item Suppose $\ell^b \geq 3$.
\begin{enumerate}
\item[i)] If $E$ is an $\OO$-CM elliptic curve defined over a number field $L$ such that $E(L)$ has a point of order $\ell^b$, then $2 \cdot T(\OO,\ell^b) \mid [L:\Q(\ff)]$.
\item[ii)] Conversely, there is an extension $L \supset \Q(\ff)$ with $[L:\Q(\ff)] =2 \cdot T(\OO,\ell^b)$ and an $\OO$-CM elliptic curve
$E_{/L}$ such that $E(L)$ has a point of order $\ell^b$.
\end{enumerate}
\item Suppose $\ell^b=2$. Then there is an $\OO$-CM elliptic curve $E_{/\Q(\ff)}$ with a point of order 2 in $E(\Q(\ff))$.
\end{enumerate}
\end{thm}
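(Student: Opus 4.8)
The plan is to treat the three assertions separately; parts~(1)(ii) and~(2) are short, while the divisibility statement~(1)(i) carries the real content. I will use throughout that $\ell$ splitting in $\OO$ means $\left(\frac{\Delta}{\ell}\right)=1$, so that $\ell\nmid\ff$ and $\ell\OO=\pp_1\pp_2$ with $\pp_2=\overline{\pp_1}\neq\pp_1$, and that $[K(\ff):\Q(\ff)]=2$ because $\Q(\ff)$ has a real place and hence cannot contain $K$. For part~(1)(ii) I would apply the existence criterion of Theorem~\ref{BIGMNTHM} with $M=1$, $N=\ell^b$ and $d=T(\OO,\ell^b)$: this produces a field $F\supset K(\ff)$ with $[F:K(\ff)]=T(\OO,\ell^b)$ and an $\OO$-CM elliptic curve $E_{/F}$ having an $F$-rational point of order $\ell^b$, and then $L:=F$ satisfies $[L:\Q(\ff)]=[F:K(\ff)]\cdot[K(\ff):\Q(\ff)]=2\,T(\OO,\ell^b)$, as required.

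For part~(2) the hypothesis forces $\Delta\equiv 1\pmod 8$, so $\Delta<-4$ and $\ff$ is odd, and I would show that \emph{every} $\OO$-CM elliptic curve $E$ over $\Q(\ff)$ already carries a rational point of order~$2$. Since $\OO/2\OO\cong\F_2\times\F_2$ the group $C_2(\OO)$ is trivial, so by \S2.3 we have $E[2]\subset E(K(\ff))$; on the other hand $\Delta$ is odd, so by \S2.5 the field $\Q(\ff)(E[2])$ contains $K$. Hence $\Q(\ff)(E[2])=K(\ff)$, a quadratic extension of $\Q(\ff)$, so the nontrivial element of $\Gal(K(\ff)/\Q(\ff))$ acts on $E[2]\cong\F_2^2$ faithfully, i.e.\ as a transposition in $\GL_2(\F_2)$, and a transposition fixes a nonzero vector --- the desired $\Q(\ff)$-rational point of order~$2$.

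For part~(1)(i), let $E_{/L}$ be an $\OO$-CM elliptic curve with $P\in E(L)$ of order $\ell^b\geq 3$. If $K\subset L$, then $L\supset K(\ff)$; when $\ell^b\geq 4$ the existence criterion of Theorem~\ref{BIGMNTHM} gives $T(\OO,\ell^b)\mid [L:K(\ff)]$, so $2\,T(\OO,\ell^b)\mid 2[L:K(\ff)]=[L:\Q(\ff)]$, while for $\ell^b=3$ we have $T(\OO,3)=1$ and $2=[K(\ff):\Q(\ff)]\mid[L:\Q(\ff)]$. The main case is $K\not\subset L$: put $L'=LK=LK(\ff)$, so $[L':L]=2$, $L'\supset K(\ff)$, and $[L':K(\ff)]=[L:\Q(\ff)]$. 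Over $L'$ the action on $E[\ell^\infty]$ is $\OO$-linear and $E[\ell^\infty]=E[\pp_1^\infty]\oplus E[\pp_2^\infty]$, whereas any $\sigma\in\gg_L\setminus\gg_{L'}$ is $\OO$-semilinear for the conjugation of $\OO$ and hence interchanges $E[\pp_1^\infty]$ and $E[\pp_2^\infty]$. Writing $P=P_1+P_2$ with $P_i\in E[\pp_i^\infty]$, the relation $\sigma P=P$ forces $\sigma P_1=P_2$, so $P_1$ and $P_2$ have equal order, necessarily $\ell^b$, and therefore $\Z/\ell^b\Z\times\Z/\ell^b\Z\cong\langle P_1\rangle\oplus\langle P_2\rangle\hookrightarrow E(L')$. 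Applying Theorem~\ref{BIGMNTHM} with $M=N=\ell^b$ then yields $\#\overline{C_{\ell^b}(\OO)}=\frac{\ell^{2b-2}(\ell-1)^2}{w}\mid[L':K(\ff)]=[L:\Q(\ff)]$, and $2\,T(\OO,\ell^b)=\frac{2\ell^{b-1}(\ell-1)}{w}$ divides $\frac{\ell^{2b-2}(\ell-1)^2}{w}$ because $2\mid\ell^{b-1}(\ell-1)$ --- this holds since $\ell-1$ is even for odd $\ell$, and since $\ell^{b-1}=2^{b-1}$ is even when $\ell=2$ (then $\ell^b\geq 3$ forces $b\geq 2$). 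Hence $2\,T(\OO,\ell^b)\mid[L:\Q(\ff)]$.

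I expect the delicate step to be the semilinearity argument in the case $K\not\subset L$: one must justify carefully, using the theory of complex multiplication and the identity $\overline{\pp_1}=\pp_2$, that a Galois element acting nontrivially on $K$ genuinely swaps the two $\pp_i$-primary components of $E[\ell^\infty]$, and then extract from $\sigma P=P$ that the components of $P$ have equal order --- i.e.\ that an $L$-rational point of order $\ell^b$ necessarily spreads out to full $\ell^b$-torsion over $LK$. Everything else reduces to bookkeeping with the explicit values of $T(\OO,\ell^b)$ and $\#\overline{C_{\ell^b}(\OO)}$ recorded in Theorem~\ref{BIGMNTHM}, together with the inputs recalled in \S2.3 and \S2.5.
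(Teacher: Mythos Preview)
Your proof is correct and follows the same overall architecture as the paper's: split (1)(i) into the cases $K\subset L$ and $K\not\subset L$, and in the latter case pass to $L'=LK$ and show that full $\ell^b$-torsion appears there, whence $\#\overline{C_{\ell^b}(\OO)}\mid[L:\Q(\ff)]$.  The paper obtains ``full $\ell^b$-torsion over $LK$'' by citing \cite[Thm.~4.8]{BCS17} as a black box, whereas you give the underlying semilinearity argument directly; your sketch is essentially a proof of that cited result in the split case.  One small point worth making explicit: after showing that $\sigma\in\gg_L\setminus\gg_{L'}$ swaps the $\pp_i$-components and hence $\sigma P_1=P_2$, you should also note that every $\tau\in\gg_{L'}$ acts $\OO$-linearly and therefore preserves each $E[\pp_i^\infty]$, so $\tau P=P$ forces $\tau P_i=P_i$ and hence $P_1,P_2\in E(L')$---this is implicit in your write-up but is what actually gives $\langle P_1\rangle\oplus\langle P_2\rangle\subset E(L')$.

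For part~(2) you take a genuinely different and more self-contained route than the paper, which simply cites Theorem~\ref{KWONTHM}/\cite[Thm.~4.2b)]{BCS17}.  Your argument---$C_2(\OO)=1$ so $E[2]\subset E(K(\ff))$, while $\Delta$ odd forces $K\subset\Q(\ff)(E[2])$ by \S2.5, so $\Gal(K(\ff)/\Q(\ff))$ acts on $E[2]\cong\F_2^2$ as a transposition and hence fixes a nonzero vector---is a nice elementary alternative that avoids invoking the full isogeny classification.
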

\begin{proof}
(a) Suppose $\ell^b \geq 3$. If $L \supset K$, then Theorem \ref{BIGMNTHM} implies $T(\OO,\ell^b) \mid
[L:K(\ff)]$ and so $2 \cdot T(\OO,\ell^b) \mid [L:\Q(\ff)]$.  So suppose that we have an $\OO$-CM elliptic curve $E$ defined over a number field $L$ \emph{not} containing $K$
such that $E(L)$ has a point of order $\ell^b$. By \cite[Thm. 4.8]{BCS17}, this implies that $\Z/\ell^b \Z \times \Z/\ell^b \Z
\hookrightarrow E(KL)$, and thus by \cite[Cor. 1.2, Lemma 2.2]{BC18} that
\[ 2\cdot T(\OO,\ell^b)=\frac{2\ell^{b-1}(\ell-1)}{w} \mid \ell^{b-1} (\ell-1)  \left( \frac{\ell^{b-1}(\ell-1)}{w} \right)
= \# \overline{C_{\ell^b}(\OO)} \mid [KL:K(\ff)] = [L:\Q(\ff)]. \]
As for existence: by Theorem \ref{BIGMNTHM} there is an $\OO$-CM elliptic curve
defined over an extension $L/K(\ff)$ with $[L:\Q(\ff)] = 2[L:K(\ff)] =  2 \cdot T(\OO,\ell^b)$ such that $E(L)$ has a point of order $\ell^b$. \\
(b) If $\ell^b=2$, the result follows from Theorem \ref{KWONTHM} and Remark \ref{KWONRMK} (or from \cite[Thm. 4.2b)]{BCS17}).
\end{proof}
\noindent
In fact, \cite[Thm. 4.8]{BCS17} has the following additional consequence:
\begin{prop}
Suppose that $\ell$ splits in $\OO$ and let $\ell^b \geq 3$.  For $d \in \Z^+$, suppose there is an $\OO$-CM elliptic curve
$E$ defined over an extension field $F/\Q(\ff)$ of degree $d$. 
\begin{enumerate}
\item  If $F$ does not contain $K$, then
\[ \frac{\varphi(\ell^b)^2}{w} \mid d. \]
\item Suppose $[F:\Q(\ff)] = \varphi(\ell^b)$. 
\begin{enumerate}
\item[i)] If $\Delta < -4$ and $\ell^b \geq 5$ then $F \supset K$.  
\item[ii)] If $\Delta = -4$ and $\ell^b \geq 9$ then $F \supset K$.  
\item[iii)] If $\Delta = -3$ and $\ell^b \geq 11$ then $F \supset K$.
\end{enumerate}
\end{enumerate}
\end{prop}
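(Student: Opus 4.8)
The plan is to deduce the proposition from \cite[Thm. 4.8]{BCS17} and Theorem \ref{BIGMNTHM}, along the lines of the proof of Theorem \ref{SplitThm}(a). I read the hypothesis as also requiring that $E(F)$ contain a point of order $\ell^b$ (the bare existence of an $\OO$-CM elliptic curve over $F$ is too weak, since one always exists over $\Q(\ff)$ itself).

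For part (a), suppose $K \not\subset F$ and set $L \coloneqq KF$, so $[L:F] = 2$. Since $\Q(\ff)$ has a real embedding we have $K \not\subset \Q(\ff)$, hence $[K(\ff):\Q(\ff)] = 2$; as $K(\ff) = K\Q(\ff) \subset L$, it follows that $L \supset K(\ff)$ with $[L:K(\ff)] = d$. Because $\ell$ splits in $\OO$, $E(F)$ has a point of order $\ell^b$, and $K \not\subset F$, \cite[Thm. 4.8]{BCS17} yields $\Z/\ell^b\Z \times \Z/\ell^b\Z \hookrightarrow E(L)$. Now $L/K(\ff)$ is a degree-$d$ extension over which the $\OO$-CM elliptic curve $E_{/L}$ realizes $\Z/\ell^b\Z \times \Z/\ell^b\Z$, so Theorem \ref{BIGMNTHM}(1), applied with $M = N = \ell^b$, forces $T(\OO,\ell^b,\ell^b) \mid d$. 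It remains to evaluate $T(\OO,\ell^b,\ell^b)$: by Lemma \ref{COMPILE1} and Proposition \ref{PROP4.11}(c) when $\ell^b \geq 4$, and by Theorem \ref{BIGMNTHM}(2) when $\ell^b = 3$, one gets $T(\OO,\ell^b,\ell^b) = \#C_{\ell^b}(\OO)/w$; since $\ell$ splits, $\left(\frac{\Delta}{\ell}\right) = 1$, so the formula of \S2.3 gives $\#C_{\ell^b}(\OO) = \ell^{2b-2}(\ell-1)^2 = \varphi(\ell^b)^2$. Hence $\varphi(\ell^b)^2/w \mid d$.

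For part (b), suppose $[F:\Q(\ff)] = \varphi(\ell^b)$ and, for contradiction, $K \not\subset F$. Part (a) gives $\varphi(\ell^b)^2/w \mid \varphi(\ell^b)$, i.e.\ $\varphi(\ell^b) \mid w$, and I would finish with a short case check. If $\Delta < -4$ then $w = 2$, so $\varphi(\ell^b) \le 2$ forces $\ell^b \in \{2,3,4\}$, contradicting $\ell^b \ge 5$. If $\Delta = -4$ then $w = 4$ and $\ell$ split in $\Z[i]$ forces $\ell \equiv 1 \pmod{4}$, so $\varphi(\ell^b) \mid 4$ forces $\ell^b = 5$, contradicting $\ell^b \ge 9$. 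If $\Delta = -3$ then $w = 6$ and $\ell$ split in $\Z[\zeta_3]$ forces $\ell \equiv 1 \pmod{3}$, so $\varphi(\ell^b) \mid 6$ forces $\ell^b = 7$, contradicting $\ell^b \ge 11$. In each case $K \subset F$.

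The only genuine input is \cite[Thm. 4.8]{BCS17}, which is already available, so I do not expect a real obstacle. The two places requiring care are the identification $T(\OO,\ell^b,\ell^b) = \varphi(\ell^b)^2/w$ in the split case (keeping the reduced-Cartan bookkeeping straight, including the $\ell^b = 3$ boundary case) and the elementary but case-heavy $\varphi$-value constraints in part (b).
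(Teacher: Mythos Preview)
Your proposal is correct and follows essentially the same route as the paper: use \cite[Thm. 4.8]{BCS17} to get full $\ell^b$-torsion over $FK$, then invoke the size of the reduced Cartan to get the divisibility (the paper cites \cite[Thm. 1.1]{BC18} directly rather than going through Theorem \ref{BIGMNTHM}, but this is the same content), and for part (b) conclude $\varphi(\ell^b) \mid w$ and do the case check. Your reading of the implicit hypothesis that $E(F)$ has a point of order $\ell^b$ is also correct, and your observation that the splitting condition further constrains $\ell$ in the $\Delta \in \{-4,-3\}$ cases is a nice sharpening, though the paper is content with the cruder bounds $\ell^b \leq 8$ and $\ell^b \leq 9$.
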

\begin{proof}
a) By \cite[Thm. 4.8]{BCS17} we have $E[\ell^b] \subset E(FK)$, so by \cite[Thm. 1.1]{BC18} we have
\[ \frac{\varphi(\ell^b)^2}{w} = \frac{\# C_{\ell^b}(\OO)}{w} = \# \overline{C_{\ell^b(\OO)}} \mid [FK:K(\ff)] \mid [F:\Q(\ff)] = d. \]
b) Taking $d = \varphi(\ell^b)$ in part a), we get that if $F$ does not contain $K$ then $\varphi(\ell^b) \mid w$.  If $\Delta < -4$ then
$w = 2$ and $\varphi(\ell^b) \mid 2 \implies \ell^b \leq 4$.  If $\Delta = -4$ then $w = 4$ and $\varphi(\ell^b) \leq 4 \implies
\ell^b \leq 8$.  If $\Delta = -3$ then $w= 6$ and $\varphi(\ell^b) \leq 6$ implies $\ell^b \leq 9$.
\end{proof}

\subsection{Ramified case}

Let $\OO$ be an order of conductor $\ff$ in the imaginary quadratic field $K$. Let $\ell$ be a prime ramified in $\OO$ and put $c \coloneqq \ord_\ell(\ff)$.
\\ \\
The least degree in which an $\OO$-CM elliptic curve has a rational point of order $\ell^b$ is tied to the existence of rational isogenies over $\Q(\ff)$ and $K(\ff)$, both of which have been classified. Work of Kwon concerns isogenies over $\Q(\ff)$ (see Theorem \ref{KWONTHM} and Remark \ref{KWONRMK}), and the following result determines
isogenies over $K(\ff)$.

Let $m$ be the maximum of all $b \in \Z$ such that there is an $\OO$-CM elliptic curve $E_{/\Q(\ff)}$ with a $\Q(\ff)$-rational cyclic
$\ell^b$-isogeny. Let $M$ be the supremum over all $b \in \Z$ such that there is an $\OO$-CM elliptic curve $E_{/K(\ff)}$ with a $K(\ff)$-rational
cyclic $\ell^b$-isogeny.

\begin{prop}\label{IsogenyProp}
Suppose that $\ell \mid \Delta$.  Recall that $c \coloneqq \ord_{\ell}(\ff)$.
\begin{enumerate}
\item Suppose $\ell \mid \ff$ and $\left( \frac{\Delta_K}{\ell}\right) = 1$.  Then we have:
 \begin{align*}
m&=\begin{cases}
1 &  \ell=2, c=1\\
2c-2 &\ell=2, c\geq2\\
2c &\ell \text{ is odd}
\end{cases}\\
M &= \infty
 \end{align*}
  \item Suppose $\ell \mid \ff$ and $\left( \frac{\Delta_K}{\ell}\right) = -1$.  Then we have:
 \begin{align*}
 m&=\begin{cases}
1 &  \ell=2, c=1\\
2c-2 &\ell=2, c\geq2\\
2c &\ell \text{ is odd}
\end{cases}\\
M &= 2c
 \end{align*}
 \item Suppose $\Delta \neq -3, -4$ and $\left( \frac{\Delta_K}{\ell}\right) = 0$.  Then we have:
 \begin{align*}
 m&=\begin{cases}
1 &  \ell=2, c=0\\
2c &\ell=2, c\geq1, \ord_2(\Delta_K)=2\\
2c+1 &\ell=2, c\geq1, \ord_2(\Delta_K)=3\\
2c+1 &\ell \text{ is odd}
\end{cases}\\
M &= 2c+1
 \end{align*}
 \item Suppose $\Delta=-3$ and $\ell=3$. Then we have $m=M=2$.
 \item Suppose $\Delta=-4$ and $\ell=2$. Then we have $m=M=2$.
 \end{enumerate}
 \end{prop}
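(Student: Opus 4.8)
The plan is to compute the two quantities $m$ and $M$ by separate mechanisms, and then to read off the five displayed cases from explicit arithmetic criteria.

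\textit{Computation of $m$.} By definition $m = \max\{b \in \Z^{\geq 0} : I(\OO,\ell^b) \text{ holds}\}$. In cases (1)--(3) we have $\Delta < -4$ — indeed $\ell \mid \ff$ gives $\ff \geq 2$ in cases (1),(2), and $\Delta \neq -3,-4$ is assumed in case (3) — so Kwon's theorem (Theorem \ref{KWONTHM}) applies by Remark \ref{KWONRMK}, and I would specialize its classification of the $N$ with $I(\OO,N)$ to $N = \ell^b$. For odd $\ell$ each of Kwon's three alternatives reduces to the single condition $\ell^b \mid \Delta$, so $m = \ord_\ell(\Delta) = 2c + \ord_\ell(\Delta_K)$, which is $2c$ in cases (1) and (2) (where $\left(\frac{\Delta_K}{\ell}\right) \neq 0$) and $2c+1$ in case (3). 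For $\ell = 2$ I would distinguish by the behaviour of $2$ in $K$. If $2$ is unramified in $K$ (cases (1),(2)), then $\ell \mid \ff$ forces $c \geq 1$: for $c = 1$ the hypotheses match Kwon's alternative (b), which restricted to $N = 2^b$ gives $m = 1$; for $c \geq 2$ we have $4 \mid \ff$, so alternative (a) applies and $m = \ord_2(\Delta/4) = 2c - 2$ (recall $\Delta_K$ is odd here). If $2$ is ramified in $K$ (case (3)), then for $c = 0$ we have $\ff$ odd with $\left(\frac{\Delta_K}{2}\right) = 0$, so again alternative (b) applies and $m = 1$, while for $c \geq 1$ alternative (a) applies and $m = \ord_2(\Delta/4) = 2c + \ord_2(\Delta_K) - 2$, which is $2c$ or $2c+1$ according as $\ord_2(\Delta_K)$ is $2$ or $3$. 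In cases (4) and (5) we have $\Delta \in \{-3,-4\}$ and Kwon's theorem does not apply; there $m = 2$ is read off directly from Propositions \ref{C3PROP}(b) and \ref{C2PROP}.

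\textit{Computation of $M$.} For $\Delta < -4$, Theorem \ref{KFISOGTHM} identifies the existence of an $\OO$-CM elliptic curve over $K(\ff)$ with a $K(\ff)$-rational cyclic $\ell^b$-isogeny with the condition that $\Delta$ be a square in $\Z/4\ell^b\Z$. Since a square reduces to a square modulo a divisor of the modulus, the set of such $b$ is an initial segment of $\Z^{\geq 0}$, so $M$ is its supremum, and the computation reduces to a congruence analysis. Writing $\ff = \ell^c\ff_0$ with $\ell \nmid \ff_0$, we have $\Delta = \ell^{2c}\ff_0^2\Delta_K$, and for odd $\ell$ the condition modulo $4$ is automatic, so only the $\ell$-adic part is at issue. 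In case (1), $\left(\frac{\Delta_K}{\ell}\right) = 1$ — i.e. $\Delta_K$ is a quadratic residue modulo $\ell$ for odd $\ell$, and $\Delta_K \equiv 1 \pmod 8$ for $\ell = 2$ — so Hensel lifting makes $\Delta$ a square modulo every power of $\ell$, hence $M = \infty$. In case (2), $\Delta_K$ is a quadratic non-residue modulo $\ell$ (resp. $\Delta_K \equiv 5 \pmod 8$), and $\Delta$ is a square modulo $4\ell^b$ exactly when $b \leq 2c$, giving $M = 2c$. In case (3), either $\ord_\ell(\Delta) = 2c + \ord_\ell(\Delta_K)$ is odd — which happens for odd $\ell$ (where $\ord_\ell(\Delta_K) = 1$) and for $\ell = 2$ with $\ord_2(\Delta_K) = 3$ — so that the square condition holds exactly for $b \leq 2c+1$; or, for $\ell = 2$ with $\ord_2(\Delta_K) = 2$, the unit left after removing $2^{2c+2}$ from $\Delta$ is $\equiv 3 \pmod 4$ — using that a fundamental discriminant with $\ord_2(\Delta_K) = 2$ has $\Delta_K/4 \equiv 3 \pmod 4$ — which again cuts the square condition off at $b \leq 2c+1$. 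Either way $M = 2c+1$. Finally, in cases (4) and (5), where Theorem \ref{KFISOGTHM} does not apply, I would argue as in the proofs of Propositions \ref{C2PROP} and \ref{C3PROP}: \cite[Thm. 6.18]{BC18} forbids a $K$-rational cyclic $8$-isogeny (resp. $27$-isogeny) on an $\OO_K$-CM elliptic curve, while the cyclic $4$-isogeny (resp. $9$-isogeny) constructed there is already defined over $\Q \subseteq K = K(\ff)$, so $M = 2$.

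The routine but delicate part of the argument will be the $2$-adic bookkeeping: matching each configuration of $\ord_2(\ff)$ and the splitting type of $2$ in $K$ to the correct clause of Kwon's theorem — a matching that, when $\ell = 2$, is intertwined with the ramification of $\ell$ itself — and reading off $\Delta_K$ modulo $8$ and $16$ from $\ord_2(\Delta_K)$ via the classification of fundamental discriminants, accurately enough that the square-modulo-$2^{b+2}$ thresholds come out precisely as stated. By contrast the odd-$\ell$ analysis is uniform and short, and the exceptional discriminants $-3$ and $-4$ follow at once from the cited propositions together with \cite[Thm. 6.18]{BC18}.
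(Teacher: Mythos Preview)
Your proposal is correct and follows essentially the same route as the paper: Kwon's theorem (via Remark \ref{KWONRMK}) for $m$, Theorem \ref{KFISOGTHM} for $M$, and Propositions \ref{C2PROP}, \ref{C3PROP} together with \cite[Thm.~6.18]{BC18} for the exceptional discriminants. The only difference is one of presentation: where the paper outsources the explicit determination of $M$ to \cite[\S 7.4]{BC18}, you carry out the square-modulo-$4\ell^b$ analysis directly, and you likewise spell out the case split in Kwon's theorem rather than merely citing it.
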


\begin{proof}
Suppose $\Delta \neq -3,-4$. Theorem \ref{KWONTHM} and Remark \ref{KWONRMK} gives the value of $m$ in all cases. By Theorem \ref{KFISOGTHM}, $M$
is the maximum of all $b \in \Z^+$ such that $\Delta$ is a square modulo $\Z/4\ell^b \Z$.  The explicit determination of $M$ in terms of $c$ and $\ell$ is given in \cite[\S 7.4]{BC18}.  (In that section we have the running hypothesis that $\ord_{\ell}(\ff) \geq 1$, but the
calculation done in Case 3 is valid even when $\ord_{\ell}(\ff) = 0$.)
For $\Delta = -3,-4$, see Propositions \ref{C2PROP} and \ref{C3PROP} and their proofs.
\end{proof}

\begin{thm}\label{IsogenyTheorem} \label{THM6.5}
Suppose that $\ell \mid \Delta$. The least degree over $\Q(\ff)$ in which there is an $\OO$-CM elliptic curve with a rational point of order $\ell^b$ for $b \in \Z^+$ is as follows:
\begin{enumerate}
\item If $b \leq m$, then the least degree is $T(\OO,\ell^b)$.
\item If $m<b \leq M$, then $\ell^b>2$ and the least degree is $2 \cdot T(\OO,\ell^b)$.
\item If $b>M=m \geq 1$, then the least degree is $T(\OO,\ell^b)$.
\item If $b>M>m \geq 1$, then $\ell=2$ and the least degree is $2 \cdot T(\OO,\ell^b)$.
\end{enumerate}
\end{thm}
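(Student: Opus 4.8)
Write $T=T(\OO,\ell^{b})$ for brevity. The plan is to first pin the least degree, call it $T^{\circ}$, between $T$ and $2T$, and then to decide which value is attained in each of the four regimes; this parallels the inert and split cases (Theorems~\ref{InertThm} and~\ref{SplitThm}), the new phenomenon being that $M$ may be finite. For the bracketing: if $E_{/F}$ is an $\OO$-CM elliptic curve over a number field $F\supseteq\Q(\ff)$ with an $F$-rational point of order $\ell^{b}$, then $E_{/KF}$ has a $KF$-rational point of order $\ell^{b}$ and $KF\supseteq K(\ff)$ with $[KF:K(\ff)]=[F:\Q(\ff)]$, so Theorem~\ref{BIGMNTHM} gives $T\mid[F:\Q(\ff)]$; conversely Theorem~\ref{BIGMNTHM} furnishes an extension of $K(\ff)$ of degree $T$ carrying such a curve, which has degree $2T$ over $\Q(\ff)$. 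Hence $T^{\circ}\in\{T,2T\}$, and in particular $T^{\circ}=T$ forces the minimal such $F$ to satisfy $K\not\subseteq F$ (otherwise $[F:K(\ff)]=T/2$ would be a multiple of $T$ by Theorem~\ref{BIGMNTHM}, which is absurd).

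For regimes (1) and (3), where the claim is $T^{\circ}=T$, I would exhibit such an $F$. In case (1), $b\le m$, so by definition of $m$ there is an $\OO$-CM curve $E_{/\Q(\ff)}$ with a $\Q(\ff)$-rational cyclic $\ell^{b}$-isogeny; applying \cite[Thm.~5.5]{BCS17} to its kernel gives an abelian extension $L/\Q(\ff)$ with $[L:\Q(\ff)]\mid\varphi(\ell^{b})/2$ over which a quadratic twist of $E$ acquires a point of order $\ell^{b}$ (the degenerate case $\ell^{b}=2$ having the point already rational, with $T=1$). Since $b\le m\le M$, Theorem~\ref{BIGMNTHM} together with Proposition~\ref{IsogenyProp} gives $T=\varphi(\ell^{b})/2$, so $[L:\Q(\ff)]\mid T$; combined with $T\mid[L:\Q(\ff)]$ from the bracketing this forces $[L:\Q(\ff)]=T$. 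In case (3), $b>M=m$, so no $K(\ff)$-rational cyclic $\ell^{b}$-isogeny exists (Theorem~\ref{KFISOGTHM}) and the isogeny trick is unavailable; here I would rerun the constructions underlying Theorems~\ref{THM1.6} and~\ref{THM1.7} over $\Q(\ff)$ in place of $K(\ff)$ --- starting from a $\Q(\ff)$-rational cyclic $\ell^{m}$-isogeny (which exists since $m\ge 1$), building an $\OO$-CM curve with a point of order $\ell^{m}$ over an extension of $\Q(\ff)$ of the optimal degree $T(\OO,\ell^{m})$ (already settled in case (1)), and climbing to order $\ell^{b}$ via \cite[Thm.~7]{CP15} and the canonical isogeny to an $\OO_{K}$-CM curve --- the point being that when $m=M$ there is no ``gap level'' whose defining field genuinely requires $K$, so the entire construction descends; the bracketing then yields equality, and the cases $\Delta\in\{-3,-4\}$ are covered by Propositions~\ref{C2PROP} and~\ref{C3PROP}.

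For regimes (2) and (4), where the claim is $T^{\circ}=2T$, the bracketing already gives $T^{\circ}\le 2T$, so the content is to rule out $T^{\circ}=T$: suppose $E_{/F}$ is $\OO$-CM with an $F$-rational point of order $\ell^{b}$ and $[F:\Q(\ff)]=T$. As noted, $K\not\subseteq F$. The point generates an $F$-rational cyclic $\ell^{b}$-subgroup, hence an $F$-rational cyclic $\ell^{b}$-isogeny on an $\OO$-CM curve; since $b>m$, no such isogeny exists over $\Q(\ff)$, so (Theorem~\ref{KWONTHM}, Remark~\ref{KWONRMK}, and \S\ref{coincidences}) the conductor--ideal conclusion of Theorem~\ref{GENKWONTHM}(a) fails for $N=\ell^{b}$, and Theorem~\ref{GENKWONTHM} then forces $\Q(\ell\ff)\subseteq F$. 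I would iterate this: passing along the canonical cyclic $\ell$-isogeny $\widetilde E\to E$ from the $\OO(\ell\ff)$-CM curve supplied by $\Q(\ell\ff)\subseteq F$ (using Theorem~\ref{KKTHM} to control whether the point drops order) transports the problem to the order $\OO(\ell\ff)$ of $\ell$-adic conductor $c+1$, over $F$ now regarded as an extension of $\Q(\ell\ff)$, where $[\Q(\ell\ff):\Q(\ff)]=\ell$ by (\ref{RINGCLASSDEGEQ}); repeating until the level falls into regime (1) for the enlarged order and then reading off the degree from Theorem~\ref{BIGMNTHM} together with the accumulated powers of $\ell$ is designed to produce a divisor of $T$ strictly larger than $T$, the desired contradiction.

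The main obstacle will be precisely this iterated descent in regimes (2) and (4): arranging the bookkeeping so that, across the five configurations of Proposition~\ref{IsogenyProp}, the powers of $\ell$ gained by climbing the conductor tower outrun the powers of $\ell$ ``lost'' at any step where the canonical isogeny merely lowers the level rather than preserving it. The case $\ell=2$ in regime (4) is the sharpest: once $b>2c+1$ the value $T(\OO,2^{b})$ carries the extra factor governed by the exponent $2b-2c-2$, and one must show these cannot all be absorbed before the descent terminates. I expect this to be where the hypotheses of Theorem~\ref{GENKWONTHM} (avoiding $K$ and $\Q(\ell\ff)$) are invoked repeatedly to keep the descent running, and where the theory of CM elliptic curves over real fields in \cite{BCS17} is needed to dispose of the quadratic-twist ambiguity introduced at each step.
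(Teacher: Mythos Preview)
Your bracketing and case (a) match the paper.  For case (c), your idea of starting from the level-$\ell^{m}$ point (supplied by case (a)) and climbing is right, but the tool you cite --- \cite[Thm.~7]{CP15}, which acquires \emph{full} $\ell^{a}$-torsion --- is not what is needed; the task is simply to lift a single point of order $\ell^{m}$ to one of order $\ell^{b}$.  The paper does this with the elementary Lemma~\ref{LITTLEDIVLEMMA}: the preimage under $[\ell^{b-m}]$ has $\ell^{2(b-m)}$ elements, so some Galois orbit has size at most that, and one then checks from Theorem~\ref{BIGMNTHM} that $\ell^{2(b-m)} \cdot T(\OO,\ell^{m}) = T(\OO,\ell^{b})$ in exactly the configurations where $b>M=m$.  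No appeal to the canonical isogeny to an $\OO_{K}$-CM curve is needed here.

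The substantive gap is in cases (b) and (d).  Your iterated-descent is, as you yourself flag, not verified to terminate with the right inequality, and Theorem~\ref{KKTHM} concerns the full $\ff$-isogeny down to an $\OO_{K}$-CM curve, not the single $\ell$-step raising conductor from $\ff$ to $\ell\ff$, so it does not control whether your transported point drops order.  The paper dispatches case (b) in one stroke with an idea you are missing: once Theorem~\ref{GENKWONTHM} yields $\Q(\ell\ff)\subseteq L$, pass to $KL$ and observe that $K(\ell\ff)\subseteq KL$ is the \emph{projective $\ell$-torsion point field} of $E$ (\cite{Parish89}, \cite[Thm.~4.1]{BC18}).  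Hence the mod-$\ell$ Galois representation over $KL$ is scalar; since $E(KL)$ already has a point of order $\ell$, this forces $E[\ell]\subseteq E(KL)$, so $\Z/\ell\Z\times\Z/\ell^{b}\Z\hookrightarrow E(KL)$.  But $[KL:K(\ff)]=T(\OO,\ell^{b})$, whereas Theorem~\ref{BIGMNTHM} gives $T(\OO,\ell,\ell^{b})=\ell\cdot T(\OO,\ell^{b})$ --- contradiction, with no iteration.  For case (d) the paper does not iterate either: it uses an orbit-size argument on $2^{b-M}P$ (via \cite[\S7.4]{BC18}) and a field diagram to produce a point of order $2^{M}$ in degree $\varphi(2^{M})/2$ over $\Q(\ff)$, which contradicts the already-established case (b) since $m<M$.
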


\begin{proof}
We will consider each case separately:

(a) Suppose $b \leq m$. \\
$\bullet$ Suppose $\Delta \neq -3,-4$. If $\ell^b = 2$, then by Proposition \ref{IsogenyProp} there is a $\Q(\ff)$-rational $2$-isogeny,
hence a $\Q(\ff)$-rational point of order $2$.  If $\ell^b > 2$, then by \cite[Thm. 5.5]{BCS17}, there is an extension $L/\Q(\ff)$ of degree $\varphi(\ell^b)/2$ and a twist $E'$ of $E_{/L}$ such that $E'(L)$ has a rational point of order $\ell^b$.  This is the least possible degree by \cite[Thm. 6.2]{BC18}.\\
$\bullet$ Suppose $\Delta=-3$.  Then $\ell=3$ and $m=2$ and the claim follows as above.  \\
$\bullet$ Suppose $\Delta = -4$.  Then $\ell = 2$ and $m = 2$, and again the claim follows as above.

(b) Suppose $m<b \leq M$. \\
By Proposition \ref{IsogenyProp}, we may assume $\Delta \neq -3, -4$. As above, we have $\ell^b > 2$, and by  \cite[Thm. 5.5]{BCS17}, there is an extension $L/K(\ff)$ of degree $\varphi(\ell^b)/2$ and an $\OO$-CM elliptic curve $E_{/L}$ with a rational point of order $\ell^b$. Furthermore, by \cite[Thm. 6.2]{BC18} if $E_{/L}$ is an $\OO$-CM elliptic curve with an $L$-rational point of order $\ell^b$,
then  \[\varphi(\ell^b)/2 \mid [L:K(\ff)], \] so the least degree over $\Q(\ff)$ in which there is an $\OO$-CM elliptic curve with a rational point of order $\ell^b$ is either $\varphi(\ell^b)/2$ or $\varphi(\ell^b)$.

Suppose for the sake of contradiction that there is a number field $L/\Q(\ff)$ of degree $\varphi(\ell^b)/2$ and an $\OO$-CM elliptic curve $E_{/L}$ such that $E(L)$ contains a rational point $P$ of order $\ell^b$. Since $\varphi(\ell^b)/2 \mid [KL:K(\ff)]$, it follows that  $[KL:K(\ff)]=\varphi(\ell^b)/2$ and $K \not \subset L$.  Since $b>m$, Theorem \ref{GENKWONTHM} implies $\Q(\ell \ff) \subset L$, and so $K(\ell\ff) \subset KL$. Recall that $K(\ell \ff)$ is the projective $\ell$-torsion point field of an $\OO$-CM elliptic curve (see \cite[Prop.3]{Parish89} and \cite[Thm. 4.1]{BC18}). Thus the image of the mod $\ell$ Galois representation of $E_{/KL}$ consists of scalar matrices. Since $E(KL)$ has a point of order $\ell$, it therefore has full $\ell$-torsion, so $\Z/\ell\Z \times \Z/\ell^b\Z \hookrightarrow E(KL)$, contradicting Theorem \ref{BIGMNTHM}.

(c) Suppose $b>M=m \geq 1$. \\ \indent
First suppose that $\ell$ is odd.  By (a), there is an extension $F/\Q(\ff)$ of
degree $\frac{\varphi(\ell^m)}{2}$ and an $\OO$-CM elliptic curve $E_{/F}$ with an $F$-rational point $P$ of order $\ell^m$.
By Lemma \ref{LITTLEDIVLEMMA}a), there is a field extension $L/F$ of degree at most $\ell^{2(b-m)}$ and $Q \in E(L)$ of
order $\ell^b$, and thus
\[ [L:\Q(\ff)] = [L:F][F:\Q(\ff)] \leq \ell^{2(b-m)} \frac{\varphi(\ell^m)}{2}= T(\OO, \ell^b).  \]
It follows from Theorem \ref{BIGMNTHM} that this is the least possible degree for an $\OO$-CM elliptic curve to have a rational point of
order $\ell^b$.  \\ \indent
Now suppose that $\ell =2$.  Then the assumptions hold in two cases: (i) $\left( \frac{\Delta_K}{2}\right) = 0$ and $c=0$; or (ii) $\left( \frac{\Delta_K}{2}\right) = 0$, $c\geq 1$, and  $\ord_2(\Delta_K)=3$. In both cases Theorem \ref{BIGMNTHM} implies that  $T(\OO,2^b)=2^{2(b-m)}\cdot \varphi(2^m)/2$ is the least possible degree, and we shall construct a point of at most this degree.  \\ \indent
Suppose (i): $\left(\frac{\Delta_K}{2} \right) = 0$ and $c = 0$.  Let us first assume that $\Delta \neq -4$.  Then $m = M = 1$,
so by (a) there is an $\OO$-CM elliptic curve defined over $\Q(\ff)$ with a $\Q(\ff)$-rational point of order $2$, and thus by Lemma \ref{LITTLEDIVLEMMA}b) there is a field extension $L/\Q(\ff)$ of degree at most $2^{2b-3} = 2^{2(b-m)} \frac{ \varphi(\ell^m)}{2}$,
an $\OO$-CM elliptic curve $E_{/L}$ and a point $Q \in E(L)$ of order $2^b$.  If $\Delta = -4$, then $m = M = 2$.   By (a) there is an $\OO_K$-CM elliptic curve $E_{/\Q}$ with a $\Q$-rational point of order $4$.  By Lemma \ref{LITTLEDIVLEMMA}a) there is a field extension $L/\Q(\ff)$ of degree at most $2^{2b-4} =
2^{2(b-m)} \frac{ \varphi(\ell^m)}{2}$ and $Q \in E(L)$ of order $2^b$.  \\ \indent
Suppose (ii): $\left( \frac{\Delta_K}{2}\right) = 0$, $c\geq 1$, and  $\ord_2(\Delta_K)=3$.  By (a), there is an extension $F/\Q(\ff)$
of degree $\frac{\varphi(2^m)}{2}$ and an $\OO$-CM elliptic curve $E_{/F}$ with an $F$-rational point of order $2^m$.  By
Lemma \ref{LITTLEDIVLEMMA}a) there is a field extension $L/F$ of degree at most $2^{2(b-m)}$ and $Q \in E(L)$ of order
$2^b$, so
\[ [L:\Q(\ff)] = [L:F][F:\Q(\ff)] \leq 2^{2(b-m)} \frac{ \varphi(2^m)}{2}. \]
\indent (d) Suppose $b>M>m\geq 1$. This case is only possible when $\ell=2$ and $2 \mid \ff$. Theorem \ref{BIGMNTHM} gives $T(\OO,2^b)=2^{2b-M-2}$, so the least degree in which there is an $\OO$-CM elliptic curve with a rational point of order $2^b$ is either $2^{2b-M-2}$ or $2^{2b-M-1}$.

Suppose for the sake of contradiction that there is a number field $L/\Q(\ff)$ of degree $2^{2b-M-2}$ and an $\OO$-CM elliptic curve $E_{/L}$ such that $E(L)$ contains a rational point $P$ of order $2^b$.  Then we must have $L=\Q(\ff)(\mathfrak{h}(P))$. Since $2^{2b-M-2} \mid [K(\ff)(P):K(\ff)]$, it follows that  \[[K(\ff)(P):K(\ff)]=[K(\ff)(\mathfrak{h}(P)):K(\ff)]=2^{2b-M-2}\] and $K \not \subset L$.
The point $2^{b-M} P$ has order $2^M$.   Suppose for the sake of contradiction that the orbit of $C_{2^M}(\OO)$ on $2^{b-M} P$ has more than $\varphi(2^M)$ elements. By \cite[\S 7.4]{BC18}, the $C_{2^b}(\OO)$-orbit on $P$ has size larger than
\[
2^{2(b-M)}\varphi(2^M)=2^{2b-M-1}.
\]
Since $[K(\ff)(\mathfrak{h}(P)):K(\ff)]$ is equal to the size of the orbit of $\overline{C_{2^b}(\OO)}$ on $\overline{P}$ (see \cite[\S7.1]{BC18}), Lemma 7.6 in \cite{BC18} implies that
$[K(\ff)(\mathfrak{h}(P)):K(\ff)]>2^{2b-M-2}$, which is a contradiction. Thus the orbit of $C_{2^M}(\OO)$ on $2^{b-M} P$ has $\varphi(2^M)$ elements, and $[K(\ff)(\mathfrak{h}(2^{b-M} P)):K(\ff)]= \frac{\varphi(2^M)}{2}$. We have the following diagram of fields:

\begin{center}
\begin{tikzpicture}[node distance=2cm]
\node (Qj)                  {$\Q(\ff)$};
\node (F) [above of=Qj, node distance=2.1cm] {$\Q(\ff)(\mathfrak{h}(2^{b-M}P))$};
\node (Kj) [above right of =Qj, node distance=2.3 cm] {$K(\ff)$};
\node (FK)  [above of=Kj, node distance=2.1 cm]   {$K(\ff)(\mathfrak{h}(2^{b-M}P))$};
\node (QjP) [above of=F, node distance=2.1cm] {$L =\Q(\ff)(\mathfrak{h}(P))$};
\node (KjP)  [above of=FK, node distance=2.1 cm]   {$K(\ff)(\mathfrak{h}(P))$};

 \draw[-] (Qj) edge node[auto] {} (F);
 \draw[-] (Qj) edge node[auto] {$2$} (Kj);
 \draw[-] (Kj) edge node[right] {$\frac{\varphi(2^M)}{2}$} (FK);
 \draw[-] (F) edge node[auto] {$2$} (FK);
 \draw[-] (F) edge node[auto] {} (QjP);
 \draw[-] (FK) edge node[right] {$2^{2(b-M)}$} (KjP);
  \draw[-] (QjP) edge node[auto] {$2$} (KjP);

\end{tikzpicture}
\end{center}
It follows that $[L:\Q(\ff)(\mathfrak{h}(2^{b-M}P))]=2^{2{b-M}}$ and thus
\[
[\Q(\ff)(\mathfrak{h}(2^{b-M}P)):\Q(\ff)]=2^{M-2}.
\]
But then we have a point of order $2^M$ in an extension of $\Q(\ff)$ of degree $2^{M-2}$, contradicting (b).
\end{proof}

\noindent
From Proposition \ref{IsogenyProp} and Theorem \ref{IsogenyTheorem} we deduce:

\begin{thm}
\label{THM5.5}
Let $\ell$ be a prime with $\ell \mid \Delta$. Then the least degree over $\Q(\ff)$ in which there is an $\OO$-CM elliptic curve with a rational point of order $\ell^b$ for $b \in \Z^+$  is as follows:
\begin{enumerate}
\item Suppose $\ell \mid \ff$ and $\left( \frac{\Delta_K}{\ell}\right) = 1$.
\begin{enumerate}
\item[i)] If $\ell=2$ and $c=1$, the least degree is $\begin{cases}
1 & b=1,\\
2 \cdot T(\OO,2^b) & b >1.
\end{cases}$
\item[ii)] If $\ell=2$ and $c \geq 2$, the least degree is $\begin{cases}
T(\OO,2^b)  &  b \leq 2c-2,\\
2 \cdot T(\OO,2^b) & b >2c-2.
\end{cases}$
\item[iii)] If $\ell$ is odd, the least degree is $\begin{cases}
T(\OO,\ell^b) & b \leq 2c,\\
2 \cdot T(\OO,\ell^b) & b >2c.
\end{cases}$
\end{enumerate}
\item Suppose $\ell \mid \ff$ and $\left( \frac{\Delta_K}{\ell}\right) = -1$.
\begin{enumerate}
\item[i)] If $\ell=2$ and $c=1$, the least degree is $\begin{cases}
1 & b=1,\\
2 \cdot T(\OO,2^b)  & b\geq 2.
\end{cases}$
\item[ii)] If $\ell=2$ and $c \geq 2$, the least degree is $\begin{cases}
T(\OO, 2^b)  & b \leq 2c-2,\\
2 \cdot T(\OO, 2^b)  & 2c-2<b 
\end{cases}$
\item[iii)] If $\ell$ is odd, the least degree is $T(\OO,\ell^b)$.
\end{enumerate}
\item Suppose $\left( \frac{\Delta_K}{\ell}\right) = 0$.
\begin{enumerate}
\item[i)] If $\ell=2$ and $c=0$, the least degree is $T(\OO, 2^b)$.
\item[ii)] If $\ell=2$,  $c \geq 1$, and $\ord_2(\Delta_K)=2$, the least degree is $\begin{cases}
T(\OO,2^b)  & b \leq 2c,\\
2 \cdot T(\OO,2^b) & b > 2c.
\end{cases}$
\item[iii)] If $\ell=2$,  $c \geq 1$, and $\ord_2(\Delta_K)=3$, the least degree is $T(\OO,2^b)$.
\item[iv)] If $\ell$ is odd, the least degree is $T(\OO,\ell^b)$.
\end{enumerate}

\end{enumerate}
\end{thm}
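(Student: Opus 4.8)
The plan is to obtain Theorem~\ref{THM5.5} as a direct corollary of Proposition~\ref{IsogenyProp} and Theorem~\ref{IsogenyTheorem}: for each numerical configuration of $\ell$, $c = \ord_\ell(\ff)$ and $\left( \frac{\Delta_K}{\ell} \right)$ one reads off $m$ and $M$ from Proposition~\ref{IsogenyProp}, and then for each $b \in \Z^+$ one determines which of the four mutually exclusive ranges in Theorem~\ref{IsogenyTheorem} contains $b$: $b \le m$, $m < b \le M$, $b > M = m$, or $b > M > m$. The one preparatory observation I would make is a trichotomy, for each case of Proposition~\ref{IsogenyProp}, recording whether $M = \infty$, $M = m$, or $M > m$; this is what controls the answer in the range $b > \min(m,M)$, since Theorem~\ref{IsogenyTheorem}(c) gives $T(\OO,\ell^b)$ for $b > M = m$ while Theorem~\ref{IsogenyTheorem}(d) gives $2 \cdot T(\OO,\ell^b)$ for $b > M > m$ (and the latter forces $\ell = 2$).

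Carrying this out case by case: in case (1) of Proposition~\ref{IsogenyProp} we have $M = \infty$, so only Theorem~\ref{IsogenyTheorem}(a) and (b) are relevant and the least degree is $T(\OO,\ell^b)$ for $b \le m$ and $2 \cdot T(\OO,\ell^b)$ for $b > m$; substituting $m = 1$ when $\ell = 2$, $c = 1$, then $m = 2c-2$ when $\ell = 2$, $c \ge 2$, and $m = 2c$ when $\ell$ is odd, produces exactly (1)(i)--(iii). In case (2) we have $M = 2c$; here $M = m$ precisely when $\ell$ is odd, so Theorem~\ref{IsogenyTheorem}(a),(c) give $T(\OO,\ell^b)$ for all $b$, which is (2)(iii); when $\ell = 2$ we have $M > m$, so Theorem~\ref{IsogenyTheorem}(a),(b),(d) give $T(\OO,2^b)$ for $b \le m$ and $2 \cdot T(\OO,2^b)$ for $b > m$, yielding (2)(i)--(ii) after inserting the respective value of $m$. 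In case (3), where $\left( \frac{\Delta_K}{\ell} \right) = 0$ and $M = 2c+1$, one checks that $M = m$ holds exactly when $\ell$ is odd, or $\ell = 2$ and $c = 0$, or $\ell = 2$, $c \ge 1$ and $\ord_2(\Delta_K) = 3$; in all these subcases Theorem~\ref{IsogenyTheorem}(a),(c) give $T(\OO,\ell^b)$ for all $b$, i.e.\ parts (3)(i),(iii),(iv), while in the remaining subcase $\ell = 2$, $c \ge 1$, $\ord_2(\Delta_K) = 2$ one has $m = 2c < M = 2c+1$, so the least degree switches from $T(\OO,2^b)$ to $2 \cdot T(\OO,2^b)$ at $b = 2c$, which is (3)(ii).

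Two loose ends remain, and I do not expect either to be a real obstacle. First, the boundary instances $\ell^b = 2$ appearing in (1)(i) and (2)(i) require the value of $T(\OO,2) = T(\OO,1,2)$; here $\ell \mid \ff$ forces $4 \mid \Delta$, hence $\left( \frac{\Delta}{2} \right) \ne -1$, so Theorem~\ref{BIGMNTHM}(2) gives $T(\OO,2) = 1$. Second, the exceptional discriminants $\Delta = -3, -4$ are subsumed under case (3): $\Delta = -3$ has $\ell = 3$ (odd), $c = 0$, $\left( \frac{\Delta_K}{3} \right) = 0$, with $m = M = 2$ by Proposition~\ref{IsogenyProp}(4), consistent with (3)(iv); $\Delta = -4$ has $\ell = 2$, $c = 0$, $\left( \frac{\Delta_K}{2} \right) = 0$, with $m = M = 2$ by Proposition~\ref{IsogenyProp}(5), consistent with (3)(i). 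The only spot that demands genuine care is the clerical task of pairing every numerical subcase with the correct one of alternatives (a)--(d) of Theorem~\ref{IsogenyTheorem} and confirming the switch-over value of $b$ in each; along the way I would also verify that in every branch landing in Theorem~\ref{IsogenyTheorem}(b) or (d) one indeed has $\ell^b > 2$ (this follows since $b > m \ge 1$ in all those branches), so that the output formula $2 \cdot T(\OO,\ell^b)$ is legitimate.
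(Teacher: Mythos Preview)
Your proposal is correct and follows exactly the paper's approach: the paper states Theorem~\ref{THM5.5} immediately after Theorem~\ref{IsogenyTheorem} with the single sentence ``From Proposition~\ref{IsogenyProp} and Theorem~\ref{IsogenyTheorem} we deduce,'' leaving the case-by-case bookkeeping to the reader, and what you have written is precisely that bookkeeping carried out carefully and correctly.
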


\subsection{An example}

\begin{example}
\label{LASTEXAMPLE}
We place ourselves in the setting of Theorem \ref{THM5.5}(a) with a prime $\ell > 2$.  Then there is a number field
$F \supset \Q(\ff)$ of degree $\frac{\ell-1}{2}$ and an $\OO$-CM elliptic curve $E_{/F}$ with an $F$-rational point $P$ of order $\ell$.  We observe that for any $b \in \Z^+$, there is an extension $L/F$ such that $[L:F]$ is odd and $E(L)$ has a point of order $\ell^b$: indeed, the $\gg_F$-set $\{Q \in E(\overline{F}) \mid \ell^{b-1} Q = P\}$ has odd order $\ell^{2b-2}$, and thus contains at least one
$\gg_F$-orbit of odd cardinality.  Overall we get a point of order $\ell^b$ over an extension $L/F$ with $\ord_2 [L:F] = \ord_2 \frac{\ell-1}{2}$.  On the other hand, when $b > 2c$ the \emph{least} degree of an extension field $F/\Q(\ff)$ for which
there is an $\OO$-CM elliptic curve with an $F$-rational point of order $\ell^b$ is $\ell^{b-1} (\ell-1)$.  Since
$\ord_2 (\ell^{b-1} (\ell-1)) = \ord_2([L:F]) + 1$, it is not the case that every degree of an extension field $F$ of $\Q(\ff)$ for which
some $\OO$-CM elliptic curve admits an $F$-rational point of $\ell^b$ is a multiple of the least such degree.  This is in distinct
contrast to Theorem \ref{BIGMNTHM}, which works over $K(\ff)$.
\end{example}

\subsection{Proof of Theorem \ref{GenIsogThm}}
Let $\OO$ be an imaginary quadratic order of discriminant $\Delta$. If $\ell \mid \Delta$, then the result follows from Theorem \ref{THM6.5}. So suppose $\ell \nmid \Delta$. As above, we let $m$ denote the maximum of all $b \in \Z$ such that there is an $\OO$-CM elliptic curve $E_{/\Q(\ff)}$ with a $\Q(\ff)$-rational cyclic
$\ell^b$-isogeny, and let $M$ be the supremum over all $b \in \Z$ such that there is an $\OO$-CM elliptic curve $E_{/K(\ff)}$ with a $K(\ff)$-rational
cyclic $\ell^b$-isogeny. Theorem \ref{GenIsogThm} can be deduced from Theorem \ref{InertThm}, Theorem \ref{SplitThm}, and the following proposition.

\begin{prop} \label{lastIsog}
Let $\OO$ be an imaginary quadratic order of discriminant $\Delta$.
\begin{enumerate}
\item Suppose $\left( \frac{\Delta}{\ell}\right) = 1$. Then we have $m=1$ if $\ell=2$ and $m=0$ otherwise. In either case $M=\infty$.
\item Suppose $\left( \frac{\Delta}{\ell}\right) = -1$. Then if $\Delta \neq -3$ or $\ell>2$, we have $m=M=0$. If $\Delta =-3$ and $\ell=2$, then $m=M=1$.
\end{enumerate}
\end{prop}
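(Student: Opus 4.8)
The plan is to treat the split and inert cases separately, using throughout the trivial fact that $m \le M$: base-changing to $K(\ff) \supseteq \Q(\ff)$ turns a $\Q(\ff)$-rational cyclic $\ell^b$-isogeny on an $\OO$-CM elliptic curve into a $K(\ff)$-rational one on an $\OO$-CM elliptic curve. Note also that $\left(\frac{\Delta}{\ell}\right) \ne 0$ forces $\ell \nmid \ff$, so $\OO$ and $\OO_K$ agree locally at $\ell$, $\left(\frac{\Delta}{\ell}\right) = \left(\frac{\Delta_K}{\ell}\right)$, and in particular in case (a) the prime $\ell$ splits in $\OO$ while in case (b) it is inert in $\OO$.

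\textbf{Split case.} Write $\ell\OO = \pp_1\pp_2$ with $\pp_1 \ne \pp_2$ invertible $\OO$-ideals, so $\OO/\pp_2^b \cong \Z/\ell^b\Z$. For any $\OO$-CM elliptic curve $E$ over $K(\ff) \supseteq K$ the Galois action on torsion is $\OO$-linear, hence the cyclic order-$\ell^b$ subgroup $E[\pp_2^b]$ is $K(\ff)$-rational; this gives a $K(\ff)$-rational cyclic $\ell^b$-isogeny for every $b$, so $M = \infty$. (When $\Delta < -4$ one may instead cite Theorem \ref{KFISOGTHM}, since $\Delta$ is a unit square mod $\ell$ for $\ell$ odd, or is $\equiv 1 \bmod 8$ for $\ell = 2$, hence a square mod $4\ell^b$ for all $b$.) For $m$ when $\ell$ is odd: by (\ref{RINGCLASSDEGEQ}) we have $[\Q(\ell\ff):\Q(\ff)] = [K(\ell\ff):K(\ff)] > 1$ (it equals $\ell - \left(\frac{\Delta_K}{\ell}\right)$, with a harmless extra factor $\tfrac{2}{w_K}$ when $\ff = 1$ and $\Delta \in \{-3,-4\}$), so $\Q(\ff)$ contains neither $K$ nor $\Q(\ell\ff)$, and Theorem \ref{GENKWONTHM} applied with $N = \ell$ forces $\ell \mid \Delta$, contradicting $\ell \nmid \Delta$; hence $m = 0$. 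For $\ell = 2$: the hypothesis $\left(\frac{\Delta}{2}\right) = 1$ forces $\ff$ odd and $\Delta_K \equiv 1 \bmod 8$, i.e. coincidence (C1) with $\Q(\ff) = \Q(2\ff)$; then there is an $\OO(2\ff)$-CM elliptic curve over $\Q(\ff)$ and the dual of the canonical cyclic $2$-isogeny $\iota_{2\ff,\ff}$ is a $\Q(\ff)$-rational cyclic $2$-isogeny whose domain is $\OO$-CM, so $m \ge 1$. Conversely $[\Q(4\ff):\Q(\ff)] = 2$ by (\ref{RINGCLASSDEGEQ}), so Proposition \ref{C1PROP} rules out a $\Q(\ff)$-rational cyclic $4$-isogeny on an $\OO$-CM curve; since the initial cyclic-$4$ segment of any cyclic $2^b$-isogeny with $b \ge 2$ has unchanged, hence still $\OO$-CM, domain, $I(\OO,2^b)$ fails for all $b \ge 2$ and thus $m = 1$.

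\textbf{Inert case.} Suppose first $\Delta \ne -3$ or $\ell > 2$; it suffices to prove $M = 0$, for then $m \le M = 0$. When $\Delta < -4$ this is immediate from Theorem \ref{KFISOGTHM}: $\left(\frac{\Delta}{\ell}\right) = -1$ means $\Delta$ is not a square mod $\ell$, hence not mod $4\ell$, so there is no $K(\ff)$-rational cyclic $\ell$-isogeny. When $\Delta \in \{-3,-4\}$ (so $\ell$ is odd) the same conclusion follows from \cite[Thm. 6.18]{BC18}, exactly as used in the proofs of Propositions \ref{C2PROP} and \ref{C3PROP}. Finally, if $\Delta = -3$ and $\ell = 2$: Proposition \ref{C3PROP}(a) gives $m = 1$ and produces a $\Q$-rational cyclic $2$-isogeny on an $\OO_K$-CM curve, whence $M \ge 1$ by base change, while $M \le 1$ because \cite[Thm. 6.18]{BC18} forbids a $K$-rational cyclic $4$-isogeny on an $\OO_K$-CM curve; thus $m = M = 1$.

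\textbf{Main obstacle.} The split computation of $M$ and the inert appeals to the isogeny classifications are routine. The delicate points are: (i) the $\ell = 2$ split case, where the ring class field coincidence $\Q(\ff) = \Q(2\ff)$ blocks Theorem \ref{GENKWONTHM} and is precisely what makes $m = 1$ rather than $0$, forcing the separate lower bound from (C1) and upper bound from Proposition \ref{C1PROP} together with the downward-closure remark on initial segments of cyclic isogenies; and (ii) the genuinely exceptional case $\Delta = -3$, $\ell = 2$, which lies outside the general framework and must be extracted from Proposition \ref{C3PROP} and \cite[Thm. 6.18]{BC18}.
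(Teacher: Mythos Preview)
Your proof is correct and arrives at the same conclusions by essentially the same route as the paper, which simply cites Theorem \ref{KWONTHM} (with Remark \ref{KWONRMK}) and Corollary \ref{LASTKWONCOR} for $m$, and Theorem \ref{KFISOGTHM} together with \cite[Thm.~6.18, Lemma~7.1, Thm.~7.2]{BC18} for $M$. The only real difference is one of packaging: where the paper reads $m$ directly off Kwon's full classification, you instead invoke the necessary-condition half via Theorem \ref{GENKWONTHM} for odd split $\ell$, and handle the split $\ell=2$ case separately through the coincidence (C1) and Proposition \ref{C1PROP}; your direct construction of $E[\pp_2^b]$ for $M=\infty$ in the split case is a pleasant alternative to citing the orbit computations in \cite{BC18}. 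These are cosmetic variations on the same underlying argument.
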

\begin{proof}
If $\Delta=-3$ or $-4$, then $m$ is given by Corollary \ref{LASTKWONCOR} and $M$ is given by Theorem 6.18 in \cite{BC18}. Thus we may assume $\Delta <-4$. The values of $m$ follow from Theorem \ref{KWONTHM} and Remark \ref{KWONRMK}. By Theorem \ref{KFISOGTHM}, there is an $\OO$-CM elliptic curve with a $K(\ff)$-rational cyclic $\ell^b$-isogeny iff there is a point $P \in \OO/\ell^b\OO$ of order $\ell^b$ with a $C_{\ell^b}(\OO)$-orbit of size $\varphi(\ell^b)$. The values of $M$ can thus be deduced from Lemma 7.1 and Theorem 7.2 in \cite{BC18}.
\end{proof}

\section{Least degrees of CM points on $X_1(N)_{/\Q}$} 
\noindent
For an order $\OO$ of conductor $\ff$ in the imaginary quadratic field $K$ and $N \in \Z^+$, let $T^{\circ}(\OO,N)$ be the least degree over $\Q(\ff)$ in which there is an $\OO$-CM elliptic curve with a rational point of order $N$.  As above, $T(\OO,N)$ denotes the least degree over $K(\ff)$ in which there is an $\OO$-CM elliptic curve with a rational point of order $N$.  
Thus: \\ \\
$\bullet$ In all cases we have $T^{\circ}(\OO,N) \in \{ T(\OO,N), 2 \cdot T(\OO,N) \}$.  \\
$\bullet$ For all $\OO$ and $N$, the quantity $T(\OO,N)=T(\OO,1,N)$ is computed in Theorem \ref{BIGMNTHM}.  \\
$\bullet$ For all $\OO$ and all prime powers $\ell^b$, the quantity $T^{\circ}(\OO,N)$ is computed in \S 6.
\\ \\
In this section we will compute $T^{\circ}(\OO,N)$ for all $\OO$ and $N$.

\begin{thm}\label{Thm7.1}
Let $\OO$ be an imaginary quadratic order of discriminant $\Delta$.  Let $N \in \Z^+$ have prime power decomposition 
$\ell_1^{b_1} \cdots \ell_r^{b_r}$ with $\ell_1 < \ldots < \ell_r$. 
\begin{enumerate}
\item If we have $T^{\circ}(\OO,\ell_i^{b_i}) = T(\OO,\ell_i^{b_i})$ for all $1 \leq i \leq r$, then $T^{\circ}(\OO,N) = T(\OO,N)$. 
\item If for some $1 \leq i \leq r$, we have $T^{\circ}(\OO,\ell_i^{b_i}) = 2 \cdot T(\OO,\ell_i^{b_i})$, then $T^{\circ}(\OO,N) = 2 \cdot T(\OO,N)$.
\end{enumerate}
\end{thm}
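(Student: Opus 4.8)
The plan is to build the degree-$T(\OO,N)$ point on $X_1(N)$ by gluing together the prime-power constructions using the Chinese Remainder Theorem, exactly mirroring the compilation argument already carried out over $K(\ff)$ in Proposition \ref{PROP4.11}, but now tracking carefully the single quadratic ``CM is rational'' obstruction. First I would recall that by Proposition \ref{FIRSTCOMPILEPROP} and Proposition \ref{PROP4.11}, over $K(\ff)$ the minimal degree $T(\OO,N)$ is achieved by choosing, for each $i$, an $(\ell_i^{b_i})$-pair (here a point of order $\ell_i^{b_i}$) whose $\overline{C_{\ell_i^{b_i}}(\OO)}$-orbit has the minimal size $T(\OO,\ell_i^{b_i})$, and that the product orbit has size $\prod_i T(\OO,\ell_i^{b_i}) = T(\OO,N)$ (after accounting for the factor $w$ as in Lemma \ref{LEMMA4.10}). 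The content of Theorem \ref{Thm7.1} is whether, when passing from $K(\ff)$ down to $\Q(\ff)$, one can or cannot ``save a factor of $2$'': i.e. whether the Galois action of $\gg_{\Q(\ff)}$ on the relevant torsion, which a priori differs from that of $\gg_{K(\ff)}$ by composition with the quadratic character cutting out $K$, can be made to become scalar-valued over an extension of $\Q(\ff)$ of the same degree as over $K(\ff)$.

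For part (a): assume $T^{\circ}(\OO,\ell_i^{b_i}) = T(\OO,\ell_i^{b_i})$ for every $i$. Then for each $i$ there is an extension $F_i/\Q(\ff)$ of degree $T(\OO,\ell_i^{b_i})$ and an $\OO$-CM elliptic curve over $F_i$ with an $F_i$-rational point of order $\ell_i^{b_i}$; equivalently, there is a point $P_i$ of order $\ell_i^{b_i}$ on a fixed $\OO$-CM curve $E_{/\Q(\ff)}$ such that the field $\Q(\ff)(\hh(P_i))$ has degree $T(\OO,\ell_i^{b_i})$ over $\Q(\ff)$ (using the model-independence of the Weber function field, \S2.4). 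I would take the compositum $L = \prod_i \Q(\ff)(\hh(P_i))$ inside a fixed algebraic closure and form $P = \sum_i P_i$, a point of order $N$. The key point is that each $\Q(\ff)(\hh(P_i))$ already ``contains'' the relevant quadratic twisting datum at $\ell_i$, so that over $L$ there is a single twist of $E$ on which all the $P_i$, hence $P$, become rational; by the degree bound $[L:\Q(\ff)] \le \prod_i T(\OO,\ell_i^{b_i}) = T(\OO,N)$, and by the lower bound $T(\OO,N) \mid [L:\Q(\ff)]$ coming from Theorem \ref{BIGMNTHM} (via $[KL:K(\ff)]$), we get $T^{\circ}(\OO,N) = T(\OO,N)$. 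The delicate part here is to check that the quadratic characters attached to the distinct primes $\ell_i$ really are independent enough that their ``sum'' is absorbed with no loss — this is where one must use that the Weber function fields at different primes interact only through their common subfield structure over $\Q(\ff)$, and that the obstruction to descending CM is a single global quadratic character, so saving it at each $\ell_i$ simultaneously does not double-count.

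For part (b): suppose $T^{\circ}(\OO,\ell_{i_0}^{b_{i_0}}) = 2\,T(\OO,\ell_{i_0}^{b_{i_0}})$ for some $i_0$. We must show no extension of $\Q(\ff)$ of degree $T(\OO,N)$ supports an $\OO$-CM curve with a rational point of order $N$. Suppose for contradiction $E_{/L}$ with $[L:\Q(\ff)] = T(\OO,N)$ has an $L$-rational point $P$ of order $N$. Then $L = \Q(\ff)(\hh(P))$, and since $T(\OO,N) \mid [KL:K(\ff)]$ while $[KL:K(\ff)] \le [L:\Q(\ff)] = T(\OO,N)$, we get $[KL:K(\ff)] = T(\OO,N)$ and $K \not\subset L$. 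Now the point $Q := (N/\ell_{i_0}^{b_{i_0}})P$ has order $\ell_{i_0}^{b_{i_0}}$ and lies in $E(L)$; projecting, its $\overline{C_{\ell_{i_0}^{b_{i_0}}}(\OO)}$-orbit has size dividing $[L:\Q(\ff)]$'s analogue, but a counting argument as in the proof of Theorem \ref{IsogenyTheorem}(d) — using that the $C_N(\OO)$-orbit of $P$ factors as the product of the orbits of its prime-power components (Proposition \ref{PROP4.11}(b)) — forces $[\Q(\ff)(\hh(Q)):\Q(\ff)] = T(\OO,\ell_{i_0}^{b_{i_0}})$. That exhibits an $\OO$-CM curve over a degree-$T(\OO,\ell_{i_0}^{b_{i_0}})$ extension of $\Q(\ff)$ with a rational point of order $\ell_{i_0}^{b_{i_0}}$, i.e. $T^{\circ}(\OO,\ell_{i_0}^{b_{i_0}}) \le T(\OO,\ell_{i_0}^{b_{i_0}})$, contradicting the hypothesis. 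The main obstacle, as in (a), is the bookkeeping: making precise that the Weber function field $\Q(\ff)(\hh(P))$ sees the prime-power Weber fields $\Q(\ff)(\hh(Q_i))$ as subfields with multiplicatively independent degrees, so that a degree saving for $P$ implies a degree saving for at least one $Q_i$. I expect this orbit-decomposition-plus-field-degree argument to be the crux, and it should go through cleanly once one invokes Proposition \ref{PROP4.11}(b) together with the fact that $[\Q(\ff)(\hh(P)):\Q(\ff)]$ equals the size of the $\overline{C_N(\OO)}$-orbit of $\overline P$ (as recalled in \S2.4 and used repeatedly in \S6).
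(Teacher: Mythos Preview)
Your plan for part (b) is essentially the paper's argument: assume a degree-$T(\OO,N)$ extension $F/\Q(\ff)$ exists, deduce $K\not\subset F$, pass to the $\ell_{i_0}$-primary part $Q$ of $P$, use the orbit factorization (Proposition~\ref{PROP4.11}) to force the $C_{\ell_{i_0}^{b_{i_0}}}(\OO)$-orbit on $Q$ to have the minimal size, and conclude that $\Q(\ff)(\hh(Q))$ is a degree-$T(\OO,\ell_{i_0}^{b_{i_0}})$ extension supporting a rational point of order $\ell_{i_0}^{b_{i_0}}$, contradicting the hypothesis. This works.

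Part (a), however, has a genuine gap. Your identity $\prod_i T(\OO,\ell_i^{b_i}) = T(\OO,N)$ is false: by Theorem~\ref{BIGMNTHM}(e) and Lemma~\ref{LEMMA4.10}(b) one has (when all $\ell_i^{b_i}\ge 4$)
\[
T(\OO,N)=\frac{\prod_i \widetilde{T}(\OO,\ell_i^{b_i})}{w}=\frac{\prod_i w\,T(\OO,\ell_i^{b_i})}{w}=w^{\,r-1}\prod_i T(\OO,\ell_i^{b_i}),
\]
so your compositum $L=\prod_i \Q(\ff)(\hh(P_i))$ has degree at most $T(\OO,N)/w^{\,r-1}$, not $T(\OO,N)$. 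More seriously, the ``single twist'' assertion cannot be right: over $L$ each $P_i$ transforms by its own character $\chi_i:\gg_L\to\OO^{\times}$, and there is no mechanism forcing $\chi_1=\cdots=\chi_r$. Indeed, if a single twist worked, you would produce a rational point of order $N$ over a field of degree $<T(\OO,N)$, contradicting Theorem~\ref{BIGMNTHM}. The paper's fix is exactly to absorb this discrepancy: one chooses a model over $\Q(\ff)(\hh(P_1))$ on which $P_1$ is rational, and then pays an extension of degree at most $w^{\,r-1}$ over $L$ to make the remaining $P_2,\dots,P_r$ rational; the factor $w^{\,r-1}$ precisely restores the correct degree $T(\OO,N)$.

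You should also be aware that this clean bookkeeping requires all $\ell_i^{b_i}\ge 4$ so that Lemma~\ref{LEMMA4.10} applies. The paper handles the residual cases $\ell_1^{b_1}=2$ (for various $\Delta$) and $\ell_1^{b_1}=3$ with $\Delta=-3$ by separate ad hoc arguments; your plan does not address these, and they do require attention.
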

\begin{proof}
Put $w \coloneqq \# \OO^{\times}$. \\
a) Suppose that $T^{\circ}(\OO,\ell_i^{b_i}) = T(\OO,\ell_i^{b_i})$ for all $1 \leq i \leq r$. \\ 
Case 1: Suppose that $\ell_1^{b_1} > 2$. If $\Delta=-3$, we further assume that $\ell_1^{b_1} \neq 3$.  Then Theorem \ref{BIGMNTHM} gives
\[ T(\OO,N) = w^{r-1} \prod_{i=1}^r T(\OO,\ell_i^{b_i}). \]
Let $E_{/\C}$ be an $\OO$-CM elliptic curve.  
By \S 2.4, for all $1 \leq i \leq r$ there is $P_i \in E(\C)$ of order $\ell_i^{b_i}$ such that 
\[ [\Q(\ff)(\mathfrak{h}(P_i)):\Q(\ff)] = T(\OO,\ell_i^{b_i}). \]
There is a model of $E$ over $\Q(\ff)(\mathfrak{h}(P_1))$ such that $P_1 \in E(\Q(\ff)(\mathfrak{h}(P_1)))$.  
Put $F \coloneqq \prod_{i=1}^r \Q(\ff)(\mathfrak{h}(P_i))$.  Since $\hh(P_2),\ldots,\hh(P_r) \in F$, there is an extension $L/F$
with $[L:F] \leq w^{r-1}$ such that $P_1,\ldots,P_r \in E(L)$, and 
\[ [L:\Q(\ff)] = [L:F][F:\Q(\ff)] \leq
w^{r-1} \prod_{i=1}^r T(\OO,\ell_i^{b_i}) \] \[ = w^{r-1} \prod_{i=1}^r \frac{\widetilde{T}(\OO,\ell_i^{b_i})}{w} = 
\frac{ \prod_{i=1}^r \widetilde{T}(\OO,\ell_i^{b_i})}{w} = T(\OO,N). \]
This shows that $T^{\circ}(\OO,N) \leq T(\OO,N)$ and thus we have $T^{\circ}(\OO,N) = T(\OO,N)$.  \\ 
Case 2: Suppose $\Delta=-3$ and $\ell_1^{b_1}=3$. If $r=1$, the claim holds trivially, so we may assume $r \geq 2$. By Case 1, there is a field extension $L/\Q(\ff)$ of degree $T(\OO, \frac{N}{3})$ and an $\OO$-CM elliptic curve $E_{/L}$ with a point of order $\frac{N}{3}$ in $E(L)$. Let $\pp$ be the (unique, hence real) ideal of $\OO$ of norm $3$.  Then $E[\pp]$ is an $L$-rational cyclic subgroup scheme of 
order $3$. (See \cite[p.11]{BCS17}.) The corresponding isogeny character trivializes over a degree $2$ extension $M/L$, so $E(M)$ has a point of order $N$. Thus 
\[ T^{\circ}(\OO,N) \leq 2 \cdot T(\OO,\tfrac{N}{3}) = T(\OO,N). \]\\
Case 3: We suppose that $\ell_1^{b_1} = 2$ and $\Delta < -4$.    We have 
\[ T^{\circ}(\OO,2) = T(\OO,2) = \begin{cases} 1 & \left( \frac{\Delta}{2} \right) \neq -1 \\ 3 & \left( \frac{\Delta}{2} \right) = -1 \end{cases} \]
and (since $\Delta < -4$) for any point $P$ of order $2$ we have $\Q(\ff)(\mathfrak{h}(P)) = \Q(\ff)(P)$. If $r=1$, we are done. Otherwise, running the above argument 
for $\frac{N}{2} = \ell_2^{b_2} \cdots \ell_r^{b_r}$ we get $T^{\circ}(\OO,\frac{N}{2}) = T(\OO,\frac{N}{2})$, and then by Theorem \ref{BIGMNTHM} we have 
\[ T^{\circ}(\OO,N) \leq T(\OO,2) T(\OO,\tfrac{N}{2}) = T(\OO,N), \]
which again implies $T^{\circ}(\OO,N) = T(\OO,N)$.  \\ 
Case 4: We suppose that $\Delta = -4$ and $\ell_1^{b_1} = 2$.  Every $\OO$-CM elliptic curve $E$ defined over a field $F$ 
of characteristic $0$ has an $F$-rational point of order $2$, as can be seen from the Weierstrass equation 
$y^2 = x^3 + Ax$ or because $E \ra E/E[\pp]$ is an $F$-rational $2$-isogeny, where $\pp$ is the (unique, hence real) ideal in $\OO$ 
of norm $2$.  Now we get the result by applying Case 1 with $\frac{N}{2}$ in place of $N$.  \\
Case 5: We suppose that $\Delta = -3$, $\ell_1^{b_1} = 2$ and $\ell_2^{b_2} \neq 3$. If $r=1$ we are done, so we may assume $r \geq 2$. By Case 1 above we have 
\[ T^{\circ}(\OO,\tfrac{N}{2}) = T(\OO,\tfrac{N}{2}), \]
so there is a field extension $F/\Q(\ff)$ of degree $T(\OO,\frac{N}{2}) = 6^{r-2} \prod_{i=2}^r T(\OO,\ell_i^{b_i})$ and an $\OO$-CM elliptic curve $E_{/F}$ with an $F$-rational 
point of order $\frac{N}{2}$.   We have $\widetilde{T}(\OO,2) = 3$.   Certainly there is an extension $L/F$ of degree $3$ such that $E(L)$ has a point of order $2$, and 
thus 
\[ T^{\circ}(\OO,N) \leq 3 \cdot T(\OO,\tfrac{N}{2}) = \frac{\widetilde{T}(\OO,2)}{6} \prod_{i=2}^r \widetilde{T}(\OO,\ell_i^{b_i}) = T(\OO,N). \]
Case 6: We suppose that $\Delta = -3$, $\ell_1^{b_1} = 2$ and $\ell_2^{b_2} = 3$. If $r = 2$ then $N = 6$ and $T^{\circ}(\OO,N) = 1$ by \cite{Olson74}.  So suppose that $r \geq 3$. By Case 2, there is a field extension $F/\Q(\ff)$ of degree $T(\OO,\frac{N}{2})$ and an $\OO$-CM elliptic curve $E_{/F}$ with an $F$-rational 
point of order $\frac{N}{2}$. As above, there is an extension $L/F$ of degree $3$ such that $E(L)$ has a point of order $2$. Thus 
\[ T^{\circ}(\OO,N) \leq 3 \cdot T(\OO,\tfrac{N}{2}) = T(\OO,N). \]
b) Fix $1 \leq I \leq r$ such that $T^{\circ}(\OO,\ell_I^{b_I}) = 2 \cdot T(\OO,\ell_I^{b_I}) = \frac{\widetilde{T}(\OO,\ell_I^{b_I})}{w/2}$. 
Since we have $T^{\circ}(\OO,2) = T(\OO,2)$, we must have $\ell_I^{b_I} > 2$.    Moreover, if $\Delta = -3$ then since 
$T^{\circ}(\OO,3) = 1 = T(\OO,3)$, we must have $\ell_I^{b_I} > 3$.  Seeking a 
contradiction, we suppose there is a field extension $F/\Q(\ff)$ of degree $T(\OO,N) = \frac{\prod_{i=1}^r \widetilde{T}(\OO,\ell_i^{b_i})}{w}$, an $\OO$-CM elliptic curve $E_{/F}$ and a point $P \in E(F)$ of order $N$.  Put $P_I \coloneqq \frac{N}{\ell_I^{b_I}} P$.   By \cite[Lemma 7.6 and Prop. 7.7]{BC18} we have
\[ [K(\ff)(\mathfrak{h}(P_I)):K(\ff)] = \frac{\widetilde{T}(\OO,\ell_I^{b_I})}{w} =  \frac{T^{\circ}(\OO,\ell_I^{b_I})}{2}, \]
which implies that $K \subset \Q(\ff)(\mathfrak{h}(P_I)) \subset F$.  But then 
\[ [F:\Q(\ff)] = [F:K(\ff)][K(\ff):\Q(\ff)] = 2 [F:K(\ff)] \geq 2 \cdot T(\OO,N), \] 
a contradiction.
\end{proof}

\section{Least degrees of CM points on $X(M,N)_{/\Q(\zeta_M)}$}
\noindent
For an imaginary quadratic order $\OO$ of discriminant $\Delta$ and conductor $\ff$ and positive integers $M \mid N$, we denote by $T^{\circ}(\OO,M,N)$ the least $d \in \Z^+$ such that there is a number field $F \supset \Q(\ff)$ with $[F:\Q(\ff)] = d$, an $\OO$-CM 
elliptic curve $E_{/F}$ and an injective group homomorphism $\Z/M\Z \times \Z/N\Z \hookrightarrow E(F)$.  
\\ \\
In this section we compute $T^{\circ}(\OO,M,N)$ for all $\OO$, $M$ and $N$.  In \S 4 we computed the analogous quantity $T(\OO,M,N)$ with $\Q(\ff)$ replaced by $K(\ff)$.  In all cases we have 
\[ T^{\circ}(\OO,M,N) \in \{ T(\OO,M,N), \ 2\cdot  T(\OO,M,N) \}. \]

\begin{remark} 
\label{REMARK8.1}
\textbf{} \\
a) The quantity $T^{\circ}(\OO,1,N) = T^{\circ}(\OO,N)$ is computed for all $\OO$ and $N$ in \S 7. \\
b) By \cite[Lemma 3.15]{BCS17}, if $M \geq 3$ and $E_{/F}$ is an $\OO$-CM elliptic curve defined over a number field, 
then $F(E[M]) \supset K$ and $T^{\circ}(\OO,M,N) = 2 \cdot T(\OO,M,N)$. \\
c) Similarly, if $\Delta$ is odd, then by \cite[Lemma 3.15]{BCS17} and $\S2.5$, if $E_{/F}$ is an $\OO$-CM elliptic curve defined over a number 
field, then $F(E[2]) \supset K$ and $T^{\circ}(\OO,2,N) = 2 \cdot T(\OO,2,N)$. 
\end{remark}
\noindent
Thus it remains to consider the case in which $M = 2$, $N = 2N'$ and $\Delta$ is even.

\subsection{Preliminary lemmas} For the computation of $T^{\circ}(\OO,2,2N')$ we need two preliminary results.  The first 
concerns orbits of the level $\ell^2$ Cartan subgroup $C_{\ell^2}(\OO) = (\OO/\ell^2 \OO)^{\times}$ on points of $\OO/\ell^2 \OO$.

\begin{lemma} 
\label{ABBEYLEMMA2}
Let $\ell \mid \ff$.  Then there are $\ell+1$ orbits of $C_{\ell^2}(\OO)$ on order $\ell^2$ points of $\OO/\ell^2 \OO$: 
one of size $\ell^3(\ell-1)$ and $\ell$ of size $\ell(\ell-1)$.
\end{lemma}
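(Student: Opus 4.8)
The plan is to work directly with the finite commutative ring $R := \OO/\ell^2\OO$, whose unit group is $C_{\ell^2}(\OO)$. First I would pin down the structure of $R$. Writing $\theta$ for the image of $\ff\tau_K$, we have $R = \Z/\ell^2\Z \oplus (\Z/\ell^2\Z)\theta$ as $\Z$-modules, and since $\ff\tau_K$ satisfies $x^2 - \ff\Delta_K x + \ff^2\tfrac{\Delta_K^2-\Delta_K}{4} = 0$ while $\ell \mid \ff$ gives $\ell^2 \mid \ff^2$, the defining relation collapses modulo $\ell^2$ to $\theta^2 = t\theta$ in $R$ for some $t \in \Z/\ell^2\Z$ with $\ell \mid t$ (the norm term dies). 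Consequently $R$ is local: reducing mod $\ell$ gives $\F_\ell[\theta]/(\theta^2)$, so $R^\times = \{a + b\theta : \ell \nmid a\}$, a group of order $\ell^3(\ell-1) = \#C_{\ell^2}(\OO)$ (directly, or from the displayed formula for $\#C_N(\OO)$ with $\left(\tfrac{\Delta}{\ell}\right)=0$), with maximal ideal $\mm = \{a + b\theta : \ell \mid a\}$.

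Next I would split the set of order-$\ell^2$ points. This set is exactly $R \setminus \ell R$, and since $\ell R \subset \mm$ it decomposes as $R^\times \sqcup (\mm \setminus \ell R)$. On $R^\times$ the group $C_{\ell^2}(\OO) = R^\times$ acts by multiplication, hence simply transitively: this is the single free orbit, of size $\ell^3(\ell-1)$, and every unit indeed has additive order $\ell^2$. It then remains to show that $\mm \setminus \ell R = \{\ell a' + b\theta : a' \in \Z/\ell\Z,\ b \in \Z/\ell^2\Z,\ \ell \nmid b\}$, a set of cardinality $\ell^2(\ell-1)$, breaks into exactly $\ell$ orbits, each of size $\ell(\ell-1)$.

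For this last step I would introduce the candidate invariant $\lambda(\ell a' + b\theta) := a'\,\bar b^{-1} \bmod \ell \in \F_\ell$, where $\bar b$ is $b$ mod $\ell$ (a unit). A one-line computation, $(c+d\theta)(\ell a' + b\theta) = \ell ca' + \bigl(cb + \ell d(a' + t'b)\bigr)\theta$ with $t = \ell t'$, shows that under $R^\times$ the pair $(a' \bmod \ell,\ b \bmod \ell)$ transforms through the common scalar $c \bmod \ell$, so $\lambda$ is $R^\times$-invariant and takes $\ell$ distinct values — giving at least $\ell$ orbits. Conversely, the orbit of the representative $\ell\lambda + \theta$ under the scalar subgroup $(\Z/\ell^2\Z)^\times \subset R^\times$, namely $\{\ell(b\lambda) + b\theta : b \in (\Z/\ell^2\Z)^\times\}$, already fills the level set $\{\lambda = \mathrm{const}\}$ and has $\ell(\ell-1)$ elements. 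Hence each level set is a single orbit of size $\ell(\ell-1)$, and in total we get $1 + \ell = \ell+1$ orbits, with the count confirmed by $\ell^3(\ell-1) + \ell\cdot\ell(\ell-1) = \ell^2(\ell^2-1) = \#(R \setminus \ell R)$.

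The only genuinely delicate point is the first step: correctly identifying $\OO/\ell^2\OO$ when $\ell \mid \ff$, in particular noticing that $\ell^2 \mid \ff^2$ kills the norm term of the structure relation, so that $R$ is a non-reduced local ring rather than a product of fields — once this is in hand, the orbit bookkeeping is routine. I would also remark that the precise value of $t$ (equivalently, how highly $\ell$ divides $\ff$ and whether $\ell$ ramifies in $K$) is immaterial here, since the transitivity argument on level sets uses only scalar multiplications.
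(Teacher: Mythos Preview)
Your proof is correct and takes a genuinely different route from the paper's.  The paper argues by constraint-and-count: orbit sizes must divide $\#C_{\ell^2}(\OO)=\ell^3(\ell-1)$ and be divisible by $\varphi(\ell^2)=\ell(\ell-1)$ (from the free action of scalars), so each orbit has size $\ell^c(\ell-1)$ with $1\le c\le 3$; the regular orbit of $1$ gives $C\ge 1$, and then Theorem~\ref{BIGMNTHM} is invoked to guarantee at least one orbit of size $\ell(\ell-1)$, after which the linear relation $A+B\ell+C\ell^2=\ell(\ell+1)$ forces $A=\ell$, $B=0$, $C=1$.  Your argument instead pins down the ring $R=\OO/\ell^2\OO$ explicitly as a local ring with $\theta^2=t\theta$, $\ell\mid t$, splits $R\setminus\ell R$ into $R^\times$ and $\mm\setminus\ell R$, and separates the non-unit orbits by the explicit $\F_\ell$-valued invariant $\lambda$, with scalar multiplication already witnessing transitivity on each level set.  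What your approach buys is self-containment (no appeal to Theorem~\ref{BIGMNTHM}, so no reliance on the machinery of \S4) and explicit orbit representatives $\ell\lambda+\theta$; what the paper's approach buys is brevity, given that Theorem~\ref{BIGMNTHM} is already in hand at this point in the paper.
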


\begin{proof}
Since $\# C_{\ell^2}(\OO) = \ell^2 \# C_{\ell}(\OO) = 
\ell^3(\ell-1)$, every orbit has size a divisor of $\ell^3(\ell-1)$.  Moreover, since the subgroup $(\Z/\ell^2 \Z)^{\times}$ acts faithfully 
on the points of order $\ell^2$ of $\OO/\ell^2 \OO$, the size of each orbit is divisible by $\varphi(\ell^2) = \ell(\ell-1)$.  It follows 
that each orbit has size $(\ell-1)\ell^c$ for $1 \leq c \leq 3$. Since there 
are $\ell^4-\ell^2$ points of order $\ell^2$, there are natural numbers $A,B,C$ such that 
\[ A\ell(\ell-1) + B \ell^2 (\ell-1) + C \ell^3(\ell-1) = \ell^4 - \ell^2 = \ell^2(\ell-1)(\ell+1), \]
and thus 																																									
\begin{equation}
\label{ABBEYLEMMA2EQ}
 A + B \ell + C \ell^2 = \ell(\ell+1). 
\end{equation}
The orbit of $1 \in C_{\ell^2}(\OO) = (\OO/\ell^2 \OO)^{\times}$ is $C_{\ell^2}(\OO)$, so has size $\ell^3(\ell-1)$.  Thus 
$C \geq 1$, and (\ref{ABBEYLEMMA2EQ}) shows that $C = 1$, giving 
\[ A + B\ell = \ell. \]
Since $\ell \mid \ff$, by Theorem \ref{BIGMNTHM} there is a $C_{\ell^2}(\OO)$-orbit of size $\ell(\ell-1)$, i.e., $A \geq 1$.  It follows 
that $A = \ell$ and $B = 0$, completing the proof.
\end{proof}

\begin{remark}
It is straightforward to determine the sizes and multiplicities of $C_{\ell^2}(\OO)$-orbits on order $\ell^2$ points of $\OO/\ell^2 \OO$ in all cases.  However, the analysis for $C_{\ell^3}(\OO)$ is more complicated.  
\end{remark}
\noindent
The next result computes the $2$-torsion field of an $\OO$-CM elliptic curve when the discriminant $\Delta$ is even and different from $-4$.

\begin{lemma}
\label{ABBEYLEMMA4}
Let $\OO$ have even discriminant $\Delta \neq -4$.  Let $E_{/\Q(\ff)}$ be an $\OO$-CM elliptic curve.  Then 
\[ \Q(\ff)(E[2]) = \Q(\ff)(\mathfrak{h}(E[2])) = \Q(2\ff). \]
\end{lemma}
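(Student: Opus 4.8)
The plan is to show $[\Q(\ff)(E[2]):\Q(\ff)]=2$ and then identify this quadratic extension with $\Q(2\ff)$.  First note that since $\Delta$ is even and $\Delta\neq -4$ we have $\Delta\le -8$, so $\Delta<-4$, $w=2$, and $\Aut E=\{\pm1\}$; as $[-1]$ fixes $E[2]$ pointwise, the Weber function $\mathfrak{h}$ is injective and $\gg_{\Q(\ff)}$-equivariant on $E[2]$, so $\Q(\ff)(E[2])=\Q(\ff)(\mathfrak{h}(E[2]))$ is independent of the chosen $\Q(\ff)$-model of $E$, and similarly $K(\ff)(E[2])=K(\ff)(\mathfrak{h}(E[2]))$.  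Write $L\coloneqq\Q(\ff)(E[2])=\Q(\ff)(\mathfrak{h}(E[2]))$.

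Next I would collect the relevant degrees.  Since $\Delta$ is even, $\left(\frac{\Delta}{2}\right)=0$, and since $-1\equiv 1\pmod 2$, the formula of \S2.3 gives $\#\overline{C_2(\OO)}=\#C_2(\OO)=2$.  Over $K(\ff)$ the reduced mod $2$ representation is surjective (\S2.3), so $[K(\ff)(\mathfrak{h}(E[2])):K(\ff)]=\#\overline{C_2(\OO)}=2$; by \cite[Thm.~4.1]{BC18} this projective $2$-torsion field is $K(2\ff)$, so $[K(2\ff):K(\ff)]=2$.  Because $\Q(\ff)$ and $\Q(2\ff)$ each have a real place, $[K(\ff):\Q(\ff)]=[K(2\ff):\Q(2\ff)]=2$, hence $[K(2\ff):\Q(\ff)]=4$ and $[\Q(2\ff):\Q(\ff)]=2$ (in particular $K\not\subseteq\Q(2\ff)$).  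Since $\Q(\ff)\subseteq L$ we get $LK=LK(\ff)=K(\ff)(\mathfrak{h}(E[2]))=K(2\ff)$; by \S2.5 (the case $\Delta$ even) $K\not\subseteq L$, so $[K(2\ff):L]=[LK:L]=2$, and therefore $[L:\Q(\ff)]=2$.

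Finally I would exhibit $\Q(2\ff)$ inside $L$.  Fix the distinguished embedding $\Q(\ff)\hookrightarrow\C$ of \S2.2, so $E_{/\C}\cong\C/\OO$ with $\OO=\Z\oplus\Z\frac{\sqrt{\Delta}}{2}$ as in \S2.5.  The order $2$ subgroup $C\coloneqq\langle\frac12+\OO\rangle$ of $E[2]=\frac12\OO/\OO$ has $E/C\cong\C/\left(\frac12\Z\oplus\Z\frac{\sqrt{\Delta}}{2}\right)$, and this lattice is homothetic (by the scalar $2$) to $\Z\oplus\Z\sqrt{\Delta}=\Z[\sqrt{\Delta}]$, the imaginary quadratic order of discriminant $4\Delta=(2\ff)^2\Delta_K$; hence $E/C$ is an $\OO(2\ff)$-CM elliptic curve with $j(E/C)=j(\C/\OO(2\ff))$.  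Since $E[2]\subseteq E(L)$, the subgroup $C$ is $L$-rational, so $j(E/C)\in L$, and therefore $\Q(2\ff)=\Q(\ff)\!\left(j(\C/\OO(2\ff))\right)\subseteq L$.  As $[\Q(2\ff):\Q(\ff)]=2=[L:\Q(\ff)]$, this forces $L=\Q(2\ff)$.

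The main difficulty is not any single deep ingredient but the bookkeeping: the hypothesis that $\Delta$ is even is exactly what makes $\#C_2(\OO)=2$, giving the clean degree $[K(2\ff):\Q(\ff)]=4$, and it is also what makes \S2.5 yield $K\not\subseteq L$; and one has to pin down the correct order $2$ subgroup of $E$ whose quotient is $\OO(2\ff)$-CM, which is transparent only after normalizing $\OO=\Z\oplus\Z\frac{\sqrt{\Delta}}{2}$ as in \S2.5.  (Alternatively, the last paragraph may be replaced by a complex-conjugation argument: $K(2\ff)/\Q(\ff)$ is Galois with group $(\Z/2\Z)^2$ because $K(2\ff)=K(\ff)\cdot\Q(2\ff)$ with $K(\ff)\cap\Q(2\ff)=\Q(\ff)$, both $\Q(2\ff)$ and $L$ are real with respect to the distinguished embedding, and the unique real quadratic subextension of $K(2\ff)/\Q(\ff)$ is $\Q(2\ff)$.)
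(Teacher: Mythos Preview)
Your proof is correct and arrives at the same conclusion by a somewhat different route than the paper.

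The paper's argument runs: (i) $\Q(2\ff)\subset L$ by Lemma~\ref{LITTLEPROJPROP}; (ii) $[\Q(2\ff):\Q(\ff)]=2$ by the ring class field degree formula \cite[Cor.~7.24]{Cox89}; (iii) $[L:\Q(\ff)]\le 2$ because every $\OO$-CM elliptic curve over $\Q(\ff)$ already has a $\Q(\ff)$-rational point of order $2$ (Theorem~\ref{THM5.5}); combining gives $L=\Q(2\ff)$. Your argument replaces the upper bound in (iii) by the fact from \S2.5 that $K\not\subset L$ when $\Delta$ is even, together with $LK=K(2\ff)$ and $[K(2\ff):\Q(\ff)]=4$, which yields $[L:\Q(\ff)]=2$ directly. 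Your step~4 (the quotient by $\langle\tfrac12+\OO\rangle$ is $\OO(2\ff)$-CM) is exactly the special case $N=2$ of Lemma~\ref{LITTLEPROJPROP}, so the two proofs agree on the inclusion $\Q(2\ff)\subset L$. The genuine difference is in bounding $[L:\Q(\ff)]$: the paper invokes Theorem~\ref{THM5.5} (which in turn rests on Kwon's classification of $\Q(\ff)$-rational isogenies), while you use only the elementary observation of \S2.5 that complex conjugation fixes $E[2]$ when $\Delta$ is even. Your route is therefore more self-contained, though the paper's is shorter once Theorem~\ref{THM5.5} is in hand.
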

\begin{proof}
Let $E_{/\Q(\ff)}$ be an $\OO$-CM elliptic curve.  By Lemma \ref{LITTLEPROJPROP}, we have 
\begin{equation}
\label{AL4EQ1}
\Q(\ff)(E[2]) \supset \Q(2\ff).
\end{equation}
Moreover we have (cf. \cite[Cor. 7.24]{Cox89}) that 
\begin{equation}
\label{AL4EQ2}
 [\Q(2\ff):\Q(\ff)] = [K(2\ff):K(\ff)] = 2. 
\end{equation}
By Theorem \ref{THM5.5}, every $\OO$-CM elliptic curve $E_{/\Q(\ff)}$ has a $\Q(\ff)$-rational point of order $2$, so 
\begin{equation}
\label{AL4EQ3}
 [\Q(\ff)(E[2]):\Q(\ff)] \leq 2. 
\end{equation}
Combining (\ref{AL4EQ1}), (\ref{AL4EQ2}) and (\ref{AL4EQ3}) gives the result.
  \end{proof}

\subsection{$T(\OO,2,2^b)$ when $\Delta<-4$ is even}
As in \S 6.3, let $m$ be the maximum of all $b \in \Z$ such that there is an $\OO$-CM elliptic curve $E_{/\Q(\ff)}$ with a $\Q(\ff)$-rational cyclic
$2^b$-isogeny. Since $\Delta$ is even, we have $m \geq 1$. Let $M$ be the supremum over all $b \in \Z$ such that there is an $\OO$-CM elliptic curve $E_{/K(\ff)}$ with a $K(\ff)$-rational
cyclic $2^b$-isogeny.

\begin{thm} \label{Thm8.5}
Suppose $\Delta < -4$ and $2 \mid \Delta$, and let $b \in \Z^+$.  The least $d$ such that there is a number field $F \supset \Q(\ff)$ with $[F:\Q(\ff)]=d$ and an $\OO$-CM elliptic curve $E_{/F}$ with $\Z/2\Z \times \Z/2^b\Z \hookrightarrow E(F)$ is as follows:
\begin{enumerate}
\item If $b \leq m$, then the least degree is $T(\OO,2, 2^b)$.
\item If $b = 2$ and $m = 1 < M$, then the least degree is $2 \cdot T(\OO,2,2^b)$.  
\item If $m+1 <b \leq M$, then the least degree is $2\cdot T(\OO,2, 2^b)$.
\item If $3 \leq m+1 = b \leq M$, then the least degree is $T(\OO,2,2^b)$.
\item If $b>M=m \geq 1$, then the least degree is $T(\OO,2,2^b)$.
\item If $b>M>m \geq 1$, then the least degree is $2 \cdot T(\OO,2,2^b)$.
\end{enumerate}
For explicit descriptions of $m$ and $M$, see Proposition \ref{IsogenyProp}.
\end{thm}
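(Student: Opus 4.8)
The plan is to reduce $T^{\circ}(\OO,2,2^b)$ to a statement about rational points of order $2^b$ and then run a case analysis parallel to that of Theorem \ref{THM6.5}, with the ring class field ``$\Q(2\ff)$'' playing the role $\Q(\ff)$ played there. Since $\Delta<-4$ is even, the automorphism $-1$ acts trivially on the $2$-torsion of any $\OO$-CM elliptic curve $E$, so $F(E[2])=F(\hh(E[2]))$, and by Lemma \ref{ABBEYLEMMA4} this field is the unique degree-$2$ extension $\Q(2\ff)$ of $\Q(j(E))\cong\Q(\ff)$. Hence for a number field $F\supseteq\Q(\ff)$ and an $\OO$-CM elliptic curve $E_{/F}$ we have $\Z/2\Z\times\Z/2^b\Z\hookrightarrow E(F)$ if and only if $F\supseteq\Q(2\ff)$ and $E(F)$ contains a point of order $2^b$; so $T^{\circ}(\OO,2,2^b)$ is the least degree over $\Q(\ff)$ of a field $F\supseteq\Q(2\ff)$ admitting an $\OO$-CM elliptic curve with a rational point of order $2^b$. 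I would then record the dichotomy $T^{\circ}(\OO,2,2^b)\in\{T(\OO,2,2^b),\,2\,T(\OO,2,2^b)\}$: the upper bound is the degree-$T(\OO,2,2^b)$ realization over $K(\ff)$ supplied by Theorem \ref{BIGMNTHM}, base-changed to $\Q(\ff)$; for the lower bound, $T(\OO,2,2^b)\mid[F:\Q(\ff)]$ follows by passing to $FK$ (using that $[FK:K(\ff)]=[F:\Q(\ff)]$ when $K\not\subset F$, the case $K\subset F$ being immediate) and Theorem \ref{BIGMNTHM}.

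For the affirmative cases (a), (d), (e) I would produce a realization of degree exactly $T(\OO,2,2^b)$. In case (a), where $b\le m$, one first obtains via \cite[Thm.\ 5.5]{BCS17} (or directly when $b\le 1$) a field $G/\Q(\ff)$ of degree $T^{\circ}(\OO,2^b)=T(\OO,2^b)$ carrying an $\OO$-CM elliptic curve with a rational point of order $2^b$; the formulas of Theorem \ref{BIGMNTHM} give $T(\OO,2,2^b)=2\,T(\OO,2^b)$ in this range, so adjoining $\Q(2\ff)$ yields a field of degree at most $T(\OO,2,2^b)$, and the lower bound forces equality. In cases (d) and (e) one cannot merely adjoin $\Q(2\ff)$, so I would instead work over $\Q(2\ff)$ itself, which is the ring class field of $\OO(2\ff)$: starting from an $\OO(2\ff)$-CM elliptic curve with a rational point of order $2^b$ or $2^{b+1}$ over a suitable extension $F$ of $\Q(2\ff)$ — whose existence and degree are governed by Theorem \ref{THM5.5} for conductor $2\ff$, together with the lifting Lemma \ref{LITTLEDIVLEMMA} when $b=m+1$ — pushing forward through the canonical $\Q(2\ff)$-rational $2$-isogeny to an $\OO(\ff)$-CM elliptic curve gives a rational point of order $2^b$, while the full $2$-torsion is rational over $F$ because $F\supseteq\Q(2\ff)$; the degree $[F:\Q(\ff)]$ is then matched against the formulas of Theorem \ref{BIGMNTHM}. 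The sub-cases $b=1$ and $\Delta_K=-4$ are handled directly.

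The main obstacle is the negative direction in cases (b), (c), (f), where a realization of degree $T(\OO,2,2^b)$ over $\Q(\ff)$ must be excluded. Assuming one exists, the reduction shows the witnessing field $F$ contains $\Q(2\ff)$, and the lower-bound argument shows $K\not\subset F$ and $[FK:K(\ff)]=T(\OO,2,2^b)$. Theorem \ref{GENKWONTHM} does not apply to $E$ directly, since its hypotheses exclude $F\supseteq\Q(2\ff)$, so I would push through the canonical $F$-rational $2$-isogeny $E\to\widetilde E$ with $\End\widetilde E=\OO(2\ff)$ (which is $F$-rational because $E[2]\subset E(F)$), obtaining an $\OO(2\ff)$-CM elliptic curve over $F\supseteq\Q(2\ff)$ with a rational point of order at least $2^{b-1}$. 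One then derives a contradiction either from the lower bound for $T^{\circ}(\OO(2\ff),2^{b-1})$ relative to $\Q(2\ff)$ via Theorem \ref{THM5.5}, or — when $F$ also avoids $\Q(4\ff)$ — by applying Theorem \ref{GENKWONTHM} to $\widetilde E$ to force a $\Q(2\ff)$-rational cyclic isogeny forbidden by Proposition \ref{IsogenyProp}, or, in the residual configurations, by the Cartan-orbit and Weber-function-field analysis of \cite[\S 7]{BC18} as in the proof of Theorem \ref{THM6.5}(d), using the absence of a ring class field coincidence $\Q(4\ff)=\Q(2\ff)$ (which is valid since $2\mid\ff$ throughout cases (b), (c), (f)). Cases (c) and (f) are reduced to the core case (b) by a Weber-function descent to a point of small $2$-power order, in the spirit of Theorem \ref{THM6.5}(d). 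I expect the bookkeeping that aligns these sub-cases with the numerical values of $m$, $M$, and $T(\OO,2,2^b)$ to be the most delicate part of the argument.
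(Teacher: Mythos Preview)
Your global reduction via Lemma \ref{ABBEYLEMMA4} and the dichotomy $T^{\circ}(\OO,2,2^b)\in\{T(\OO,2,2^b),2T(\OO,2,2^b)\}$ match the paper exactly, and your treatment of (a) is the paper's.  But the detailed approaches to the remaining cases diverge, and in places yours is either unnecessarily complicated or has a gap.

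For (e) and (f) the paper's arguments are far simpler than your $\OO(2\ff)$-detour.  In (e) one has $b>M=m$, so by Theorem \ref{THM6.5}(c) there is already an $\OO$-CM curve with a point of order $2^b$ over some $F/\Q(\ff)$ of degree $T(\OO,2^b)$, and the formulas of Theorem \ref{BIGMNTHM} give $T(\OO,2^b)=T(\OO,2,2^b)$ here; since $K\not\subset F$ and $b>m$, Theorem \ref{GENKWONTHM} forces $\Q(2\ff)\subset F$ automatically, so the full $2$-torsion is already there.  In (f), again $T(\OO,2,2^b)=T(\OO,2^b)$, and Theorem \ref{THM6.5}(d) says no $\OO$-CM curve has even a rational point of order $2^b$ in degree $T(\OO,2^b)$---so the contradiction is immediate, with no isogeny push-forward or Weber descent needed.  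Your plan to route (e) and (f) through $\OO(2\ff)$ works in principle but is substantially heavier.

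For (d) the paper's construction is quite different from yours and your sketch has a real gap.  The paper builds an explicit real model $E\cong_{\R}\C/I$ for a specific real primitive proper ideal $I=[t,\sqrt{D}]$, then uses the explicit $\Z_2$-basis of the Tate module from \cite[Thm.~4.12]{BCS17} to produce a point $P\in E(\R)$ of order $2^b$ lying in a small Cartan orbit; realness of $P$ guarantees $\Q(\ff)(\hh(P))\not\supset K$, and then Theorem \ref{GENKWONTHM} gives $\Q(2\ff)\subset\Q(\ff)(\hh(P))$.  Your proposed route---take a point of order $2^b$ on an $\OO(2\ff)$-CM curve over $F$ with $[F:\Q(2\ff)]=T(\OO(2\ff),2^b)$ and push through the canonical $2$-isogeny $\iota:\tilde E\to E$---does produce the correct ambient degree $[F:\Q(\ff)]=T(\OO,2,2^b)$, but $\iota(\tilde P)$ may have order only $2^{b-1}$ when $2^{b-1}\tilde P$ generates $\ker\iota$.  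Starting instead from order $2^{b+1}$ on $\tilde E$ overshoots the degree by a factor of $2$, and invoking Lemma \ref{LITTLEDIVLEMMA} to recover the lost factor costs another factor of $4$.  So as written your (d) does not close; you would need an additional argument that a minimal-orbit $\tilde P$ can always be chosen with $2^{b-1}\tilde P\notin\ker\iota$, which is essentially the content the paper extracts from the explicit $\R$-structure.

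For (b) and (c) the paper's arguments are in the spirit of your ``Weber-function descent,'' but are organized differently and do not pass to $\OO(2\ff)$.  Case (b) uses the orbit count of Lemma \ref{ABBEYLEMMA2} directly to force $[K(\ff)(\hh(P)):K(\ff)]=1$, hence $\Q(\ff)(\hh(P))=\Q(\ff)$, contradicting Theorem \ref{THM6.5}.  Case (c) descends only one step, to $2P$: a Cartan-orbit argument shows $[K(\ff)(\hh(2P)):K(\ff)]=\varphi(2^{b-1})/2$, whence $[\Q(\ff)(\hh(2P)):\Q(\ff)]=\varphi(2^{b-1})/2$, which contradicts Theorem \ref{THM6.5} at level $2^{b-1}$ (since $b-1>m$).
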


\begin{proof}
(a) Suppose $b \leq m$. By Theorem \ref{THM6.5}, there is an extension $L/\Q(\ff)$ of degree $T(\OO,2^b)$ and an $\OO$-CM elliptic curve $E_{/L}$ with a point of order $2^b$.  By Lemma \ref{ABBEYLEMMA4} and (\ref{AL4EQ2}) we have 
$\Q(\ff)(\mathfrak{h}(E[2]))=\Q(2\ff)$ and $[\Q(2\ff):\Q(\ff)] = 2$.  So there is an extension $F/\Q(\ff)$ of degree at most $2\cdot T(\OO,2^b)=T(\OO,2,2^b)$ such that $\Z/2\Z \times \Z/2^b\Z \hookrightarrow E(F)$. Thus equality holds.\\
\noindent
(b) Seeking a contradiction, we assume that there is a field $F/\Q(\ff)$ of degree $T(\OO,2,4)=2$ and an $\OO$-CM elliptic curve $E_{/F}$ with $\Z/2\Z \times \Z/4\Z \hookrightarrow E(F)$.  It follows that $F$ does not contain $K$.  Let $P \in E(F)$ be a point of order 4.  Lemma \ref{ABBEYLEMMA2} gives $[K(\ff)(\mathfrak{h}(P)):K(\ff)]=1$. Since $\Q(\ff)(\mathfrak{h}(P))$ does not contain $K$, it follows that $\Q(\ff)(\mathfrak{h}(P))=\Q(\ff)$, contradicting Theorem \ref{THM6.5}.
\\ 
(c) Let $F/K(\ff)$ be an extension of degree $T(\OO,2,2^b) = \varphi(2^b)$, and let $E_{/F}$ be an $\OO$-CM elliptic curve such that $\Z/2\Z \times \Z/2^b\Z \hookrightarrow E(F)$. Let $P \in E(F)$ be a point of order $2^b$. Suppose for the sake of contradiction that \[[K(\ff)(\mathfrak{h}(2P)):K(\ff)]> T(\OO,2^{b-1}) = \varphi(2^{b-1})/2. \] Then by \cite[$\S 7.4$]{BC18} we have \[[K(\ff)(\mathfrak{h}(P)):K(\ff)]> 2^2 \cdot \varphi(2^{b-1})/2=\varphi(2^b), \] which is a contradiction since $K(\ff)(\mathfrak{h}(P)) \subset F$ and $[F:K(\ff)]=\varphi(2^b)$. Thus 
\[[K(\ff)(\mathfrak{h}(2P)):K(\ff)]= \varphi(2^{b-1})/2. \]

Seeking a contradiction, we assume that there is a field $F/\Q(\ff)$ of degree $T(\OO,2,2^b) = \varphi(2^b)$ and an $\OO$-CM elliptic curve $E_{/F}$ with $\Z/2\Z \times \Z/2^b\Z \hookrightarrow E(F)$.  It follows that $F$ does not contain $K$.  Let $P \in E(F)$ be a point of order $2^b$. Since the compositum $FK(\ff)$ is an extension of $K(\ff)$ of degree $\varphi(2^b)$, the previous paragraph implies $[K(\ff)(\mathfrak{h}(2P)):K(\ff)]= \varphi(2^{b-1})/2$. Since $\Q(\ff)(\mathfrak{h}(2P))$ does not contain $K$, it follows that $[\Q(\ff)(\mathfrak{h}(2P)):\Q(\ff)]=\varphi(2^{b-1})/2$, which contradicts Theorem \ref{THM6.5}. 
\\
(d) Let $D = \frac{\Delta}{4}$ and $t = 2^{\ord_2(D)}$.  Then by \cite[Thm. 5.6.4]{Halter-Koch} we have that $I = [t,\sqrt{D}]$ is a real, 
primitive proper $\OO$-ideal.    We claim that there is an $\OO$-CM elliptic curve $E_{/\Q(\ff)}$ and a field embedding $\Q(\ff) \hookrightarrow \R$ such that $E_{/\R} \cong_{\R} \C/I$.  Indeed, there is a unique embedding $\Q(\ff) = \Q[j]/(H_{\Delta}(j)) \hookrightarrow \R$ under which $j$ maps to $j(\C/I)$, so let $E_0$ be any elliptic curve defined over $\Q(\ff)$ with $j(E_0) = j(\C/I)$.  
Then $(E_0)_{/\R}$ need not be $\R$-isomorphic to $\C/I$, but if not there is some quadratic (since $\Delta < -4$) twist $E_{/\R}$ of 
$(E_0)_{/\R}$ such that $E_{/\R} \cong_{\R} \C/I$.  However the square classes in $\R^{\times}$ are represented by $\pm 1$, so the map $\Q(\ff)^{\times}/\Q(\ff)^{\times 2} \ra \R^{\times}/\R^{\times 2}$ is surjective, and thus $E$ is the base extension of an 
elliptic curve defined over $\Q(\ff)$.  \\ \indent
As in the proof of \cite[Thm. 4.12]{BCS17} there is a $\Z_2$-basis $\widetilde{e_1}$, $\widetilde{e_2}$ of the Tate module $T_2(E)$ 
with respect to which the image of the Cartan subgroup $(\OO \otimes \Z_2)^{\times}$ of $\GL_2(\Z_2)$ is 
\[ \left\{ \left[ \begin{array}{cc} \alpha & \beta (\frac{D}{t}) \\ \beta t & \alpha \end{array} \right] \bigg{|} \ \alpha^2 - D \beta^2 \in \Z_2^{\times} \right\}, \]
and if $c \in \gg_{\Q(\ff)}$ is the complex conjugation induced by the embedding $\Q(\ff) \hookrightarrow \R$, then its image in the 
$2$-adic Galois representation is 
\[ \rho_{2^{\infty}}(c) = \left[ \begin{array}{cc} 1 & 0 \\ 0 & -1 \end{array} \right]. \]
Let $P = \widetilde{e_1} \pmod{2^b}$.  Then $P$ lies in $E(\R)$ and has order $2^b$.  Since $b-1 = m \geq 2$, Theorem 
\ref{KWONTHM} gives $\ord_2(D) \geq b-1$.  Thus $\beta t \equiv 0 \pmod{2^{b-1}}$, so $2P$ lies in a Cartan orbit of size $\varphi(2^{b-1})$ 
and $[K(\ff)(\hh(2P)):K(\ff)] = \frac{\varphi(2^{b-1})}{2}$.  Since $P \in E(\R)$, also $2P \in E(\R)$, and thus $\Q(\ff)(\hh(2P))$ 
does not contain $K$. So
\[ [\Q(\ff)(\hh(2P)):\Q(\ff)] = \frac{\varphi(2^{b-1})}{2} \]
and thus by Lemma \ref{LITTLEDIVLEMMA} we have
\[ [\Q(\ff)(\hh(P)):\Q(\ff)] \leq 2^2 [\Q(\ff)(\hh(2P)):\Q(\ff)] = \varphi(2^b) = T(\OO,2,2^b). \]
Let $F \coloneqq \Q(\ff)(\hh(P))$.  Since $F$ does not contain $K$, Theorem \ref{GENKWONTHM} gives $\Q(2\ff) \subset F$.  So there is an $\OO$-CM elliptic curve $E_{/F}$ with an $F$-rational point of order $2^b$, and since $\Q(2\ff) \subset F$, by Lemma \ref{ABBEYLEMMA4} every $\OO$-CM elliptic curve $E_{/F}$ has $E[2] \subset E(F)$.  Thus $\Z/2\Z \times \Z/2^b \Z \hookrightarrow 
E(F)$.  Since $[F:\Q(\ff)] \leq T(\OO,2,2^b)$, we are done.
\\ 
(e) Suppose $b >M=m \geq 1$. Let $c=\ord_2(\ff)$. Proposition \ref{IsogenyProp} gives that $\leg{\Delta_K}{2}=0$ and either (i) $c=0$ or (ii) $c \geq 1$ and $\ord_2(\Delta_K)=3$. In either case, $m=M=2c+1$. By Theorem \ref{THM6.5}, there is an $\OO$-CM elliptic curve $E$ with a point of order $2^b$ in an extension $F/\Q(\ff)$ of degree $T(\OO,2^b)=T(\OO,2,2^b)$.
It follows that $F$ does not contain $K$. Since $b>m$, Theorem \ref{GENKWONTHM} implies $\Q(2\ff) \subset F$. By Lemma \ref{ABBEYLEMMA4}, $E$ has full 2-torsion over $F$, a field of degree $T(\OO,2,2^b)$ over $\Q(\ff)$. \\
(f) Suppose $b>M>m \geq 1$. As above, let $c=\ord_2(\ff)$.  Then we know that one of the following occurs by Proposition \ref{IsogenyProp}: 
\begin{itemize}
\item $2 \mid \ff, \leg{\Delta_K}{2}=-1$, and $2c=M$, or
\item $2 \mid \ff, \leg{\Delta_K}{2}=0, \ord_2(\Delta_K)=2$, and $2c+1=M$.
\end{itemize}
As before, Theorem \ref{BIGMNTHM} implies $T(\OO,2,2^b)=T(\OO,2^b)$. Suppose for the sake of contradiction that there is a field extension $F/\Q(\ff)$ of degree $T(\OO,2,2^b)=T(\OO,2^b)$ and an $\OO$-CM elliptic curve $E_{/F}$ with $\Z/2\Z \times \Z/2^b\Z \hookrightarrow E(F)$.  By Theorem \ref{THM6.5} the least degree of an extension $F/\Q(\ff)$ for which an $\OO$-CM elliptic 
curve $E_{/F}$ has an $F$-rational point of order $2^b$ is $2 \cdot T(\OO,2^b)$, which gives a contradiction.
\end{proof}

\subsection{$T(\OO,2,2^b)$ when $\Delta=-4$}

\begin{prop} \label{Prop8.6}
Let $\Delta=-4$ and let $b \in \Z^+$. Then the least $d$ such that there is a number field $F \supset \Q$ with $[F:\Q]=d$ and an $\OO$-CM elliptic curve $E_{/F}$ with $\Z/2\Z \times \Z/2^b\Z \hookrightarrow E(F)$ is $T(\OO,2,2)$ if $b=1$ and $2 \cdot T(\OO,2,2^b)$ if $b>1$.
\end{prop}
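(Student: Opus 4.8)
The plan is to treat $b=1$ and $b\geq 2$ separately, and within the latter to obtain the upper and lower bounds by quite different means. Since $\Delta=-4$ we have $K=\Q(\sqrt{-1})$, $\ff=1$, $\Q(\ff)=\Q$, $w=4$, and $2$ is ramified in $K$, say $2\OO_K=\pp^2$ with $\pp=(1+i)$ (so that $\overline{\pp}=\pp$ as ideals); Theorem \ref{BIGMNTHM} gives $T(\OO,2,2)=1$ and $T(\OO,2,2^b)=2^{2b-4}$ for $b\geq 2$, as well as $T(\OO,2^b,2^b)=\#\overline{C_{2^b}(\OO)}=2^{2b-3}$, and recall from the start of this section that $T^{\circ}(\OO,2,2^b)\in\{T(\OO,2,2^b),\,2\,T(\OO,2,2^b)\}$. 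For $b=1$ the curve $E_{/\Q}\colon y^2=x^3-x$ is $\OO_K$-CM with $E[2]\subset E(\Q)$, so the least degree is $1=T(\OO,2,2)$. For $b\geq 2$ the upper bound is immediate from Theorem \ref{BIGMNTHM}(1): there is an extension $F/K(\ff)=F/K$ with $[F:K]=T(\OO,2,2^b)$ and an $\OO$-CM elliptic curve $E_{/F}$ with $\Z/2\Z\times\Z/2^b\Z\hookrightarrow E(F)$, whence $[F:\Q]=2\,T(\OO,2,2^b)$. So the content lies in the lower bound for $b\geq 2$; here the situation parallels Theorem \ref{Thm8.5}, from which $\Delta=-4$ was excluded precisely because of the extra automorphisms.

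Suppose then that $F/\Q$ has $[F:\Q]=T(\OO,2,2^b)=2^{2b-4}$ and that there is an $\OO_K$-CM elliptic curve $E_{/F}$ with $\Z/2\Z\times\Z/2^b\Z\hookrightarrow E(F)$; fix $P\in E(F)$ of order $2^b$ and $Q\in E(F)$ of order $2$ with $\langle P,Q\rangle\cong\Z/2\Z\times\Z/2^b\Z$. First I would note $K\not\subset F$: if $K\subset F$ then $[F:K]=2^{2b-5}$ and Theorem \ref{BIGMNTHM} forces $2^{2b-4}=T(\OO,2,2^b)\mid 2^{2b-5}$ (and for $b=2$, $[F:\Q]=1$ already excludes $K$). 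Hence $[FK:K]=2^{2b-4}$. Since $E(FK)[2^{\infty}]$ is a finite $\OO_K$-submodule of $E[2^{\infty}]$ and $\pp$ is the only prime over $2$, it equals $E[\pp^d]$ for some $d$; as it contains a point of order $2^b$ but not $E[2^b]$ (that would give $2^{2b-3}=T(\OO,2^b,2^b)\mid 2^{2b-4}$), and $E[\pp^d]\cong\Z/2^{\lceil d/2\rceil}\Z\times\Z/2^{\lfloor d/2\rfloor}\Z$, we get $d=2b-1$, i.e. $E(FK)[2^{\infty}]=E[\pp^{2b-1}]\cong\OO_K/\pp^{2b-1}$. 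As $P\in E[\pp^{2b-1}]$ has order $2^b$, the $\OO_K$-submodule it generates has exponent $\geq 2^b$, which forces $\OO_K\cdot P=E[\pp^{2b-1}]$; thus there is an $\OO_K$-linear isomorphism $\phi\colon E[\pp^{2b-1}]\xrightarrow{\ \sim\ }\OO_K/\pp^{2b-1}$ with $\phi(P)=1$. The three order-$2$ points of $E[\pp^{2b-1}]$ have $\phi$-images of $\pp$-adic valuation $2b-3$, $2b-3$, and $2b-2$, the last being $\phi(2^{b-1}P)=2^{b-1}$; since $Q\neq 2^{b-1}P$, either $\phi(Q)=(1+i)^{2b-3}$ or $\phi(Q)=i(1+i)^{2b-3}$.

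The heart of the argument is a semilinearity computation. Let $\tau$ generate $\Gal(FK/F)$; because the complex multiplication is defined over $K$, $\tau$ acts on $E[2^{\infty}]$ semilinearly over complex conjugation of $\OO_K$ (cf. \S2.5 and \S2.6), so, as $\phi(P)=1$, this action corresponds under $\phi$ to complex conjugation on $\OO_K/\pp^{2b-1}$. But $Q\in E(F)$ forces $\tau(Q)=Q$, i.e. $\phi(Q)=\overline{\phi(Q)}$ in $\OO_K/\pp^{2b-1}$. Using $\overline{1+i}=1-i=-i(1+i)$ one finds $\phi(Q)-\overline{\phi(Q)}=(1+i)^{2b-3}\varepsilon$, with $\varepsilon=1-(-i)^{2b-3}$ in the first case and $\varepsilon=i-(-i)^{2b-2}$ in the second; since $2b-3$ is odd we have $(-i)^{2b-3}\in\{\pm i\}$ and $(-i)^{2b-2}=(-1)^{b-1}\in\{\pm 1\}$, so in either case $\varepsilon$ has the form $1\pm i$ or $i\pm 1$, of $\pp$-adic valuation exactly $1$. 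Therefore $\phi(Q)-\overline{\phi(Q)}$ has valuation $2b-2<2b-1$ and is nonzero in $\OO_K/\pp^{2b-1}$ — a contradiction. Hence no such $F$ exists, and combined with the upper bound and the dichotomy $T^{\circ}\in\{T,2T\}$ this yields $T^{\circ}(\OO,2,2^b)=2\,T(\OO,2,2^b)$ for $b\geq 2$.

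I expect this semilinearity computation to be the main obstacle: the assertion that an $\OO_K$-CM curve carrying $\Z/2\Z\times\Z/2^b\Z$ rationally cannot descend to a field of degree $2^{2b-4}$ ultimately rests on the fine arithmetic of $\Z[i]$ at the ramified prime $1+i$ — specifically, on the fact that $\overline{1+i}/(1+i)$ is a unit of exact order $4$, which is what keeps $Q$ from being fixed by complex conjugation once $P$ is. Everything else (the curve for $b=1$, the base-change upper bound, excluding $K\subset F$ and full $2^b$-torsion over $FK$, and identifying $E(FK)[2^{\infty}]$) consists of routine applications of Theorem \ref{BIGMNTHM} together with the classification of finite $\OO_K$-submodules of $E[2^{\infty}]$ as in the $\ff=1$ case of Theorem \ref{THM1.6}.
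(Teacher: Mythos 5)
Your proof is correct but takes a genuinely different route from the paper's. The paper works with the explicit Weierstrass model $E_A\colon y^2 = x^3 + Ax$ (noting that $-A\in F^{\times 2}$ encodes rational full $2$-torsion), constructs an explicit isomorphism to $E_{-4}$ over $F(\sqrt d)$ with $d=2/\sqrt{-A}$, and then uses the $4$-division polynomial $4y(x^2+4)(x^2-4x-4)(x^2+4x-4)$ to show the image point of order $4$, and hence the image point $P'$ of order $2^b$, lies in the \emph{larger} Cartan orbit; this yields $[\Q(\hh(P')):\Q]=2^{2b-3}$ and $\Q(\hh(P'))=\Q(x_1^2)\subset F$, a contradiction. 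You instead argue structurally: you pin down $E(FK)[2^\infty]=E[\pp^{2b-1}]$ via the classification of finite $\OO_K$-submodules, use the $\OO_K$-cyclicity to normalize $\phi(P)=1$, and then invoke the semilinearity of $\tau\in\Gal(FK/F)$ over complex conjugation to force $\phi(Q)=\overline{\phi(Q)}$ in $\OO_K/\pp^{2b-1}$; the valuation computation $v_\pp\bigl(\phi(Q)-\overline{\phi(Q)}\bigr)=2b-2<2b-1$ then kills it. Both arguments are ultimately detecting the same phenomenon — that the two order-$2$ points outside $\langle P\rangle$ sit in the large Cartan orbit over $K$, so cannot be rational over a degree-$2^{2b-4}$ extension of $\Q$ not containing $K$ — but yours avoids the explicit model and division polynomial, trading them for the fact that $\overline{(1+i)}/(1+i)=-i$ has exact order $4$ (so $1-(-i)^{2b-3}$ has $\pp$-valuation exactly $1$). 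One could tighten your citation for the semilinearity to \cite[\S 3]{BCS17} rather than the informal ``cf.\ \S 2.5 and \S 2.6,'' but the underlying fact is standard and correctly applied.
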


\begin{proof}
Let $\OO = \Z[\sqrt{-1}]$ be the imaginary quadratic order of discriminant $-4$.  Recall that  elliptic curve $E$ defined over a field $F$ of characteristic $0$ has $\OO$-CM iff it is $F$-rationally isomorphic to a curve 
\[ E_A: y^2 = x^3 + Ax \]
for some $A \in F^{\times}$.  \cite[Prop. X.5.4]{Silverman}.  Moreover we have $E_A[2] = E_A[2](F)$ iff $-A \in F^{\times 2}$.  Taking 
$A = -1$, we recover the well known fact that $T(\OO,2,2) = 1$.   From now on we suppose that $b \geq 2$.  
\\ \indent
Seeking a contradiction, we suppose that there is a number field $F$ of degree $T(\OO,2,2^b)$ and an $\OO$-CM elliptic $E_{/F}$ with $\Z/2\Z \times \Z/2^b\Z \hookrightarrow E(F)$. So $F$ does not contain $K$ and $E \cong E_A$ with $-A \in F^{\times 2}$.    
Put $d \coloneqq \frac{2}{\sqrt{-A}} \in F$.  We have an isomorphism
\[
\varphi:  (E_A)_{/F(\sqrt{d})} \stackrel{\sim}{\ra} (E_{-4})_{F(\sqrt{d})}, \ (x, y) \mapsto \left( \frac{x}{d}, \frac{y}{d \sqrt{d}} \right).
\]
Let $P \in E(F)$ be a point of order $2^b$, and let $P' = \varphi(P) = (x_1,y_1) \in E_{-4}(F(\sqrt{d}))$, so $y_1 \in F(\sqrt{d})$ and 
$x_1 \in F$.  Then $2^{b-2} P' = (x_0,y_0)$, a point of order $4$ on $E_{-4}(F(\sqrt{d}))$ and $\Q(x_0) \subset \Q(x_1) \subset F$, as follows e.g. from the duplication formula \cite[\S III.2]{Silverman}; see also \cite[p. 32]{ADS}.  Since $F$ does not contain $K=\Q(\sqrt{-1})$ and the $4$-division polynomial of $E_{-4}$ is \[4y(x^2+4)(x^2-4x-4)(x^2+4x-4), \] we get that $x_0$ is a root of $x^2 \pm 4x-4$.   Then $\Q(\sqrt{2},\sqrt{-1}) = K(x_0)=K(x_0^2)=K(\mathfrak{h}(2^{b-2}P'))$, and  
\[
[K(\mathfrak{h}(2^{b-2}P')):K]=2.
\]
Thus $2^{b-2}P'$ is in a Cartan orbit of size 8, the larger of the two possible orbits.  (See \cite[Lemma 7.6 and Thm. 7.8]{BC18}.) We will use the ideas of \cite[$\S7.3,\S7.4$]{BC18} to show that $P'$ belongs to the larger of the two Cartan orbits of points of order $2^b$.  For an $\OO$-CM elliptic curve $E_{/\C}$ 
and a subset $S \subset E(\C)$, we denote by $\langle \langle S \rangle \rangle$ the $\OO$-submodule of $E(\C)$ generated by $S$.  
Let $\pp$ denote the unique prime of $\OO$ lying over $2$, so $\pp^2 = (2)$.   Moreover, the finite $\OO$-submodules of 
$E[2^{\infty}]$ are precisely $E[\pp^n]$ for $n \geq 0$, and we have $E[\pp^n] \cong_{\OO} \OO/\pp^n$ and thus $\# E[\pp^n] = 
\# \OO/\pp^n = 2^n$, so such a module is determined up to isomorphism by its cardinality.  It follows from the discussion of \cite[$\S 7.3$]{BC18} that the annihilator ideal of $2^{b-2}P'$ is $\mathfrak{p}^4$, so \[\langle \langle 2^{b-2}P' \rangle \rangle \cong_{\OO} \OO/\mathfrak{p}^4  \cong_{\mathbb{Z}} \Z/2^2\Z \times \mathbb{Z}/2^2 \Z.
\]
By the discussion in \cite[$\S7.4$]{BC18}, we see that $2^{b-2} \langle \langle P' \rangle \rangle  = \langle \langle 2^{b-2}P' \rangle \rangle$.  It follows that
\[
\langle \langle P' \rangle \rangle \cong_{\Z} \Z/2^b\Z \times \Z/2^b\Z,
\]
so $\# \langle \langle P' \rangle \rangle = 2^{2b}$. This implies $\langle \langle P' \rangle \rangle \cong \OO/\pp^{2b}$ and thus that the Cartan 
orbit of $P'$ has size $\# (\OO/\pp^{2b})^{\times} = 2^{2b-1}$. Therefore
\[
[K(\mathfrak{h}(P')):K]=2^{2b-3}.
\]
Since $F$ does not contain $K$, it follows that $[\Q(\mathfrak{h}(P')):\Q]=2^{2b-3}$.   Because
\[
\Q(\mathfrak{h}(P'))=\Q(x_1^2) \subset \Q(x_1)\subset F,
\] we get
\[
[F:\Q] \geq 2^{2b-3} =2 \cdot T(\OO,2,2^b),
\]
a contradiction.
\end{proof}

\subsection{$T(\OO, 2, N)$ when $2 \mid \Delta$}

\begin{thm} \label{Thm8.7}
Let $\OO$ be an imaginary quadratic order of discriminant $\Delta$ where $2 \mid \Delta$. Let $N=2^b \ell_1^{b_1} \cdots \ell_r^{b_r}$, where $\ell_i$ are distinct odd primes and $b \geq 1$. Then the least degree over $\Q(\ff)$ in which there is a number field $F/\Q(\ff)$ and an $\OO$-CM elliptic curve $E_{/F}$ with $\Z/2\Z \times \Z/N\Z \hookrightarrow E(F)$ is $T(\OO, 2, N)$ if and only if the following conditions hold:
\begin{enumerate}
\item $T^{\circ}(\OO, 2, 2^b)=T(\OO,2,2^b)$ and
\item $T^{\circ}(\OO,\ell_i^{b_i})=T(\OO,\ell_i^{b_i})$ for all $1 \leq i \leq r$.
\end{enumerate}
Otherwise, the least degree is $2 \cdot T(\OO, 2, N)$.
\end{thm}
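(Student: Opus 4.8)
The proof is a compilation across prime powers, in the spirit of Theorem~\ref{Thm7.1}, using the prime-power quantities $T^{\circ}(\OO,2,2^b)$ (Theorem~\ref{Thm8.5}, Proposition~\ref{Prop8.6}) and $T^{\circ}(\OO,\ell_i^{b_i})$ (from \S6) as inputs. We may assume $r\geq 1$ (for $r=0$ the statement is the definition of $T^{\circ}(\OO,2,2^b)$), so $N=2^b\ell_1^{b_1}\cdots\ell_r^{b_r}\geq 6$ and Theorem~\ref{BIGMNTHM}(f), Lemma~\ref{LEMMA4.10} and Proposition~\ref{PROP4.11} all apply. Fix a root $j_0$ of $H_{\Delta}$ and identify $\Q(\ff)=\Q(j_0)$; after a harmless Galois conjugation all curves below may be assumed to have $j$-invariant $j_0$. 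Since $T^{\circ}(\OO,2,N)\in\{T(\OO,2,N),\,2T(\OO,2,N)\}$ and $T^{\circ}(\OO,2,N)\geq T(\OO,2,N)$ (pass from a realizing field $F$ to $FK(\ff)$), it suffices to prove: \textbf{(i)} if conditions (1) and (2) both hold then $T^{\circ}(\OO,2,N)\leq T(\OO,2,N)$; and \textbf{(ii)} if (1) or (2) fails then $T^{\circ}(\OO,2,N)\neq T(\OO,2,N)$.

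For (i), suppose first $b\geq 2$. By (1) there is $F_0\supseteq\Q(\ff)$ with $[F_0:\Q(\ff)]=T(\OO,2,2^b)$ and an $\OO$-CM curve $E_0/F_0$ with $j(E_0)=j_0$ and $\Z/2\Z\times\Z/2^b\Z\hookrightarrow E_0(F_0)$. By (2) and \S2.4, for each $i$ there is a point $P_i$ of order $\ell_i^{b_i}$ on the $j_0$-curve with $[\Q(\ff)(\mathfrak{h}(P_i)):\Q(\ff)]=T(\OO,\ell_i^{b_i})$, and by model-independence of Weber function fields a point $Q_i\in E_0(\overline{F_0})$ of order $\ell_i^{b_i}$ with $\mathfrak{h}(Q_i)\in F_0\cdot\Q(\ff)(\mathfrak{h}(P_i))$. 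Put $F'=F_0\cdot\prod_i\Q(\ff)(\mathfrak{h}(P_i))$ and $L=F'(Q_1,\dots,Q_r)$; adjoining each $Q_i$ (whose Weber value already lies in $F'$) costs a factor dividing $\#\Aut E_0=w$, so
\[
[L:\Q(\ff)]\ \leq\ w^{r}\,T(\OO,2,2^b)\prod_{i=1}^{r}T(\OO,\ell_i^{b_i})\ =\ \frac{\widetilde{T}(\OO,2,2^b)\prod_{i=1}^{r}\widetilde{T}(\OO,1,\ell_i^{b_i})}{w}\ =\ T(\OO,2,N),
\]
using $\widetilde{T}(\OO,2,2^b)=wT(\OO,2,2^b)$ (valid as $2^b\geq 4$) and $\widetilde{T}(\OO,1,\ell_i^{b_i})=wT(\OO,\ell_i^{b_i})$ (valid as $\ell_i^{b_i}\geq 3$). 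Since the $\ell_i$ are distinct odd primes, $\Z/2\Z\times\Z/2^b\Z\oplus\bigoplus_i\langle Q_i\rangle\cong\Z/2\Z\times\Z/N\Z$ inside $E_0(L)$, so $T^{\circ}(\OO,2,N)\leq T(\OO,2,N)$. When $b=1$, condition (1) holds automatically (as $2\mid\Delta$: $T^{\circ}(\OO,2,2)=T(\OO,2,2)$ by Proposition~\ref{Prop8.6}, Theorem~\ref{THM5.5} and Lemma~\ref{ABBEYLEMMA4}); running Theorem~\ref{Thm7.1}(a) on $N''=\ell_1^{b_1}\cdots\ell_r^{b_r}$ yields an $\OO$-CM curve with a rational point of order $N''$ over a degree-$T(\OO,N'')$ extension, and adjoining the full $2$-torsion costs a factor at most $2=[\Q(2\ff):\Q(\ff)]$ (Lemma~\ref{ABBEYLEMMA4}, or the Weierstrass model $y^2=x^3+Ax$ when $\Delta=-4$); since $2\mid\Delta$ gives $T(\OO,2,N)=\widetilde{T}(\OO,2,2)\cdot T(\OO,N'')=2\,T(\OO,N'')$, the bound follows.

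For (ii), suppose toward a contradiction that some $F\supseteq\Q(\ff)$ of degree $T(\OO,2,N)$ carries an $\OO$-CM curve $E/F$ with a $(2,N)$-pair $(P,Q)$ in $E(F)$. If $K\subseteq F$ then $[F:K(\ff)]=T(\OO,2,N)/2<T(\OO,2,N)$ realizes the pair over $K(\ff)$, impossible; so $K\not\subseteq F$, $F\cap K(\ff)=\Q(\ff)$, and $[FK(\ff):K(\ff)]=T(\OO,2,N)$. By surjectivity of $\overline{\rho_N}$ on $\gg_{K(\ff)}$ and Proposition~\ref{FIRSTCOMPILEPROP}, $\overline{(P,Q)}$ lies in a $\overline{C_N(\OO)}$-orbit whose size is the degree over $K(\ff)$ of the field where $(P,Q)$ becomes rational up to twist; that field lies in $FK(\ff)$, so the orbit has size $\leq T(\OO,2,N)$, and by the results of \S4 it is a multiple of $T(\OO,2,N)$, hence equal to it. By Proposition~\ref{PROP4.11}(b) and Lemma~\ref{LEMMA4.10}(a), this minimal reduced orbit equals $\tfrac1w$ times the product, over the primes dividing $N$, of the sizes of the Cartan orbits of the prime-power components of $(P,Q)$; since each such size is a multiple of the corresponding $\widetilde{T}$ and the product of these $\widetilde{T}$'s is $\widetilde{T}(\OO,2,N)=w\,T(\OO,2,N)$, minimality forces every prime-power component to lie in a \emph{minimal} Cartan orbit. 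Now if (2) fails at $\ell_I$: the point $Q_I\in E(F)$ of order $\ell_I^{b_I}$ satisfies $[\Q(\ff)(\mathfrak{h}(Q_I)):\Q(\ff)]=[K(\ff)(\mathfrak{h}(Q_I)):K(\ff)]=T(\OO,\ell_I^{b_I})$ (using $K\not\subseteq F$), and twisting $E$ by the automorphism-valued cocycle describing the Galois action on $Q_I$ produces an $\OO$-CM curve with a rational point of order $\ell_I^{b_I}$ over a field of degree $T(\OO,\ell_I^{b_I})$, contradicting $T^{\circ}(\OO,\ell_I^{b_I})=2T(\OO,\ell_I^{b_I})$. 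If instead (2) holds and (1) fails (so $b\geq 2$), the same reasoning applied to the $2$-component $(P_0,Q_0)$ — a $(2,2^b)$-pair in $E(F)$ in a minimal Cartan orbit — produces an $\OO$-CM curve realizing $\Z/2\Z\times\Z/2^b\Z$ over a field of degree $T(\OO,2,2^b)$ not containing $K$, contradicting $T^{\circ}(\OO,2,2^b)=2T(\OO,2,2^b)$.

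The crux is the ``minimality propagates to prime-power components'' step in (ii): a $(2,N)$-pair realized over an extension of $K(\ff)$ of least possible degree has every prime-power component realized over least possible degree. This rests on the multiplicativity of Cartan orbits under the Chinese Remainder decomposition (Proposition~\ref{PROP4.11}), the identity $\widetilde{T}=wT$ relating reduced and unreduced minimal orbit sizes (Lemma~\ref{LEMMA4.10}), and the fact — proved case by case in \S4.3 — that every prime-power Cartan orbit has size a multiple of the corresponding $\widetilde{T}$. A secondary nuisance is the case $\Delta=-4$, where $w=4$ and $E$ has extra automorphisms; there one uses that the Galois action on a point of order $\geq 3$ is always through a cocycle valued in $\Aut E$ (so untwisting is free) and that $\OO$-CM curves are precisely those of the form $y^2=x^3+Ax$, which keeps the $b=1$ case of (i) and the handling of $Q_I$ in (ii) transparent.
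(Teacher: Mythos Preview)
Your proof is essentially correct and follows the same two-step architecture as the paper's: build the structure in degree $T(\OO,2,N)$ by compiling prime powers (direction (i)), and for the converse (ii) show that a realization in degree $T(\OO,2,N)$ forces each prime-power component to sit in a minimal Cartan orbit, whence each prime-power condition holds.  Your compilation in (i) adjoins the $\ell_i$-components one at a time (costing $w$ each) rather than via a single point of order $N' = \ell_1^{b_1}\cdots\ell_r^{b_r}$ as the paper does; the degree bound is the same either way.  One small correction: your appeal to Lemma~\ref{LEMMA4.10} for $\widetilde{T}(\OO,1,\ell_i^{b_i}) = w\,T(\OO,\ell_i^{b_i})$ says ``valid as $\ell_i^{b_i}\geq 3$,'' but that lemma is stated for $N\geq 4$; the case $\ell_i^{b_i}=3$ needs the separate observation that under $2\mid\Delta$ (hence $\Delta\neq -3$) the group $\OO^\times$ acts freely on points of order~$3$.

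The one place where your sketch needs more care is the $\Delta = -4$ case of the converse when (2) holds and (1) fails. Your final paragraph says the extra-automorphism nuisance is handled for $Q_I$ (a single point of odd order), but you do not address the $(2,2^b)$-pair $(P_0,Q_0)$.  The ``twist by the automorphism-valued cocycle'' argument does still work for the pair, but you should say why: since $(P_0,Q_0)$ lies in a minimal reduced Cartan orbit, the residue field $L$ of $[(E,P_0,Q_0)]$ on $X(2,2^b)_{/\Q}$ satisfies $[LK:K(\ff)] = T(\OO,2,2^b)$ and $L\subset F$, so $K\not\subset L$ and $[L:\Q(\ff)] = T(\OO,2,2^b)$; then \cite[Prop.~VI.3.2]{Deligne-Rapoport} supplies a model over $L$ carrying the $(2,2^b)$-structure.  (Equivalently: over $L$ one has $\sigma(P_0,Q_0) = u_\sigma(P_0,Q_0)$ for a $1$-cocycle $u_\sigma\in\Aut E$, and twisting by this cocycle makes the pair rational; freeness of the $\Aut E$-action on $(2,2^b)$-pairs for $2^b\geq 4$ ensures the cocycle is well-defined.)  The paper takes a different route here: rather than untwisting the pair, it uses the explicit analysis from the proof of Proposition~\ref{Prop8.6} to show that when $K\not\subset F$ and $E(F)\supset E[2]$, the point $N'P$ of order $2^b$ must lie in the \emph{large} Cartan orbit of size $2^{2b-1}$, which forces $[K(\hh(P)):K]\geq 2\cdot T(\OO,2,N)$ directly. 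Your residue-field argument is cleaner and more uniform; the paper's is more hands-on but avoids appealing to Deligne--Rapoport for $X(2,2^b)$.
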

\noindent
We first treat the case where $b=1$. Since $2 \mid \Delta$, we have $T^{\circ}(\OO, 2, 2)=T(\OO,2,2)$ by Theorem \ref{Thm8.5} and Proposition \ref{Prop8.6}, so this case is a consequence of the following lemma.

\begin{lemma} \label{Lem8.8}
Let $\OO$ be an imaginary quadratic order of discriminant $\Delta$ where $2 \mid \Delta$. Let $N=2 \ell_1^{b_1} \cdots \ell_r^{b_r}$, where $\ell_1 < \ldots < \ell_r$ are odd primes. Then $T^{\circ}(\OO,2,N)=T(\OO, 2, N)$ if and only if $T^{\circ}(\OO,\ell_i^{b_i})=T(\OO,\ell_i^{b_i})$ for all $1 \leq i \leq r$. Otherwise $T^{\circ}(\OO,2,N) = 2 \cdot T(\OO, 2, N)$.
\end{lemma}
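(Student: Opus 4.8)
The plan is to adapt the proof of Theorem~\ref{Thm7.1} so as to incorporate the extra full $2$‑torsion requirement. Write $N' \coloneqq N/2 = \ell_1^{b_1}\cdots\ell_r^{b_r}$, an odd integer; the case $r = 0$ (so $N = 2$) is just $T^{\circ}(\OO,2,2) = T(\OO,2,2)$, which holds because $2\mid\Delta$, so assume $r \geq 1$ and $N' \geq 3$. The first step is the numerical identity $T(\OO,2,N) = 2\,T(\OO,N')$: since $2\mid\Delta$ one has $\widetilde{T}(\OO,2,2) = 2$ by Theorem~\ref{BIGMNTHM}(3), and substituting this into the prime‑power formula of Theorem~\ref{BIGMNTHM}(5) (together with $\widetilde{T}(\OO,1,3) = w\,T(\OO,1,3)$ in the edge case $N' = 3$) gives $T(\OO,2,N) = \tfrac{2\prod_i\widetilde{T}(\OO,1,\ell_i^{b_i})}{w} = 2\,T(\OO,N')$. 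I will also use that $[L(E[2]):L] \leq 2$ for every $\OO$‑CM elliptic curve $E$ over every number field $L \supset \Q(\ff)$: when $\Delta < -4$ this follows from Lemma~\ref{ABBEYLEMMA4} and the fact that a quadratic twist does not alter the $2$‑division field, and when $\Delta = -4$ it is clear from the model $y^2 = x^3 + Ax$.

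For the direction ($\Leftarrow$): assume $T^{\circ}(\OO,\ell_i^{b_i}) = T(\OO,\ell_i^{b_i})$ for all $i$. Theorem~\ref{Thm7.1}(a) gives $T^{\circ}(\OO,N') = T(\OO,N')$, so there are an extension $F'/\Q(\ff)$ of degree $T(\OO,N')$ and an $\OO$‑CM elliptic curve $E_{/F'}$ with a point of order $N'$ in $E(F')$. Then $F \coloneqq F'(E[2])$ has $[F:\Q(\ff)] \leq 2\,T(\OO,N') = T(\OO,2,N)$, and over $F$ the curve $E$ has full $2$‑torsion and still a point of order $N'$; since $\gcd(2,N') = 1$ this yields $\Z/2\Z \times \Z/N\Z \hookrightarrow E(F)$. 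Hence $T^{\circ}(\OO,2,N) \leq T(\OO,2,N)$, and equality follows since $T^{\circ}(\OO,2,N) \in \{T(\OO,2,N),\,2\,T(\OO,2,N)\}$.

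For ($\Rightarrow$) I argue the contrapositive, following the proof of Theorem~\ref{Thm7.1}(b). Suppose $T^{\circ}(\OO,\ell_I^{b_I}) = 2\,T(\OO,\ell_I^{b_I})$ for some $I$, so $T^{\circ}(\OO,N') = 2\,T(\OO,N')$ by Theorem~\ref{Thm7.1}(b), and suppose for contradiction that $T^{\circ}(\OO,2,N) = T(\OO,2,N) = 2\,T(\OO,N')$, witnessed by $F/\Q(\ff)$ of degree $2\,T(\OO,N')$, an $\OO$‑CM curve $E_{/F}$, and an injection $\Z/2\Z \times \Z/N\Z \hookrightarrow E(F)$. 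Since $E(FK) \supset \Z/2\Z \times \Z/N\Z$ we get $[FK:K(\ff)] \geq T(\OO,2,N) = 2\,T(\OO,N')$, which rules out $K \subset F$ (that would force $[FK:K(\ff)] = T(\OO,N')$); hence $K \not\subset F$, and $FK$ realizes the least degree $T(\OO,2,N)$ over $K(\ff)$. Choosing $Q, P \in E(F)$ of orders $2$ and $N$ realizing the injection and setting $P_I \coloneqq \tfrac{N}{\ell_I^{b_I}}P \in E(F)$ (of order $\ell_I^{b_I}$), one decomposes the $C_N(\OO)$‑orbit of the $(2,N)$‑pair $(Q,P)$ over its prime‑power components via Proposition~\ref{PROP4.11}: the component at $2$ is a $(2,2)$‑pair with $C_2(\OO)$‑orbit of the unique size $\#C_2(\OO) = 2 = \widetilde{T}(\OO,2,2)$ (Lemma~\ref{COMPILE1}), and each odd component has orbit of size at least $\widetilde{T}(\OO,1,\ell_i^{b_i})$. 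By Lemma~\ref{LEMMA4.10}(a) this product is $w$ times the reduced orbit, which is at most $w\,[FK:K(\ff)] = 2\prod_i\widetilde{T}(\OO,1,\ell_i^{b_i})$; so every odd component attains its minimum, and in particular $[K(\ff)(\hh(P_I)):K(\ff)] = \widetilde{T}(\OO,1,\ell_I^{b_I})/w = T(\OO,\ell_I^{b_I})$. As $\Q(\ff)(\hh(P_I)) \subset F$ does not contain $K$, also $[\Q(\ff)(\hh(P_I)):\Q(\ff)] = T(\OO,\ell_I^{b_I})$, and by \cite[Prop.~VI.3.2]{Deligne-Rapoport} a model of $E$ over $\Q(\ff)(\hh(P_I))$ has a rational point of order $\ell_I^{b_I}$. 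This forces $T^{\circ}(\OO,\ell_I^{b_I}) \leq T(\OO,\ell_I^{b_I})$, contradicting $T^{\circ}(\OO,\ell_I^{b_I}) = 2\,T(\OO,\ell_I^{b_I})$; therefore $T^{\circ}(\OO,2,N) = 2\,T(\OO,2,N)$.

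The step I expect to require the most care is the ($\Rightarrow$) direction: one must check carefully that the reduced $(2,N)$‑pair orbit really factors over its prime‑power components, with the prime $2$ contributing only its single forced orbit size, so that the minimality of $[FK:K(\ff)]$ is inherited by the $\ell_I$‑component — this is precisely where the interaction of the mandatory full $2$‑torsion with the order‑$N'$ point must be controlled. Beyond this, the remaining friction is routine bookkeeping: the small‑modulus case $\ell_I^{b_I} = 3$, where Lemma~\ref{LEMMA4.10} must be supplemented by an explicit count of $C_3(\OO)$‑orbits on points of order $3$, and the verification of $[L(E[2]):L] \leq 2$ in the special case $\Delta = -4$.
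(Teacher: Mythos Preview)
Your proposal is correct and follows essentially the same approach as the paper. The forward direction is identical: use Theorem~\ref{Thm7.1}(a) to realize a point of order $N'$ in degree $T(\OO,N')$, then adjoin full $2$-torsion at cost $\leq 2$, landing in degree $T(\OO,2,N) = 2\,T(\OO,N')$. For the converse, the paper's argument is a slight compression of yours: rather than isolating the single prime $\ell_I$, the paper looks directly at $2Q$ of order $N'$ and, via Lemma~\ref{LEMMA4.10} and Proposition~\ref{PROP4.11}, obtains $[K(\ff)(\hh(2Q)):K(\ff)] = \widetilde{T}(\OO,N')/w = T^{\circ}(\OO,N')/2$, forcing $K \subset F$ exactly as at the end of \S7 --- but the underlying mechanism (the $(2,N)$-orbit over $FK$ achieves its minimum, the $2$-component contributes the forced value $\widetilde{T}(\OO,2,2)=2$, hence all odd components are minimal) is the same one you spell out.
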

\begin{proof} Let $N'=\ell_1^{b_1} \cdots \ell_r^{b_r}$. The case where $N'=1$ follows from Theorem \ref{Thm8.5} and Proposition \ref{Prop8.6}, so we may assume $N' \geq 3$. First suppose that $T^{\circ}(\OO,\ell_i^{b_i}) = T(\OO,\ell_i^{b_i})$ for all $1 \leq i \leq r$.  Since $\widetilde{T}(\OO,2,2) = 2$, by the work of the previous section and Theorem \ref{BIGMNTHM}, we have
\[ T(\OO,2,2N') = \frac{2}{w} \prod_{i=1}^r \widetilde{T}(\OO,\ell_i^{b_i}) = 
2 \cdot T(\OO,N') = 2 \cdot T^{\circ}(\OO,N'). \]
Any $\OO$-CM elliptic curve over a number field acquires full $2$-torsion over a quadratic extension field -- when $\Delta \neq -4$, 
this follows from \cite[Thm. 4.2]{BCS17}; and when $\Delta = -4$ it is immediate from the Weierstrass equation $y^2 = x^3 + Ax$ -- and thus 
\[ T^{\circ}(\OO,2,2N') \leq 2\cdot T^{\circ}(\OO,N') = 2 \cdot T(\OO,N') = T(\OO,2,2N'). \]
Conversely, suppose there is $1 \leq i \leq r$ such that $T^{\circ}(\OO,\ell_i^{b_i}) = 2 \cdot T(\OO,\ell_i^{b_i})$.  As in \S 7 we have $T^{\circ}(\OO,N') = 2 \cdot T(\OO,N')$. Seeking a contradiction, we suppose that there is a field extension $F/\Q(\ff)$ of degree 
\[ T(\OO,2,2N') = \frac{2}{w} \prod_{i=1}^r \widetilde{T}(\OO,\ell_i^{b_i}) \] 
and an $\OO$-CM elliptic curve $E_{/F}$ with an injective group homomorphism $\iota: \Z/2\Z \times \Z/2N' \Z \hookrightarrow E(F)$.  
Let $Q \coloneqq \iota((0,1))$.  By Lemma \ref{LEMMA4.10} and Proposition \ref{PROP4.11} we have 
\[ [K(\ff)(\hh(2Q)):K(\ff)] = \frac{\widetilde{T}(\OO,N')}{w} = \frac{T^{\circ}(\OO,N')}{2}, \]
which implies that $K \subset F$ and gives a contradiction as in the end of \S 7 above.
\end{proof}

\begin{proof}[Proof of Theorem \ref{Thm8.7}] The case in which $b = 1$ is treated in Lemma \ref{Lem8.8}, so we may assume that $b>1$. First suppose $T^{\circ}(\OO, 2, 2^b)=T(\OO,2,2^b)$ and $T^{\circ}(\OO,\ell_i^{b_i})=T(\OO,\ell_i^{b_i})$ for all $1 \leq i \leq r$. Let $N'=\ell_1^{b_1} \cdots \ell_r^{b_r}$. By assumption, there is a field extension $F/\Q(\ff)$ of degree $T(\OO,2,2^b)$ and an $\OO$-CM elliptic curve $E_{/F}$ such that $\Z/2\Z \times \Z/2^b \Z \hookrightarrow E(F)$. By Theorem \ref{Thm7.1}, there is a point $P' \in E(\overline{F})$ of order $N'$ such that $[\Q(\ff)(\mathfrak{h}(P')):\Q(\ff)]=T(\OO, N')$. Let $L \coloneqq F \cdot \Q(\ff)(\mathfrak{h}(P'))$. Since $P'$ is rational in an extension of $L$ of degree at most $w$, there is an extension of $\Q(\ff)$ of degree at most
\[
w\cdot [L:\Q(\ff)] \leq w\cdot T(\OO,N')\cdot T(\OO,2,2^b)=T(\OO,2,N)
\]
such that the base extension of $E$ to this field has a rational torsion subgroup isomorphic to $\Z/2\Z \times \Z/N\Z$. The conclusion follows. \\ \indent
Conversely, suppose there is $F/\Q(\ff)$ of degree $T(\OO, 2, N)$ and an $\OO$-CM elliptic curve $E_{/F}$ with $\Z/2\Z \times \Z/N\Z \hookrightarrow E(F)$.  If $N'=1$, then the statement is trivial, so we assume $N'>1$. We first consider the case where $\Delta \neq -4$. Let $P,Q \in E(F)$ be such that $P$ has 
order $N$, $Q$ has order $2$ and $\langle P,Q \rangle \cong \Z/2\Z \times \Z/N\Z$.   Since $\widetilde{T}(\OO,N')=\prod_{i=1}^r \widetilde{T}(\OO,\ell_i^{b_i})$, we have
\[
T(\OO,2,N)=2 \cdot T(\OO,N')\cdot T(\OO,2,2^b)=\frac{1}{2}\widetilde{T}(\OO,N') \cdot \widetilde{T}(\OO,2,2^b),
\]
and this is the size of the $\overline{C_N(\OO)}$-orbit on $\overline{(P,Q)}$.  By Lemma \ref{LEMMA4.10} the size of the $C_N(\OO)$-orbit on $(P,Q)$ is 
\[
\widetilde{T}(\OO,N') \cdot \widetilde{T}(\OO,2,2^b).
\]
Propositions \ref{FIRSTCOMPILEPROP} and \ref{PROP4.11} imply that $2^b P$ is in a $C_{N'}(\OO)$-orbit of size $\widetilde{T}(\OO,N')$ and $(N'P,Q)$ has a $C_{2^b}(\OO)$-orbit of size $\widetilde{T}(\OO,2,2^b)$. Thus 
\[
[K(\ff)(\mathfrak{h}(2^bP)):K(\ff)]=T(\OO,N')
\]
and
\[
[K(\ff)(\mathfrak{h}(N'P), \mathfrak{h}(Q)):K(\ff)]=T(\OO,2,2^b).
\]
Since $\Q(\ff)(\mathfrak{h}(2^bP))$, $\Q(\ff)(\mathfrak{h}(N'P), \mathfrak{h}(Q)) \subset F$ and $F$ does not contain $K$, it follows that $T^{\circ}(\OO, 2, 2^b)=T(\OO,2,2^b)$ and $T^{\circ}(\OO,N')=T(\OO,N')$. By Theorem \ref{Thm7.1}, we have $T^{\circ}(\OO,\ell_i^{b_i})=T(\OO,\ell_i^{b_i})$ for all $1 \leq i \leq r$, and the conclusion follows.

Finally, suppose $\Delta =-4$. Since $b>1$, Proposition \ref{Prop8.6} gives $T^{\circ}(\OO, 2, 2^b) \neq T(\OO,2,2^b)$, so the theorem will follow if we can show there is no number field $F/\Q$ of degree $T(\OO, 2, N)$ and an $\OO$-CM elliptic curve $E_{/F}$ with $\Z/2\Z \times \Z/N\Z \hookrightarrow E(F)$. For the sake of contradiction, suppose such an $E_{/F}$ exists.   It follows that $F$ does not contain $K$.  Let $P \in E(F)$ be a point of order $N$ so that $N'P$ has order $2^b$.  We claim that 
\[
[K(\mathfrak{h}(N'P)):K]=2^{2b-3}.
\]
Indeed, in the proof of Proposition \ref{Prop8.6} we begin with a point $P \in E(F)$ of order $2^b$ where $F$ is a number field 
not containing $K = \Q(\sqrt{-1})$ and construct $d \in F^{\times}$ and an isomorphism $\varphi: E_{/F(\sqrt{d})} \ra E'_{/F(\sqrt{d})}$ 
such that $[K(\mathfrak{h}(\varphi(P))):K] = 2^{2b-3}$.  By \cite[Lemma 7.6]{BC18} the $C_{2^b}(\OO)$-orbit on $\varphi(P)$ 
has size $2^{2b-1}$.  However the size of the Cartan orbit on a point does not depend upon the rational model, so the $C_{2^b}(\OO)$-orbit 
on $P$ also has size $2^{2b-1}$.  In our context this applies to show that the $C_{2^b}(\OO)$-orbit on $N'P$ has size $2^{2b-1}$.  
 Since the $C_{\ell_i^{b_i}}(\OO)$-orbit on $\frac{N}{\ell_i^{b_i}}P$ has size at least $\widetilde{T}(\OO,\ell_i^{b_i})$, by Proposition \cite[Prop. 7.7]{BC18} the $C_{N}(\OO)$-orbit on $P$ has size at least
\[
2^{2b-1}\cdot \widetilde{T}(\OO,N').
\]
Thus
\[
[K(\mathfrak{h}(P)):K]\geq \frac{1}{4}\cdot2^{2b-1}\cdot \widetilde{T}(\OO,N')=2^{2b-3}\cdot \widetilde{T}(\OO,N')=2 \cdot T(\OO,2,N),
\]
which implies
\[
[\Q(\mathfrak{h}(P)):\Q] \geq 2 \cdot T(\OO,2,N),
\]
and we have reached a contradiction since $\Q(\mathfrak{h}(P)) \subset F$.
\end{proof}

\end{document}